\newcommand{\MFN}{\widetilde{\textrm{FN}}}
\newcommand{\FN}{\textrm{FN}}
\newcommand{\FNP}{\textrm{FN}^{\le}}
\newcommand{\Ff}{   \textrm{sd } \mathcal{F}}
\newcommand{\Conf}{\textrm{Conf}}
\newcommand{\Emb}{\textrm{Emb}}
\DeclareMathOperator{\im}{Im}
\newcommand{\Kons}{\textrm{Konts}}
\newcommand{\norm}[1]{\lVert #1\rVert}
\newcommand{\Ass}{\textbf{Ass}}
\newcommand{\colim}{\operatornamewithlimits{colim}}
\newcommand{\elm}[1]{\mathrm{elt}(\sse{#1})}
\newcommand{\elt}{\mathrm{elt}}
\newcommand{\sse}[1]{\mathscr{#1}}
\newcommand{\Hom}{\textrm{Hom}}
\newcommand{\WHT}{\mathrm{WHT}}
\newcommand{\WT}{\mathrm{WT}} 
\DeclareMathOperator{\cl}{cl}
\newcommand{\Ttop}{\textbf{Top}}
\newcommand{\vect}[1]{\underline{#1}}
\newcommand{\Nerve}{\mathcal{N}}
\newcommand{\bt}[1]{ {#1}_{\bullet} }
\newcommand{\bts}[1]{ {\vect{#1}}^{\bullet} }
\newcommand{\Id}{\textrm{Id}}
\newcommand{\Chains}{\textbf{Ch}^{-}(\mathbb{Z}) }
\newcommand{\FNC}{\textrm{FN}^{\textrm{sd}}}
\newcommand{\Rext}{\bar{\mathbb{R}}_{\ge 0 } }
\newcommand{\BFNA}{\mathcal{B}(\FN_m,A)}
\newcommand{\BZ}{\textrm{sd\,}\NBZ}
\newcommand{\NBZ}{\mathrm{BZ}}
\newcommand{\sgn}{\textrm{sgn}}
\newcommand{\std}{\textrm{std}}
\newcommand{\Cube}{\mathcal{Q}}
\newcommand{\bit}{\begin{itemize}} 
\newcommand{\eit}{\end{itemize}}
\newtheoremstyle{mystyle}
  {3pt} 
  {3pt} 
  {} 
  {} 
  {\bfseries} 
  {.} 
  {.5em} 
  {} 
\theoremstyle{mystyle} 
\newtheorem{theorem}{Theorem}[section]
\newtheorem{corollary}[theorem]{Corollary}
\newtheorem{definition}[theorem]{Definition}
\newtheorem{lemma}[theorem]{Lemma}
\newtheorem{remark}[theorem]{Remark}
\newtheorem*{recall*}{Recall}
\newtheorem{example}[theorem]{Example}
\theoremstyle{plain}
\newtheorem*{ss-theorem}{Theorem \ref{ssthm}}
\newtheorem*{zz-theorem}{Theorem \ref{zzthm}}
\author{Andrea Marino}
\title{A Fox-Neuwirth Basis for the Sinha Spectral Sequence}
\date{}
\begin{document}
\maketitle

\begin{abstract}
Recently, Sinha defined a spectral sequence approximating the (co)homology of the space of long knots in $\mathbb{R}^m$ modulo immersions, stemming from a cosimplicial structure on the compactified configuration spaces à la Kontsevich. We provide an equivalent cosimplicial structure on (the barycentric subdivision of) a regular CW complex with cells indexed by Fox-Neuwirth trees. As a corollary, we give a combinatorial presentation of the Sinha Spectral Sequence in terms of Fox-Neuwirth trees for all dimensions $m\ge 2$ and all coefficients $A$.
\end{abstract}

\tableofcontents

\section{Introduction}Knot invariants, from a modern perspective, can be regarded as elements of the zeroth cohomology group of the space of (long) knots in $\mathbb{R}^3$. This interpretation gained prominence following the seminal contributions of Vassiliev \cite{Vassiliev}, who developed a spectral sequence approximating the cohomology of the space of knots in $\mathbb{R}^m$ by resolution of singularities. In further work \cite{vas-computation,vas-combinatorial}, he explicitly computed certain cohomology classes using higher differentials of the spectral sequence—some of which have since received elegant geometric interpretations \cite{teiblum-turchin, budney}.
\\

In the classical case $m=3$, this spectral sequence in degree zero only captures a subset of all invariants—those known as finite-type. A central problem in modern knot theory is whether every pair of distinct knots can be distinguished by such finite-type invariants. While a definitive answer remains far off, recent advances have emerged from a different framework: embedding calculus, as introduced by Goodwillie and Weiss \cite{weiss,weiss-2}.
\\

This theory provide, among others, algebraic means to investigate the space of long knots in terms of embeddings from open intervals into $\mathbb{R}^m$. The result of the construction is a \textit{Taylor Tower}—a sequence of spaces that approximates $\Emb_m$ increasingly well. The $n$-th stage of the tower for long knots \textit{modulo immersions} $\overline{\Emb}_m$ is particularly well-behaved: it turns out to be homotopically determined by a configuration of at most $n$ points in $\mathbb{R}^m$, that cna be thought of as $n$ points uniformly sampled from the knot. Sinha, in his foundational article\cite{sinha2004operads}, proved that the entire Taylor tower for $\overline{\Emb}_m$ can be modeled by a cosimplicial structure on \textit{compactified} configuration spaces, known as Kontsevich spaces $\Kons_m(\bullet)$ \cite{sinha2009topology}. The compactification is needed to implement the coherence maps between the stages of the tower. The spectral sequence in (co)homology derived from $\Kons_m(\bullet)$ turns out to coincide on the second page (up to a regrading) with the first page of Vassiliev’s spectral sequence \cite{tourtchine2005bialgebra}. For $m \geq 4$ and rational coefficients, they both converge to the same limit, due to known collapses at early pages\cite{volic}.
\\

Further connections between these two perspectives have been drawn by recent work \cite{boavida,conant,kosanovic,kosanovic2}, which links stages of the Taylor tower to finite-type invariants that are additive under connected sum. These developments, along with the relative computational tractability of Sinha’s model, have contributed to its increasing adoption in the field. It was shown by Lambrechts, Turchin, and Volić \cite{Lambrechts_2010} that for $m\ge 4$, and later generalized by Tsopmene \cite{tsopmene} to $m=3$, that the Sinha Spectral Sequence with rational coefficients collapses at the second page. Their proof relies on the rational formality of the operadic pair $(\mathbb{E}_1, \mathbb{E}_m)$. Explicit computations by Turchin \cite{tourtchine2005bialgebra} on the second page, together with the collapse, provides precious information about the rational cohomology of the space of long knots (modulo immersions).
\\

The case of non-zero characteristic, however, remains largely unexplored. For $m=2$, the spectral sequence is known not to collapse over any of $\mathbb{Z}$, $\mathbb{Q}$, or $\mathbb{F}_p$ \cite{willwacher,goodwillie}. This is consistent with the failure of formality for $\mathbb{E}_m$ in positive characteristic, as shown in \cite{salvatore2018planar}. For general dimension $m$, Boavida De Brito and Horel \cite{boavida} showed that the only possibly non-trivial differentials over $\mathbb{F}_p$ are $d_{1+n(m-1)(p-1)}$ for some $n\ge 0$.  Combinatorial models such as the surjection operad \cite{smith} offer a potential method to compute the differentials of the spectral sequence, but they become unwieldy in higher dimensions. As we explain in Section \ref{configurations}, only efficient complexes in terms of number of cells are practically viable for such computations.
\\

Among known models, the Fox–Neuwirth decomposition provides the most efficient cellular structure for the compactified configuration spaces $\Conf_n(\mathbb{R}^m)$, with origins in the work of Nakamura \cite{Nakamura} and later recovered by Giusti and Sinha \cite{Giusti}. Cells correspond to Fox–Neuwirth trees $\FNP_m(n)$—combinatorial objects given by sequences of inequalities $\sigma(1) <_{i_1} \dots <_{i_{n-1}} \sigma(n)$ indexed by permutations and height indices. The sum of indices gives the cell’s codimension. Building on the work by De Concini-Salvetti\cite{DeConcini}, Blagojević–Ziegler constructed a complex\cite{blagojevic2013convex}, denoted $\NBZ_m(n)$, dual to the Fox-Neuwirth stratification on the space of configurations. In practice, $\NBZ_m(n)$ is a regular CW complex that deformation retracts onto $\Conf_n(\mathbb{R}^m)$, with cells indexed by the same Fox–Neuwirth trees (but dual dimension grading).
\\

We define in Section \ref{fnposet} a cosimplicial structure on the posets $\FNP_m(\bullet)$, analogous to that on the Kontsevich spaces. A coface map $d_i$ for $0<i<n+1$ may be interpreted geometrically as replacing the point $x_i$ of the configuration by a pair $x_i, x_{i+1}$ aligned vertically; at the level of trees, this corresponds to inserting $i <_{m-1} i+1$ and relabeling as needed. The coface $d_0$ (resp. $d_{n+1}$), on ther other hand, adds a new point $x_1$ (resp. $x_{n+1}$) at the bottom (resp. on top) of the configuration. The nerve of each poset, followed by normalized chains functor, yields a semicosimplicial chain complex $\FNC_m(\bullet)$, whose associated total bicomplex we call the Barycentric Fox–Neuwirth bicomplex (Def. \ref{bar-fn}). Its generators are ascending chains of trees, forming the barycentric subdivision of $\NBZ_m(\bullet)$. The horizontal differential $D_1$ is induced by the alternated sum of cofaces, and the vertical differential $D_0$ is the boundary map of the barycentric cell structure.
\\

The main goal of this article is to show the following
\begin{ss-theorem} For all $m \ge 2$ and abelian groups $A$, the Barycentric Fox-Neuwirth and Sinha Spectral Sequence in (co)homology are isomorphic from the first page on.
In particular, $E_{pq}^r(\FNC_m,A)$ (resp. $E^{pq}_r(\FNC_m,A)$) converges to the homology (resp. cohomology) of $E_m$ with coefficients in $A$ for $m \ge 4$. 
\end{ss-theorem}
This result is exploited in \cite{am} to show that the Sinha Spectral Sequence in dimension $3$ and $\mathbb{F}_2$ coefficients does not collapse at the second page, as opposed to the rational case.
\\

The strategy is to connect the (semi)cosimplicial spaces $\Kons_m$ and $\BZ_m$ via a zig-zag of homotopy equivalences; the result will then follow by abstract non-sense. We will bridge the two constructions via an object $\WHT_m(n)$ that serves as a common ground between configuration spaces and their compactification. Formally, we will prove that:
\begin{zz-theorem}For all $m\ge 2$, there is a zig-zag of semicosimplicial homotopy equivalences:
\[\begin{tikzcd}
	& {\textrm{WHT}_m} \\
	{\BZ_m} && {\textrm{Kons}_m}
	\arrow["\simeq"', from=1-2, to=2-1]
	\arrow["\simeq", from=1-2, to=2-3]
\end{tikzcd}\]
\end{zz-theorem}

Let us provide an intuition behind the proof. The coface action $d_i$ in $\Kons_m(n)$ replace $x_i$ with a pair of points $x_i, x_{i+1}$ infinitesimally close, with direction of collapse being vertical. Similarly, $d_0$ (resp. $d_{n+1}$) adds a point infinitely distant at the bottom (resp. on top) of the configuration. On the other hand, $\BZ_m(n)$ possesses a piecewise linear cosimplicial structure: cofaces maps barycenters $v(\Gamma) \in \Gamma$ to $v(d_i\Gamma) \in d_i \Gamma$, and then extend linearly to the barycentric cell structure. At a closer inspection, one can see that the configuration $v(d_i\Gamma)$, for $i\neq 0, n+1$ takes the point $x_i$ in $v(\Gamma)$ and replaces it with $x_i, x_{i+1}$, where the new point is placed in vertical direction at distance $1$. For $i=0$ (resp. $i=n+1$), the new point is placed at the bottom (resp. on top) of the configuration at distance $1$.
\\

In order to fill the gap between distance $1$ and $0,\infty$, we craft a space $\WHT_m(n)$ that would admit a family of coface actions, with $d_i$ doubling a point at distance $\varepsilon$ and $d_0, d_{n+1}$ placing a new point at distance $\frac{1}{\varepsilon}$ at $\pm \infty$. Geometrically, the space can be thought of as a space of configurations where points are allowed to collapse in vertical direction only. This is enough to define the infinitesimal coface action, and allows for a simple "collapse resolution" map to actual configurations—transforming the infinitesimal vertical collapses in small distances (precisely, one). In practice, we endow the spaces $\WHT_m$ with the infinitesimal coface action, and show that it reduces to the two structures $\BZ_m(n), \Kons_m(n)$ when applying respectively the maps $\WHT_m(n) \to \BZ_m(n), \WHT_m(n)\to \Kons_m(n)$.
\\

The formal implementation of this idea is based on how the configurations of points corresponding to barycenters $v(\Gamma)$ are constructed from $\Gamma$. By allowing weights $\in \mathbb{R}_{>0}$ on the branches of a Fox-Neuwirth tree $\Gamma$, we can govern the distance between points in the corresponding configuration $v(\Gamma)$ (Def. \ref{ass-conf}). This provides the first step of the construction, \textit{the space of weighted trees} $\WT_m(n)$. Such space is obtained by "blowing-up" each barycenter to an open cube $\mathbb{R}_{>0}^{n-1}$ (representing weights). The cells, corresponding to the convex combination of barycenters, become the join of such open cubes (Remark \ref{polytope-exploration-wt}). These "blown-up" cells are then glued together following the same scheme of the barycentric cell structure. We formalize this procedure via a \textit{twisted geometric realization}, in which the simplicial set $\Nerve(\FNP_m(n))$ is geometrically realized by using more complicated bricks than the classical simplices (Section \ref{twisted-geo}).
\\

However, in order to allow these weights to degenerate to $0, \infty$ in the vertical direction, a more sophisticated construction is needed. In short, the technical difficulties arise from the need to make these "virtual collapses" geometrically meaningful when seen in $\Kons_m$. A general point in $\WHT_m(n)$ is a convex combination of configurations with groups of vertically collapsed points. In order for the convex combination to be geometrically well-defined (e.g. corresponding to a point of $\Kons_m(n)$), the partitions of the configurations in vertically collapsed groups have to be the same for all summed configurations. This subtlety requires us to introduce a stratification that keeps track of the groups of collapsed points, resulting in a rather technical construction (Section \ref{wht-section}). Such blocks of collapsed points are encoded into groups of "hairs" on the weighted trees, hence the name \textit{space of weighted hairy trees} for the object $\WHT_m(n)$.
\\

Once the space is constructed and endowed with the infinitesimal vertical (semi)cosimplicial structure, the two maps $\WHT_m(n) \to \BZ_m(n)$ and $\Kons_m(n)$ are formalized and showed to respect the two key properties (Section \ref{kons-to-fox-l}): respecting the cofaces and being homotopy equivalences. The first property comes from the geometric intuition of infinitesimal doubling that we provided; the second one is a consequence of the fact that all the involved spaces—$\BZ_m(n), \Kons_m(n), \WT_m(n), \WHT_m(n)$—are homotopical variants of the space of configurations.
\\

We acknowledge the University of Tor Vergata and the University of Victoria for providing the space and resources that made the research possible. Heartfelt thanks to my Ph.D. supervisor, Paolo Salvatore, for having inspired my mathematical style and ideas behind the paper in many ways. Thanks to Dev Sinha and Victor Turchin for the thorough review of the presented material. Lastly, thanks to Tommaso Rossi, Andrea Bianchi and Lorenzo Guerra for the helpful conversations during the writing of this paper.

\nopagebreak
\section{Background and Notation}
In this section, the fundamental definitions and theorems behind the article are introduced.

\subsection{The Fox-Neuwirth Stratification on Configuration Spaces} \label{configurations}

Configuration spaces are a fundamental object of a study in geometry, as they arise naturally in numerous topological problems\footnote{See, for example, their connection to little disk operads, braid groups and embedding spaces \cite{knudsen}}. Given $n,m\ge 1$, they are defined as
$$ \Conf_n(\mathbb{R}^m) : \{ (x_1, \ldots, x_n) \in (\mathbb{R}^m)^n: x_i \neq x_j \ \ \textrm{ for } i \neq j \}\ . $$
In order to study (co)homological properties of configuration spaces, a particuarly effective combiantorial presentation is given by the Fox-Neuwirth stratification \cite{Giusti}. Let us denote by $<$ the lexicographical order on $\mathbb{R}^m$. Every configuration $\vec{x} \in \Conf_n(\mathbb{R}^m)$ defines a permutation $\sigma \in \Sigma_n$ such that 
$$x_{\sigma(1)} < \ldots < x_{\sigma(n)}$$
Although the subspace of configurations with a given permutation is contractible, it is not regular enough to provide a convenient stratification. We need to retain more information about the lexicographic structure — not only the order, but also the number of shared components. This concept is formalized in the following way:

\begin{definition}A depth-ordering of height $m$ on a set $S$ is the data of
\begin{itemize}
\item A linear order on $S$;
\item For each strict inequality $x < y$ in $S$, a depth index $d(x,y) \in \{0, \ldots, m-1\}$ such that
$$ d(x,z) = \min\{d(x,y), d(y,z) \} \ .$$
\end{itemize}
We write $x <_a y$ as a notation for $d(x,y) = a$.
\end{definition}
The lexicographic order induces a depth-ordering on $\mathbb{R}^m$, with $x <_d y$ whenever $x$ and $y$ share the first $d$ components. Notice that in case of a finite set $S=\{1, \ldots, n\}$, a depth-ordering $\Gamma$ amounts to the folowing data:
\begin{itemize}
\item A permutation $\sigma \in \Sigma_n$, that determines the linear order;
\item Depth indices $a_1, \ldots, a_{n-1} \in \{0, \ldots, m-1\}$ such that $\sigma(1) <_{a_1} \sigma(2) <_{a_2} \ldots <_{a_{n-1}} \sigma(n) $. The other depth-indices are determined by the min-rule.
\end{itemize}

The correct strata on $\Conf_n(\mathbb{R}^m)$ arising from the lexicographic order are then descibed in the following way:
\begin{definition} For a depth-ordering $\Gamma = (\sigma, a_{\bullet}) $ of height $m$ on $n$ points, define
$$ \Conf(\Gamma) := \{ \vect{x} \in \Conf_n(\mathbb{R}^m) : x_{\sigma(1)} <_{a_1} \ldots <_{a_{n-1}} x_{\sigma(n)} \} \ .$$
\end{definition}
This is known as the \textit{Fox-Neuwirth} stratification, as they first introduced it \cite{FN} for $m=2$. They were later generalized to arbitrary dimensions by Nakamura \cite{Nakamura}.

From now on, we denote by $\FNP_m(n)$ the set of depth-orderings of height $m$ on $\{1, \ldots, n\}$. The set $\FNP_m(n)$ has a useful combinatorial interpretation in terms of trees. Given $\Gamma= (\sigma, a_{\bullet}) \in \FNP_m(n)$ we can draw a tree of height $m$ and $n$ leaves with the following characteristics:
\begin{itemize}
\item Leaves are labeled according to $\sigma$;
\item Consecutive branches labeled $\sigma(i)$ and $\sigma(i+1)$ merge at height $a_i$.
\end{itemize}
\vspace{0.5cm}
\begin{figure}[h]
	\centering
	\includegraphics{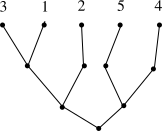}
	\caption{The tree associated to the Fox-Neuwirth cell $3 <_2 1 <_1 2 <_0 5 <_1 4 \in \FNP_3(5)$}
	\label{fox cell}
\end{figure}

 It is easy to see \cite{Giusti} that the following holds:
\begin{theorem} \label{FN-theorem}For any $\Gamma \in \FNP_m(n)$, the subspace $\Conf(\Gamma)$ is homeomorphic to a Euclidean ball of dimension $mn - \sum a_i$. The images of the $\Conf(\Gamma)$ are the interiors of cells in an equivariant
CW structure on the one-point compactification $\Conf_n(\mathbb{R}^m)^+$.
\end{theorem}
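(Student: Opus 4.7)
The first statement, that $\Conf(\Gamma)$ is homeomorphic to a Euclidean ball of the advertised dimension, I would prove by exhibiting an explicit parametrization. Fix $\Gamma = (\sigma, a_\bullet)$. A configuration $\vec{x} \in \Conf(\Gamma)$ is uniquely determined by the following independent pieces of data: the position $x_{\sigma(1)} \in \mathbb{R}^m$; for each $i = 1, \ldots, n-1$ a strictly positive gap $g_i := (x_{\sigma(i+1)})_{a_i+1} - (x_{\sigma(i)})_{a_i+1} > 0$ in the $(a_i+1)$-th coordinate; and the ``free'' subsequent coordinates $(x_{\sigma(i+1)})_j$ for $j > a_i + 1$. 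The constraints $x_{\sigma(i)} <_{a_i} x_{\sigma(i+1)}$ reduce exactly to positivity of the gaps together with equality in the first $a_i$ coordinates (which is already enforced by the data). This identifies $\Conf(\Gamma)$ homeomorphically with $\mathbb{R}^m \times \prod_{i=1}^{n-1} \bigl( \mathbb{R}_{>0} \times \mathbb{R}^{m-a_i-1} \bigr)$, an open Euclidean ball of dimension $m + \sum_i (m - a_i) = mn - \sum_i a_i$.

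For the CW structure on $\Conf_n(\mathbb{R}^m)^+$, the strategy is to upgrade the parametrization above to a characteristic map out of a closed ball. Composing with standard homeomorphisms $\mathbb{R} \cong (-1, 1)$ and $\mathbb{R}_{>0} \cong (0, 1)$, one places the open cell inside the interior of a closed cube, and I would extend the resulting map continuously by specifying the image of each boundary face according to the limit of the coordinates: a free coordinate tending to $\pm \infty$, or a gap $g_i \to \infty$, is sent to the point at infinity; a gap $g_i \to 0^+$ collapses $x_{\sigma(i)}$ and $x_{\sigma(i+1)}$ by one more coordinate, so the limiting configuration lies in the cell indexed by the depth-ordering obtained by replacing $a_i$ with $a_i + 1$ (and restoring the min-rule along the resulting chain); when $a_i + 1 = m$ the two points coincide and we go to the point at infinity.

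The axioms of a CW complex then follow from essentially formal checks. Every $\vec{x} \in \Conf_n(\mathbb{R}^m)$ determines a unique depth-ordering $\Gamma \in \FNP_m(n)$ via the lexicographic structure on its components, so the open cells $\Conf(\Gamma)$ partition $\Conf_n(\mathbb{R}^m)$; together with the $0$-cell at infinity they partition $\Conf_n(\mathbb{R}^m)^+$. Since $\FNP_m(n)$ is finite, closure-finiteness and the weak topology axiom are automatic. Equivariance is transparent: $\tau \in \Sigma_n$ sends $\Conf(\sigma, a_\bullet)$ bijectively onto $\Conf(\tau \sigma, a_\bullet)$.

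The main obstacle I anticipate is continuity of the characteristic map on the boundary of the cube, specifically when several gaps degenerate simultaneously: different orders of limits could a priori produce different combinatorial contractions of $\Gamma$. The compatibility condition $d(x, z) = \min\{d(x, y), d(y, z)\}$ in the definition of depth-orderings is exactly the bookkeeping needed to show that all such limits agree and land in the same lower cell (or at infinity), making the attaching map well-defined and the CW structure regular.
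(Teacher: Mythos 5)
The paper states this theorem as a known result with a citation to Giusti--Sinha and provides no internal proof, so there is nothing of the paper's own to compare against. Your parametrization of $\Conf(\Gamma)$ by $x_{\sigma(1)} \in \mathbb{R}^m$, the $n-1$ positive gaps $g_i$, and the $\sum_i (m - a_i - 1)$ free higher coordinates is correct, and so is the dimension count $mn - \sum a_i$.

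Your description of the boundary behavior on the $g_i \to 0^+$ face, however, is wrong in a way that matters. Once $x_{\sigma(i)}$ and $x_{\sigma(i+1)}$ agree through coordinate $a_i + 1$, their lexicographic order is decided by coordinate $a_i + 2$, and $(x_{\sigma(i+1)})_{a_i+2}$ is a \emph{free} parameter of your chart. On an open subset of the $\{g_i = 0\}$ face you get $x_{\sigma(i+1)} <_{a_i+1} x_{\sigma(i)}$, which flips the permutation to $\sigma \circ (i\ i{+}1)$, not merely increases the depth; on a thinner locus the coincidence propagates further, and if all remaining coordinates tie the two points coincide and the image is $\infty$ even though $a_i + 1 < m$. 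So a single boundary face of your cube already covers several cells together with the $0$-cell at infinity, not the unique cell obtained by replacing $a_i$ with $a_i + 1$. The concern you raise --- that different orders of iterated limits might disagree --- is not where the difficulty lies; the min-rule does not rescue the one-cell-per-face picture. Relatedly, the closing assertion that the resulting CW structure is \emph{regular} is false: every characteristic map collapses entire faces (a free coordinate escaping to $\pm\infty$, a gap $g_i \to \infty$, a total coincidence of two points) onto the single point at infinity, and on $\{g_i=0\}$ faces it collapses the tie loci. Non-regularity of this CW structure on $\Conf_n(\mathbb{R}^m)^+$ is precisely the reason the paper later passes to the Blagojevic--Ziegler complex $\NBZ_m(n)$, which \emph{is} a regular CW model. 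Your strategy is salvageable --- the boundary does land in lower cells plus $\infty$, and continuity into the one-point compactification does hold because colliding or escaping configurations leave every compact subset --- but both the attaching-map description and the regularity claim need to be corrected before the CW verification goes through.
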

This theorem allows one to use the combinatorics of $\FNP_m(n)$ to describe homological properties of the configuration spaces. Indeed, the boundary map can be reformulated explicitly in terms of the trees associated to Fox-Neuwirth strata. The first step in this direction is to understand which strata are contained in the closure of others — the analogue of the face poset structure in the context of stratification, called exit poset. In \cite{blagojevic2013convex}, Lemma 3.3, it is proven that $\Conf(\Gamma') \subset \overline{\Conf(\Gamma)}$ if for all $\alpha, \beta \in \{1, \ldots,n\}$
$$ \alpha <_r \beta \text{ in } \Gamma \ \  \Rightarrow \ \ \alpha <_r \beta \text{ or } \alpha <_{s} \beta \textrm{ or } \alpha >_s \beta \text{ in } \Gamma', \text{ where } s < r\ . $$

We consider $\FNP_m(n)$ to be a poset with the \textbf{reverse} exit poset structure.\footnote{That is, minimal elements are open cells of $\Conf_n(\mathbb{R}^m)$, corresponding to trees with no merged branches.} The connection between the combinatorics of the poset and the homological properties is provided by the following construction. Building on the work of De Concini and Salvetti \cite{DeConcini}, Blagojevic and Ziegler showed in \cite{blagojevic2013convex} the existence of a subspace $\NBZ_m(n) \subset \Conf_n(\mathbb{R}^m)$ with the following features:
\begin{itemize}
    \item The inclusion $\NBZ_m(n) \to \Conf_n(\mathbb{R}^m)$ is a homotopy equivalence;
    \item It has a regular CW-complex structure, with cells  $c(\Gamma)$ indexed by $\FNP_m(n)$;
    \item It has the opposite order with respect to strata: $c(\Gamma) \subset \overline{c(\Gamma')}$ iff $\Conf(\Gamma') \subset \overline{\Conf(\Gamma)}$;
    \item If $\Gamma = (\sigma, a_{\bullet})$, the dimension of $c(\Gamma)$ is $\sum a_i$.
\end{itemize}
As a consequence, the (co)homology of $\Conf_n(\mathbb{R}^m)$ can be obtained from the (dual of the) cellular chain complex of $\NBZ_m(n)$. We denote such complex by $\MFN_m(n)$. The work \cite{Giusti} gives a complete description of signs in the differential. The presentation via Fox-Neuwirth trees is more efficient in terms of size than the McClure-Smith \cite{smith} surjection operad $\mathcal{S}_m(n)$, the other well-established model for $C_*(\Conf_n(\mathbb{R}^m))$.

Functorial constructions in terms of the poset $\FNP_m(n)$ will be related to the barycentric subdivision of $\NBZ_m(n)$, because of how the nerve of a poset is defined. Interestingly enough, $\BZ_m(n)$ is an explicit realization of $\Nerve(\FNP_m(n))$ inside the configuration space\footnote{Denoted in their article as $\Ff (m,n)$.}: Theorem 3.13 of \cite{blagojevic2013convex} shows that\footnote{Denoted in their article as $W_n^{\oplus d}$} $\BZ_m(n)$, when included in $\Conf_n(\mathbb{R}^m)$, is the boundary of a PL star-shaped closed subset. Explicitly, given $(\sigma, a_{\bullet})$ a Fox-Neuwirth cell in $\FNP_m(n)$, the vertices $v(\sigma, a_{\bullet})$  of $\BZ_m(n)$ are built in the following way: set $v_{\sigma(1)}(\sigma,a_{\bullet})=0$, and then define inductively
$$ v_{\sigma(p+1)} (\sigma, a_{\bullet} ) = v_{\sigma(p)} (\sigma, a_{\bullet}) + e_{a_p +1} \ .$$
Given a chain $(\sigma_0, a_{\bullet}^0) < \ldots < (\sigma_d, a_{\bullet}^d)$ in $\Nerve(\FNP_m(n))_d$, the corresponding face in $\Conf_n(\mathbb{R}^m)$ is the convex hull of the vertices $v(\sigma_0, a_0), \ldots , v(\sigma_d, a_d)$.  

The figure \ref{fig:zb-triangulation} illustrates the case $m=3, n=2$ with the following convention: the number of bars between two labels is $2-$depth, and $w(\Gamma) = v_2(\Gamma)$ represents the non-zero component. Since $\BZ_3(2)$ is a deformation retract of $\Conf_2(\mathbb{R}^3)$, we obtain a PL 2-sphere. The faces of the polyhedron corresponds to chains in the poset $\FNP_2(3)$.

\begin{figure}[h]
	\centering
	\includegraphics{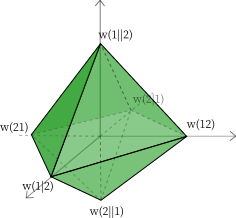}
        \caption{An illustration of $\BZ_3(2)$ as a PL-sphere}
	\label{fig:zb-triangulation}
\end{figure}

\subsection{The Cosimplicial Structure on Kontsevich spaces} \label{kons-cosimp}
A notable application of configuration spaces is to model the spaces constituting \textit{little disk operads} $\mathbb{E}_m$: a point in $\mathbb{E}_m(n)$ is a collection of $n$ disjoint disks of dimension $m$ inside a disk of the same dimension. The map $\mathbb{E}_m(n) \to \Conf_n(\mathbb{R}^m)$ retaining the centers of the disks is an homotopy equivalence. However, the operad structure on $\mathbb{E}_m$ — given by replacing a disk with a collection of disks — it is not clearly generalizable to $\Conf_n(\mathbb{R}^m)$: how should we canonically replace a point with a collection of points? A solution is given by allowing 'infinitesimal configurations': the configuration to insert is scaled to fit into a disk of radius $\epsilon \to 0$. A quantity that remains unchanged when scaling a configuration $\vect{x}$ in a disk of radius $\epsilon \to 0$ is
$$ (*) \ \ \ \phi_{ij}(\vect{x}) = \frac{x_i - x_j}{||x_i - x_j||} \in S^{m-1} \ ,$$
since it is homogeneous. This motivates the following
\begin{definition} The space $\Kons_m(n)$ is the closure in $(S^{m-1})^{\binom{n}{2}}$ of the image of
$$ \phi = (\phi_{ij})_{i \neq j} : \Conf_n(\mathbb{R}^m) \to (S^{m-1})^{\binom{n}{2}} \ ,$$
where $\phi_{ij}$ is defined as in equation $(*)$.
\end{definition}

In \cite{sinha2004manifold,sinha2004operads}, Sinha showed that $\Conf_n(\mathbb{R}^m) \to \Kons_m(n)$ is an homotopy equivalence, and that Kontsevich spaces possess an operadic structure akin to the little disk operads (Theorem 4.5). Points in Kontsevich spaces can be thought as collections of nested clouds, each cloud containing a configuration (Fig. \ref{Kontsevich}).

\begin{figure}[H]
    \centering
    \includegraphics[scale=0.25]{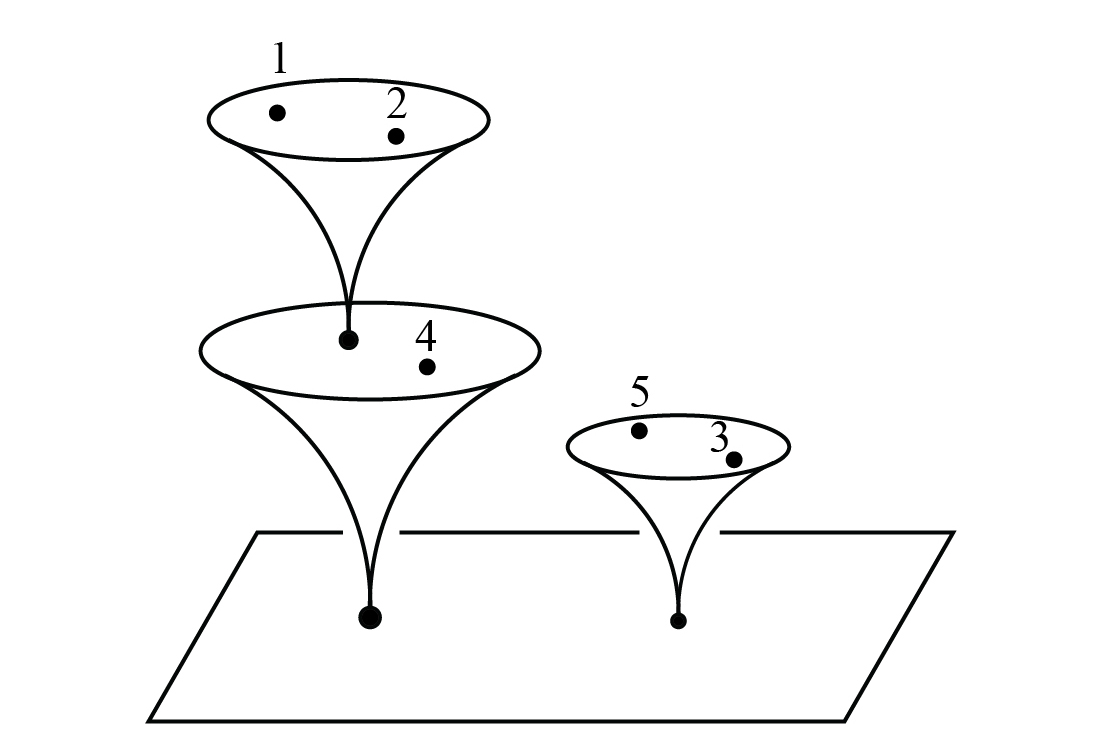}
    \caption{A point in $\Kons_2(5)$}
    \label{Kontsevich}
\end{figure}

The operad structure on $\Kons_m$ can be used to define a cosimplicial structure. McClure and Smith \cite{mcclure2001solution} proved that \textit{multiplicative operad} $\mathcal{O}$ - that is, equipped with a morphism $\Ass \to \mathcal{O}$  - defines a cosimplicial structure in the following way. Note that multiplicativity corresponds to a distinguished binary operation $\mu \in \mathcal{O}^2$  such that $\mu( \mu(-,-),-) = \mu(-, \mu(-,-))$ (a \textit{multiplication}) and a unit $e \in \mathcal{O}^0$ such that $\mu(e,-) = \mu(-,e) = \Id$.  The maps $d_i : \mathcal{O}^n \to \mathcal{O}^{n+1}$ for $i=0, \ldots, n+1$ and $s_j: \mathcal{O}^{n+1} \to \mathcal{O}^n$ for $j=0, \ldots, n$ are then defined as:
\begin{align*}
    \boxed{1 \le i \le n} & \ \ \ \ \  d_i :& \hspace{-3cm} \mathcal{O}^n \overset{(\Id,\mu)}{\to} \mathcal{O}^n \times \mathcal{O}^2 \overset{\circ_i}{\to} \mathcal{O}^{n+1} \ , \\
    \boxed{i=0} & \ \ \ \ \  d_0 :& \hspace{-3cm} \mathcal{O}^n \overset{(\mu,\Id)}{\to} \mathcal{O}^2 \times \mathcal{O}^n \overset{\circ_1}{\to} \mathcal{O}^{n+1} \ ,\\
    \boxed{i=n+1} & \ \ \ \ \  d_{n+1} :&  \hspace{-3cm}\mathcal{O}^n \overset{(\mu,\Id)}{\to} \mathcal{O}^2 \times \mathcal{O}^n \overset{\circ_2}{\to} \mathcal{O}^{n+1} \ ,\\
    \boxed{0 \le j \le n} & \ \ \ \ \ s_j :& \hspace{-3cm}\mathcal{O}^{n+1} \overset{( \Id,e)}{\to} \mathcal{O}^{n+1} \times \mathcal{O}^0 \overset{\circ_{j+1}}{\to} \mathcal{O}^n \ ,
\end{align*}

summarized in the picture \ref{smith} using corollas' diagrams for operads' operations. The axioms for multiplication ensure that cosimplicial identities are satisfied. A multiplication for the Kontsevich operad corresponds to a direction $\mu \in \Kons_m(2) \cong S^{m-1}$. One can see that any such direction satisfies the axioms for a multiplication and, since the sphere is homogeneous, all such cosimplicial structures are isomorphic. There is no choice for the unit, as\footnote{since $\Conf_0(\mathbb{R}^m) = \Hom( \emptyset, \mathbb{R}^m) = \{\emptyset\}$, and $\Kons_0(\mathbb{R}^m)$ is the closure of the map from $\{\emptyset\}$ to $(S^{m-1})^{\binom{0}{2}} = \{*\}$.} $\Kons_m(0) = \{*\}$. In geometrical terms, the resulting cosimplicial moves are:
\begin{itemize}
    \item $s_j$ forgets the $(j+1)$-th point;
    \item $d_i$ for $i=1, \ldots,n$ doubles the $i$-th point, placing $(i+1)$ infinitesimally close to $i$ in direction $\mu$;
    \item $d_0$ inserts a new point with label $1$ at $\infty$ in direction $-\mu$;
    \item $d_{n+1}$ inserts a new point with label $n+1$ at $\infty$ in direction $\mu$
\end{itemize} 

\begin{figure}[H]
    \centering
    \includegraphics[scale=0.15]{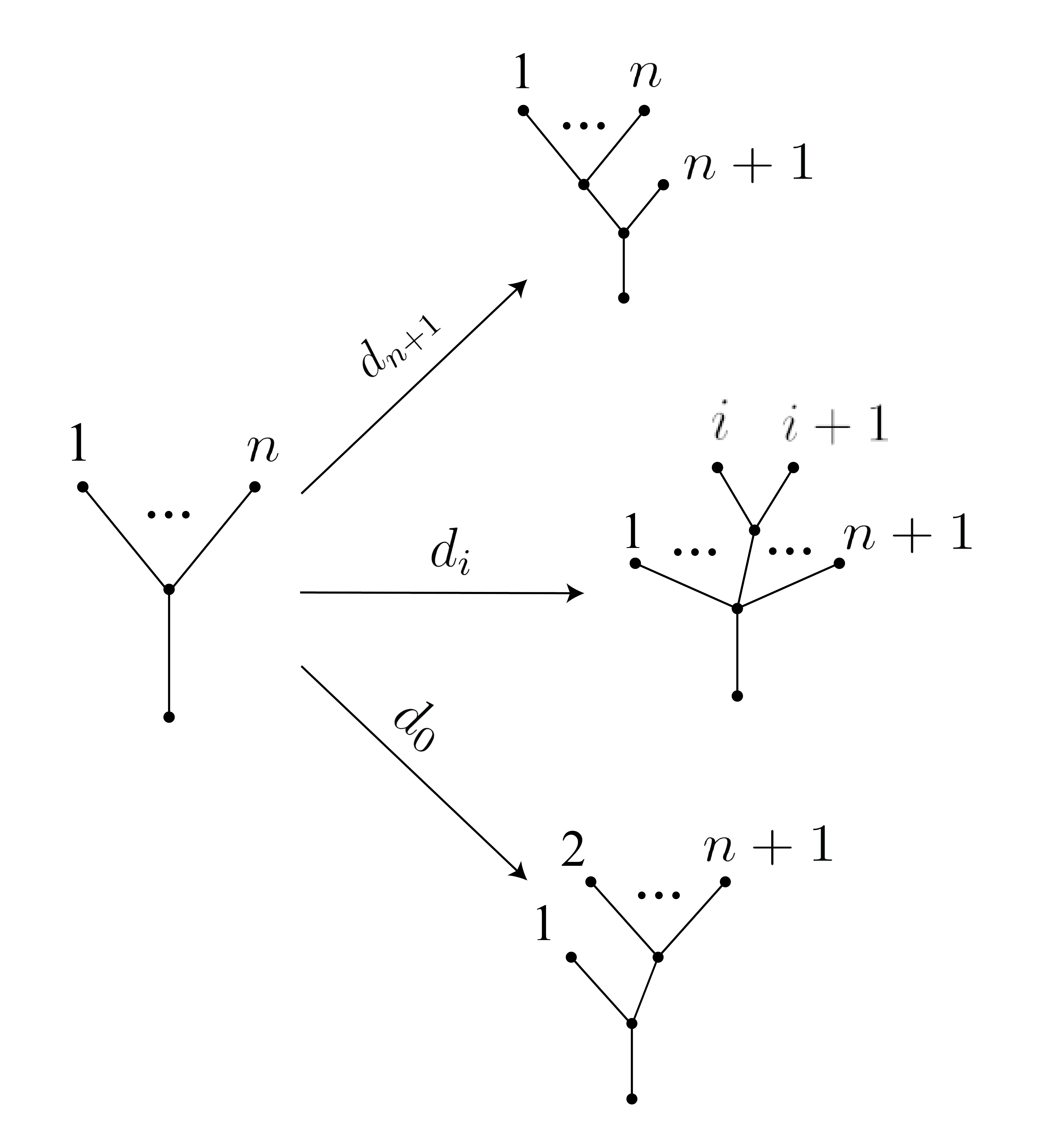}
    \caption{The McClure-Smith cosimplicial structure on a multiplicative operad}
    \label{smith}
\end{figure}

The convention used by Sinha for the multiplication is the southpole of the sphere. In the present article, we will use $\mu = e_m$: when looking for algebraic analogs of the cosimplicial structure on Fox-Neuwirth Trees, the choice of a direction matters. Fox-Neuwirth trees are based on lexicographic order, and the direction $\mu =e_m$ ensures that we perturb a configuration in the 'lexicographically smallest' component.
 
\subsection{Barycentric Fox-Neuwirth Bicomplex}
\label{fnposet}
In \cite{am}, we presented an analog of the cosimplicial structure on Kontsevich Spaces for Fox-Neuwirth cells, that ultimately furnishes a degreewise finite-dimensional bicomplex constructed from Fox-Neuwirth Trees. The aim of the article is to prove that the spectral sequence of this bicomplex is isomorphic to the Sinha Spectral Sequence from the first page on. Let us recall the relevant definitions.

Firstly, we define a cosimplicial structure on the Fox-Neuwirth posets that turns $\FNP_m(n)$ into a functor
$$\FNP_m( \bullet) : \Delta \to \textrm{Poset} \ .$$ 
Let us define the image of $\Delta$ generators\footnote{We denote by $d_i, s_j$ both the generators of $\Delta$ and the cosimplicial structure maps; their meaning can be easily deduced from the context.}:

\begin{definition} \label{fn-cosimplicial}
Given a depth-ordering $\Gamma =(\sigma, a) \in \FNP_m(n)$, we construct the depth-orderings $ d_i \Gamma, s_j \Gamma$  in the following way. Let $\alpha := \sigma^{-1}(i)$ and $\beta := \sigma^{-1}(j+1)$.
\begin{itemize}
\item \textit{Internal cofaces, $0 < i < n+1$}. 
$$ d_i(\sigma)(k) = \left\{ \begin{array}{ll}
        i,  & \textrm{if }  k =\alpha\\
       d_i (\sigma( s_{\alpha}(k))) , & \textrm{ otherwise} 
    \end{array} \right. \ \ \ \ \ \ d_i(a)_k= \left\{ \begin{array}{ll}
        m-1,  & \textrm{if }  k =\alpha\\
       a_{s_{\alpha}(k)} , & \textrm{ otherwise} 
    \end{array} \right. 
    $$
\item \textit{Left extremal coface, $i=0$}
$$ d_0(\sigma)(k) = \left\{ \begin{array}{ll}
        1,  & \textrm{if }  k =1\\
       \sigma(k-1)+1 , & \textrm{ otherwise} 
    \end{array} \right. \ \ \ \ \ \ d_0(a)_k= \left\{ \begin{array}{ll}
        m-1,  & \textrm{if }  k =1\\
       a_{k-1} , & \textrm{ otherwise} 
    \end{array} \right.
    $$
\item \textit{Right extremal coface, $i=n+1$} 
$$ d_{n+1}(\sigma)(k) = \left\{ \begin{array}{ll}
        n+1,  & \textrm{if }  k =n+1\\
       \sigma(k) , & \textrm{ otherwise} 
    \end{array} \right.  \ \ \ \ \ \ d_{n+1}(a)_k= \left\{ \begin{array}{ll}
        m-1,  & \textrm{if }  k = n\\
       a_k , & \textrm{ otherwise} 
    \end{array} \right.
    $$
\item \textit{Codegeneracy, $0 \le j \le n-1$}
$$ s_j(\sigma)(k) = s_{j+1}( \sigma ( d_{\beta}(k) ) )   \ \ \ \ \ \  s_j(a)_k= \left\{ \begin{array}{ll}
        \min \{ a_{\beta-1}, a_{\beta} \},  & \textrm{if }  k = \beta-1 \\
       a_{ d_{\beta}(k) }, & \textrm{ otherwise} 
    \end{array} \right.
    $$
\end{itemize}
\end{definition}
\begin{remark} Informally, the cosimplicial structure acts in the following way:
\begin{itemize}
\item The $i$-th internal coface substitutes the branch labeled $i$ with a small fork labeled $i, i+1$;
\item The left (resp. right) extremal coface inserts $1$ (resp. $n+1$) with a small fork at the left (resp. right) of the tree;
\item The $j$-th codegeneracy eliminates the branch labeled $(j+1)$. 
\end{itemize}
\end{remark}

By taking the nerve of involved posets, followed by normalized simplicial chains, we get a cosimplicial chain complex $ \FNC_m( \bullet) : \Delta_s \to \Chains$. Taking the alternated sum of faces, we obtain the following

\begin{definition} \label{bar-fn}For any $m \ge 2$, the \textit{Barycentric Fox-Neuwirth Bicomplex} with coefficients in an abelian group $A$ is defined as
$$ \BFNA_{dn} = \FNC_m(n)_d \otimes A \ ,$$
with horizontal differential induced by
$$ \partial_h( \Gamma_0 < \ldots < \Gamma_d) = \sum_i (-1)^i \Gamma_0 < \ldots < \hat{\Gamma}_i < \ldots < \Gamma_d \ ,$$
and vertical differential by
$$ \partial_v(\Gamma_0 < \ldots < \Gamma_d) = \sum_i (-1)^i d_i \Gamma_0 < \ldots < d_i \Gamma_d \ .$$
Here $\Gamma_0 < \ldots < \Gamma_d$ is a chain of Fox-Neuwirth trees with $n$ leaves and height $m$. The spectral sequence that starts by taking vertical homology is denoted by $E_{pq}^r(\FNC_m,A)$. The vertical spectral sequence associated to the \textit{dual} bicomplex $\BFNA^{\vee}$ is denoted by $ E^{pq}_r(\FNC_m, A)$.
\end{definition}

\begin{remark} The main claim of the present article is that the spectral sequences $E_{pq}^r(\FNC_m,A), E^{pq}_r(\FNC_m, A)$ are isomorphic, from the first page on, to the Sinha Spectral Sequences respectively in homology and cohomology. See Theorem \ref{ssthm} for a more detailed account.

\end{remark}
\section{Combinatorics of cosimplicial Fox-Neuwirth trees}
This is a rather technical sections, in which we explore the combinatorics of the cosimplicial action on Fox-Neuwirth trees. We suggest using tree pictures as Fig. \ref{shape-tree} to follow proofs. Many results are intuitive once graphically reformulated. On a first read, proofs in this section can be safely skipped.

\subsection{The shape-tree lemma}
This paragraph is devoted to the proof of the lemma appearing in the title:
\begin{lemma}[Shape-Tree]\label{twist-initial} Let $\Lambda \in \FNP_m(n)$ and $\psi:[n] \to [\ell]$. Denote by $\Lambda = (\sigma^{\Lambda}, a^{\Lambda})$ and $\psi \Lambda = (\sigma^{\psi \Lambda}, a^{\psi \Lambda} ) $ the permutations and depth indices associated to the Fox-Neuwirth trees $\Lambda, \psi \Lambda$. Then the following are true:
\begin{enumerate}
\item For all $1 \le t \le \psi(0)$
$$ \sigma^{\psi \Lambda} t = t,\ \ \ \ a^{\psi \Lambda}_t = m-1\ . $$
\item If $\psi(0) < \alpha < \beta < \psi(n)$ and $\{\alpha, \ldots, \beta-1\} \subset (\im \psi)^c$, then
\begin{align*}
(1) \ \ \ \ & (\sigma^{\psi \Lambda})^{-1}(\beta) - (\sigma^{\psi \Lambda})^{-1}(\alpha) = \beta - \alpha \\
(2) \ \ \ \ & \textrm{for all } (\sigma^{\psi \Lambda})^{-1}(\alpha) \le t \le (\sigma^{\psi \Lambda})^{-1}(\beta) -1: \ \ \ a^{\psi \Lambda}_t = m-1 \ .
\end{align*}
\item For all $\psi(n) +1 \le s \le \ell$
$$ \sigma^{\psi \Lambda} (s) = s, \ \ \ a^{\psi \Lambda}_s = m-1\ . $$
\end{enumerate}

\end{lemma}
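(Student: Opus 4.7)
The plan is to proceed by induction on $r = \ell - n$ using a decomposition $\psi = d_{i_r} \circ \cdots \circ d_{i_1}$ of $\psi$ into elementary cofaces in $\Delta$. Since the three statements concern the structure of the \emph{new} leaves (those outside $\im \psi$), I restrict to $\psi$ injective; the surjective part of a general simplicial map only collapses existing leaves of $\Lambda$ and does not interact with the claims as formulated. The base case $r = 0$ is trivial: $\psi$ is the identity, $\psi(0) = 0$, $\psi(n) = n = \ell$, $\im \psi = [\ell]$, and all three assertions hold vacuously.

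For the inductive step, I write $\psi = d_i \circ \psi'$ with $\psi' : [n] \to [\ell-1]$, and set $\Lambda' := \psi' \Lambda$ so that $\psi \Lambda = d_i \Lambda'$. The inductive hypothesis furnishes the three-part description of $\Lambda'$; one then checks that the explicit formulas of Definition \ref{fn-cosimplicial} transport it from $\Lambda'$ to $d_i \Lambda'$. Three cases arise according to where $i$ sits. In case \textbf{(A)}, $0 \le i \le \psi'(0)$: here $\psi(0) = \psi'(0) + 1$, the left linear comb of IH Part 1 grows by one new depth-$(m-1)$ leaf, and the middle and right descriptions transport through the uniform label shift $d_i$. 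Case \textbf{(B)}, $\psi'(n) < i \le \ell$, is symmetric, with the right block growing by one leaf. In case \textbf{(C)}, $\psi'(0) < i \le \psi'(n)$, either $i \in \im \psi'$ and $d_i$ doubles an existing leaf of $\Lambda'$ into labels $i, i+1$ at depth $m-1$, or $i \notin \im \psi'$ and $d_i$ extends an already-existing gap block identified by IH Part 2; in both sub-cases the position shift $s_\alpha$ and the label shift $d_i$ combine to enlarge the relevant gap block $[\alpha, \beta-1]$ by one label in a controlled way, preserving Part 2.

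The main obstacle is the bookkeeping in Case \textbf{(C)}: one must simultaneously track the label shift $d_i$ on integers and the position shift $s_\alpha$ on the linear order against the inductive gap description, and distinguish between the sub-cases $i \in \im \psi'$ and $i \notin \im \psi'$. A subtle point is that doubling an image leaf $i \in \im \psi'$ merges the new singleton gap $\{i\}$ in $\im \psi$ with the previous gap block ending at $i$, which requires readjusting the endpoints $\alpha, \beta$ of Part 2 accordingly. The section's advice to visualize each step on a tree diagram turns this verification into an essentially pictorial check rather than a calculational one.
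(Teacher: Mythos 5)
Your proposal follows the same core strategy as the paper: decompose $\psi$ into elementary cofaces $d_i$ and induct, tracking how the explicit formulas of Definition \ref{fn-cosimplicial} transport the three structural claims through one coface at a time. The differences are organizational rather than substantive. You run a single outermost-coface induction and carry all three parts simultaneously through each $d_i$; the paper tailors a separate argument to each part — for parts 1 and 3 it first factors off the block of left/right extremal cofaces and then inducts on the remainder, while for part 2 it isolates a ``core'' adjacency statement (that $\sigma^{\psi\Lambda}(j+1)=\alpha+1$ and $a^{\psi\Lambda}_j=m-1$ for a single $\alpha\notin\im\psi$) and then bootstraps the full gap-block statement by a second induction on $\beta-\alpha$, which is equivalent to your formulation in terms of extending or merging gap blocks. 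You also make explicit the reduction to injective $\psi$, a step the paper leaves implicit: every decomposition in the paper's proof is into cofaces only, so it silently assumes $\psi$ is a face map — which is harmless since $\psi\Lambda$, $\im\psi$, $\psi(0)$, $\psi(n)$ all depend only on the injective part, and in every application of the lemma $\psi$ is already a composition of $d_i$'s. Your case split (A)/(B)/(C) and the sub-cases of (C) (whether $i\in\im\psi'$ and how the new singleton gap $\{i\}$ merges with adjacent gaps) match where the paper expends its effort, namely the sub-sub-cases $\alpha=k-1$, $\alpha=k$, $\alpha\neq k,k-1$; a full write-up of your sketch would essentially reproduce those calculations, so there is no gap in the idea, only in the bookkeeping you explicitly defer.
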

\begin{remark} What the shape tree lemma says is summarized by picture \ref{shape-tree}. The upshot is that when applying $\psi$ we replace each $k$ with a bunch of little "hairs"; labels on the hairs are consecutive integers. Furthermore, the last $\ell - \psi(n)$ integers of $\{1, \ldots, \ell\}$ appear on hairs at the end of the tree, and the first $\psi(0)$ at the beginning. We will deepen this observation in Remark \ref{hair-blocks-position}.
\end{remark}

\begin{figure}[H]
	\centering
	\includegraphics[scale=0.17]{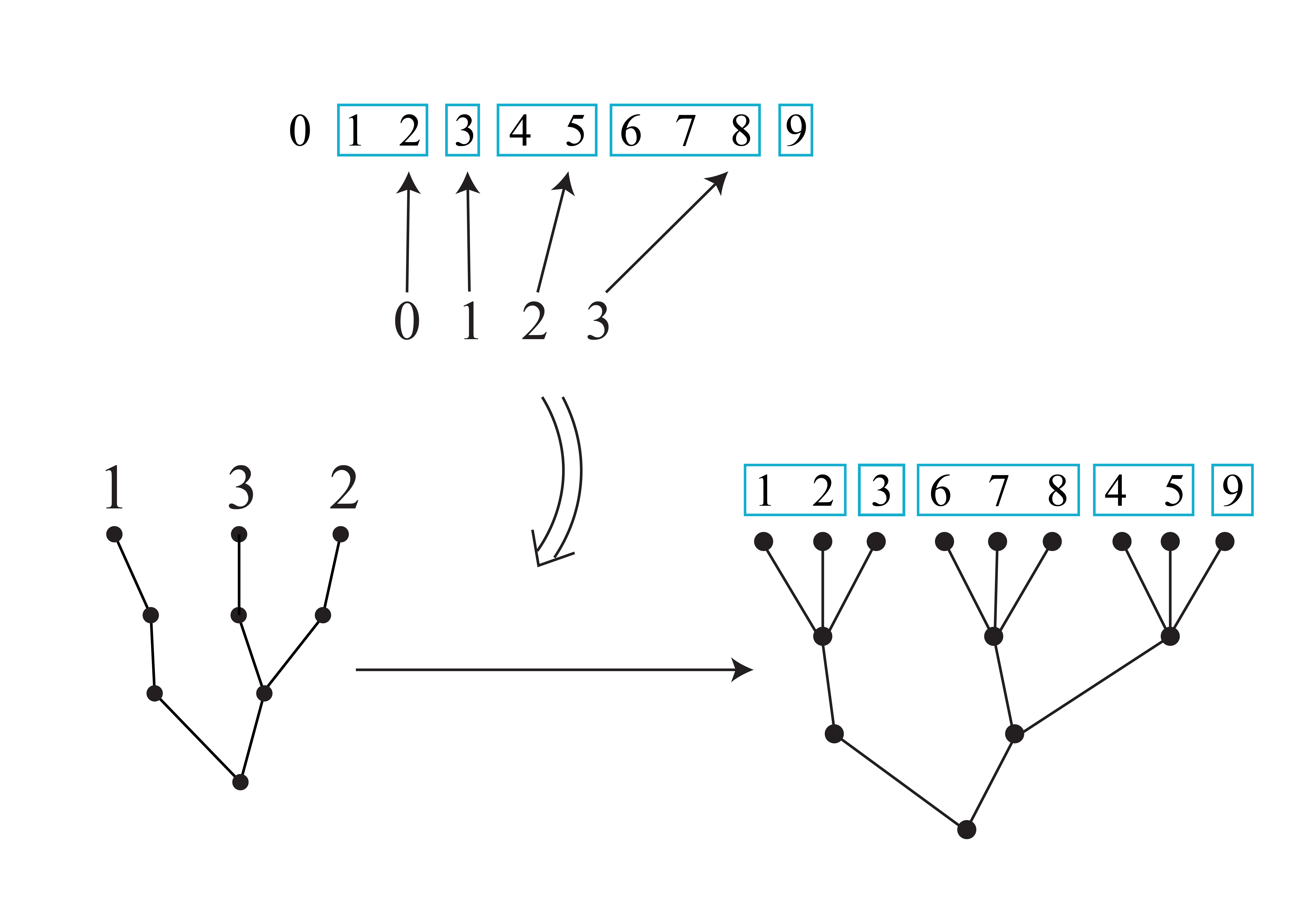}
	\caption{The shape of a Fox-Neuwirth tree after several cosimplicial moves}
	\label{shape-tree}
\end{figure}

\begin{proof}We separately show the three points, although the strategies are quite similar.
\begin{enumerate}
\item Write $\psi=\psi' d_0^{\psi(0)}$, where
$$\psi'(t) = \left \{ \begin{array}{ll} 
t, & \textrm{if } t \le \psi(0) \\
\psi(t-\psi(0)), & \textrm{if } t \ge \psi(0)
\end{array} \right. $$
It is well defined in $t= \psi(0)$ and it gives an increasing function. Also, note that
$$ \psi'd_0^{\psi(0)}(t) = \psi'(t+\psi(0) ) = \psi(t) \ . $$
We proceed by induction on the length of $\psi'$.

If $\psi'$ is the identity, let us show $\sigma^{d_0^p \Lambda}(t) = t$ for $t \le p$ by (a nested) induction on $p \ge 0$ . For $p=0$ there is nothing to show. Let us show the inductive step. For $t \ge 1$, we have
$$ \sigma^{d_0 d_0^p \Lambda}(t+1) = 1+ \sigma^{d_0^p \Lambda}(t) = 1+t\ , $$
$$ a^{d_0 d_0^p \Lambda}_{t+1} = a^{d_0^p \Lambda}_t = m-1\ ,$$
by the inductive hypothesis, while $\sigma^{d_0 d_0^p \Lambda}(1) = 1$ and $a^{d_0 d_0^p \Lambda}_1 = m-1$ by definition.

Now let us examine the inductive step of the induction on $\psi'$. Let us write $\psi' = d_k\phi$ for some $\phi$ of shorter length. Note that since $\psi'(t) = t$ for $t \le \psi(0)$, we must have $k > \psi(0)$. There are two cases. If $k = \ell+1$, then
$$ \sigma^{d_{\ell+1} \phi d_0^{\psi(0)} \Lambda}(t ) = \sigma^{\phi d_0^{\psi(0)} \Lambda}( t) = t \ ,$$
$$ a^{ d_{\ell+1} \phi d_0^{\psi(0)} \Lambda}_t = a^{\phi d_0 d_0^{\psi(0)} \Lambda}_t = m-1  \ ,$$
Since $t \le \psi(0)$. If $k \le \ell$:
$$ \sigma^{d_k \phi d_0^{\psi(0)} \Lambda}(t ) = d_k \sigma^{\phi d_0^{\psi(0)} \Lambda} s_{\gamma} t \ , $$
$$ a^{d_k \phi d_0^{\psi(0)} \Lambda}_t = a^{\phi d_0^{\psi(0)} \Lambda}_{ s_{\gamma} t} \ ,$$
where $\gamma$ is such that $\sigma^{\phi d_0^{\psi(0)} \Lambda}(\gamma) = k$. Since $k > \psi(0)$ and $\sigma^{\phi d_0^{\psi(0)} \Lambda}\{1, \ldots, \psi(0)\} = \{1, \ldots, \psi(0)\}$ by the inductive hypothesis, we must have $\gamma > \psi(0) \ge t$ too. We conclude that
$$ d_k \sigma^{\phi d_0^{\psi(0)} \Lambda} s_{\gamma} t = d_k \sigma^{\phi d_0^{\psi(0)} \Lambda} t = d_k  t = t\ ,$$
$$ a^{\phi d_0^{\psi(0)} \Lambda}_{ s_{\gamma} t }= a^{\phi d_0^{\psi(0)} \Lambda}_t = t \ . $$
In the second passage, we used again the inductive hypothesis on $\phi$.
\item The core of the proof is the following statement, that we will show by induction on the length of $\psi$: if $\alpha \not \in \im \psi$ is such that $\psi(0) < \alpha < \psi(n)$ and $\sigma^{\psi \Lambda}(j) = \alpha$, then $\sigma^{\psi \Lambda}(j+1) = \alpha+1$ and $a^{\psi \Lambda}_j = m-1$.

If $\psi = \Id$ has length zero there is nothing to show, since $\psi$ is surjective. Otherwise, write $\psi = d_k \phi$ for $\phi$ of shorter length. We distinguish three cases for $k$, as usual. If $k=0$, then
$$ \alpha \not \in \im d_0 \phi, \ d_0 \phi(0) < \alpha < d_0 \phi(n) \ \ \Leftrightarrow \ \ \alpha -1 \not \in \im \phi, \ \ \  \phi(0) <  \alpha-1 < \phi(n)\ . $$
Note that $\alpha -1 > \phi(0) \ge 0$ implies $\alpha > 1$. 
Then
\begin{align*}
    &\alpha = \sigma^{\phi \Lambda}(j-1)+1 \ \ \Rightarrow \ \ \alpha -1 = \sigma^{\phi \Lambda}(j-1) \ \ \\
 \xRightarrow{\text{i.hp}} \ \ &\sigma^{\phi \Lambda}(j) = \alpha \ \ \Rightarrow \ \ \sigma^{d_0 \phi \Lambda}(j+1) = \sigma^{\phi \Lambda}(j)+1 = \alpha +1 \ ,
\end{align*}
and also
$$ a^{d_0 \phi \Lambda}_{\alpha} = a^{ \phi \Lambda}_{\alpha -1} = m-1 \ .$$
If $0 < k < \ell+1$, then 
$$\alpha \not \in \im d_k\phi , \ \ \ d_k \phi(0) < \alpha < d_k \phi(n) \ \ \Rightarrow \  \ s_k(\alpha) \not \in \im \phi \textrm{ or } \alpha = k, \phi(0) < s_k(\alpha) < \phi(n) \ .$$
Let us denote by $\gamma$ the index such that $\sigma^{\phi \Lambda}(\gamma) = k$. We have to distinguish three subcases: $\alpha=k-1, \alpha = k, \alpha \neq k,k-1$. If $\alpha = k-1$, by inductive hypothesis $\sigma^{\phi \Lambda}(j+1) = \alpha+1 = k$, which implies $\gamma = j+1$. This means
$$ \sigma^{d_k \phi \Lambda}(j+1) = \sigma^{d_k \phi \Lambda}(\gamma) = k = \alpha+1 \ ,$$
because of the exception in the transformed permutation definition. Also,
$$ a^{ d_k \phi \Lambda}_{\gamma-1} = a^{\phi \Lambda}_{s_{\gamma} (\gamma-1)} =a^{\phi \Lambda}_{ \gamma -1} = m-1 \ ,$$
because of the inductive hypothesis and the observation $\sigma^{\phi \Lambda}(\gamma -1 ) = \sigma^{\phi \Lambda}(j) = \alpha$.

If $\alpha = k$, then $j=\gamma$ and we have
$$ \sigma^{d_k \phi \Lambda}(\gamma+1) = d_k \sigma^{\phi \Lambda}s_{\gamma}(\gamma+1) = d_k \sigma^{\phi \Lambda}(\gamma) = d_k(k) = k+1 = \alpha+1 \ ,$$
$$ a^{d_k \phi \Lambda}_{\gamma} = m-1 \ ,$$
because of the exception in the definition of the transformed depth-index.

Finally, if $\alpha \neq k, k-1$ we have
$$ \alpha = \sigma^{d_k \phi \Lambda}(j) = d_k \sigma^{ \phi \Lambda}( s_{\gamma}j) \ \ \Rightarrow \ \ s_k \alpha = \sigma^{\phi \Lambda}( s_{\gamma} j)  \ \ \xRightarrow{\text{i.hp}} \ \ \sigma^{\phi \Lambda}(s_{\gamma}j +1) = s_k \alpha +1\ .$$
In particular, $\sigma^{\phi \Lambda}(s_{\gamma} j) = s_k \alpha$ and $s_k \alpha$ respect the hypothesis, thus
$$ a^{d_k \phi \Lambda}_{j} = a^{ \phi \Lambda}_{s_{\gamma} j} = m-1 \ .$$
Let us go on with the examination of the permutations. Since $\alpha \neq k$, it is easy to see that $s_k \alpha +1 = s_k(\alpha +1)$. Note that $j \neq \gamma$, otherwise $\alpha = k$. Thus we also have $s_{\gamma} j +1 = s_{\gamma}(j+1)$. We deduce that
$$ \sigma^{\phi \Lambda}(s_{\gamma}(j+1)) = s_k(\alpha+1) \ . $$
Note that $\alpha+1 \neq k$ because of our hypothesis. Applying $d_k$ and using the identity $d_k s_k(x) = x$ for $x \neq k$, we have that
$$ \sigma^{d_k \phi \Lambda}(j+1) = d_k \sigma^{\phi \Lambda}(s_{\gamma}(j+1) ) = d_k s_k(\alpha+1) = \alpha+1\ ,$$
as desired. 

At last, consider the case $k= \ell+1$. We have that
$$ \alpha \not \in \im d_{\ell+1} \phi, \ \ d_{\ell+1} \phi(0) < \alpha < d_{\ell+1} \phi(n) \ \ \Leftrightarrow \ \ \alpha \not \in \im \phi, \ \ \phi(0) < \alpha < \phi(n) \ .$$
The last condition implies $\alpha \le \phi(n)-1 \le \ell-1$.  Thus we have
$$ \sigma^{d_{\ell+1} \phi \lambda}(j) = \alpha \ \ \xRightarrow{\alpha \neq \ell+1} \ \ \sigma^{\phi \Lambda}(j) = \alpha \ .$$
Regarding the depth-index, if $j=\ell$ the thesis is automatically satisfied. Otherwise
$$ a^{d_{\ell+1} \phi \Lambda}_j = a^{ \phi \Lambda}_j = m-1\ . $$
Regarding the permutation, by the inductive hypothesis the above equation implies $\sigma^{\phi \lambda}(j+1) = \alpha+1$. Again, since $\alpha+1 \neq \ell+1$, we deduce $\sigma^{d_{\ell+1} \phi \Lambda}(j+1) = \alpha+1$, as desired.

Now that we have shown the core statement, we are ready to prove point $(2)$, that is for all $\psi(0) < \alpha < \beta \le \psi(n)$ such that $\{\alpha, \ldots, \beta-1\} \subset (\im \psi )^c$, it holds
$$ (\sigma^{\psi \Lambda} )^{-1}(\beta) - (\sigma^{\psi \Lambda} )^{-1}(\alpha) = \beta - \alpha \ ,$$
$$  \textrm{for all } (\sigma^{\psi \Lambda})^{-1}(\alpha) \le t \le (\sigma^{\psi \Lambda})^{-1}(\beta) -1: \ \ \ a^{\psi \Lambda}_t = m-1 \ ,$$
We use induction on $\beta - \alpha$. If $\beta - \alpha = 1$, the hypothesis reformulate as $\psi(0) < \alpha < \psi(n)$ and $\alpha \not \in \im \psi$. Let us denote by $j= (\sigma^{\psi \Lambda} )^{-1}(\alpha) $. By the core statement, we have that $\sigma^{\psi \Lambda}(j+1)= \alpha +1 = \beta$, that yields
$$ (\sigma^{\psi \Lambda} )^{-1}(\beta) - (\sigma^{\psi \Lambda} )^{-1}(\alpha) = (j+1) - j = \beta - \alpha\ .$$
For the general case, write $\beta = \beta'+1$. The hypothesis are still verified for the pairs $\alpha < \beta'$ and $\beta' < \beta$. By the inductive hypothesis we have
$$ (\sigma^{\psi \Lambda} )^{-1}(\beta') - (\sigma^{\psi \Lambda} )^{-1}(\alpha) = \beta' - \alpha \ , $$
while for the base case (since $\beta'-\beta = 1$) we have
$$ (\sigma^{\psi \Lambda} )^{-1}(\beta) - (\sigma^{\psi \Lambda} )^{-1}(\beta') = \beta - \beta' \ .$$
Summing the two equations we obtain the statement.

The deduction on depth-indices is even simpler. Because of what we have shown about the permutation, we have that $t= (\sigma^{\psi \Lambda} )^{-1}(\gamma)$ for some $\gamma \in \{\alpha, \ldots, \beta-1\} \subset (\im \psi)^c$. Because of the core statement, we conclude that $a^{\psi \Lambda}_t = m-1$.

\item We decompose an arbitrary $\psi$ as $\psi' d_{n+p} \ldots d_{n+1}$, where $p= \ell- \psi(n)$. Here $\psi'$ is defined as
$$ \psi'(t) =  \left \{ \begin{array}{ll} 
\psi(t) , & \textrm{if } t \le n \\
\psi(n)-n+t, & \textrm{if} t \ge n
\end{array} \right. $$
This is well defined in the common point of definition and it gives an increasing function. Also for $t \in [n]$
$$ \psi' d_{n+p} \ldots d_{n+1}(t) =  \psi'(t) = \psi(t)\ .$$
We now show the thesis by induction on the length of $\psi'$. In the base case $\psi'=\Id$, we use a nested induction on $p\ge 0$. If $p=0$, there is nothing to show since there is no $s \in [n]$ with $s \ge \psi(n)+1= n+1$. Otherwise, we have that
$$ \sigma^{d_{n+p} d_{n+p-1} \ldots d_{n+1} \Lambda}(s) =  \left \{ \begin{array}{ll} 
\textrm{if } s=n+p: & n+p \\
\textrm{otherwise if } s \le n+p-1: & \sigma^{ d_{n+p-1} \ldots d_{n+1} \Lambda}(s) \stackrel{i.hp}{=} s
\end{array} \right. $$
$$ a^{d_{n+p} d_{n+p-1} \ldots d_{n+1} \Lambda}_s =  \left \{ \begin{array}{ll} 
\textrm{if } s=n+p-1: & m-1 \\
\textrm{otherwise: } &  a^{ d_{n+p-1} \ldots d_{n+1} \Lambda}_s \stackrel{i.hp}{=} m-1
\end{array} \right. $$

Let us examine the inductive step on the length of $\psi'$. Suppose $\psi' = d_k \phi$ for some $\phi$ of shorter length. Note that we must have $k < \psi(n)$, otherwise
$$ \psi(n) = \psi'(n) = d_k \phi(n) \le d_k (\psi(n)-1) = \psi(n) -1 \ .$$
The inequality $\phi(n) \le \psi(n)-1$ comes from the fact that $\phi : [n+p] \to [\ell-1]$ is strictly increasing, thus
$$ \phi(n) \le \phi(n+1) -1 \le \ldots \phi(n+p)-p \le \ell-1-p = \psi(n)-1 \ .$$
We now have two cases. Denote by $\tilde{\phi} = \phi d_{n+p} \ldots d_{n+1}$. If $k=0$, then for $s \ge \psi(n)+1$
$$ \sigma^{d_0 \tilde{\phi} \Lambda}(s) = \sigma^{\tilde{\phi}\Lambda}(s-1) +1 \ , $$
$$ a^{d_0 \tilde{\phi} \Lambda}_s = a^{\tilde{\phi}\Lambda}_{s-1}\ . $$
Recall that $ \tilde{\phi}(n) = \phi(n) \le \psi(n)-1$, thus $s-1 \ge \psi(n) \ge \tilde{\phi}(n)+1$. Applying the inductive hypothesis we get 
$$ \sigma^{\tilde{\phi} \Lambda}(s-1) +1 = (s-1)+1 = s \ ,$$
$$ a^{\tilde{\phi} \Lambda}_{s-1}= m-1 \ . $$
In case $k > 0$, consider $\gamma = (\sigma^{\tilde{\phi} \Lambda})^{-1}(k)$. Then for $s \ge \psi(n)+1$
$$ \sigma^{d_k \tilde{\phi}\Lambda}(s) = d_k \sigma^{\tilde{\phi} \Lambda}(s_{\gamma}(s) ) \ , $$
$$ a^{d_k \tilde{\phi}\Lambda}_s = a^{\tilde{\phi} \Lambda}_{s_{\gamma}(s)} \ .$$
Since $k < \psi(n)$ and $\sigma^{\tilde{\phi} \Lambda} \{ \psi(n), \ldots, \ell-1\} = \{ \psi(n), \ldots, \ell-1\}$ by inductive hypothesis, we must have that $\gamma < \psi(n)$ too. Thus $s_{\gamma}(s) = s-1$ and we get, applying the inductive hypothesis
$$d_k \sigma^{\tilde{\phi} \Lambda}(s-1) = d_k(s-1) = s \ ,$$
$$ a^{\tilde{\phi} \Lambda}_{s-1} = m-1\ , $$
since $s-1 \ge \psi(n) > k$.
\end{enumerate}
\end{proof}

\subsection{Relationship between positions and labels}
In order to work with the cosimplicial action, we have to somehow explicit its effect on positions; this is the purpose of the twisted morphisms we introduce in the following definition, illustrated in figure \ref{hair-blocks-position-img}.  The slogan is the following: the argument of $\phi$ is a point (label), the argument of $\phi^{\Lambda}$ is a position.

\begin{definition} Consider $\psi: [p] \to [q]$, $\Lambda \in \FNP_m(p), \Gamma \in \FNP_m(q)$ such that $\psi \Lambda = \Gamma$. Let us denote by $\sigma^{\Lambda}, \sigma^{\Gamma}$ the label permutations associated to the two trees. Define the morphism $\psi$ twisted by $\Lambda$, denoted $\psi^{\Lambda}$, in the following way: 
$$ \psi^{\Lambda}(s) = \left\{ \begin{array}{ll} (\sigma^{\Gamma})^{-1}\psi(\sigma^{\Lambda}(s)) & \textrm{if } s=1, \ldots,p \\
\psi(0) & \textrm{for } s=0 \end{array} \right. $$
\end{definition}
Since twisting a morphism involves several definitions, we take the time to go through an explicit example while giving some down-to-earth tips to simplify the calculations.
\begin{example}\label{hair-blocks-example} Consider $\Lambda = (312, (0,1)) \in \FNP_3(3)$ and $\psi: [3] \to [9]$ as in the following picture:
\begin{figure}[H]
	\centering
	\includegraphics[scale=0.15]{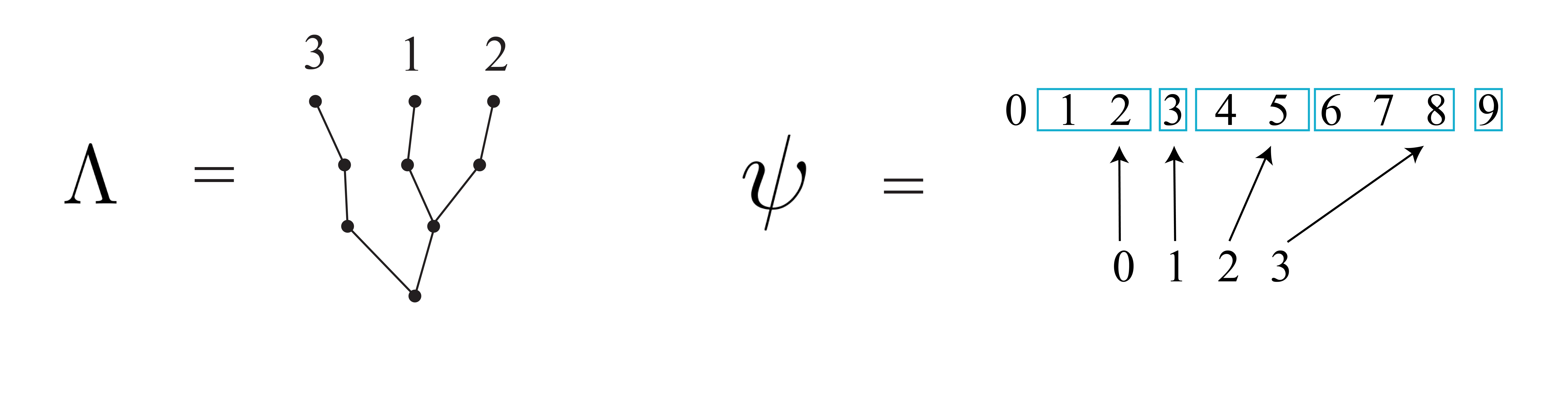}
\end{figure}
We will write an increasing function by juxtaposing its values, as we did for the permutation; in our case, $\psi = 2358$. The first step to compute $\psi^{\Lambda}$ is to find $\sigma^{\Gamma} = \psi(312)$.  Since the cosimplicial action on Fox-Neuwirth trees is defined in terms of generators, the second step is to decompose $\psi$ in terms of cofaces. Given the picture of an increasing morphism, it is easy to find its decomposition. Let us call a value in the codomain "jumped" if it is not in the image of $\psi$. Then $\psi$ can be obtained with the following recipe:
\begin{itemize}
\item Start with as many $d_0$ as the number of jumped values at the beginning;
\item Going from left to right, add a term $d_k$ for every jumped value $k \ge \psi(0)$.
\end{itemize}
Since composed functions are written from right to left, we will get $\psi = d_9 d_7 d_6 d_4 d_0^2$. Let us compute the transformed permutation. Recall that, up to relabeling, $d_0$ adds a $1$ at the beginning, $d_i$ for "internal" $i$ doubles $i$, and $``d_{\textrm{last}}"$ adds the maximum label plus one at the end. Thus:
\begin{align*}
\sigma^{\psi \Lambda} = & d_9 d_7 d_6 d_4 d_0^2 (312) = d_9 d_7 d_6 d_4 (12534) = d_9 d_7 d_6 (126345) = \\
= & d_9 d_7 (1267345) = d_9 (12678345) = 126783459 \ .
\end{align*}
The effect of $\psi$ on $\Lambda$ is depicted in figure \ref{shape-tree}. Lastly, we can apply the main definition $\psi^{\Lambda} = (\sigma^{\psi \Lambda} )^{-1} \circ \psi \circ \sigma^{\Lambda}$ to get the desired result. There is a workaround to include the case $\psi^{\Lambda}(0)$ in the latter formula, that is by appending $0$ at the beginning of both permutations. With this trick, we can compute $\psi^{\Lambda}$ in one line:
\begin{align*}
\psi^{\Lambda} = & (0126783459)^{-1} \circ (2358) \circ (0312) = (0126783459)^{-1} \circ (2835) = 2578
\end{align*}
\end{example}

As you can see, the twisted morphism bears some striking similarities with the original map. First of all, it is increasing. Secondly, the first and last value remains unchanged. Some well-versed-in-sequences readers could have also noticed the following fact: if we take the difference between consecutive values, the original sequence yields $1,2,3$, while the new one yields $3,2,1$. 

Unfortunately, directly applying the definition makes these similarities seem like a coincidence. Using the Shape-Tree Lemma, together with Lemma \ref{twisted-example}, we will shed light on the apparent simple behaviour of $\psi^{\Lambda}$. Indeed, we will later prove the following facts:
\begin{corollary} \label{increasing} $\psi^{\Lambda}: [n] \to [\ell]$ is increasing for all $\psi: [n] \to [\ell]$ and $\Lambda \in \FNP_m(n)$.
\end{corollary}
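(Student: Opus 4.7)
The plan is to induct on the length of $\psi$ as a composition of cofaces, combined with a chain rule for twisting. Specifically, for $\psi = \phi \circ \psi'$ and $\Lambda' := \psi' \Lambda$, substituting the identity $\sigma^{\Lambda'}(\psi'^{\Lambda}(s)) = \psi'(\sigma^{\Lambda}(s))$ (which is just the cancellation $\sigma^{\Lambda'} \circ (\sigma^{\Lambda'})^{-1} = \Id$ applied inside the definition of $\psi'^{\Lambda}$) yields
\[
\psi^{\Lambda}(s) = \phi^{\Lambda'}\bigl(\psi'^{\Lambda}(s)\bigr) \qquad \text{for } s \ge 1.
\]
Hence once monotonicity is established for each single coface, it passes to general $\psi$ by composition of increasing functions.

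For the base case, I would directly unravel Definition \ref{fn-cosimplicial} for each type of coface. Setting $\alpha := (\sigma^{\Lambda})^{-1}(k)$ in the internal case $0 < k < n+1$, the resulting formulas are very clean: $d_0^{\Lambda}(s) = s+1$ for all $s$; $d_{n+1}^{\Lambda}(s) = s$ for all $s$; and $d_k^{\Lambda}$ is the identity for $s < \alpha$ and the shift-by-one for $s \ge \alpha$ in the internal case. Each is manifestly monotone. The verification amounts to checking that $d_k$ applied to the label $\sigma^{\Lambda}(s)$ lands at the asserted position of $\sigma^{d_k \Lambda}$; this is exactly the algebraic translation of the geometric operation of internally doubling a branch.

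The subtle point is the boundary $s = 0$, both when applying the chain rule and when comparing $\psi^{\Lambda}(0) = \psi(0)$ with $\psi^{\Lambda}(1)$. I would handle this via Shape-Tree part 1: since $\sigma^{\psi \Lambda}$ fixes the first $\psi(0)$ positions, the permutation cleanly separates labels $\le \psi(0)$ from labels $> \psi(0)$ across the corresponding positions. Because $\psi$ is strictly increasing and $\sigma^{\Lambda}(1) \ge 1$, we have $\psi(\sigma^{\Lambda}(1)) > \psi(0)$, whose position in $\psi \Lambda$ therefore exceeds $\psi(0)$, giving $\psi^{\Lambda}(0) < \psi^{\Lambda}(1)$.

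The main obstacle I anticipate is the bookkeeping in the internal-coface computation: the action of $d_k$ on labels has to be coordinated with the implicit degeneracy $s_\alpha$ appearing in the formula for $\sigma^{d_k \Lambda}$, so one must split into the three sub-cases $s < \alpha$, $s = \alpha$, $s > \alpha$ and track both the numerator $d_k(\sigma^{\Lambda}(s))$ and the inverse permutation $(\sigma^{d_k \Lambda})^{-1}$ separately. The picture of doubling the branch at position $\alpha$---earlier positions untouched, positions $\alpha$ and $\alpha+1$ carrying the doubled labels, later positions shifted by one---is precisely what the computation encodes and keeps the algebra honest.
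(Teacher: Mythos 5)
Your proposal is correct and follows essentially the same path as the paper: induction on the length of $\psi$, the chain rule $(fg)^{\Lambda}=f^{g\Lambda}g^{\Lambda}$, and the explicit identification of each twisted coface $d_k^{\Lambda}$ with an honest coface (this is exactly Lemma~\ref{twisted-example}). The only wrinkle is that you derive the chain rule only for $s\ge 1$ and patch the comparison $\psi^{\Lambda}(0)<\psi^{\Lambda}(1)$ separately via Shape-Tree part 1, whereas the paper's Lemma~\ref{twisted-example} already proves the chain rule for all $s\in[n]$ (handling $s=0$ via Shape-Tree inside the lemma), so the corollary becomes an immediate composition of increasing maps.
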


\begin{corollary} \label{extrema} For all $\psi:[n] \to [\ell]$ and $\Lambda \in \FNP_m(n)$ we have 
$$ \psi^{\Lambda}(0) = \psi(0), \ \ \ \ \ \psi^{\Lambda}(n) = \psi(n) \ . $$
\end{corollary}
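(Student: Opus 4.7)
The first identity $\psi^{\Lambda}(0)=\psi(0)$ is immediate from the definition, so I concentrate on proving $\psi^{\Lambda}(n)=\psi(n)$. Unpacking $\psi^{\Lambda}$, this is equivalent to the assertion that position $\psi(n)$ of $\Gamma=\psi\Lambda$ carries the label $\psi(\sigma^{\Lambda}(n))$, i.e.\ $\sigma^{\Gamma}(\psi(n))=\psi(\sigma^{\Lambda}(n))$.

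The plan is to proceed by induction on the length of $\psi$ written as a word in the coface generators $d_k$. The base case $\psi=\Id_{[n]}$ is trivial since $\Gamma=\Lambda$ and $(\sigma^{\Lambda})^{-1}(\sigma^{\Lambda}(n))=n$. For the inductive step, I would write $\psi=d_k\phi$ with $\phi:[n]\to[\ell-1]$ and $k\in\{0,1,\ldots,\ell\}$. By the inductive hypothesis, the label $L_0:=\phi(\sigma^{\Lambda}(n))$ lies at position $j_0:=\phi(n)$ in $\phi\Lambda$, and the task reduces to tracking how this single leaf is transformed by $d_k$: we need the resulting label to equal $d_k(L_0)=\psi(\sigma^{\Lambda}(n))$ and the resulting position to equal $d_k(j_0)=\psi(n)$.

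The extremal cases are easy: for $k=0$, all existing labels and positions shift up by one, giving $L_0+1$ at $j_0+1$; for $k=\ell$, the right extremal appends a trailing leaf and leaves everything else intact, and since $L_0,j_0\le \ell-1$ we are done. The delicate case is $0<k<\ell$, where $d_k$ inserts a fork at the position $\alpha:=(\sigma^{\phi\Lambda})^{-1}(k)$ of label $k$ in $\phi\Lambda$, shifts positions $>\alpha$ up by one, and increments labels $\ge k$. The main obstacle is that a priori $\alpha$ and $j_0$ are unrelated, so one cannot directly read off the new position of the tracked leaf.

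To resolve this, I would invoke the shape-tree lemma (part 3) applied to $\phi$: the trailing positions $j_0+1,\ldots,\ell-1$ of $\phi\Lambda$ carry precisely the labels $j_0+1,\ldots,\ell-1$, so the remaining labels $\{1,\ldots,j_0\}$ occupy positions $\{1,\ldots,j_0\}$. This yields the dichotomy needed to coordinate $\alpha$ and $k$: either $\alpha\le j_0$, in which case $k=\sigma^{\phi\Lambda}(\alpha)\le j_0$; or $\alpha>j_0$, in which case $\alpha$ lies in the trailing range and hence $k=\alpha$. Combined with $L_0\le j_0$ (since $\sigma^{\Lambda}(n)\le n$ and $\phi$ is order-preserving), a short sub-case analysis on $\alpha<j_0$, $\alpha=j_0$ (equivalently $L_0=k$), and $\alpha>j_0$ verifies in each instance that $d_k(L_0)$ lands exactly at $d_k(j_0)$. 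The shape-tree lemma is tailor-made to supply the missing coordination, and this is where the real content of the argument sits.
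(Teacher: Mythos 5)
Your proof is correct and takes essentially the same route as the paper: induction on the word length of $\psi$, with the key dichotomy $\alpha = (\sigma^{\phi\Lambda})^{-1}(k) > \phi(n) \Leftrightarrow k > \phi(n)$ supplied by the Shape-Tree Lemma (part 3). The paper streamlines the bookkeeping via Lemma \ref{twisted-example} (writing $(d_k\phi)^{\Lambda}(n) = d_{\alpha}\phi^{\Lambda}(n) = d_{\alpha}\phi(n)$), whereas you track the distinguished leaf by hand, but the core argument is the same.
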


\begin{corollary} \label{consecutive} Consider $\psi : [n] \to [\ell], \Lambda \in \FNP_m(n)$ and $\alpha, \beta \in [\ell]$, with $a,b$ such that $ \sigma^{\psi \Lambda}(a) = \alpha, \sigma^{\psi \Lambda}(b) = \beta$. Then
$$\psi(0) < \alpha < \beta \le \psi(n), \ \ \ \{\alpha, \ldots, \beta -1 \} \subset (\im \psi)^c $$
if and only if
$$\psi^{\Lambda}(0) < a < b \le \psi^{\Lambda}(n), \ \ \ \{a, \ldots, b-1 \} \subset (\im \psi^{\Lambda} )^c\ . $$
\end{corollary}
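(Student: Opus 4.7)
The plan is to translate between the label-side hypotheses on $\psi$ and the position-side hypotheses on $\psi^{\Lambda}$, exploiting the Shape-Tree Lemma~\ref{twist-initial} and Corollary~\ref{extrema}, and unpacking the definition $\psi^{\Lambda}(s)=(\sigma^{\psi\Lambda})^{-1}(\psi(\sigma^{\Lambda}(s)))$ for $s\ge 1$ together with $\psi^{\Lambda}(0)=\psi(0)$. After Corollary~\ref{extrema} the two boundary conditions read the same pair $\psi(0)<\,\cdot\,\le\psi(n)$, and Shape-Tree Lemma points~(1) and~(3) identify the label and position versions: if $a\le\psi(0)$ then $\alpha=\sigma^{\psi\Lambda}(a)=a\le\psi(0)$ by point~(1), and the symmetric argument via point~(3) handles the right boundary.

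For the forward direction, assume the label-side hypotheses and iterate the internal ``core statement'' established inside the proof of Shape-Tree Lemma point~(2): if $\gamma\notin\im\psi$ and $\psi(0)<\gamma<\psi(n)$, then $(\sigma^{\psi\Lambda})^{-1}(\gamma+1)=(\sigma^{\psi\Lambda})^{-1}(\gamma)+1$. Starting from $\sigma^{\psi\Lambda}(a)=\alpha$ and walking through the window $\{\alpha,\ldots,\beta-1\}\subset(\im\psi)^c$, induction gives $\sigma^{\psi\Lambda}(a+k)=\alpha+k$ for $0\le k\le\beta-\alpha$; in particular $b=a+(\beta-\alpha)$, and the labels at positions $\{a,\ldots,b-1\}$ are exactly $\{\alpha,\ldots,\beta-1\}\subset(\im\psi)^c$. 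Since $\psi^{\Lambda}(s)$ is the position of the label $\psi(\sigma^{\Lambda}(s))\in\im\psi$ for $s\ge 1$, and $\psi^{\Lambda}(0)=\psi(0)<a$, no position in $\{a,\ldots,b-1\}$ can lie in $\im\psi^{\Lambda}$.

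The backward direction runs the same induction in reverse. The hypothesis $a\notin\im\psi^{\Lambda}$ forces $\alpha\notin\im\psi$ (any $\alpha=\psi(k)$ would yield $a=\psi^{\Lambda}((\sigma^{\Lambda})^{-1}(k))\in\im\psi^{\Lambda}$, where $\alpha>\psi(0)$ guarantees $k\ge 1$), and $\alpha\le\psi(n)$ combined with $\alpha\notin\im\psi$ yields $\alpha<\psi(n)$. We are thus in the hypotheses of the core statement, so $\sigma^{\psi\Lambda}(a+1)=\alpha+1$; the induction continues as long as $a+k\in\{a,\ldots,b-1\}$, reproducing $\alpha+k\notin\im\psi$ and $\alpha+k<\psi(n)$ at each step, and terminates at $\sigma^{\psi\Lambda}(b)=\alpha+(b-a)$. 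Uniqueness then forces $\beta=\alpha+(b-a)>\alpha$, simultaneously delivering $\alpha<\beta$ and $\{\alpha,\ldots,\beta-1\}\subset(\im\psi)^c$.

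The main delicacy is to invoke the per-step core statement from the proof of point~(2) rather than its packaged conclusion $b-a=\beta-\alpha$, since the one-step form ``consecutive labels occupy consecutive positions'' is the actual inductive engine for both directions; with that in hand, the rest is bookkeeping.
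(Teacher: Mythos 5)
Your proof is correct, and it follows the same combinatorial engine as the paper's: the one‑step ``consecutive jumped labels sit at consecutive positions'' fact that the Shape-Tree Lemma produces. The main organizational difference is that the paper invokes the packaged two-point conclusion of Lemma~\ref{twist-initial}(2) and, in the backward direction, builds a maximal window $p = \max\{p' > \alpha : \{\alpha,\ldots,p'-1\}\subset(\im\psi)^c\}$ and compares it against $[a,b)$ via a contradiction; you instead extract the internal one-step core statement from the proof of point~(2) and iterate it directly in both directions, reading off $b - a = \beta - \alpha$ and the label/position correspondence without the auxiliary maximum. Both routes turn on the same observation about $\psi^\Lambda(s)=(\sigma^{\psi\Lambda})^{-1}(\psi(\sigma^\Lambda(s)))$, the boundary bookkeeping via Corollary~\ref{extrema} and the end-segments of Lemma~\ref{twist-initial}(1),(3), and the ``if $\alpha\in\im\psi$ then $a\in\im\psi^\Lambda$'' step that seeds the backward direction; yours is arguably a bit tighter by avoiding the double detour through the maximal $p$. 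One cosmetic caveat: since the core statement is not a numbered result in the paper, if you wanted to make this self-contained you would need to either restate and reprove it, or simply apply the aggregated point~(2) to each pair $(\alpha,\alpha+k)$, which recovers the same one-step shifts you use.
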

The last Corollary is a set-theoretical manifestation of our previous numerical observation: differences between consecutive values are the cardinality of consecutive jumped values (plus one).

\begin{remark} \label{hair-blocks-position}
Let us further illustrate the significance of Corollaries in the Example \ref{hair-blocks-example}. As you can see in picture \ref{hair-blocks-position-img}, $\psi$ prescribes the `hair blocks substitution' for \textit{labels}. Indeed, up to relabeling we have: 
\begin{enumerate}
\item Insert the jumped values at the beginning as `hairs' at the left of the tree;
\item For each label, substitute $p$ with $\psi(p)$, and add hairs to its left corresponding to jumped values before $\psi(p)$;
\item Insert the jumped values at the end as `hairs' at the right of the tree.
\end{enumerate}
In the same spirit, $\psi^{\Lambda}$ prescribes the `hair blocks substitution' for \textit{labels} for \textit{positions}. However, since the information of the new permutation is not contained in the twisted morphism, this recipe is less informative about labels (but equally effective in terms of hair structure). Let us make it explicit:
\begin{enumerate}
\item Insert the jumped values at the beginning as `hairs' at the left of the tree;
\item For each position, substitute the label $p$ in the place $k$ with the label in position $\psi^{\Lambda}(k)$ of the transformed tree. Then, add hairs to its left corresponding to jumped values before $\psi^{\Lambda}(k)$.
\item Insert the jumped values at the end as `hairs' at the right of the tree.
\end{enumerate}
Corollary \ref{extrema} ensures that we get the same hairs in extremal parts of the tree. Corollary \ref{increasing} guarantees that we can talk about jumps in a way that is similar to the original case. At last, the correspondence under permutation of jumps is ultimately what Corollary \ref{consecutive} describes. 
\begin{figure}[H]
	\centering
	\includegraphics[scale=0.15]{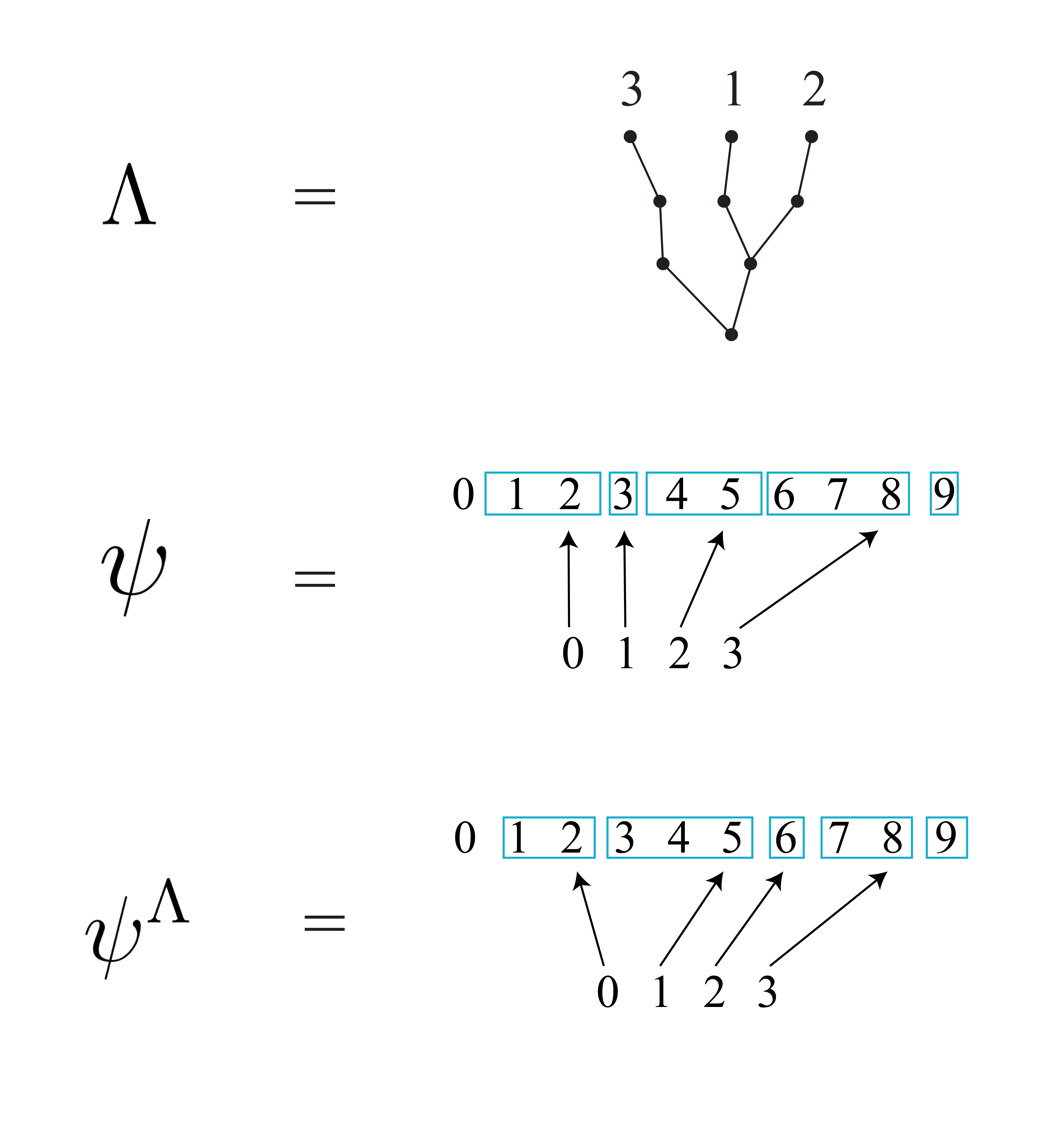}
	\caption{The effect on \enquote{hair blocks} of twisting a morphism}
	\label{hair-blocks-position-img}
\end{figure}

\end{remark}

We now proceed to the proof of such Corollaries, starting from two preliminary lemmas. The first regards the inverses of transformed permutations.
\begin{lemma} Given $\Lambda \in \FNP_m(n)$, we have
$$\hspace{-2cm} \boxed{0 < i < n+1}: \ \ \  d_i(\sigma^{\Lambda})^{-1}(t) = \left\{ \begin{array}{ll}
        (\sigma^{\Lambda})^{-1}(t),  & \textrm{if }  t=i\\
       d_{\alpha} (\sigma^{\Lambda})^{-1}( s_i(t))) , & \textrm{ otherwise} 
    \end{array} \right.
    $$
$$ d_0(\sigma^{\Lambda})(k) = \left\{ \begin{array}{ll}
        1,  & \textrm{if }  k =1\\
       \sigma^{\Lambda}(k-1)+1 , & \textrm{ otherwise} 
    \end{array} \right.
    $$
$$ d_{n+1}(\sigma^{\Lambda})^{-1}(t) = \left\{ \begin{array}{ll}
        n+1,  & \textrm{if }  t =n+1\\
       (\sigma^{\Lambda})^{-1}(t) , & \textrm{ otherwise} 
    \end{array} \right.
    $$
\end{lemma}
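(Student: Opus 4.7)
The plan is to verify each of the three formulas by a direct, case-wise inversion of the corresponding formula in Definition \ref{fn-cosimplicial}. The only machinery needed is the pair of simplicial identities $d_i s_i(t) = t$ for $t \neq i$ and $d_\alpha s_\alpha(k) = k$ for $k \neq \alpha$; everything else is bookkeeping.

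For the internal coface $0 < i < n+1$, I would set $\alpha = (\sigma^{\Lambda})^{-1}(i)$. The defining clause $d_i(\sigma^{\Lambda})(\alpha) = i$ gives the first case $(d_i\sigma^{\Lambda})^{-1}(i) = \alpha$ immediately. For $t \neq i$, the preimage $k$ must lie outside $\{\alpha\}$ (otherwise it would be $i$), so Definition \ref{fn-cosimplicial} rewrites the equation $(d_i\sigma^{\Lambda})(k) = t$ as $d_i(\sigma^{\Lambda}(s_\alpha(k))) = t$. Applying $s_i$ to both sides (legal since $t \neq i$) yields $\sigma^{\Lambda}(s_\alpha(k)) = s_i(t)$, and then inverting $\sigma^{\Lambda}$ and applying $d_\alpha$ (legal since $k \neq \alpha$) gives $k = d_\alpha((\sigma^{\Lambda})^{-1}(s_i(t)))$, as required.

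The two extremal cases are even more immediate. For $i = n+1$, the definition gives $d_{n+1}(\sigma^{\Lambda})(n+1) = n+1$ and $d_{n+1}(\sigma^{\Lambda})(k) = \sigma^{\Lambda}(k)$ for $k \leq n$, which inverts in a single line to produce the displayed piecewise formula. The $i = 0$ formula as typeset in the statement is literally Definition \ref{fn-cosimplicial} applied to $\sigma^{\Lambda}$ and so requires no separate argument; if the intended assertion is instead the analogous inverse description $(d_0\sigma^{\Lambda})^{-1}(t) = 1$ for $t = 1$ and $(\sigma^{\Lambda})^{-1}(t-1) + 1$ otherwise, the inversion is equally transparent. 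I do not anticipate any genuine obstacle: the entire lemma is a mechanical unpacking of the generators' definitions, and the only care required is to keep the case distinction separating the preimage of the pinned value ($i$, $1$, or $n+1$, respectively) from the generic preimages on which $s_i$ acts.
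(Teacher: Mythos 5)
Your proposal is correct and follows essentially the same route as the paper: a direct piecewise inversion of Definition~\ref{fn-cosimplicial} using the simplicial identities $s_id_i = \Id$ and $d_\alpha s_\alpha(k) = k$ for $k \neq \alpha$. You also correctly spotted that the $i=0$ clause as typeset in the lemma is the forward formula rather than its inverse; the paper's own proof confirms the intended assertion is the inverse description you write down, namely $d_0(\sigma^{\Lambda})^{-1}(1) = 1$ and $d_0(\sigma^{\Lambda})^{-1}(t) = (\sigma^{\Lambda})^{-1}(t-1)+1$ for $t > 1$.
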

\begin{proof} Recall the definitions of $\sigma^{d_i \Lambda} = d_i(\sigma^{\Lambda})$ from Definition \ref{fn-cosimplicial}. For $0 < i < n+1$:
$$ d_i(\sigma^{\Lambda})(k) = \left\{ \begin{array}{ll}
        i,  & \textrm{if }  k =\alpha\\
       d_i (\sigma^{\Lambda}( s_{\alpha}(k))) , & \textrm{ otherwise} 
    \end{array} \right.
    $$
where $\alpha = (\sigma^{\Lambda})^{-1}(i)$. It follows that
$$ d_i(\sigma^{\Lambda})^{-1}(t) = \left\{ \begin{array}{ll}
        (\sigma^{\Lambda})^{-1}(t),  & \textrm{if }  t=i\\
       d_{\alpha} (\sigma^{\Lambda})^{-1}( s_i(t))) , & \textrm{ otherwise} 
    \end{array} \right.
    $$
Indeed, beside the case $t=i$ that follows from the case $k=\alpha$, we have
    $$ d_{\alpha} (\sigma^{\Lambda})^{-1} ( s_i(  d_i (\sigma^{\Lambda}( s_{\alpha}(k))))) = d_{\alpha} (\sigma^{\Lambda})^{-1}  (\sigma^{\Lambda}( s_{\alpha}(k)))) = d_{\alpha} s_{\alpha}(k) = k \ ,$$
where in the last passage we used $k \neq \alpha$. The other composition is analogous. 

For $i=0$ we have
$$ d_0(\sigma^{\Lambda})(k) = \left\{ \begin{array}{ll}
        1,  & \textrm{if }  k =1\\
       \sigma^{\Lambda}(k-1)+1 , & \textrm{ otherwise} 
    \end{array} \right.
    $$
Thus, since in the general case it is a composition of a bijection and two translations, we have:
$$ d_0(\sigma^{\Lambda})^{-1}(t) = \left\{ \begin{array}{ll}
        1,  & \textrm{if }  t =1\\
       (\sigma^{\Lambda})^{-1}(t-1)+1 , & \textrm{ otherwise} 
    \end{array} \right.
    $$
Finally for $i=n+1$
    $$ d_{n+1}(\sigma^{\Lambda})(k) = \left\{ \begin{array}{ll}
        n+1,  & \textrm{if }  k =n+1\\
       \sigma^{\Lambda}(k) , & \textrm{ otherwise} 
    \end{array} \right.
    $$
Which yields
$$ d_{n+1}(\sigma^{\Lambda})^{-1}(t) = \left\{ \begin{array}{ll}
        n+1,  & \textrm{if }  t =n+1\\
       (\sigma^{\Lambda})^{-1}(t) , & \textrm{ otherwise} 
    \end{array} \right.
    $$
\end{proof} \vspace{-1cm}

The second provides an inductive way to deal with twisted morphisms.
\begin{lemma} \label{twisted-example} Given  $\Lambda \in \FNP_m(n)$, $\phi: [n] \to [n+1]$ with permutation $\sigma$ on labels, we have  
$$ d_i^{\Lambda} = \left\{ \begin{array}{ll}
       d_{\sigma^{-1}(i)},  & \textrm{if }  0 < i<  n+1\\
      	d_i, & \textrm{ otherwise} 
    \end{array} \right. $$
Furthermore, if $f:[n] \to [p]$ and $g: [p] \to [q]$, the twisting of the composition is computed as
$$ (fg)^{\Lambda} = f^{g \Lambda} g^{\Lambda} \ .$$
\end{lemma}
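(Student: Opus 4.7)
The strategy is to unwind the definition $\psi^{\Lambda}(s) = (\sigma^{\psi\Lambda})^{-1}\psi(\sigma^{\Lambda}(s))$ for $s\ge 1$ (with $\psi^{\Lambda}(0)=\psi(0)$) and reduce each claim to the preceding lemma on the explicit form of $(\sigma^{d_i\Lambda})^{-1}$, together with, for the composition part, the Shape-Tree Lemma.

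For the first claim on generators, I treat the three cases separately. Fix $\alpha=(\sigma^{\Lambda})^{-1}(i)$. In the internal case $0<i<n+1$ I split on $s=\alpha$ versus $s\ne\alpha$: for $s=\alpha$ the value $\sigma^{\Lambda}(s)=i$ triggers the exceptional branch of the preceding lemma and outputs $\alpha+1=d_{\alpha}(\alpha)$; for $s\ne\alpha$ the formula $(\sigma^{d_i\Lambda})^{-1}(t)=d_{\alpha}(\sigma^{\Lambda})^{-1}(s_i(t))$ composes with the outer $d_i$ (using $s_i\circ d_i=\mathrm{id}$ away from $i$) to yield $d_{\alpha}(s)$. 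The extremal cases $i=0$ and $i=n+1$ are immediate from the explicit shapes of $(\sigma^{d_0\Lambda})^{-1}$ (a shift by one on $\{2,\ldots,n+1\}$) and $(\sigma^{d_{n+1}\Lambda})^{-1}$ (the identity on $\{1,\ldots,n\}$); the value at $s=0$ matches tautologically because the side rule $\psi^{\Lambda}(0)=\psi(0)$ forces $d_i^{\Lambda}(0)=d_i(0)$ for every $i$.

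For the composition claim, I verify $(fg)^{\Lambda}=f^{g\Lambda}\circ g^{\Lambda}$ by pointwise comparison, using the typings for which the right-hand side is a valid composition. The key observation is the identity $\sigma^{g\Lambda}(g^{\Lambda}(s))=g(\sigma^{\Lambda}(s))$ for $s\ge 1$, which is just the defining equation for $g^{\Lambda}$ solved for $\sigma^{g\Lambda}\circ g^{\Lambda}$. When $g^{\Lambda}(s)\ge 1$, substituting into the $s\ge 1$ branch of $f^{g\Lambda}$ telescopes the nested expression into $(\sigma^{fg\Lambda})^{-1}f(g(\sigma^{\Lambda}(s)))$, which is precisely the left-hand side.

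The only delicate point is when the inner value drops to $0$ (notably at $s=0$, but potentially also for interior $s$ if codegeneracies are allowed), and this is where the Shape-Tree Lemma enters. In those situations one side produces $f(0)$ or $f(g(0))$ directly via the side rule $\psi^{\Lambda}(0)=\psi(0)$, and one must check that the other side also collapses to the same constant. Part (1) of the Shape-Tree Lemma applied to $g$ gives $\sigma^{g\Lambda}(t)=t$ for $t\le g(0)$, and applied to $fg$ gives $\sigma^{fg\Lambda}(t)=t$ for $t\le f(g(0))$; these are exactly the fixed-point identities needed to chain through the permutation conjugations and conclude that both expressions evaluate to $f(g(0))$. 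This compatibility at the ad-hoc index $0$ is the main technical obstacle; the generic computation is essentially a rearrangement of the definition.
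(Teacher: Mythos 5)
Your proposal is correct in substance and takes the same route as the paper: unwind $\psi^{\Lambda}=(\sigma^{\psi\Lambda})^{-1}\psi\sigma^{\Lambda}$ and the explicit inverse-permutation formulas on generators, then telescope the permutation conjugations for the composition law, with the Shape-Tree Lemma applied exactly as you describe to handle the ad hoc value at $0$ (split on $g(0)=0$ vs.\ $g(0)>0$).

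Two small imprecisions worth tidying in the generator case. First, the split on $s=\alpha$ versus $s\ne\alpha$ is unnecessary and the justification is off: since $d_i$ never takes the value $i$, the argument $t=d_i(\sigma^{\Lambda}(s))$ is always $\ne i$, so the \emph{non}-exceptional branch of $(\sigma^{d_i\Lambda})^{-1}(t)=d_{\alpha}(\sigma^{\Lambda})^{-1}(s_i t)$ applies uniformly for all $s\ge 1$, including $s=\alpha$ (where it indeed outputs $d_\alpha(\alpha)=\alpha+1$). The paper's proof runs this computation in one stroke without cases. Second, the parenthetical ``using $s_i\circ d_i=\mathrm{id}$ away from $i$'' is misleading: $s_i\circ d_i$ is the identity on the whole of $[n]$, with no restriction; the qualifier ``away from $i$'' would be needed for $d_i\circ s_i$, which is not what appears. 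Finally, your caution about interior $s$ dropping to $0$ under codegeneracies is not needed here: the twisted morphisms in this section are compositions of cofaces only (strictly increasing), so $g(\sigma^{\Lambda}(s))\ge 1$ whenever $s\ge 1$, and the only genuinely delicate index is $s=0$, exactly as the paper treats it.
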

\vspace{-0.5cm}
\begin{proof} 
Let us start with $0< i < n+1$. For $x \neq 0$ we have
$$ \begin{array}{ll}
d_i^{\Lambda}(x) & = (\sigma^{d_i \Lambda})^{-1} d_i \sigma^{\Lambda}(x) =\\
& = d_{\alpha} (\sigma^{\Lambda})^{-1} s_i d_i \sigma^{\Lambda}(x) = \\
& = d_{\alpha} (\sigma^{\Lambda})^{-1} \sigma^{\Lambda}(x) = \\
& = d_{\alpha}(x) \ ,
\end{array} $$
while for $x=0$ we easily have $d_i^{\Lambda}(0) = 0 = d_{\alpha}(0)$. Let us go on to the case $i=0$. If $x\neq 0$
$$ \begin{array}{ll}
d_0^{\Lambda}(x) & = (\sigma^{d_0 \Lambda})^{-1} d_0 \sigma^{\Lambda}(x) \\
&  = 1+ (\sigma^{\Lambda})^{-1}( d_0 \sigma^{\Lambda}(x)-1) =  \\
& = 1+ (\sigma^{\Lambda})^{-1}(\sigma^{\Lambda}(x)) =  \\
& = x+1 = d_0(x) \ ,
\end{array} $$
and $d_0^{\Lambda}(0) = d_0(0)$ by definition. We conclude analysing $i=n+1$:
$$ \begin{array}{ll}
d_{n+1}^{\Lambda}(x) & = (\sigma^{d_{n+1} \Lambda})^{-1} d_{n+1} \sigma^{\Lambda}(x) \\
&  = (\sigma^{\Lambda})^{-1}\sigma^{\Lambda}(x) =  \\
& = x = d_{n+1}(x) \ ,
\end{array} $$
with $d_{n+1}^{\Lambda}(0)=d_{n+1}(0)$ by definition. We now pass to the composition formula. For $x \neq 0$:
$$ \begin{array}{ll}
f^{g\Lambda} g^{\Lambda}(x) & = [(\sigma^{fg\Lambda})^{-1} f \sigma^{g \Lambda} ][ (\sigma^{g \Lambda})^{-1} g \sigma^{\Lambda}](x) = \\
&  = (\sigma^{fg\Lambda})^{-1} f g \sigma^{\Lambda}(x) =  \\
& = (fg)^{\Lambda}(x) \ ,
\end{array} $$
while for $x=0$, we have to distinguish the cases $g(0) = 0$ and $g(0) > 0$. If $g(0) > 0$, we invoke the shape-tree Lemma ( \ref{twist-initial}):
$$ \begin{array}{ll} 
f^{g\Lambda} g^{\Lambda}(0) & = f^{g\Lambda} g(0) = (\sigma^{fg \Lambda} )^{-1} f \sigma^{g \Lambda} g(0) = \\
& = (\sigma^{fg \Lambda})^{-1} f g(0) = fg(0) = \\
& = (fg)^{\Lambda}(0)  \ .
\end{array}$$
Otherwise, we simply have
$$ f^{g\Lambda} g^{\Lambda}(0) = f^{g\Lambda} g(0) = f^{g\Lambda}(0) = f(0) = fg(0) = (fg)^{\Lambda}(0)\ . $$
\end{proof}

We are finally ready to prove the Corollaries stated at the beginning of the section.
\begin{proof} \textit{(Corollary \ref{increasing})}  By induction on the length of $\psi$. If $\psi= \Id$, then $\psi^{\Lambda} = \Id$. Otherwise, write $\psi= d_{k} \psi'$. By the above lemma, we have that $\psi^{\Lambda} = d_k^{\psi' \Lambda} (\psi')^{\Lambda}$. The first function is increasing by the inductive hypothesis, while $d_k^{\psi' \Lambda} = d_{k'}$ for some $k'$ is increasing by definition.
\end{proof}

\begin{proof} \textit{(Corollary \ref{extrema})} The first one is the very definition of $\psi^{\Lambda}(0)$. Regarding the second one, we use induction on the length of $\psi$. If $\psi = \Id$, then $\psi^{\Lambda} = \Id$ and we are done. If $\psi= d_k \phi$ for some $\phi$ of shorter length, we distinguish two cases. In case $k=0$ or $k = \ell+1$, we have 
$$ \psi^{\Lambda}(n) = (d_k \phi)^{\Lambda}(n) = d_k^{\phi \Lambda} \phi^{\Lambda}(n) = d_k \phi(n) = \psi(n) \ .$$
Otherwise, let $\alpha= (\sigma^{\phi \Lambda})^{-1}(k)$. Since $\sigma^{\phi \Lambda} \{ \phi(n)+1, \ldots, \ell-1\} = \{\phi(n)+1, \ldots, \ell-1\}$, we have that $\alpha > \phi(n)$ if and only if $k > \phi(n)$. It follows that 
$$\psi^{\Lambda}(n) =(d_k \phi)^{\Lambda}(n) = d_{\alpha} \phi^{\Lambda}(n) = d_{\alpha} \phi(n) = d_k \phi(n) = \psi(n)\ .$$
\end{proof}

\begin{proof} \textit{(Corollary \ref{consecutive})} We firstly concentrate on the top-to-bottom deduction. For all $p \le \beta - \alpha$, we have that $\{\alpha, \ldots, \alpha+p-1\} \subset (\im \psi)^c$, thus by the above lemma
$$ (\sigma^{\psi \Lambda})^{-1}(\alpha+p) = (\sigma^{\psi \Lambda})^{-1}(\alpha) + p \ .$$
Suppose by contradiction that $\psi^{\Lambda}(k) = (\sigma^{\psi \Lambda})^{-1}(\alpha)+p$ for some $p \le (\sigma^{\Lambda})^{-1}(\beta)-(\sigma^{\Lambda})^{-1}(\alpha)-1$. Firstly, note that by the above lemma
$$p \le (\sigma^{\Lambda})^{-1}(\beta)-(\sigma^{\Lambda})^{-1}(\alpha)-1 = \beta - \alpha -1 \ ,$$
so that we can rewrite the equation as
$$\psi^{\Lambda}(k) = (\sigma^{\psi \Lambda})^{-1}(\alpha+p) \ .$$
Using $\psi^{\Lambda} =  (\sigma^{\psi \Lambda})^{-1} \psi \sigma^{\Lambda} $ we get
$$ \psi( \sigma^{\Lambda}(k) ) = \alpha +p \le \beta - 1 \ ,$$
which is a contradiction by the hypothesis $\{\alpha, \ldots, \beta-1 \} \subset (\im \psi)^c$.

Now let us show that  $\psi^{\Lambda}(0) < a < b \le \psi^{\Lambda}(n)$. The fact that $a < b$ follows from the discussion above. Regarding  the inequalities $\psi^{\Lambda}(0) < a$ and $b \le \psi^{\Lambda}(n)$, recall that
$$ \psi^{\Lambda}(0) = \psi(0), \ \ \ \ \psi^{\Lambda}(n) = \psi(n) $$
from Corollary \ref{extrema}. The result follows since
$$ \sigma^{\psi \Lambda}\{1, \ldots, \psi(0)\} = \{1, \ldots, \psi(0)\}, \ \ \ \ \sigma^{\psi \Lambda}\{\psi(n)+1, \ldots, \ell \} = \{\psi(n)+1, \ldots, \ell\}\ . $$

We pass to examine the bottom-to-top implication. Note that $\alpha > \psi(0)$ and $\beta \le \psi(n)$ easily follows from Corollary \ref{extrema} and Lemma \ref{twist-initial}. Consider
$$ p = \max \{ p' > \alpha: \{\alpha, \ldots, p'-1\} \subset (\im \psi)^c \} \ .$$
Note that the set is non-empty, since $\alpha+1$ is a valid element: if by contradiction $\alpha = \psi(\alpha')$, then 
$$ a = (\sigma^{\psi \Lambda} )^{-1}(\alpha) = \psi^{\Lambda}(a'), \ \ \ a' = (\sigma^{\Lambda})^{-1}(\alpha') \ , $$
against the hypothesis. Now suppose by contradiction that $p-\alpha < b-a$. Applying the usual lemma to $\alpha < p$ we get
$$ (\sigma^{\psi \Lambda})^{-1}(p) - (\sigma^{\psi \Lambda})^{-1}(\alpha) = p-\alpha \ \ \ \Rightarrow \ \ \ (\sigma^{\psi\Lambda})^{-1}(p) = a+(p-\alpha) \in \{a, \ldots, b-1\} \ . $$
As above, this implies that $p \not \in \im \psi$. But then $(p+1) \in \{p' \ge \alpha: \{\alpha, \ldots, p'-1\} \subset (\im \psi)^c\}$, contradicting the maximality of $p$. We finally want to show that $p \ge \beta$, concluding that 
$$\{\alpha, \ldots, \beta-1\} \subset \{\alpha, \ldots, p-1\} \subset  (\im \psi)^c \ .$$
Consider $p_* = \alpha + b-a$. By the above observation, $p_* \le p$. We can then apply the lemma and conclude
$$ (\sigma^{\psi \Lambda})^{-1}(p_*) =(\sigma^{\psi \Lambda})^{-1}(\alpha) + p_* - \alpha = a+ p_*-\alpha = b $$
that is $p_* = \beta$, and we are done.
\end{proof}

\section{Spaces of Weighted Trees}
The cosimplicial structure we defined on $\BZ_m(n)$ seems to mimic well the cosimplicial action on Kontsevich spaces. In the latter space, the action of $d_i$ doubles a point infinitesimally close in the direction of $e_1$. On Fox-Neuwirth trees, $d_i$ doubles the label $i$ inserting $i+1$ as close as possible to the right of it. 

However, when we get to the concrete $\BZ_m(n)$ inside configurations (see Section \ref{configurations}), we see that in $v(d_i \Gamma)$ the $(i+1)$-th point has distance $1$ from the $i$-th point. This is not even close to infinitesimal. 

The idea is to allow \textit{weights} on the branches of a Fox-Neuwirth tree, that prescribe the distance between consecutive points. As an example, we would like the weighted tree
\begin{figure}[H]
	\centering
	\includegraphics[scale=0.25]{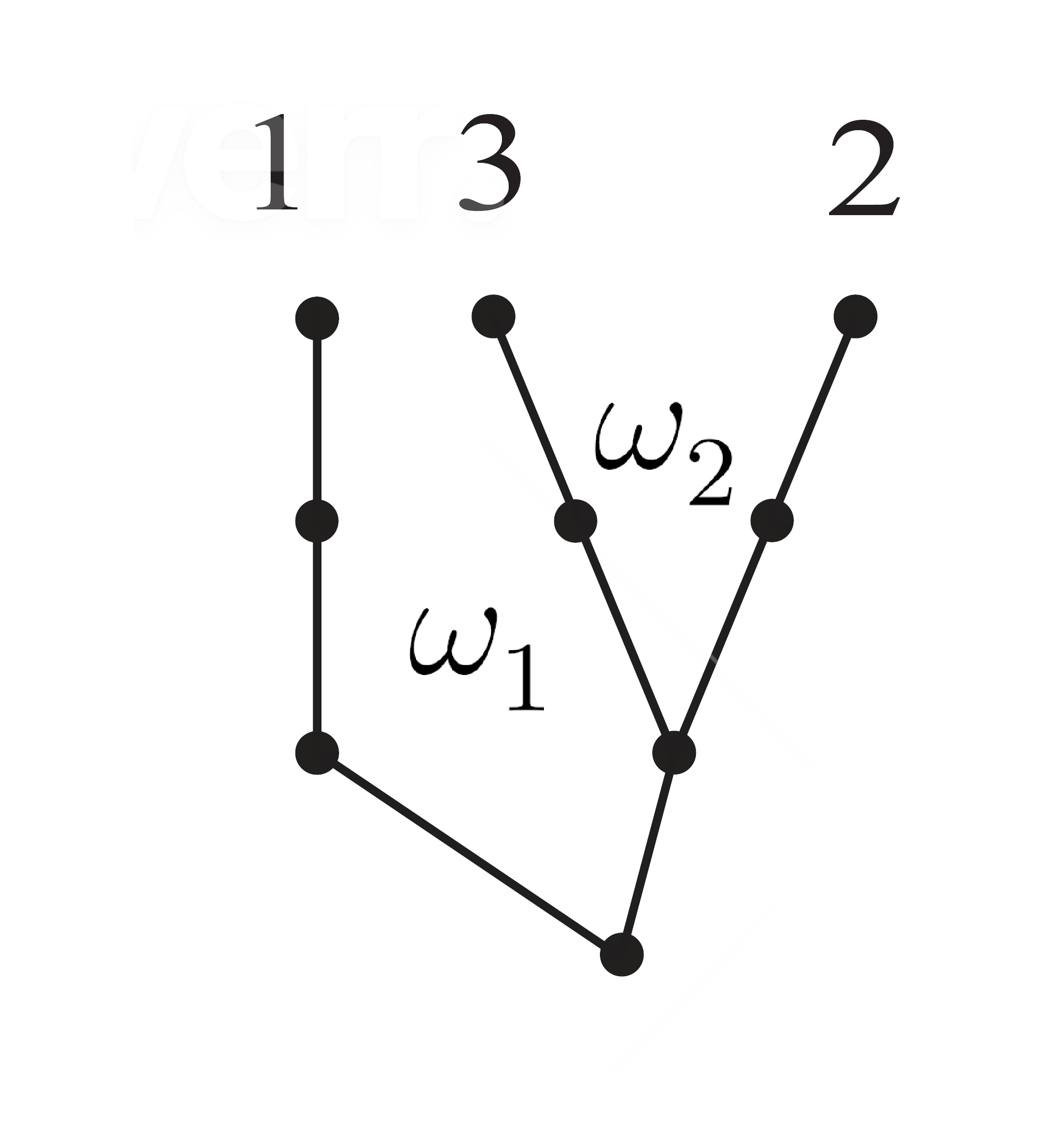}
\end{figure}
to correspond to a point $\vect{x} \in \Conf(1<_03<_1 2)$ such that $d(x_3, x_1) = \omega_1, d(x_2, x_3) = \omega_2$.
\subsection{Twisted geometric realization} \label{twisted-geo}
We introduce here a variant of the geometric realization. The first difference is that whichever (functorial) space can be used in place of simplices $\Delta^n$, but most relevantly the employed space can be different between cells of the same dimension. Firstly, in order to encode the functoriality relations, let us recall a purely categorical construction.

\begin{definition} Let $\sse{X}$ be a simplicial set. The category of elements of $\sse{X}$, denoted $\elm{X}$, is defined in the following way:
\begin{itemize}
\item Its objects are pairs $(n,x)$, where $x \in \sse{X}_n$;
\item There is a morphism $f: (n,x) \to (m,y)$ for each map $f: [n] \to [m]$ in $\Delta$ such that $\sse{X}(f)(y) = x$.
\end{itemize}

Let us notice that there is a functor $\elm{X} \to \Delta$, given by projecting on the first component. Fibers of such projection are discrete sets. 
\end{definition}
\begin{remark} Visually, the category of elements is a sort of "blow up" of the category $\Delta$, where each $[n]$ is replaced by all the elements $x \in \sse{X}_n$, and maps are arranged so that they keep track of the (contravariant) action of $\sse{X}(f)$, for all $f$ in $\Delta$. 
\end{remark}

\begin{figure}[H]
	\centering
	\includegraphics[scale=0.75]{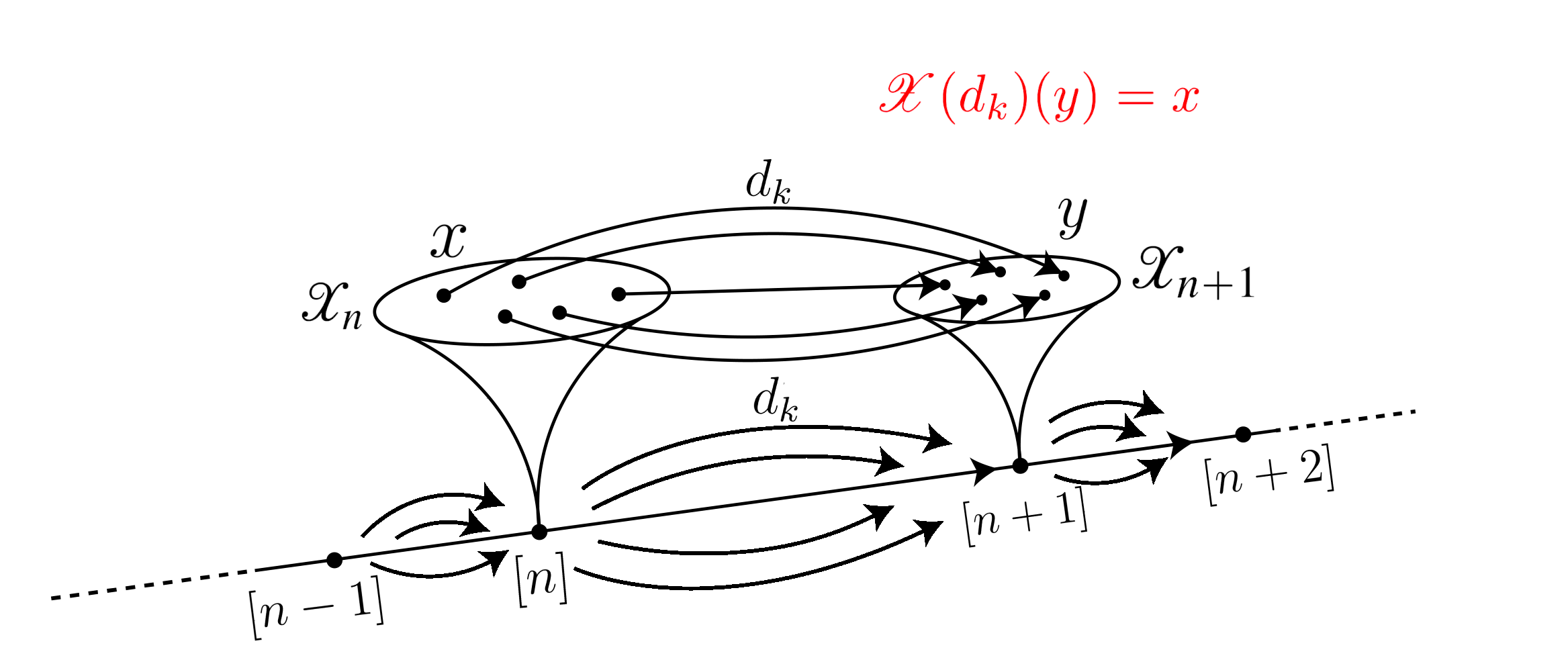}
	\caption{A blow-up-like illustration of the category of elements}
	\label{blow-up}
\end{figure}
From now on, we \textbf{drop the first component} of $(n,x) \in \elm{X}$ and simply write $x \in \sse{X}$ or $x \in \sse{X}_n$. We are ready to define a collection of spaces which can replace the ordinary cells $\Delta^n$.
\begin{definition} A \textbf{twist} over a simplicial set $\sse{X}$ is a functor $\omega: \elm{X}^{op} \to \Ttop$. 
\end{definition}
The definition of twisted geometric realization follows easily.

\begin{definition} Given a twist $\omega$ over a simplicial set $\sse{X}$, its \textit{twisted geometric realization} is defined as
$$ | \sse{X} |_{\omega}:= \colim_{x \in \elm{X}} \omega(x)\ .$$
\end{definition}
The twisted geometric realization admits an explicit description that is formally similar to the classical one.
\begin{lemma} \label{explicit-tr} For $\sse{X}$ a simplicial set and $\omega$ a twist on $\sse{X}$, we have:
$$ |\sse{X}|_{\omega} \simeq 	\frac{\bigsqcup_{x \in \sse{X} } \omega(x) } { (\partial_ix,\alpha) \sim (x, \omega(\partial_i) \alpha), (\sigma_j x,\alpha) \sim (x, \omega(\sigma_j) \alpha)} \ . $$
\end{lemma}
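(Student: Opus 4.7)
The approach is to invoke the standard colimit formula in $\Ttop$: for any small category $\mathcal{C}$ and any functor $F: \mathcal{C} \to \Ttop$,
$$ \colim_{c} F(c) \;\cong\; \Bigl( \bigsqcup_{c \in \mathcal{C}} F(c) \Bigr) \Big/ \sim, $$
where the equivalence relation $\sim$ is generated by $(c, \alpha) \sim (c', F(f)(\alpha))$ ranging over all morphisms $f: c \to c'$ and all $\alpha \in F(c)$. Setting $\mathcal{C} = \elm{X}$ and $F = \omega$ immediately presents $|\sse{X}|_\omega$ as the disjoint union $\bigsqcup_{x \in \sse{X}} \omega(x)$ modulo the equivalence relation generated by \emph{all} morphisms of $\elm{X}$.

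The remaining step is to cut down the generating set to only those morphisms sitting over the face and degeneracy maps. Every morphism in $\Delta$ factors as a composition of cofaces $d_i$ and codegeneracies $s_j$; via the projection $\elm{X} \to \Delta$ (whose fibres are discrete), each morphism of $\elm{X}$ inherits such a factorization through morphisms lying over the generators. Functoriality of $\omega$ then converts each composite into a chain of instances of the two displayed families, so the full equivalence relation is indeed generated by the relations $(\partial_i x, \alpha) \sim (x, \omega(\partial_i)\alpha)$ and $(\sigma_j x, \alpha) \sim (x, \omega(\sigma_j)\alpha)$. Assembling these pieces yields the claimed presentation.

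The only real bookkeeping lies in matching the direction of arrows in $\elm{X}$ with the variance convention for $\omega$, so that $\omega(\partial_i)$ is correctly read as a map $\omega(\partial_i x) \to \omega(x)$ (and similarly for $\omega(\sigma_j)$), and then checking that this convention is compatible with the generators-and-relations description. This is a matter of chasing definitions rather than a conceptual obstacle, and I expect no substantive difficulty beyond it.
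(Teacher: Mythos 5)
Your argument is correct and matches the paper's own proof, which is just the terse observation that colimits in $\Ttop$ are computed as in $\Set$ (disjoint union modulo the generated equivalence) with the final topology. You add the worthwhile but standard detail that the relation is already generated by morphisms over the cosimplicial generators $d_i, s_j$, since every map in $\Delta$ factors through these.
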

\begin{proof} Colimits in $\Ttop$ are calculated by endowing the corresponding colimit in $\textbf{Set}$, which is given by union and quotient, with the final topology with respect to the defining cocone. 
\end{proof}

\begin{remark} The differences of formula \ref{explicit-tr} with the classical one are the following:
\begin{itemize}
\item The glued pieces are not "homogeneous", since they depend on $x \in \sse{X}_n$ and not only on its degree;
\item The glued pieces are allowed to be any space, not just the standard cells.
\end{itemize}
\end{remark}
Let us show this is a generalization of the classical geometric realization.
\begin{lemma} \label{standard-twist} Let $\sse{X}$ be a simplicial set, and let $\textrm{std}$ be the twist defined by composing the projection $\Sigma_{\sse{X}} : \elm{X} \to \Delta$ with the standard cosimplicial object 
$$\Delta_{\bullet} : \Delta \to \Ttop, \ \ \ \  [n] \mapsto \Delta^n \ .$$
Then
$$ |\sse{X}|_{\textrm{std} } \simeq | \sse{X} | \ .$$

\end{lemma}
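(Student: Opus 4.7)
The plan is to identify both sides as the same colimit, indexed over the category of elements $\elm{X}$. Unwinding the definition, we have by construction
$$ |\sse{X}|_{\std} = \colim_{x \in \elm{X}} \std(x) = \colim_{x \in \elm{X}} \Delta^{n(x)}\ , $$
where $n(x)$ denotes the simplicial degree of $x$. The task is therefore to identify this colimit with $|\sse{X}|$.

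The key input is the standard density theorem: every simplicial set is canonically the colimit of its representables indexed by its own category of elements, i.e.\ $\sse{X} \cong \colim_{x \in \elm{X}} \Delta[n(x)]$ in $\sSet$. Since geometric realization $|\cdot|: \sSet \to \Ttop$ is a left adjoint (with right adjoint $\Sing$), it preserves colimits, so
$$ |\sse{X}| \;\cong\; \colim_{x \in \elm{X}} \,|\Delta[n(x)]| \;=\; \colim_{x \in \elm{X}} \Delta^{n(x)}\ , $$
which matches the expression for $|\sse{X}|_{\std}$.

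Alternatively, one can argue via Lemma \ref{explicit-tr}: applied to $\omega = \std$, it yields the explicit quotient of $\bigsqcup_{x \in \sse{X}} \Delta^{n(x)}$ by the relations $(\partial_i x, \alpha) \sim (x, \std(\partial_i) \alpha)$ and $(\sigma_j x, \alpha) \sim (x, \std(\sigma_j) \alpha)$. Since $\std$ is defined by composing with the standard cosimplicial object $\Delta_\bullet$, the morphisms $\std(\partial_i), \std(\sigma_j)$ are exactly the standard coface and codegeneracy maps on simplices, reproducing the classical coequalizer presentation of $|\sse{X}|$.

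The only point requiring minor care is orientation bookkeeping: one has to verify that the direction of the arrows in $\elm{X}$ induced by face/degeneracy maps of $\Delta$, after composition with $\Sigma_{\sse{X}}$ and $\Delta_\bullet$, produces precisely the structure maps entering the classical definition of $|\sse{X}|$. This is immediate from the construction of $\elm{X}$, so no real obstacle is expected.
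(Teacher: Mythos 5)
Your proof is correct and mirrors the paper's, which likewise offers two arguments: an explicit verification via Lemma \ref{explicit-tr} and a categorical one identifying both sides as $\colim_{x \in \elm{X}} \Delta^{n(x)}$. The only difference is cosmetic — you phrase the categorical argument via the density theorem together with the fact that $|\cdot|$ is a left adjoint, while the paper passes through weighted colimits and the coend formula — but both routes rest on exactly the same identification.
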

\begin{proof} This follows directly from Lemma \ref{explicit-tr}, since
\begin{align*}
    |\sse{X}|_{\textrm{std}} & \simeq 	\frac{\bigsqcup_{x \in \sse{X} } \textrm{std}(x) } { (\partial_ix,\alpha) \sim (x, \textrm{std}(\partial_i) \alpha), (\sigma_j x,\alpha) \sim (x, \textrm{std}(\sigma_j) \alpha)} \\
    & \simeq \frac{\bigsqcup_{n \in \mathbb{N} }  \sse{X}_n \times \Delta^n} { (\partial_ix,\alpha) \sim (x, \partial_i \alpha), (\sigma_j x,\alpha) \sim (x, \sigma_j \alpha)} = |\sse{X}|
\end{align*}

Alternatively, note that 
$$ | \sse{X} |_{\textrm{std} } := \colim_{x \in \elm{X}} \Delta_{\bullet}(\Sigma(x)) \simeq \sse{X} \otimes \Delta_{\bullet} \simeq \int^{ n \in \Delta} \sse{X}_n \cdot \Delta^n = |\sse{X} | \ ,$$
where $\sse{X} \otimes \Delta_{\bullet}$ is a notation for the weighted colimit of $\Delta_{\bullet}: \Delta \to \Ttop$ weighted by $\sse{X}: \Delta^{op} \to \Ttop$. The first isomorphism is \cite{kelly}, 3.34, while the second is \cite{Fosco}, 4.10. The third is the direct definition of geometric realization.
\end{proof}

Let us motivate the use of the word "twisted realization" for this geometric construction with further examples.
\begin{example} Consider a simplicial set $\sse{X}$ and a LCH (Locally Compact Hausdorff) space $Z$. Consider the twist $\omega_Z(\sigma) = \textrm{std}(\sigma) \times Z$. Since $Z$ is LCH \cite{colebunders}, the functor $ (-) \times Z : \Ttop \to \Ttop$  has a right adjoint and thus preserves colimits. We deduce that 
$$ |\sse{X}|_{\omega_Z} \cong \colim_{\sigma \in \elm{X}} \omega_Z(\sigma) = \colim_{\sigma \in \elm{X}} (\std(\sigma) \times Z) \cong \left ( \colim_{\sigma \in \elm{X}} \std(\sigma) \right ) \times Z \cong | \sse{X} | \times Z \ .$$
In the same way in which a product is a trivial fibration, $\omega_Z$ is a trivial twist over $\std$, in the sense that there is a natural transformation $\omega_Z \to \std$ as functors $\elm{X} \to \Ttop$ that is given by projecting on the first component on all elements. 

\begin{remark}
This analogy is more than qualitative, in that we can realize any sufficiently nice space over $|\sse{X}|$ as a twisted realization. For example, consider $f: Y \to |\sse{X}|$ where $Y$ is LCH. Set $\omega_f(\sigma) = f^{-1}( |\sse{X}(\sigma) | )$. Here $|\sse{X}(\sigma)| \subset |\sse{X}|$ denotes the cell in the geometric realization corresponding to $\sigma$. Formally, $\omega_f(\sigma)$ is obtained as the following pullback:
\[\begin{tikzcd}
	{\omega_f(\sigma)} & {\std(\sigma)} \\
	Y & {|\sse{X}|} 
	\arrow[from=1-1, to=1-2]
	\arrow["f", from=2-1, to=2-2]
	\arrow[from=1-2, to=2-2]
	\arrow[from=1-1, to=2-1]
	\arrow["\lrcorner"{anchor=center, pos=0.125}, draw=none, from=1-1, to=2-2]
\end{tikzcd}\]
Since colimits are stable for pullbacks with LCH spaces (Proposition 13.19 in \cite{bredon}), we have
$$ |\sse{X}|_{\omega_f} \cong \colim_{\sigma \in \elm{X}} \omega_f(\sigma) = \colim_{\sigma \in \elm{X}} (\std(\sigma) \times_{|\sse{X}|} Y) \cong \left ( \colim_{\sigma \in \elm{X}} \std(\sigma) \right ) \times_{|\sse{X}|} Y \cong Y\ .$$
By limiting the attention to sufficiently nice spaces and twists, we conjecture that this argument could be turned into an equivalence between the category of spaces over $|\sse{X}|$ and the category of twists over $\std$.

The upshot of this remark is the following: if we "twist" the cells of geometric realization enough, we can obtain almost any space. From this perspective, the twisted geometric realization is just a convenient way of defining a (relative) space, given (hopefully simpler) local data.

\end{remark}
\end{example}
\subsubsection{Contravariance conditions}
In this section we reformulate the colimit property in a hands-on way. This will simplify the construction of maps out of twisted geometric realizations in the rest of the article.
\begin{lemma}\textbf{(Contravariance conditions for twisted geometric realizations.)} Let $\sse{X}$ be a simplicial set, $\omega$ a twist on it and $Z$ a topological space. Let also $x \in \sse{X}_{n+1}, y \in \sse{X}_n$. Then a map $g: |\sse{X}|_{F} \to Z$ amounts to maps $g_z : \omega(z) \to Z$ for all $z \in \sse{X}$ that respects the following two conditions:
\begin{itemize}
\item$\partial$-\textbf{contravariance}: if $\partial_ix= y$, then 
\[\begin{tikzcd}
	{\omega(y)} & Z \\
	{\omega(x)}
	\arrow["{g_y}", from=1-1, to=1-2]
	\arrow["{g_x}"', from=2-1, to=1-2]
	\arrow["{\omega(\partial^{op}_i)}"', from=1-1, to=2-1]
\end{tikzcd}\]
\item $\sigma$-\textbf{contravariance}: if $\sigma_j y = x$, then 
\[\begin{tikzcd}
	{\omega(x)} & Z \\
	{\omega(y)}
	\arrow["{g_y}", from=1-1, to=1-2]
	\arrow["{g_x}"', from=2-1, to=1-2]
	\arrow["{\omega(\sigma^{op}_j)}"', from=1-1, to=2-1]
\end{tikzcd}\]
\end{itemize}
\end{lemma}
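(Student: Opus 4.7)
My plan is to derive the lemma as a direct reformulation of the universal property of the colimit defining $|\sse{X}|_{\omega}$. By definition, $|\sse{X}|_{\omega} = \colim_{z \in \elm{X}^{op}} \omega(z)$, so a continuous map $g: |\sse{X}|_{\omega} \to Z$ is the same datum as a cocone on $\omega$ with tip $Z$: a family $(g_z: \omega(z) \to Z)_{z \in \elm{X}}$ such that for every morphism $f$ in $\elm{X}^{op}$, the triangle involving $\omega(f)$ commutes. The content of the lemma is therefore that this cocone condition, a priori imposed on all morphisms, is equivalent to its restriction to the two families of generators $\partial_i$ and $\sigma_j$.

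The key reduction rests on two standard facts. First, the simplex category $\Delta$ is generated as a category by the coface and codegeneracy maps, subject only to the cosimplicial identities. Second, the forgetful functor $\Sigma: \elm{X} \to \Delta$ is a discrete fibration: given $f: [n] \to [m]$ in $\Delta$ and an object $(m, y)$ over $[m]$, there is a unique lift of $f$ ending at $(m, y)$, namely the morphism $(n, \sse{X}(f)(y)) \to (m, y)$ in $\elm{X}$. Combining these, any morphism of $\elm{X}$ factors as a composition of lifts of $\partial_i$'s and $\sigma_j$'s, so by functoriality of $\omega$ the cocone condition for the generating lifts automatically propagates to every morphism of $\elm{X}^{op}$.

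Conversely, the two diagrams in the statement are literally the cocone condition applied to the lifts of $\partial_i$ and $\sigma_j$, so any $g: |\sse{X}|_{\omega} \to Z$ induces maps $g_z$ satisfying them. The only mildly careful point will be to match variance conventions: the map $\omega(\partial_i^{op}): \omega(y) \to \omega(x)$ appearing in the $\partial$-contravariance diagram must be identified with the map $\omega(\partial_i)$ used in the explicit gluing formula of Lemma \ref{explicit-tr}, and similarly for $\sigma_j$. I do not expect any serious obstacle; the statement is essentially a tautology once the universal property is unwound, with the sole substantive point being the appeal to the discrete fibration structure of $\Sigma$ to restrict the compatibility check to the generators of $\Delta$.
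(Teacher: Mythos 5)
Your argument is correct and follows the same approach as the paper's (one-line) proof — unwinding the universal property of the colimit and using the fact that $\Delta$ is generated by faces and degeneracies — with the useful added detail of invoking the discrete-fibration structure of $\elm{X} \to \Delta$ to justify that morphisms of $\elm{X}$ factor through lifts of the generators.
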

\begin{proof} This is just a reformulation of the colimit property, plus the fact that $\Delta$ is generated by faces and degeneracies.
\end{proof}

Morphisms between two twisted realizations can be realized locally, in the following way.

\begin{lemma}\textbf{(Simplicial morphisms between twisted geometric realizations.)} Let $\sse{X}, \sse{Y}$ be simplicial sets and $\omega,\theta$ twists on $\sse{X}, \sse{Y}$ respectively. A map from $|\sse{X}|_{\omega}$ to $|\sse{Y}|_{\theta}$ can be constructed with the data of
\begin{itemize}
\item A simplicial map $\varphi: \sse{X} \to \sse{Y}$;
\item For all $z \in \sse{X}$, a map of topological spaces $f_z: \omega(z) \to \theta(\varphi(z))$.
\end{itemize}
That respect the contravariance conditions: for all $x \in \sse{X}_{n+1}, y \in \sse{X}_n$
\begin{itemize}
\item$\partial$-\textbf{contravariance}: if $\partial_ix= y$, then 
\[\begin{tikzcd}
	\omega(y) & \theta(\varphi(y))\\
	\omega(x) &   \theta(\varphi(x)) 
	\arrow["{f_y}", from=1-1, to=1-2]
	\arrow["{f_x}"', from=2-1, to=2-2]
	\arrow["{\omega(\partial^{op}_i)}"', from=1-1, to=2-1]
	\arrow["{\theta(\partial^{op}_i)}"', from=1-2, to=2-2]
\end{tikzcd}\]
\item $\sigma$-\textbf{contravariance}: if $\sigma_j y = x$, then 
\[\begin{tikzcd}
	\omega(y) & \theta(\varphi(y))\\
	\omega(x) &   \theta(\varphi(x)) 
	\arrow["{f_y}", from=1-1, to=1-2]
	\arrow["{f_x}"', from=2-1, to=2-2]
	\arrow["{\omega(\sigma^{op}_j)}"', from=1-1, to=2-1]
	\arrow["{\theta(\sigma^{op}_j)}"', from=1-2, to=2-2]
\end{tikzcd}\]
\end{itemize}
\end{lemma}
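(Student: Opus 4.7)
The plan is to reduce the statement directly to the previous lemma (the contravariance characterization of maps \emph{out} of a twisted geometric realization). For each $z \in \sse{X}$, let $\iota_{\varphi(z)} : \theta(\varphi(z)) \to |\sse{Y}|_{\theta}$ denote the canonical structure map into the colimit. Set
\[ g_z := \iota_{\varphi(z)} \circ f_z : \omega(z) \longrightarrow |\sse{Y}|_{\theta}. \]
The task is to verify that the family $\{g_z\}_{z \in \sse{X}}$ satisfies the $\partial$- and $\sigma$-contravariance conditions of the previous lemma, applied to the target space $Z = |\sse{Y}|_{\theta}$. Once this is done, the previous lemma produces a continuous map $|\sse{X}|_{\omega} \to |\sse{Y}|_{\theta}$, which is exactly what is required.

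For the $\partial$-contravariance check, suppose $\partial_i x = y$. Using the hypothesized contravariance of the $f_z$ and the fact that $\varphi$ is simplicial (so $\partial_i \varphi(x) = \varphi(\partial_i x) = \varphi(y)$), one computes
\[ g_x \circ \omega(\partial_i^{op}) \;=\; \iota_{\varphi(x)} \circ f_x \circ \omega(\partial_i^{op}) \;=\; \iota_{\varphi(x)} \circ \theta(\partial_i^{op}) \circ f_y. \]
The explicit quotient description of $|\sse{Y}|_{\theta}$ given by Lemma \ref{explicit-tr} forces the identification $\iota_{\varphi(x)} \circ \theta(\partial_i^{op}) = \iota_{\partial_i \varphi(x)} = \iota_{\varphi(y)}$, so the composite equals $\iota_{\varphi(y)} \circ f_y = g_y$, as needed. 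The $\sigma$-contravariance verification is entirely analogous, substituting codegeneracies for cofaces and using simpliciality of $\varphi$ in the form $\sigma_j \varphi(y) = \varphi(\sigma_j y)$.

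There is really no obstacle here: the argument is a direct diagram chase combining three ingredients — the hypothesized contravariance of $\{f_z\}$, simpliciality of $\varphi$, and the quotient relations defining $|\sse{Y}|_{\theta}$. If any care is needed, it is only in being consistent with the convention (visible already in Lemma \ref{explicit-tr}) that $\omega(\partial_i^{op})$ denotes the structure map $\omega(y) \to \omega(x)$ induced by the morphism $y \to x$ in $\elm{X}$ coming from $\partial_i x = y$, and likewise for $\theta$. With that bookkeeping settled, the verification is mechanical and the lemma follows from the previous one.
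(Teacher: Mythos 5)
Your proof is correct and follows exactly the same route as the paper's own argument: both define $g_z := \iota_{\varphi(z)} \circ f_z$ and invoke the previous lemma with $Z = |\sse{Y}|_{\theta}$. The paper leaves the diagram chase implicit ("an easy consequence"), whereas you spell out the verification that the $g_z$ inherit contravariance from the $f_z$, simpliciality of $\varphi$, and the colimit relations in $|\sse{Y}|_{\theta}$; this is a welcome expansion but not a different approach.
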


\begin{proof} This is an easy consequence of the previous lemma applied to $Z= |\sse{Y}|_{\theta}$ and $g_z := i_{\varphi(z)} f_z $, where $i_y : \theta(y) \to |\sse{Y}|_{\theta}$ is the map defining the colimit of $|\sse{Y}|_{\theta}$. 
\end{proof}

\subsubsection{Non-degenerate realization}
For standard geometric realization, there is the following lemma due to \cite{milnor}:
\begin{theorem} For any simplicial set $K$, the space $|K|$ is a CW-complex having one $n$-cell corresponding to each non-degenerate $n$-simplex of $K$.
\end{theorem}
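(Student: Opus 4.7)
The plan is to exhibit an explicit CW structure on $|K|$ whose $n$-cells are indexed by the non-degenerate $n$-simplices, and then check that the resulting CW topology coincides with the quotient topology of the geometric realization.

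The key technical input is the Eilenberg--Zilber lemma: every simplex $x \in K_n$ admits a unique factorization $x = \sse{X}(\sigma)(y)$ where $\sigma : [n] \twoheadrightarrow [k]$ is a surjection in $\Delta$ and $y \in K_k$ is non-degenerate. I would first recall (or prove by induction on $n$) this factorization, which identifies $K_n$ with a disjoint union $\bigsqcup_k (K_k^{\mathrm{nd}} \times \mathrm{Epi}([n],[k]))$, where $K_k^{\mathrm{nd}}$ denotes the non-degenerate $k$-simplices. Combined with the fact that every map $[n]\to[m]$ in $\Delta$ factors uniquely as an epi followed by a mono, this gives a normal form for points of $\bigsqcup_n K_n \times \Delta^n$ under the degeneracy relation: each class has a canonical representative of the form $(y, t)$ with $y$ non-degenerate and $t$ lying in the interior $\mathring{\Delta}^k$ of $\Delta^k$.

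Next, I would define the skeletal filtration $|K|^{(n)}$ as the image in $|K|$ of $\bigsqcup_{k\le n} K_k \times \Delta^k$. For each non-degenerate $n$-simplex $y$, let $\chi_y : \Delta^n \to |K|$ be the map sending $t$ to the class of $(y,t)$. Using the Eilenberg--Zilber normal form together with the face relations $(\partial_i x, t) \sim (x, \partial_i t)$, one checks that $\chi_y$ sends $\partial \Delta^n$ into $|K|^{(n-1)}$ and that its restriction to $\mathring{\Delta}^n$ is injective, with disjoint images for distinct non-degenerate $y$. Consequently $|K|^{(n)}$ is obtained from $|K|^{(n-1)}$ as the pushout
\[
\begin{tikzcd}
\bigsqcup_{y \in K_n^{\mathrm{nd}}} \partial \Delta^n \arrow[r] \arrow[d] & |K|^{(n-1)} \arrow[d] \\
\bigsqcup_{y \in K_n^{\mathrm{nd}}} \Delta^n \arrow[r] & |K|^{(n)},
\end{tikzcd}
\]
which is exactly the data of attaching $n$-cells via the $\chi_y$.

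Finally, I would verify that the quotient topology on $|K|$ agrees with the weak topology coming from this cellular filtration. One inclusion is immediate since each $\chi_y$ is continuous; for the converse, a set whose preimage is closed in every cell pulls back to a closed set in $\bigsqcup_n K_n \times \Delta^n$ by expressing each product cell as a finite union of images of characteristic maps via the normal form above. The main obstacle I anticipate is the injectivity on interiors: this is where the uniqueness half of Eilenberg--Zilber does all the work, and care is needed in verifying that two distinct non-degenerate simplices cannot produce representatives identified by a chain of face/degeneracy relations. Once that is in place, the rest is bookkeeping.
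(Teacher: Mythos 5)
The paper does not prove this statement; it cites it to Milnor's 1957 paper \cite{milnor} and uses it as a black box. Your proposed proof is essentially Milnor's original argument (as also found, e.g., in May's \emph{Simplicial Objects}): the Eilenberg--Zilber factorization plus the epi--mono factorization in $\Delta$ give a unique normal form $(y,t)$ with $y$ non-degenerate and $t$ an interior point, and this normal form drives both the injectivity on open cells and the identification of $|K|^{(n)}$ as a pushout attaching one $n$-cell per non-degenerate $n$-simplex. The outline is correct. Two points of care worth flagging: first, the normal form is a representative under the \emph{full} relation (faces and degeneracies), not only "the degeneracy relation" as you write --- the argument must iterate a degeneracy-reduction (Eilenberg--Zilber) with a face-reduction (pushing $t$ into the interior of a face) until the dimension stabilizes, and termination is by descent on dimension. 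Second, for the pushout you need not only set-theoretic bijectivity on the open cells but also that $|K|^{(n)}$ carries the quotient topology from $|K|^{(n-1)}\sqcup\bigsqcup_y\Delta^n$; your final paragraph points at the right check (preimages through characteristic maps detect closedness), but this is exactly where the finiteness/normal-form bookkeeping matters and is the part that needs to be written out in full. Neither issue is a gap in the idea --- your plan is sound and matches the classical proof.
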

Under further assumptions that we will investigate, it is possible to give an explicit construction for $|X|$ that uses only non degenerate simplices. In this section, we will explain this result in the twisted setting. Let us firstly recall a definition from \cite{non-singular}:
\begin{definition} A simplicial set is said to be \textit{non-singular} if, for any non degenerate simplex $x \in \sse{X}_n$, the associated map $\hat{x} : \Delta^n \to \sse{X}$ is injective in all degrees.
\end{definition}

Let us see a very important example of non-singular simplicial sets, which will be used later. For the definition of acyclic category, see \cite{Kozlov2008}. Let us remark that an important example of acyclic categories are posets.
\begin{example} \label{poset-sing} \label{acyclic} Let $C$ be an acyclic category. Then its nerve $\Nerve(C)$ is a non-singular simplicial set.
\end{example} 
\begin{proof} Consider a non-degenerate simplex $\sigma: [n] \to C$. If we write it as $(f_1, \ldots, f_n)$, where $f_i : x_{i-1} \to x_i$ for $x_i = \sigma(i)$, the non-degeneracy constraint boils down to $f_i$'s being different from the identity. 

Firstly, let us show that $\sigma$ is injective on objects; that is, all $x_i$ are all different. Suppose $x_i = x_j$ for $i < j$. If $j=i+1$, then $f_{i+1} : x_i \to x_i$ must be the identity because of acyclicity. This contradicts non-degeneracy of $\sigma$, so we must have $j> i+1$. Let us denote by $F:= f_j \circ \ldots \circ f_{i+2} \in \Hom(x_{i+1}, x_j) $. Since $f_{i+1} \in \Hom(x_i, x_{i+1})$, the two composites 
$$F f_{i+1} \in \Hom(x_i, x_j), \ \ \ f_{i+1} F \in \Hom( x_j, x_i)$$
 must be the identity. This implies $F, f_{i+1}$ are inverses, and again by acyclicity this means $f_{i+1} $ is the identity, a contradiction. This shows that the $x_i$'s are all different.

Finally, let us show this implies $\hat{\sigma} : \Delta^n \to \Nerve(C)$ is degreewise injective. Consider two $k$-dimensional faces (possibly degenerate) $\varphi_1, \varphi_2 : [k] \to [n]$ such that $\sigma \varphi_1 = \sigma \varphi_2$. Since $\sigma$ is injective on objects, we get that $\varphi_1(i)= \varphi_2(i)$ for all $i=0, \ldots, k$. But this means $\varphi_1= \varphi_2$, since maps out of the poset $[k]$ are determined by the image of the objects.
\end{proof}
Recall that a simplex $x \in \sse{X}_n$ is said to be non-degenerate if it is not in the image of any degeneracy. A degenerate map $\sigma \in \Delta$ is the composition of degeneracy maps. Let us recall the following property about degenerate simplices:
\begin{lemma} \label{factorization} For $\sse{X}$ a non-singular simplicial set, every simplex $x \in \sse{X}_n$ can be expressed in one and only one way as
$$ x = \sigma_xr(x) \ ,$$
where $r(x)$ is non-degenerate and $\sigma_x$ is a degenerate map.
\end{lemma}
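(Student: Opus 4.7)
The statement is the classical Eilenberg--Zilber Lemma, which in fact holds for any simplicial set; the non-singularity assumption is not strictly needed for this particular conclusion. I would split the proof into existence and uniqueness.

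For existence, I would induct on $n = \deg x$. In the base case every simplex is non-degenerate, so $r(x) = x$ and $\sigma_x = \mathrm{id}$ work. In the inductive step, if $x$ is already non-degenerate we are done; otherwise $x = \sigma_j x'$ for some codegeneracy and some $x' \in \sse{X}_{n-1}$, and the induction hypothesis applied to $x'$ yields $x' = \sigma_{x'} r(x')$ with $r(x')$ non-degenerate, whence $\sigma_x := \sigma_j \sigma_{x'}$ together with $r(x) := r(x')$ completes the factorization.

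For uniqueness, suppose $x = \sigma y = \tau z$ with $y \in \sse{X}_k$, $z \in \sse{X}_l$ non-degenerate and $\sigma \colon [n] \twoheadrightarrow [k]$, $\tau \colon [n] \twoheadrightarrow [l]$ surjections in $\Delta$. I would pick monotone sections $s$ of $\sigma$ and $t$ of $\tau$; by contravariance of the action $y = s x = s(\tau z) = (\tau s) z$ and $z = (\sigma t) y$. Factor $\tau s = \iota \rho$ uniquely in $\Delta$ as a surjection $\rho$ followed by an injection $\iota$; then $y = (\iota \rho) z = \rho (\iota z)$, and since $y$ is non-degenerate we must have $\rho = \mathrm{id}$, i.e. $\tau s$ is injective. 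Symmetrically $\sigma t$ is injective, so both $\tau s \colon [k] \to [l]$ and $\sigma t \colon [l] \to [k]$ are injective monotone maps; this forces $k = l$ and both compositions to equal $\mathrm{id}_{[k]}$, hence $y = (\tau s) z = z$. Finally, for any $i \in [k]$ and any $j \in \sigma^{-1}(i)$ one can construct a monotone section $s'$ of $\sigma$ with $s'(i) = j$, because fibers of ordinal surjections are consecutive intervals; rerunning the previous argument yields $\tau s' = \mathrm{id}$, so $\tau(j) = i$, proving $\sigma^{-1}(i) \subseteq \tau^{-1}(i)$ for every $i$. By symmetry $\sigma = \tau$.

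The main obstacle, beyond bookkeeping the contravariance of the action, is the last step of uniqueness: deducing $\sigma = \tau$ from $y = z$. This rests on the combinatorial fact that monotone sections of an ordinal surjection can be chosen to hit any prescribed element of any given fiber, which in turn follows from fibers being consecutive intervals in $[n]$.
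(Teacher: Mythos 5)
Your proof is correct, and it takes a genuinely different route from the paper's. The paper cites Friedman's Proposition~4.8 for the existence and uniqueness of the non-degenerate simplex $r(x)$, and then invokes the non-singularity hypothesis for the remaining uniqueness of $\sigma_x$: if $\sigma z = \sigma' z$ with $z$ non-degenerate, then passing to the associated cell maps $\hat z \circ \sigma^{op} = \hat z \circ (\sigma')^{op}$ and using that $\hat z$ is degreewise injective (which is precisely what non-singularity supplies) allows cancellation of $\hat z$ and hence $\sigma = \sigma'$. You instead give a self-contained proof of the full Eilenberg--Zilber lemma: existence by induction on degree, and uniqueness by the classical section-trick, with the final reduction from $y=z$ to $\sigma=\tau$ handled by the observation that a monotone section of an ordinal surjection can be chosen to hit any prescribed element of any fiber (because fibers are intervals). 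Your observation that the conclusion holds for \emph{every} simplicial set and that non-singularity is not actually required is correct and worth noting: the paper's invocation of non-singularity here is a convenience (cancelling $\hat z$ is quick once degreewise injectivity is available, and the hypothesis is free in context since the lemma is only ever applied to non-singular $\sse{X}$), not a necessity. The trade-off is length: the paper's argument is two sentences modulo a citation, whereas yours is longer but citation-free and strictly more general.
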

\begin{proof} Proposition 4.8 in \cite{friedman} proves that there exists a unique non degenerate $z$ such that $x= \sigma z$, for some degenerate $\sigma$. In order to prove unicity of $\sigma$, suppose $\sigma z = \sigma' z$. Passing to the corresponding cell maps we have $\hat{z} \circ \sigma^{op} = \hat{z} (\sigma')^{op}$. Since $z$ is non-degenerate, $\hat{z}$ is degreewise injective, thus $\sigma^{op} = (\sigma')^{op}$, which is equivalent to $\sigma = \sigma'$.
\end{proof}
The crucial non-degeneracy property that non-singular sSets have is the following. Let $\sse{X}^{nd}$ be the degreewise subset of $\sse{X}$ made of non-degenerate simplices. Then
\begin{lemma} If $\sse{X}$ is non-singular, $\sse{X}^{nd}$ is a semisimplicial set. \label{semisimplicial}
\end{lemma}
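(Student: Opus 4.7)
The plan is to verify that the face operators of $\sse{X}$ restrict to $\sse{X}^{nd}$; once this is established, the semisimplicial identities are inherited from $\sse{X}$, and $\sse{X}^{nd}$ becomes a semisimplicial set in the evident way.

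The heart of the argument is showing that whenever $x \in \sse{X}_n$ is non-degenerate, so is $\partial_i x$ for every $0 \leq i \leq n$. Arguing by contradiction, suppose $\partial_i x$ is degenerate. By Lemma \ref{factorization}, write uniquely $\partial_i x = \sigma \cdot r$ with $r \in \sse{X}_k$ non-degenerate and $\sigma$ corresponding to a non-identity surjection $\eta : [n-1] \to [k]$ in $\Delta$. Then $k < n-1$, so there exist distinct $\alpha, \beta \in \{0, \ldots, n-1\}$ with $\eta(\alpha) = \eta(\beta)$.

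By the Yoneda lemma, the simplex $\partial_i x \in \sse{X}_{n-1}$ is represented by a unique map of simplicial sets $\Delta^{n-1} \to \sse{X}$, which admits two descriptions: as the composition $\Delta^{n-1} \xrightarrow{d_i} \Delta^n \xrightarrow{\hat{x}} \sse{X}$ (coming from $\partial_i x = \sse{X}(d_i)(x)$), and as $\Delta^{n-1} \xrightarrow{\eta} \Delta^k \xrightarrow{\hat{r}} \sse{X}$ (coming from the factorization above). Evaluating both on the vertices $\alpha, \beta \in (\Delta^{n-1})_0$, the second composition sends them to $\hat{r}(\eta(\alpha)) = \hat{r}(\eta(\beta))$, so the first must coincide on them as well: $\hat{x}(d_i(\alpha)) = \hat{x}(d_i(\beta))$. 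Non-singularity of $\sse{X}$ gives injectivity of $\hat{x}$ at level $0$, whence $d_i(\alpha) = d_i(\beta)$; then injectivity of $d_i : [n-1] \to [n]$ in $\Delta$ yields $\alpha = \beta$, a contradiction. Therefore $\partial_i x$ is non-degenerate.

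Thus the face operators of $\sse{X}$ restrict to well-defined maps $\partial_i : \sse{X}^{nd}_n \to \sse{X}^{nd}_{n-1}$, and the semisimplicial identities $\partial_i \partial_j = \partial_{j-1} \partial_i$ for $i < j$ are exactly those already holding in $\sse{X}$. The only subtle point, which non-singularity is tailored to overcome, is ruling out the "faces-collapsing-onto-a-degeneracy" phenomenon that occurs in general simplicial sets; evaluating the two representing maps at the vertex level is the cleanest way to obtain the contradiction, since that is the lowest level at which $\hat{\eta}$ already fails to be injective.
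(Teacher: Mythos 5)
Your proof is correct, and it is essentially the same argument as the paper's: both express $\hat{\partial_i x}$ in two ways — once through $\hat{x} \circ d_i$ (degreewise injective by non-singularity of $\sse{X}$ at $x$) and once through a factorization via a degeneracy — and then derive a contradiction from the failure of injectivity. The only cosmetic differences are that the paper extracts a single codegeneracy $s_j$ directly from the assumption that $\partial_i x$ is degenerate and obtains the contradiction by looking at degree $n-2$, whereas you invoke the full Eilenberg–Zilber factorization (Lemma~\ref{factorization}) to get a surjection $\eta$ and then obtain the contradiction at degree~$0$ by comparing vertices; working at the vertex level is a slightly more hands-on variant of the same idea, but neither buys anything substantially different.
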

\begin{proof} We have to show that if $x \in \sse{X}_n$ is non degenerate, then $y=\partial_i x$ is non degenerate, for any given $i$ in $\{0, \ldots, n\}$. The map $\hat{y}: \Delta^{n-1} \to \sse{X} $ is obtained as the composition
$$ \Delta^{n-1} \stackrel{d_i}{\rightarrow} \Delta^n \xrightarrow{\hat{x} } \sse{X}  \ .$$
Since both maps are degreewise injective, the map $\hat{y}_i$ is degreewise injective. If by contradiction there exists $z$ such that $\sigma_jz=y$, then $\hat{y}$ would also be the commposition of
$$ \Delta^{n-1} \xrightarrow{s_j} \Delta^{n-2} \xrightarrow{\hat{z}} \sse{X} \ ,$$
which is not injective in degree $n-2$, a contradiction.
\end{proof}
Semisimplicial sets have an analogous definition of twisted geometric realization.
\begin{definition} Given a semisimplicial set $\sse{X}$ and a twist $\omega: \elm{X}^{op} \to \Ttop$, its geometric realization twisted by $\omega$ is defined as
$$ | \sse{X} |_{\omega} := \colim_{x \in \sse{X} } \omega(x) \ .$$
\end{definition}
Notice, however, that since semisimplicial sets do not have degeneracies, contravariance conditions reduce to just $\partial$-contravariance. Let us state the main theorem of this section:
\begin{theorem} \label{non-deg-real} Let $\sse{X}$ be a non singular simplicial set. Then
$$ |\sse{X}|_{\omega} \simeq | \sse{X}^{nd} |_{\omega^{nd} }   \ ,$$
where $\omega^{nd} := \omega | \sse{X}^{nd}$.
\end{theorem}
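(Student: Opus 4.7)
The plan is to build mutually inverse maps $\Phi: |\sse{X}^{nd}|_{\omega^{nd}} \to |\sse{X}|_{\omega}$ and $\Psi: |\sse{X}|_{\omega} \to |\sse{X}^{nd}|_{\omega^{nd}}$. The guiding picture is that the $\sigma$-relations in $|\sse{X}|_{\omega}$ collapse every simplex onto its essentially unique non-degenerate representative supplied by Lemma \ref{factorization}, after which only the $\partial$-relations among non-degenerate simplices survive --- and these are precisely what define $|\sse{X}^{nd}|_{\omega^{nd}}$. The forward map $\Phi$ is almost tautological: the inclusion $\sse{X}^{nd} \hookrightarrow \sse{X}$ is a well-defined morphism of semi-simplicial sets by Lemma \ref{semisimplicial}, $\omega^{nd}$ is just the restriction of $\omega$, and the universal property of the colimit yields $\Phi$.

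To construct $\Psi$ I would invoke the contravariance-conditions lemma with target $Z = |\sse{X}^{nd}|_{\omega^{nd}}$. For each $x \in \sse{X}_n$, use Lemma \ref{factorization} to write $x = \sigma_x r(x)$ uniquely, and set
$$g_x := \iota_{r(x)} \circ \omega(\sigma_x) : \omega(x) \to |\sse{X}^{nd}|_{\omega^{nd}},$$
where $\iota_{r(x)}$ is the canonical map into the colimit. The $\sigma$-contravariance is then formal: if $x = \sigma_j y$, uniqueness of factorization forces $r(x) = r(y)$ and $\sigma_x = \sigma_y \circ s^j$, after which functoriality of $\omega$ immediately yields the required commutativity between $g_x$ and $g_y$.

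The technical heart is the $\partial$-contravariance, where non-singularity becomes decisive. Given $y = \partial_i x$, apply the epi-mono factorization in $\Delta$ to write $\sigma_x \circ d^i = \iota \circ \sigma'$, with $\iota$ a face inclusion and $\sigma'$ a surjection; then $y = \sse{X}(\sigma')(\sse{X}(\iota)(r(x)))$. Since $r(x)$ is non-degenerate and $\sse{X}^{nd}$ is closed under faces by Lemma \ref{semisimplicial}, the simplex $z := \sse{X}(\iota)(r(x))$ is also non-degenerate, so uniqueness in Lemma \ref{factorization} forces $r(y) = z$ and $\sigma_y = \sigma'$. Functoriality of $\omega$ combined with the cocone relation $\iota_{r(x)} \circ \omega(\iota) = \iota_z$ inside $|\sse{X}^{nd}|_{\omega^{nd}}$ then delivers the desired commutativity between $g_x$ and $g_y$.

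Verifying the two composites is routine: on a non-degenerate $z$ we have $r(z) = z$ and $\sigma_z = \Id$, so $g_z = \iota_z$, giving $\Psi \circ \Phi = \Id$; and $\Phi \circ \Psi = \Id$ because the class of $\omega(\sigma_x)\alpha$ in $|\sse{X}|_{\omega}$ equals the class of $\alpha$ by the defining $\sigma$-relation. The only substantive obstacle is the $\partial$-contravariance step: without non-singularity, the simplex $\sse{X}(\iota)(r(x))$ could fail to be non-degenerate, the clean identification of $r(y)$ and $\sigma_y$ in terms of the factorization of $x$ would collapse, and the construction of $\Psi$ would break down at exactly this point.
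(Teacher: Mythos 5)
Your proof is correct, and it rests on exactly the ingredients the paper uses: the unique factorization $x = \sigma_x r(x)$ of Lemma \ref{factorization}, and closure of $\sse{X}^{nd}$ under faces from Lemma \ref{semisimplicial}. The route, however, is genuinely different. The paper packages these ingredients into a cofinality argument: Lemma \ref{final} shows that $r(x) \to x$ is final in the comma category $\elm{X}^{nd}_{/x}$, whence the inclusion $(\elm{X}^{nd})^{op} \to \elm{X}^{op}$ is a final functor, and the colimit restriction theorem produces the isomorphism in one line. You instead unfold that abstraction by hand: your $g_x = \iota_{r(x)} \circ \omega(\sigma_x)$ is precisely the map that the final object $r(x) \to x$ supplies, your $\sigma$-contravariance is the statement that this choice is coherent across degeneracies, and the $\partial$-contravariance check (after the epi–mono factorization of $\sigma_x \circ d^i$) reproduces, component by component, the uniqueness argument in the proof of Lemma \ref{final}. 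What your version buys is concreteness and independence from the cofinal-functor machinery; what it costs is having to chase the contravariance diagrams explicitly, including the direction conventions for $\omega$ on morphisms, which you handle correctly but leave mostly implicit. Your closing diagnosis of where non-singularity is decisive — that without it $\sse{X}(\iota)(r(x))$ could be degenerate, destroying the identification $r(y) = \sse{X}(\iota)(r(x))$, $\sigma_y = \sigma'$ — is exactly the point the paper's Lemma \ref{final} turns on as well, so the two proofs fail at the same place for the same reason when the hypothesis is dropped.
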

Before giving the proof, let us state the crucial lemma:
\begin{lemma} \label{final} Let $\sse{X}$ be a non singular simplicial set and let $i : \elm{X}^{nd} \to \elm{X} $ the inclusion of non-degenerate part. Then for all $x \in \sse{X}$ the comma category $\elm{X}^{nd}_{/x}$ has a final object.
\end{lemma}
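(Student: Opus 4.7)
The natural candidate for the final object is the pair $(r(x), \sigma_x)$ arising from the Eilenberg--Zilber factorisation $x = \sigma_x \cdot r(x)$ supplied by Lemma \ref{factorization}. Intuitively, every non-degenerate simplex accessible from $x$ ought to factor through the unique non-degenerate core $r(x)$, with the residual degeneracy absorbed into $\sigma_x$. The plan is to verify directly that this pair enjoys the required universal property.

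Given an arbitrary object $(z, f)$ of $\elm{X}^{nd}_{/x}$, I would apply the epi--mono factorisation in $\Delta$ to write $f = \iota \circ \tau$ with $\tau$ a surjection and $\iota$ an injection. Pushing through $\sse{X}$, the image of $z$ decomposes as $\sse{X}(\tau)(\sse{X}(\iota)(z))$, and $w := \sse{X}(\iota)(z)$ is non-degenerate because it is a face of the non-degenerate $z$ in a non-singular simplicial set (Lemma \ref{semisimplicial}). So we have presented the relevant image of $z$ as a surjection $\tau$ applied to a non-degenerate simplex $w$; by the uniqueness part of Lemma \ref{factorization}, this forces $\tau = \sigma_x$ and $w = r(x)$.

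The mono part $\iota$ then provides a morphism in $\elm{X}^{nd}$ between $r(x)$ and $z$, and the identity $f = \iota \circ \sigma_x$ extracted from the factorisation is precisely the triangle identity required by the comma category. Uniqueness is automatic: $\sigma_x$ is an epimorphism in $\Delta$ (being a surjective monotone map), so any map satisfying the triangle condition is right-cancellable to $\iota$.

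The main subtlety I expect is keeping the variances straight. Since $\sse{X}$ is contravariant, morphisms in $\elm{X}$ run opposite to their underlying $\Delta$-arrows, and one needs to check carefully that the $\iota$ extracted from the epi--mono factorisation is translated into a morphism in $\elm{X}^{nd}$ of the correct orientation for the comma-category convention used in the paper. Beyond this clerical care, nothing more than Lemma \ref{factorization} (existence and uniqueness of Eilenberg--Zilber) and Lemma \ref{semisimplicial} (faces of non-degenerate simplices are non-degenerate in a non-singular $\sse{X}$) is required.
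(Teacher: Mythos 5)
Your proof is correct. The existence half is essentially the paper's argument restated in $\Delta$-map notation: the epi--mono factorization $f = \iota\circ\tau$ is the same decomposition the paper expresses operadically as $f = \sigma_1\partial_1$, non-singularity (Lemma \ref{semisimplicial}) makes the face $w = \sse{X}(\iota)(z)$ non-degenerate, and uniqueness of the Eilenberg--Zilber factorization (Lemma \ref{factorization}) identifies $\tau$ with $\sigma_x$ and $w$ with $r(x)$. Where you genuinely depart is in proving uniqueness of the mediating morphism: you right-cancel the epimorphism $\sigma_x$ against the triangle identity in $\Delta$, whereas the paper argues that any comma-category map $g$ to $r(x)$ must be a composition of faces and then post-composes with the degreewise-injective characteristic map $\hat{y}$ to force $\hat{g}=\hat{\partial}_1$. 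Your route is a bit cleaner: it uses exactly the universal-property data the comma category supplies, needs no second appeal to non-singularity, and avoids the paper's unstated step that $g$ must be injective because $r(x)$ is non-degenerate. Your caution about variance is warranted --- the stated morphism convention for $\elm{X}$ in the paper does not match the arrow directions used in its own proof --- but the argument does close once the orientations are read consistently.
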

\begin{proof} Recall the factorization of Lemma \ref{factorization}
$$ (**) \ \ \ x = \sigma_x r(x)  \ ,$$
where $r(x)$ is non-degenerate and $\sigma_x$ is a composition of degeneracies. This is an object $r(x) \to x$ in $\elm{X}^{nd}_{/x}$ which we claim to be final. Suppose $x = f y$ where $y$ is non-degenerate.  By the normal form of morhpisms in the simplex category \cite{vallette} $f$ can be uniquely written as $\sigma_1 \partial_1$ where $\sigma_1$ is a composition of degeneracies and $\partial_1$ is a composition of faces. This means 
$$ x= \sigma_1 (\partial_1 y)  \ .$$
By Lemma \ref{semisimplicial}, $\partial_1y$ is still non-degenerate, and by uniqueness of factorization $(**) $ we must have 
$$ \sigma_1 = \sigma_x, \ \ \ \partial_1 y = r(x)  \ .$$
The second equation gives a map $y \to r(x) $, and the first equation ensures that the following diagram commutes:
\[\begin{tikzcd}
	{y} & x \\
	{r(x)} \ .
	\arrow["{f}", from=1-1, to=1-2]
	\arrow["{\sigma_x}"', from=2-1, to=1-2]
	\arrow["\partial_1"', from=1-1, to=2-1]
\end{tikzcd}\]
This gives a map from $y \to x$ to $r(x) \to x$ in $\elm{X}^{nd}_{/x}$. Let us show this is unique. If $g: y \to r(x)$ is a map in $\elm{X}^{nd}_{/x}$, we must have $g(y) = r(x) $ and $g$ composition of faces. Since we also have $\partial_1(y) = r(x)$ from the previous paragraph, we conclude that $g(y) = \partial_1(y)$. The diagram of associated maps reads:
\[\begin{tikzcd}
	{\Delta^n} & {\Delta^{n+k}} & {\sse{X}} \ .
	\arrow["{\hat{\partial}_1}", shift left=1, from=1-1, to=1-2]
	\arrow["{\hat{y}}", from=1-2, to=1-3]
	\arrow["{\hat{g}}"', shift right=1, from=1-1, to=1-2]
\end{tikzcd}\]
Since $\hat{y}$ is injective degreewise, the post-composition with $\hat{y}$ is injective; since both of the compositions equal $\hat{r(x)}$, we must have $\hat{\partial}_1 =\hat{g}$, that is $g=\partial_1$.
\end{proof}
We are ready to prove the main theorem. Recall the definition of final functor from \cite{Riehl}, 8.3.
\begin{proof} We claim that the functor $(\elm{X}^{nd})^{op} \to \elm{X}^{op}$ is final. Indeed, for all $x \in \elm{X}^{op}$ the comma category 
$$(\elm{X}^{nd})^{op}_{x/} \simeq (\elm{X}^{nd}_{/x} )^{op}$$
 has an initial object by Lemma \ref{final}, thus it is connected and non-empty. It follows that 
$$ |\sse{X}|_{\omega} \simeq \colim_{x \in \elm{X}^{op} } \omega(x) \simeq \colim_{ x \in (\elm{X}^{nd})^{op} } \omega(i(x)) = \colim_{ x \in (\elm{X}^{nd})^{op} } \omega^{nd}(x) \simeq | \sse{X}^{nd} |_{\omega^{nd}}   \ .$$
\end{proof}
We give another version of Theorem \ref{non-deg-real}, which is a sort of "internalization" of the non-degenerate realization.
\begin{theorem} \label{internalized-ndr}
Let $\sse{X}$ be a non-singular simplicial set. Then
$$  \sse{X} \simeq \int^{n \in \Delta_s } \Delta^n \cdot \sse{X}^{nd}_n $$ 
in the category of simplicial sets. 
\end{theorem}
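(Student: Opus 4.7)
The plan is to mimic the proof of Theorem \ref{non-deg-real}, replacing $\Ttop$ by $\sSet$ and the topological simplex by the standard simplicial simplex $\Delta^n$. The combinatorial content has already been extracted in Lemmas \ref{factorization}--\ref{final}; this statement should essentially be the internal translation of the previous theorem to the world of simplicial sets.

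First I would invoke the density (co-Yoneda) formula: every simplicial set $\sse{X}$ can be written canonically as a colimit of its own simplices,
$$\sse{X} \;\cong\; \int^{n \in \Delta} \Delta^n \cdot \sse{X}_n \;\cong\; \colim_{x \in \elm{X}^{op}} \Delta^{n(x)}.$$
This is the simplicial analogue of the identity invoked in Lemma \ref{standard-twist}, and follows from \cite{kelly} and \cite{Fosco} just as there, since $\sSet$ is the free cocompletion of $\Delta$.

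Next, I would unfold the right-hand side of the statement analogously. By Lemma \ref{semisimplicial}, $\sse{X}^{nd}$ is a semisimplicial set, and its category of elements $\elm{X}^{nd}$ carries only face maps. Exactly as in Lemma \ref{explicit-tr}, the coend can be rewritten as a colimit indexed by individual elements,
$$\int^{n \in \Delta_s} \Delta^n \cdot \sse{X}^{nd}_n \;\cong\; \colim_{x \in (\elm{X}^{nd})^{op}} \Delta^{n(x)}.$$

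Finally, I would apply the finality argument of Theorem \ref{non-deg-real} verbatim: Lemma \ref{final} shows that each comma category $\elm{X}^{nd}_{/x}$ has a final object, so the inclusion $(\elm{X}^{nd})^{op} \hookrightarrow \elm{X}^{op}$ is a final functor. Finality is a purely categorical property and does not depend on the target category, so the same argument that worked for $\Ttop$-valued diagrams applies to $\sSet$-valued diagrams, yielding
$$\colim_{x \in \elm{X}^{op}} \Delta^{n(x)} \;\cong\; \colim_{x \in (\elm{X}^{nd})^{op}} \Delta^{n(x)}.$$
Chaining the three displayed isomorphisms gives the theorem. I do not foresee any real obstacle, since all the work — namely the unique factorisation $x = \sigma_x r(x)$ and the identification of the final object in $\elm{X}^{nd}_{/x}$ — has already been carried out; the only point that deserves a moment's attention is checking that the colimit presentation of the coend on the right-hand side is genuinely the one used in the density formula, which is immediate from the coend formula for copowers.
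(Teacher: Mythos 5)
Your proposal is correct and matches the paper's own proof almost verbatim: both rewrite the coend over $\Delta_s$ as a colimit over $(\elm{X}^{nd})^{op}$, invoke the density theorem to identify $\sse{X}$ with the colimit over $\elm{X}^{op}$ of the standard simplices, and then use Lemma \ref{final} to conclude that the inclusion of the non-degenerate part is a final functor. Your remark that finality is target-independent, so the $\Ttop$ argument transfers unchanged to $\sSet$, is exactly the point the paper is implicitly relying on.
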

\begin{proof} Firstly, let us note that the coend can be written as
$$  \sse{X} \simeq \colim_{ x \in \elm{X}^{nd} } \Delta^{n(x)}  \ ,$$
where $n: \elm{X}^{nd} \to \Delta_s$ is the projection from the category of elements to the base category.  The density theorem for simplicial sets tells us that  (\cite{Riehl}, 7.2.8) 
$$  \sse{X} \simeq \colim_{ x \in \elm{X} } \Delta^{n(x)}  \ .$$
Using Lemma \ref{final}, we know that we can restrict the diagram to the non-degenerate part and obtain the same result, obtaining the thesis.
\end{proof}
As a final lemma, this weakened realization allows for a simpler definition of morphisms, analogous to the contravariance conditions in the previous subsection.

\begin{lemma}\textbf{(Contravariance conditions for non-degenerate realizations.)} Let $\sse{X}$ be a non singular simplicial set, $\omega$ a twist on it and $Z$ a topological space. Then a map $g: |\sse{X}|_{F} \to Z$ amounts to maps $g_z : \omega(z) \to Z$ for all $z \in \sse{X}^{nd}$ that respects the $\partial$-contravariance:  for all $x \in \sse{X}^{nd}_{n+1}, y \in \sse{X}^{nd}_n$ such that $\partial_i x = y$
\[\begin{tikzcd}
	{\omega(y)} & Z \\
	{\omega(x)} 
	\arrow["{g_y}", from=1-1, to=1-2]
	\arrow["{g_x}"', from=2-1, to=1-2]
	\arrow["{\omega(\partial^{op}_i)}"', from=1-1, to=2-1]
\end{tikzcd} \]
\end{lemma}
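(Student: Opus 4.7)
The plan is to combine Theorem \ref{non-deg-real} with the universal property of the colimit. By Theorem \ref{non-deg-real} we have a homeomorphism $|\sse{X}|_{\omega} \simeq |\sse{X}^{nd}|_{\omega^{nd}}$, so a continuous map $g: |\sse{X}|_{\omega} \to Z$ is the same data as a continuous map out of $|\sse{X}^{nd}|_{\omega^{nd}} = \colim_{z \in \elm{X}^{nd}} \omega(z)$. By the universal property of the colimit, such a map amounts to a family $\{g_z : \omega(z) \to Z\}_{z \in \sse{X}^{nd}}$ that is natural with respect to every morphism of $\elm{X}^{nd}$.

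First, I would invoke Lemma \ref{semisimplicial} to observe that $\sse{X}^{nd}$ is a semisimplicial set; equivalently, the category $\elm{X}^{nd}$ admits only morphisms coming from the face subcategory $\Delta_s \subset \Delta$, since degeneracies would send non-degenerate simplices to degenerate ones (which are excluded from $\sse{X}^{nd}$). In particular, the $\sigma$-contravariance conditions appearing in the previous lemma on twisted geometric realizations become vacuous in this restricted setting.

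Next, I would unwind what the remaining naturality means concretely. Every morphism in $\elm{X}^{nd}$ factors as a composition of elementary face morphisms of the form $y \to x$ with $y = \partial_i x$ for some $x \in \sse{X}^{nd}_{n+1}$, $y \in \sse{X}^{nd}_n$ (well-defined in $\sse{X}^{nd}$ thanks again to Lemma \ref{semisimplicial}). Applying the twist $\omega$ to such a morphism yields $\omega(\partial_i^{op}) : \omega(y) \to \omega(x)$, so naturality on the generator is exactly the commutativity of the displayed $\partial$-contravariance diagram. Compatibility with compositions of face morphisms is then automatic from the functoriality of $\omega$ and the simplicial face identities, so no further conditions are needed.

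The only delicate point is checking that this generating set of elementary face morphisms really suffices, i.e.\ that verifying $\partial$-contravariance on each single $\partial_i$ implies naturality with respect to an arbitrary composite; this follows because $\elm{X}^{nd}$ inherits the standard presentation of $\Delta_s$ by face generators subject to the semisimplicial identities, and both sides of the candidate naturality square for a composite are obtained by successively composing the elementary squares. Thus the correspondence $g \mapsto \{g_z\}_{z \in \sse{X}^{nd}}$ is a bijection, which is exactly the content of the lemma.
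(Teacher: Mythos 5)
Your proof is correct and is the same argument the paper is gesturing at: the paper disposes of this lemma by declaring it "completely analogous to the general case", i.e.\ a reformulation of the colimit universal property together with the generators-and-relations presentation of the indexing category. You supply precisely the details that make the analogy rigorous — passing to $|\sse{X}^{nd}|_{\omega^{nd}}$ via Theorem \ref{non-deg-real}, invoking Lemma \ref{semisimplicial} to see that $\elm{X}^{nd}$ carries only face morphisms (so the $\sigma$-contravariance squares are vacuous), and reducing naturality of the cocone to the elementary cofaces generating $\Delta_s$.
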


\begin{lemma}\label{simplicial-non-deg} \textbf{(Simplicial morphisms between non-degenerate realizations.)} Let $\sse{X}, \sse{Y}$ be simplicial sets - with $\sse{X}$ non singular -and $\omega,\theta$ twists on $\sse{X}, \sse{Y}$ respectively. A map from $|\sse{X}|_{\omega}$ to $|\sse{Y}|_{\theta}$ is determined uniquely by
\begin{itemize}
\item A simplicial map $\varphi: \sse{X} \to \sse{Y}$;
\item For all $z \in \sse{X}^{nd}$, a map of topological spaces $f_z: \omega(z) \to \theta(\varphi(z))$.
\end{itemize}
That respect $\partial$-contravariance: for all $x \in \sse{X}^{nd}_{n+1}, y \in \sse{X}^{nd}_n$ such that $\partial_i x = y$

\[\begin{tikzcd}
	\omega(y) & \theta(\varphi(y))\\
	\omega(x) &   \theta(\varphi(x)) 
	\arrow["{f_y}", from=1-1, to=1-2]
	\arrow["{f_x}"', from=2-1, to=2-2]
	\arrow["{\omega(\partial^{op}_i)}"', from=1-1, to=2-1]
	\arrow["{\theta(\partial^{op}_i)}"', from=1-2, to=2-2]
\end{tikzcd}\]
commutes.
\end{lemma}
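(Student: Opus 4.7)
The plan is to reduce this statement to the contravariance conditions for non-degenerate realizations stated just above, via the identification $|\sse{X}|_{\omega}\simeq|\sse{X}^{nd}|_{\omega^{nd}}$ from Theorem \ref{non-deg-real}. So the first step will be to invoke that theorem and replace the source $|\sse{X}|_{\omega}$ with $|\sse{X}^{nd}|_{\omega^{nd}}$ throughout. This reduces the problem to producing, for each non-degenerate $z\in\sse{X}$, a map $g_z\colon\omega(z)\to|\sse{Y}|_{\theta}$ satisfying $\partial$-contravariance for non-degenerate faces.

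Given the data in the statement, the natural definition is $g_z := i_{\varphi(z)}\circ f_z$, where $i_y\colon\theta(y)\to|\sse{Y}|_{\theta}$ denotes the canonical map from the defining cocone of the colimit $|\sse{Y}|_{\theta}=\colim_{y\in\elm{Y}}\theta(y)$. Existence of the desired map then follows from the contravariance lemma for non-degenerate realizations, provided I check the $\partial$-contravariance of $(g_z)$: given $x\in\sse{X}^{nd}_{n+1}$ and $y=\partial_i x\in\sse{X}^{nd}_n$, I need
\[
g_y = g_x\circ\omega(\partial_i^{op}).
\]
Unwinding definitions, the left side is $i_{\varphi(y)}\circ f_y$, while the right side is $i_{\varphi(x)}\circ f_x\circ\omega(\partial_i^{op})$. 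By the assumed $\partial$-contravariance of $(f_z)$, this equals $i_{\varphi(x)}\circ\theta(\partial_i^{op})\circ f_y$. But $\varphi$ is simplicial, so $\partial_i\varphi(x)=\varphi(\partial_i x)=\varphi(y)$, hence the cocone compatibility for $|\sse{Y}|_{\theta}$ yields $i_{\varphi(x)}\circ\theta(\partial_i^{op})=i_{\varphi(y)}$, completing the check. By Lemma \ref{semisimplicial} the face $\partial_i x$ is indeed non-degenerate, so the hypothesis of $\partial$-contravariance for $(f_z)$ applies.

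It remains to prove uniqueness. Suppose two maps $F,G\colon|\sse{X}|_{\omega}\to|\sse{Y}|_{\theta}$ are induced by the same data $(\varphi,(f_z)_{z\in\sse{X}^{nd}})$. Then $F\circ i_z=G\circ i_z$ for every non-degenerate $z$, where now $i_z\colon\omega(z)\to|\sse{X}|_{\omega}$. Since $|\sse{X}|_{\omega}\simeq|\sse{X}^{nd}|_{\omega^{nd}}$ is a colimit indexed by $\elm{X}^{nd}$, the family $\{i_z\}_{z\in\sse{X}^{nd}}$ is jointly epimorphic, and hence $F=G$.

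I expect no serious obstacle here: the lemma is essentially a translation of the colimit universal property through the non-degenerate reduction of Theorem \ref{non-deg-real}. The only mildly delicate step is the cocone identity $i_{\varphi(x)}\circ\theta(\partial_i^{op})=i_{\varphi(y)}$, which is where simpliciality of $\varphi$ is used and which requires no non-singularity hypothesis on $\sse{Y}$ (so in particular $\varphi(x)$ is allowed to be degenerate, and the identity is still valid as part of the defining cocone of $|\sse{Y}|_{\theta}$).
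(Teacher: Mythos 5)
Your proof is correct and follows the same route as the paper: the paper's own (very terse) proof says the argument is analogous to the general case, which is exactly to apply the contravariance lemma with $Z = |\sse{Y}|_{\theta}$ and $g_z := i_{\varphi(z)}\circ f_z$, using simpliciality of $\varphi$ for the cocone identity. You spell out the cocone-compatibility step and add the (correct, and strictly speaking required for the word ``uniquely'') joint-epimorphism argument for uniqueness, which the paper leaves implicit.
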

The proof are completely analogous to the general case. 
\begin{remark}
Note that in Lemma \ref{simplicial-non-deg} $\sse{Y}$ is not required to be non-singular, nor the simplicial map $\sse{X} \to \sse{Y}$ to preserve non-degenerate simplices. The non-degenerate realization is used only on $\sse{X}$.
\end{remark}

\subsection{Positively Weighted Trees}

In this section, we define the space $\WT_m(n)$ as a generalization of $\BZ_m(n)$, together with a map to the space of configurations that allows for arbitrary distances between consecutive points. This is the starting point of a semicosimplicial zig-zag between $\BZ_m$ and $\Kons_m$. However, it will not be the intermediate object we are looking for, since the semicosimplicial action does not generalize to $\WT_m(n)$. In order to have a bridge between $\BZ_m$ and $\Kons_m$, we will have to wait until the construction of the weighted \textit{hairy} trees space, where we allow points to be "infinitesimally close" (or infinitely far).

Since the technicalities we encounter in $\WT_m$ are a simplified version of the ones we will encounter in $\WHT_m$, we chose to refer the reader to later proofs for technical lemmas. This will make the first presentation of the "weighting" idea more accessible.

\subsubsection{Construction by twisted geometric realization}
The basic idea is to use the (non-degenerate) twisted realization, to "enrich" the basic cells with weights on branches. Such cells are encoded in a functor $\check{\Omega} : \elt(\Nerve(\FNP_m(n))^{nd})^{op} \to \Ttop$, that is:
\begin{itemize}
    \item A topological space $\check{\Omega}(\bt \Gamma)$ for all chains $\bt \Gamma = \Gamma_0 < \ldots < \Gamma_d$ of Fox-Neuwrith trees;
    \item Maps 
$$\check{\Omega}(\partial_i): \check{\Omega}(\Gamma_0 < \ldots \hat{\Gamma}_i < \ldots \Gamma_d) \to \check{\Omega}(\Gamma_0 < \ldots < \Gamma_d)  \ ,$$
that satisfies the cosimplicial relations.
\end{itemize}
We start by describing the building blocks:
\begin{definition} Given $\Gamma_{\bullet} \in \Nerve(\FNP_m(n))^{nd}_d$, set 
$$ \check{\Omega}(\Gamma_{\bullet}) := \frac{ |\Delta^d| \times [(\mathbb{R}_{>0})^{n-1}]^{d+1} }{ \sim} \ ,$$
where $(\bt \lambda, \bts \omega) \sim (\bt \lambda, \bts \theta)$ if $\vect{\omega}^k = \vect{\theta}^k$ for all $k$ such that $\lambda_k \neq 0$. \newline 
Furthermore, for any increasing map $D : [r] \to [d]$, set
$$ \check{\Omega}(\Gamma_{\bullet}, D ) = \{ (\bt \lambda, \bts \omega) \in \check{\Omega}(\bt \Gamma): \ \ \lambda_k \neq 0 \ \ \Leftrightarrow \ \ k \in \im D\} \ .$$
\end{definition}

Just as for Fox-Neuwirth Trees, we can think about elements of $\check{\Omega}(\bt \Gamma)$ in a pictorial way. The first component $\bt \lambda \in |\Delta^d|$ provides the coefficients of a convex combination. The second component provides a vector of weights $\vect{\omega}^k \in \mathbb{R}_{> 0}^{n-1}$ for each tree $\Gamma_k$. If we put weights on forks, we can depict an element of $\check{\Omega}(\bt \Gamma)$ as:
\begin{figure}[H]
	\centering
	\includegraphics[scale=0.18]{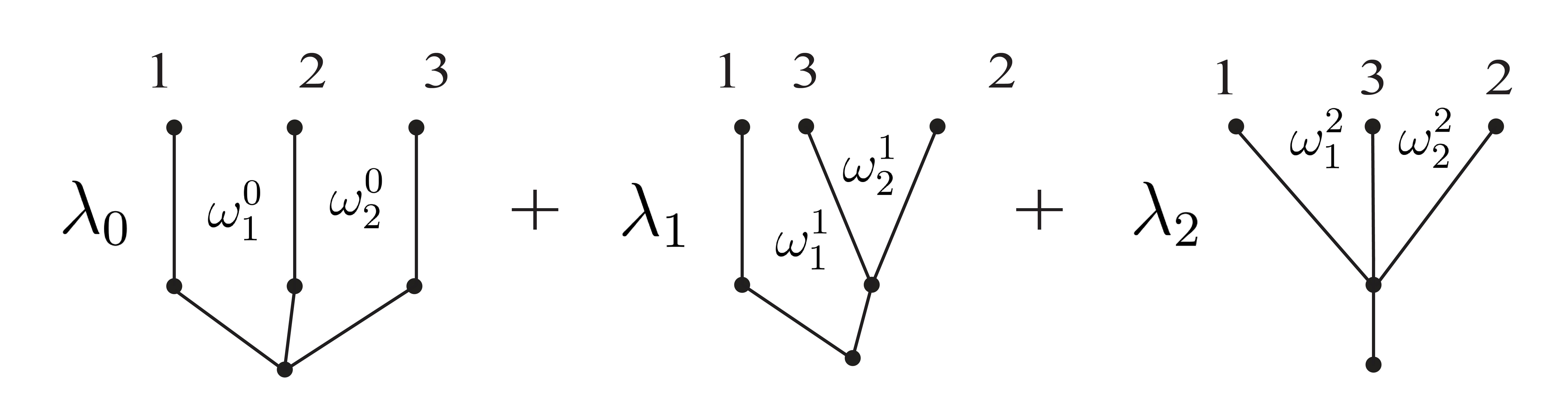}
	\caption{A convex combination of weighted trees}
	\label{weighted-combination}
\end{figure}
The equivalence relation, from this point of view, is extremely natural: if a tree has coefficient zero, the weights we put on it do not make any difference. 

\begin{remark} \label{polytope-exploration-wt} The geometry of cells $\check{\Omega}(\Gamma_{\bullet})$, being a blow up of the simplex (see Lemmas \ref{projection} and \ref{BZ-he-2}), deserves to be explored. 
When $\Gamma$ is a single tree in $\FNP_m(n)$, we have that
$$ \check{\Omega}(\Gamma) = (\mathbb{R}_{>0})^{n-1} \simeq (0,1)^{n-1}\ , $$
which is, up to closure in the Euclidean space, a $(n-1)$-dimensional cube. We denote the latter by $\Cube_{n-1}$. When an entire chain of trees $\bt \Gamma$ is involved, we can describe the geometry of $\check{\Omega}( \bt \Gamma)$ in terms of the \textit{free join of polytopes} \cite{polytopes}. Let us explain the construction in a few words. Let $\bar{P}, \bar{Q}$ be emebddings of two polytopes $P,Q$, respectively, into skew affine planes of dimension $d$ and $e$ inside $\mathbb{R}^{d+e+1}$. Their free join $P \vee Q$ is defined as the convex hull of $\bar{P} \cup \bar{Q}$. This is just a polytopic realization of the abstract join of spaces. Since our definition essentially sets
$$ \check{\Omega}(\Gamma_0 < \ldots < \Gamma_d) = \check{\Omega}(\Gamma_0) * \ldots * \check{\Omega}(\Gamma_d) \ , $$
the associated polytope is $\Cube_{n-1}^{\vee(d+1)}$. This observation in the simple case $n=2, d=1$ is illustrated in figure \ref{polytope-1}. The projection to the simplex, from this perspective, amounts to collapsing a each cube to a point. Indeed, we have:
$$ \Cube_{n-1}^{\vee(d+1)} \to (\Delta^0)^{\vee(d+1)} \simeq \Delta^d \ . $$
\end{remark}

\begin{remark} Since we will not use such "associated polytopes" if not for geometrical intuition, we will not delve into the subtle details behind the sentence "up to a closure in the Euclidean space". In what follows, we will sometimes sloppily denote by $\mathcal{P}(C)$ the polytope associated to a cell $C$. The previous remark, in this notation, would be summed up as 
$$\mathcal{P}(\check{\Omega}(\bt \Gamma)) \simeq \Cube_{n-1}^{\vee(d+1)} \ ,$$
for $\bt \Gamma \in \Nerve( \FNP_m(n))_{d}^{nd}$. 
\end{remark}

\begin{figure}
\centering
\includegraphics[width=9cm]{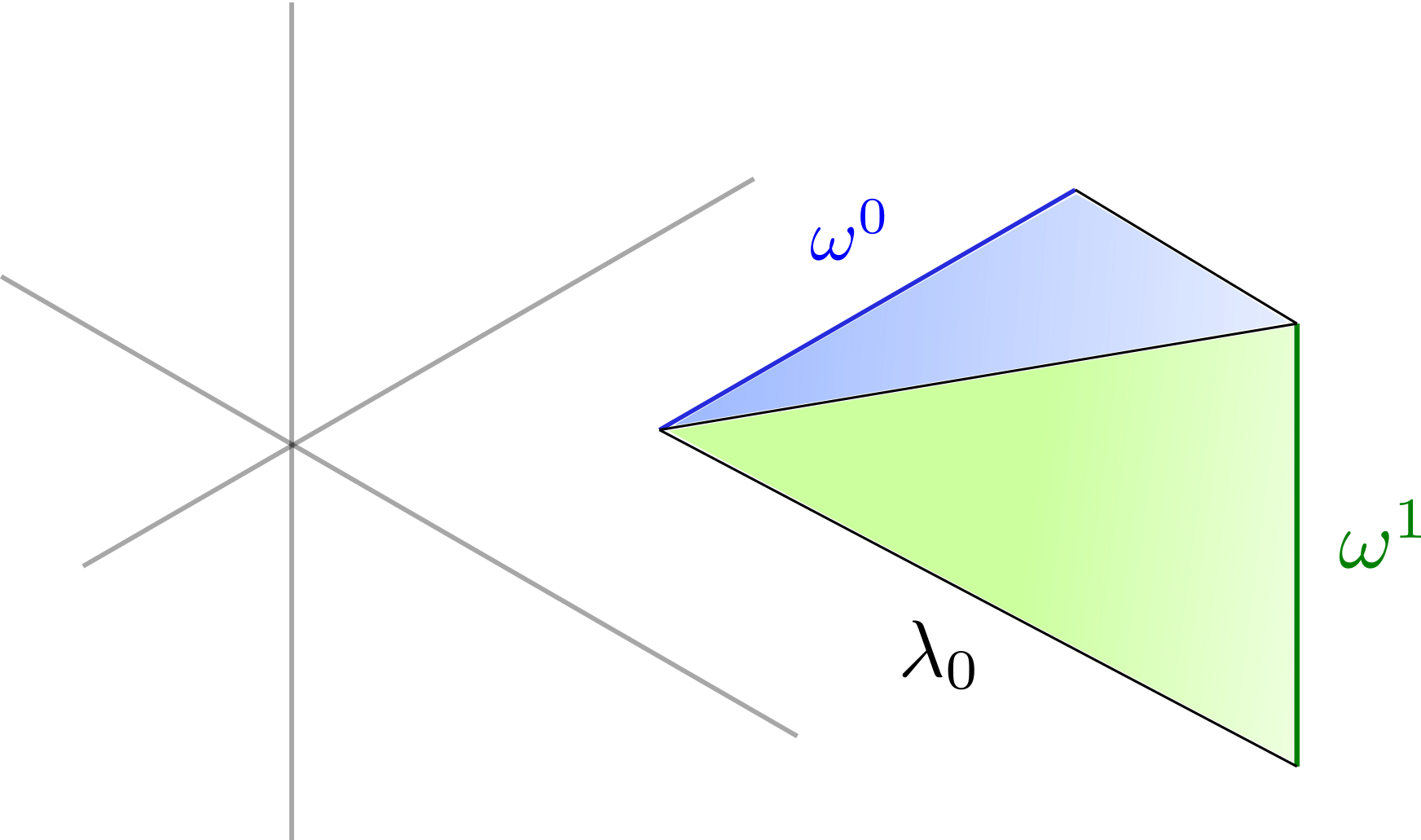}
\caption{The geometry behind $\check{\Omega}(1|2 <12)$, a cell of $\WT_2(2)$}
\label{polytope-1}
\end{figure}

Let us now define the $\check{\Omega}(\partial_i)$ maps:

\begin{definition} \label{twist-2} For $\Gamma_{\bullet} \in \Nerve(\FNP_m(n))_d^{nd} $ and $i \in [d+1]$, define $\check{\Omega}(\partial_i) : \check{\Omega}( \partial_i \Gamma_{\bullet}) \to \check{\Omega}( \bt \Gamma) $ as $\check{\Omega}(\partial_i)(\bt \lambda, \bts \omega) = (\partial_i \bt \lambda, \partial_i \bts \omega)$, where
$$ \partial_i(\bt \lambda)_k = \left\{ \begin{array}{ll}
       0,  & \textrm{if }  k=i\\
      	\lambda_{\sigma_i(k)}, & \textrm{ otherwise} 
    \end{array} \right.  $$
$$ \partial_i(\bts \omega )^k = \left\{ \begin{array}{ll}
       \underline{u},  & \textrm{if }  k=i\\
      	\vect{\omega}^{\sigma_i(k)}, & \textrm{ otherwise} 
    \end{array} \right.  $$
Here $\underline{u}=(1, \ldots, 1)$.
\end{definition}
The following lemmas ensures we can use such blocks to build a space, and that the structure behaves nicely with respect to the stratification:
\begin{lemma}  \label{restriction} The spaces $\check{\Omega}(\Gamma_{\bullet}, D )$ stratify $\check{\Omega}(\bt \Gamma)$ with exit poset structure
$$\overline{\check{\Omega}(\Gamma_{\bullet}, D )} = \bigsqcup_{T:[r'] \to [r] } \check{\Omega}(\bt \Gamma, DT)  \ .$$
The maps $\Omega(\partial_i)$ are stratified and turns $\check{\Omega}$ into a functor $\check{\Omega}: \elt(\Nerve(\FNP_m(n))^{nd})^{op} \to \Ttop$.
\end{lemma}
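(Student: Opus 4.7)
The plan is to unpack the three claims separately: (a) that the subspaces $\check{\Omega}(\bt\Gamma,D)$ indexed by increasing injections $D\colon[r]\to[d]$ form a stratification of $\check{\Omega}(\bt\Gamma)$, (b) that their closures are described by post-composition with increasing injections $T\colon[r']\to[r]$, and (c) that $\check{\Omega}(\partial_i)$ respects these strata and the assignment $\check{\Omega}$ satisfies the semisimplicial identities (we only need the face ones, since by Lemma \ref{semisimplicial} $\Nerve(\FNP_m(n))^{nd}$ is semisimplicial).

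For (a), I would observe that any point $(\bt\lambda,\bts\omega)\in\check{\Omega}(\bt\Gamma)$ has a well-defined support $S(\bt\lambda):=\{k\in[d]:\lambda_k\neq 0\}$, since the equivalence relation only modifies $\vec\omega^k$ for $\lambda_k=0$ and thus leaves $\bt\lambda$ untouched. Every non-empty subset $S\subset[d]$ is the image of a unique increasing injection $D\colon[r]\to[d]$ (with $r=|S|-1$), so the sets $\check{\Omega}(\bt\Gamma,D)$ partition $\check{\Omega}(\bt\Gamma)$. Concretely, $\check{\Omega}(\bt\Gamma,D)$ is homeomorphic to $\mathring{\Delta}^r\times(\mathbb{R}_{>0}^{n-1})^{r+1}$, where $\mathring{\Delta}^r$ denotes the open $r$-simplex.

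For (b), I would check both inclusions set-theoretically. A point $(\bt\lambda,\bts\omega)$ lies in the closure of $\check{\Omega}(\bt\Gamma,D)$ iff its support is contained in $\im D$, because one can degenerate a point of the stratum by letting certain $\lambda_k$'s approach $0$, while the weights at those collapsed indices become irrelevant under $\sim$. Subsets of $\im D$ are in bijection with increasing injections of the form $D\circ T$ for $T\colon[r']\to[r]$, giving the decomposition in the statement.

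For (c), the formula in Definition \ref{twist-2} inserts $\lambda_i=0$ at position $i$ and shifts the remaining coordinates by $\sigma_i$, and analogously for the weight vectors (placing the neutral $\underline{u}$ at position $i$). Hence if the input has support $\im D$ for $D\colon[r]\to[d]$, the output has support $d_i(\im D)=\im(d_i\circ D)$, where $d_i\colon[d]\to[d+1]$ is the standard coface. This shows $\check{\Omega}(\partial_i)$ sends the stratum indexed by $D$ into the stratum indexed by $d_i\circ D$, which is precisely the stratified compatibility. Finally, the semisimplicial identities $\check{\Omega}(\partial_j)\check{\Omega}(\partial_i)=\check{\Omega}(\partial_i)\check{\Omega}(\partial_{j-1})$ for $i<j$ reduce to the corresponding identities $d_j d_i = d_i d_{j-1}$ on simplicial coordinates and their dual $\sigma_i\sigma_{j-1}=\sigma_{j-1}\sigma_i$ (for the shift of the weight indices), a direct check from Definition \ref{twist-2}. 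The continuity of each $\check{\Omega}(\partial_i)$ follows from continuity of the coface map on $|\Delta^d|$ and the shift on the product factor, passing to the quotient $\sim$.

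The main obstacle is making sure that (b) and (c) interact correctly at the boundary, i.e.\ that the stratum-shifting rule of (c) is consistent with the quotient $\sim$: the weights at degenerate positions must not influence the output of $\check{\Omega}(\partial_i)$, and indeed they don't, because a degenerate coordinate $\lambda_k=0$ still has $\partial_i\lambda_k=0$ after relabeling, so the quotient relation on the target identifies the inserted weight regardless of choices in the source.
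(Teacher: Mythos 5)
Your proof is correct and follows essentially the same strategy as the paper: identify strata by the support of $\bt\lambda$ (which is equivalence-invariant), characterize closures by containment of supports in $\im D$, and note that $\check\Omega(\partial_i)$ sends the stratum over $D$ into the stratum over $d_i\circ D$ with functoriality reducing to the standard (co)simplicial identities. One small slip: the degeneracy identity you cite should be $\sigma_i\sigma_j=\sigma_{j-1}\sigma_i$ (for $i<j$), not $\sigma_i\sigma_{j-1}=\sigma_{j-1}\sigma_i$; the paper itself defers this routine check to the proof of Lemma~\ref{stratified}, which your outline expands in more detail.
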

\begin{proof} Regarding the stratification, it is easy to see that $(\bt \lambda, \bts \omega) \in \overline{\check{\Omega}(\Gamma_{\bullet}, D )}$ if and only if
$$\{k: \lambda_k \neq 0 \}  \subset \im D  \ .$$
If $(\bt \lambda, \bts \omega) \in \check{\Omega}(\bt \Gamma, D')$ for some $D'$, this implies by the very definition $\im D' \subset \im D$. Since they are both increasing maps, we conclude the existence of increasing $T$ such that $D' = DT$.

Let us pass to examine the maps $\Omega(\partial_i)$. Since we only add weights equal to $1$, they are well-defined on $\WT$ cells that only allow for positive-and-finite weights. They are easily seen to be stratified with stratification map
$$ \Omega(\partial_i) \left ( \check{\Omega}(\bt \Gamma, D) \right ) \subset \check{\Omega}( \bt \Gamma, \partial_i D)  \ .$$
Regarding the well definition with respect to the equivalence relation and the cosimpliciality, the proof is the same as in Lemma \ref{stratified}.
\end{proof}
We are ready to give the definition of the weighted trees space.
\begin{definition} Define $\WT_m(n)$ as
$$  \WT_m(n) = | \Nerve(\FNP_m(n))^{nd}|_{\check{\Omega}} \ .$$
\end{definition}
\subsubsection{Equivalence with $\BZ_m$}
We want to see that the weighted trees space is another model for the $\BZ$ space. Let us define the projection:
\begin{lemma} \label{projection} For $\bt \Gamma \in \Nerve(\FNP_m(n))^{nd}_d$, the maps
$$\check{f}_{\bt \Gamma} : \check{\Omega}(\bt \Gamma) \to |\Delta^d| \ ,$$
defined as $\check{f}(\bt \lambda, \bts \omega) = \bt \lambda$ are indeed well defined with respect to the equivalence relation. They respect $\partial$-contravariance conditions, thereby providing a map
$$ \check{f} : \WT_m(n) \to \BZ_m(n)  \ .$$
\end{lemma}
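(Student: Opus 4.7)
The plan is to split the claim into three pieces and handle each by appealing to the machinery already established. First, the well-definedness of $\check{f}_{\bt\Gamma}$ on the quotient $\check{\Omega}(\bt\Gamma)$ is essentially automatic: the equivalence relation identifies $(\bt\lambda, \bts\omega) \sim (\bt\lambda, \bts\theta)$ only when the first coordinates agree, so projecting onto the $\bt\lambda$-component trivially descends to the quotient.

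Second, to verify $\partial$-contravariance I would check, for each $\bt\Gamma \in \Nerve(\FNP_m(n))^{nd}_d$ and admissible $i$, that the square with top row $\check{f}_{\partial_i\bt\Gamma}\colon \check{\Omega}(\partial_i\bt\Gamma) \to |\Delta^{d-1}|$, bottom row $\check{f}_{\bt\Gamma}\colon \check{\Omega}(\bt\Gamma) \to |\Delta^d|$, left column $\check{\Omega}(\partial_i)$, and right column $\std(\partial_i) = d_i$ commutes. Since $\check{f}$ forgets the weight coordinates and retains only $\bt\lambda$, the square collapses to a comparison on barycentric coordinates: the formula $\partial_i(\bt\lambda)_k$ from Definition \ref{twist-2} — zero at position $i$ and $\lambda_{\sigma_i(k)}$ otherwise — must match the standard affine face inclusion $d_i\colon |\Delta^{d-1}| \hookrightarrow |\Delta^d|$, which inserts $0$ in position $i$ and shifts subsequent coordinates. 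These agree by direct inspection.

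With both conditions verified, I would invoke Lemma \ref{simplicial-non-deg} with $\sse{X} = \sse{Y} = \Nerve(\FNP_m(n))$, twists $\omega = \check{\Omega}$ and $\theta = \std$, simplicial map $\varphi = \Id$, and local data $\{\check{f}_{\bt\Gamma}\}_{\bt\Gamma \in \sse{X}^{nd}}$. The hypothesis of the lemma is met because $\FNP_m(n)$ is a poset, hence an acyclic category, so $\Nerve(\FNP_m(n))$ is non-singular by Example \ref{poset-sing}. The lemma then assembles the local data into the desired continuous map $\check{f}\colon \WT_m(n) \to |\Nerve(\FNP_m(n))|_{\std} = \BZ_m(n)$.

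There is no substantive obstacle here: all the geometric content has already been absorbed into the twisted geometric realization formalism and the non-singularity of the nerve of a poset. The only real work is notational bookkeeping — tracking the domain/codomain conventions in the contravariance square and matching the combinatorial face action on weights to the standard affine face inclusion — which is the mildest possible check.
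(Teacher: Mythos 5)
Your proof is correct and matches the paper's own approach: the paper declares the proof of Lemma~\ref{projection} to be ``identical to the one given in Lemma~\ref{BZ-semi},'' and the proof there is exactly your three steps — well-definedness because $\check{f}$ ignores the weight coordinate, $\partial$-contravariance because both sides of the square yield $\partial_i\bt\lambda$, and assembly via the non-degenerate realization formalism (with non-singularity of $\Nerve(\FNP_m(n))$ supplied by Example~\ref{poset-sing}). Nothing to add.
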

The proof is identical to the one given in Lemma \ref{BZ-semi}. There is also a parallel of Lemma \ref{BZ-he}:
\begin{lemma} \label{BZ-he-2} For $\bt \Gamma \in \Nerve(\FNP_m(n))_d$, define
$$\check{g}_{\bt \Gamma} :  \textrm{std}(\bt \Gamma) \to \check{\Omega}(\bt \Gamma) $$
as  $\check{g}_{\bt \Gamma}(\bt \lambda) = (\bt \lambda, \bts u)$, where $\vect{u}^k_{\alpha} = 1$ for all $k, \alpha$. Then $\check{g}_{\bt \Gamma}$ respects $\partial$-contravariance, and the assembled maps 
$$\check{g}_m(n) : \BZ_m(n) \to \WT_m(n)$$
are homotopy inverses to $\check{f}_m(n)$. 
\end{lemma}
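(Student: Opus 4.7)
The plan is to break the proof into two parts, following the two claims in the statement. First, I would verify that the constant-weight-one insertion is compatible with $\partial$-contravariance so that the maps $\check{g}_{\bt\Gamma}$ assemble into a well-defined morphism out of $\BZ_m(n) \simeq |\Nerve(\FNP_m(n))^{nd}|_{\std}$ via Lemma \ref{simplicial-non-deg} with $\varphi = \Id$. Concretely, starting from $\bt\lambda' \in \std(\partial_i \bt\Gamma)$, one route computes $\check{\Omega}(\partial_i) \circ \check{g}_{\partial_i\bt\Gamma}(\bt\lambda') = (\partial_i\bt\lambda', \partial_i \bts{u})$; since $\partial_i$ on weights inserts the vector $\underline{u}=(1,\ldots,1)$ at position $i$ and leaves the others untouched, the result is $(\std(\partial_i^{op})\bt\lambda', \bts{u})$, matching the other route $\check{g}_{\bt\Gamma} \circ \std(\partial_i^{op})(\bt\lambda')$ on the nose.

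Next, $\check{f} \circ \check{g} = \Id_{\BZ_m(n)}$ holds strictly: by construction $\check{f}_{\bt\Gamma}(\bt\lambda, \bts\omega) = \bt\lambda$ and $\check{g}_{\bt\Gamma}(\bt\lambda) = (\bt\lambda, \bts{u})$, so the composite forgets the weights and then reintroduces trivial ones. The harder direction is to show $\check{g} \circ \check{f} \simeq \Id_{\WT_m(n)}$. My proposal is to build an explicit straight-line homotopy on each cell
\[
H_t^{\bt\Gamma}(\bt\lambda, \bts\omega) \;=\; \bigl(\bt\lambda,\, (1-t)\bts\omega + t\bts{u}\bigr),
\]
which stays in $[\mathbb{R}_{>0}^{n-1}]^{d+1}$ for all $t \in [0,1]$ since $\mathbb{R}_{>0}$ is convex, hence lands in the correct cell $\check{\Omega}(\bt\Gamma)$. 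At $t=0$ this is $\Id$, and at $t=1$ it equals $\check{g}_{\bt\Gamma} \circ \check{f}_{\bt\Gamma}$.

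The main technical check, which I expect to be the only nontrivial obstacle, is that $H_t$ assembles over $t$ into a continuous homotopy on the whole twisted realization, i.e.\ that the family $\{H_t^{\bt\Gamma}\}_{\bt\Gamma}$ commutes with the twist maps $\check{\Omega}(\partial_i)$. This is again a hands-on verification: on the $k$-th weight with $k \neq i$ both routes yield $(1-t)\bts\omega^{s_i(k)} + t\underline{u}$, while on the $i$-th weight the top route applies $\partial_i$ to an interpolation whose $i$-th slot is forcibly $\underline{u}$, and the bottom route interpolates $\underline{u}$ with itself, giving $\underline{u}$ in both cases. Continuity in $(\bt\lambda, \bts\omega, t)$ is clear from the formula, and well-definedness across the equivalence relation $(\bt\lambda, \bts\omega) \sim (\bt\lambda, \bts\theta)$ (whenever the two weights agree on the support of $\bt\lambda$) is immediate since interpolation is done componentwise. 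Combined with Lemma \ref{simplicial-non-deg}, this produces the desired homotopy $H_t: \WT_m(n) \to \WT_m(n)$ from the identity to $\check{g}_m(n)\circ \check{f}_m(n)$, concluding the proof.
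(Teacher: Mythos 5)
Your proposal is correct and takes essentially the same approach as the paper. The paper does not spell out this proof but refers to the analogous Lemma \ref{BZ-he} for $\WHT_m$, noting that the $\tan$/$\arctan$ reparametrization there can be replaced by ordinary convex combinations since weights live in $\mathbb{R}_{>0}$, that stratification is automatic because the stratum depends only on $\bt\lambda$, and that $\bts u$ is independent of $\bt\Gamma$ — which is exactly the simplification your straight-line homotopy $(1-t)\bts\omega + t\bts u$ implements, together with the same $\partial$-contravariance and equivalence-relation checks.
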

The proof is similar to Lemma \ref{BZ-he}, but much simpler for several reasons: 
\begin{enumerate}
\item Weights are in $(0,\infty)$, thus we can use ordinary convex combinations;
\item The maps are evidently stratified, since the stratum only depends on $\bt \lambda$ which does not change during the homotopy
\item The new component $\bts u$ is independent of $\bt \Gamma$.
\end{enumerate}
\subsubsection{Associated configurations}
Now that the new model is constructed, we want to generalize the definition sketched at the end of Section \ref{configurations} from $\BZ_m$ to $\WT_m$. The first step is to associate a configuration to a single weighted tree.
\begin{definition} Given a tree $\Lambda = (\sigma, a) \in \FNP_m(n)$ and $\vect{\theta} \in (\mathbb{R}_{> 0} )^{n-1}$ the function $x : \FNP_m(n) \times (\mathbb{R}_{> 0} )^{n-1} \to \Conf_n(\mathbb{R}^m)$ is inductively defined as
$$ x(\Lambda, \vect{\theta} )_{ \sigma(1) } = 0  \ ,$$
$$ x(\Lambda, \vect{\theta} )_{ \sigma(p+1) } = x(\Lambda, \vect{\theta} )_{ \sigma(p) } + \theta_p e_{1+a_p}  \ .$$
\end{definition}
The definition on a combination of weighted trees is simply the combination of the results:
\begin{definition} \label{ass-conf} For $\bt \Gamma \in \Nerve(\FNP_m(n))^{nd}_d $ and $D : [r] \to [d] $, we want to define a map
$$ \check{\tau}^0 : \check{\Omega}(\bt \Gamma, D) \to (\mathbb{R}^m)^n  \ .$$
Given $(\bt \lambda, \bts \omega) \in \check{\Omega}(\bt \Gamma, D)$, we set
$$ \check{\tau}^0(\bt \lambda, \bt \omega) = \sum_{k=0}^r \lambda_{D(k)}	x(\Gamma_{D(k)}, \vect{\omega}^{D(k)}) \ .$$
\end{definition}
\begin{figure}[H]
	\centering
	\includegraphics[scale=0.2]{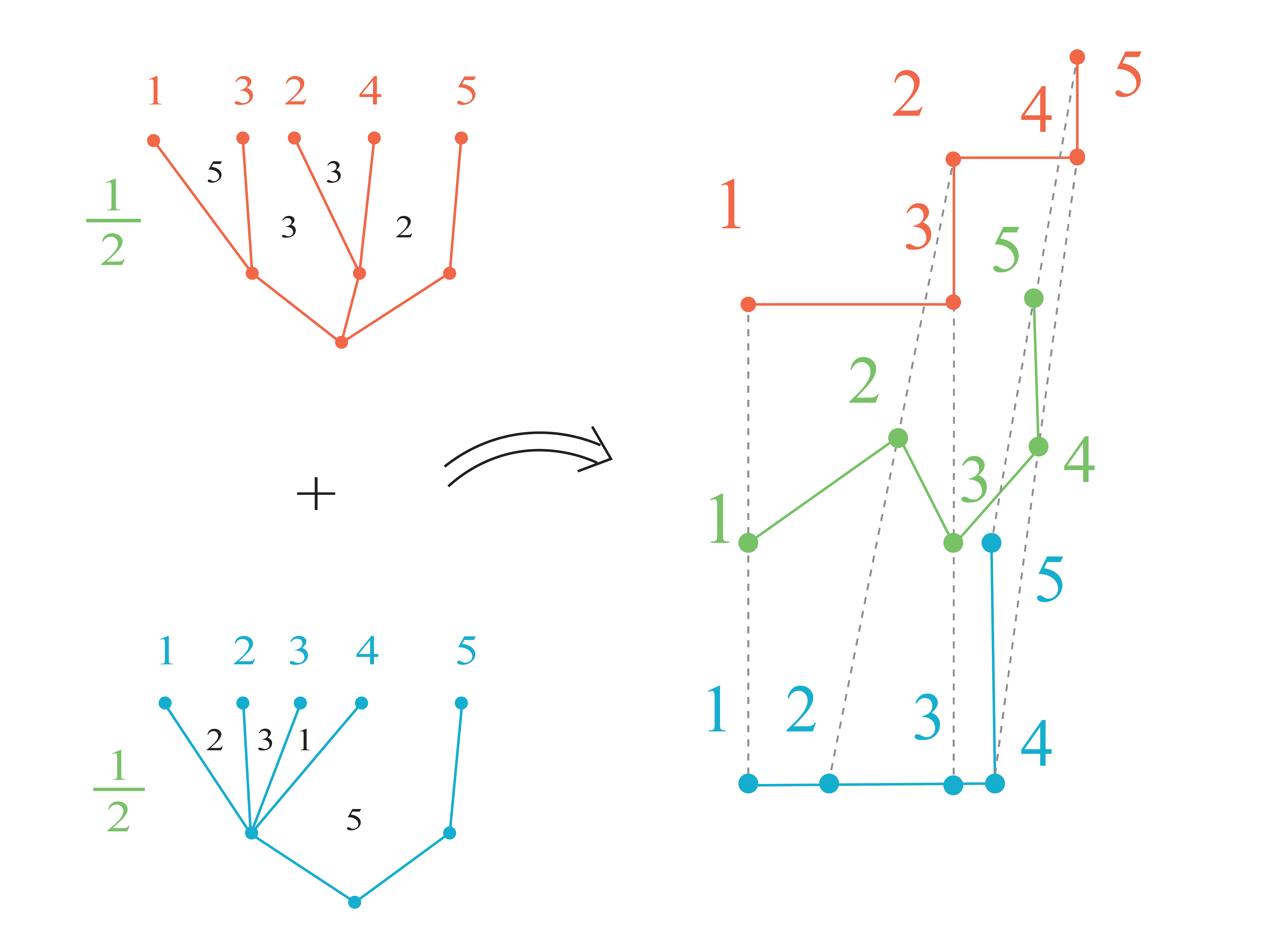}
	\caption{The configuration of points associated to a convex combination of weighted trees}
	\label{associated-configuration}
\end{figure}
It is not clear that $\check{\tau}^0$ yields a configuration since it is a combination of configurations. Before seeing the proof, note that $\check{\tau}^0$ is well-defined with respect to the equivalence relation, since we only use weights $\vect{\omega}^k_{\alpha}$ with $k \in \im D$. 

\begin{remark} From a geometric perspective, this should not be surprising. For any tree $\Gamma$ and vector of weights $\theta_{\bullet}$, the point $x(\Gamma, \theta_{\bullet})$ is constructed so that $x(\Gamma, \theta_{\bullet}) \in \Conf(\Gamma)$. If we imagine to take $x(\Gamma, \theta_{\bullet})$ as a barycenter of the cell $\Conf(\Gamma)$ in the one-point compactification $\Conf_n(\mathbb{R}^m)^+$, convex combinations of $x(\Gamma_0, \theta_{\bullet}^{(0)} ), \ldots, x(\Gamma_d, \theta_{\bullet}^{(d)})$ span the barycentric cell corresponding to the chain $\Gamma_0 < \ldots < \Gamma_d$.
\end{remark}

In order to show it is actually a configuration, we will explicitly compute the difference between two points and check it is a non-zero vector. Let us set up a notation.
\begin{definition}  Given $\Lambda \in \FNP_m(n)$ and $i \neq j$ in $ \{1, \ldots, n\}$, define 
$$ \sgn^{\Lambda}_{ij} = \left\{ \begin{array}{ll}
       +1 ,  & \textrm{if }  i <_{\Lambda} j \\
       -1 ,  & \textrm{if }  i >_{\Lambda} j 
    \end{array} \right. $$
\end{definition}
We are ready to give the formula for the difference of two points (see figure \ref{walking-man}).
\begin{lemma}[Walking-man formula] \label{difference}  For $\bt \Gamma \in \Nerve(\FNP_m(n))^{nd}_d $ and $D : [r] \to [d] $, let us consider $i \neq j $ in $ \{1, \ldots, n\}$ and $(\bt \lambda, \bts \omega) \in \check{\Omega}(\bt \Gamma, D)$. Let $\alpha_k = \min\{i,j\}, \beta_k = \max\{i,j \}$ with respect to the order of $\Gamma_{D(k)}$, and $p_k = \sigma_{D(k)}^{-1}(\alpha_k ), q_k = \sigma_{D(k)}^{-1}(\beta_k)$, where $\Gamma_{\ell} = (\sigma_{\ell}, a^{\ell})$. Then we have
$$ \check{\tau}^0(\bt \lambda, \bts \omega)_j - \check{\tau}(\bt \lambda, \bts \omega)_i = \sum_{k=0}^r \lambda_{D(k)} \sgn^{\Gamma_{D(k)}}_{ij} \sum_{h = p_k}^{q_k-1} \vect{\omega}^{D(k)}_{h} e_{1+a^{D(k)}_{h}}  \ .$$
\end{lemma}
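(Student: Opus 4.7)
The proof is essentially a telescoping computation once one unwinds the recursive definition of $x(\Lambda, \vec{\theta})$. The plan is to reduce to the single-tree case by linearity, then prove the formula for one tree by induction on the positional distance, and finally account for the sign by symmetry.

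First I would use linearity of the sum defining $\check{\tau}^0$ to write
\[
\check{\tau}^0(\bt\lambda,\bts\omega)_j - \check{\tau}^0(\bt\lambda,\bts\omega)_i
= \sum_{k=0}^{r} \lambda_{D(k)}\bigl(x(\Gamma_{D(k)},\vect{\omega}^{D(k)})_j - x(\Gamma_{D(k)},\vect{\omega}^{D(k)})_i\bigr),
\]
so that the problem reduces to establishing, for a single tree $\Lambda=(\sigma,a)\in \FNP_m(n)$ and weight vector $\vect{\theta}\in(\mathbb{R}_{>0})^{n-1}$, the identity
\[
x(\Lambda,\vect{\theta})_j - x(\Lambda,\vect{\theta})_i = \sgn^{\Lambda}_{ij}\sum_{h=p}^{q-1}\theta_h\, e_{1+a_h},
\]
where $\alpha=\min_{\Lambda}\{i,j\}$, $\beta=\max_{\Lambda}\{i,j\}$, $p=\sigma^{-1}(\alpha)$, $q=\sigma^{-1}(\beta)$.

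The single-tree identity I would prove by induction on $q-p\ge 1$. In the base case $q=p+1$, with $\alpha <_\Lambda \beta$ consecutive in the order of $\Lambda$, the very definition $x(\Lambda,\vect{\theta})_{\sigma(p+1)} = x(\Lambda,\vect{\theta})_{\sigma(p)} + \theta_p\, e_{1+a_p}$ immediately yields $x_\beta - x_\alpha = \theta_p e_{1+a_p}$, matching the right-hand side. For the inductive step, let $\gamma = \sigma(q-1)$; then using the inductive hypothesis on the pair $(\alpha,\gamma)$ (which has $p_\gamma - p = (q-1)-p$) and the recursion for the last step $(\gamma,\beta)$, the two telescope into
\[
x_\beta - x_\alpha = (x_\beta - x_\gamma) + (x_\gamma - x_\alpha) = \theta_{q-1}e_{1+a_{q-1}} + \sum_{h=p}^{q-2}\theta_h e_{1+a_h} = \sum_{h=p}^{q-1}\theta_h e_{1+a_h}.
\]
The sign factor $\sgn^{\Lambda}_{ij}$ then appears because the roles of $\alpha,\beta$ are symmetric: if $i<_\Lambda j$, then $\alpha=i,\beta=j$ and the above gives $x_j-x_i$ directly with a $+1$ sign; if $i>_\Lambda j$, then $\alpha=j,\beta=i$ and the above computes $x_i-x_j$, so $x_j-x_i$ picks up a $-1$. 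In both cases the formula $x_j - x_i = \sgn^\Lambda_{ij}\sum_{h=p}^{q-1}\theta_h e_{1+a_h}$ holds, with $p,q$ defined via the min/max prescription.

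There is no substantive obstacle here; the only thing requiring care is keeping the index bookkeeping straight between labels (the arguments $i,j$ of $x(\cdot)_\bullet$) and positions (the indices $p,q$ on which the recursion proceeds), and ensuring that the definition of $p_k,q_k$ in the statement really matches $(\sigma_{D(k)})^{-1}\alpha_k,(\sigma_{D(k)})^{-1}\beta_k$ with $\alpha_k,\beta_k$ taken with respect to the $k$-th tree's order (which may differ between different $k$'s—this is why both $\alpha_k,\beta_k$ and $p_k,q_k$ are indexed by $k$). Plugging the single-tree identity back into the sum over $k$ yields exactly the walking-man formula as stated.
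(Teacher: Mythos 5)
Your proposal is correct and matches the paper's proof in all essentials: reduce to a single tree by linearity, obtain the sign by the min/max symmetry of $p_k,q_k$ together with the antisymmetry of the left-hand side and of $\sgn^\Lambda_{ij}$, then prove the "increasing" case by induction on $q-p$ with the telescoping step through the point at position $q-1$. The only cosmetic difference is that the paper isolates the symmetry reduction before starting the induction, whereas you prove the formula directly for $(\alpha,\beta)=(\min,\max)$ and unpack the sign afterward.
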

\begin{proof} Firstly, let us reduce to the case of a single tree. Indeed, we have
\begin{align*} &
\check{\tau}^0(\bt \lambda, \bts \omega)_j - \check{\tau}(\bt \lambda, \bts \omega)_i  =  \sum_{k=0}^r \lambda_{D(k)} \left ( x(\Gamma_{D(k)}, \vect{\omega}^{D(k)})_j - x(\Gamma_{D(k)}, \vect{\omega}^{D(k)})_i \right ) \\
& \stackrel{?}{=} \sum_{k=0}^r \lambda_{D(k)} \sgn^{\Gamma_{D(k)}}_{ij} \sum_{h = p_k}^{q_k-1} \vect{\omega}^{D(k)}_{h} e_{1+a^{D(k)}_{h}}\ ,
\end{align*}
so it is enough to show, for each $k$, that
$$ x(\Gamma_{D(k)}, \vect{\omega}^{D(k)})_j - x(\Gamma_{D(k)}, \vect{\omega}^{D(k)})_i = \sgn^{\Gamma_{D(k)}}_{ij} \sum_{h = p_k}^{q_k-1} \vect{\omega}^{D(k)}_{h} e_{1+a^{D(k)}_{h}}  \ .$$
We can also get rid of the $\sgn$ part by describing what happens to the left-hand side and to the right-hand side when we swap $i,j$. The LHS is manifestly antisymmetric. The summation is symmetric, since $p_k = \sigma_k^{-1} \min_{\Gamma_k}\{i,j\}$, $q_k = \sigma_k^{-1} \max_{\Gamma_k} \{i,j\}$. The tensor $\sgn_{ij}^{\Gamma_{D(k)}}$ is antysimmetric. We conclude that it is enough to show that
$$ x(\Gamma_{D(k)}, \vect{\omega}^{D(k)})_j - x(\Gamma_{D(k)}, \vect{\omega}^{D(k)})_i = \sum_{h = p_k}^{q_k-1} \vect{\omega}^{D(k)}_{h} e_{1+a^{D(k)}_{h}} $$
for $ i <_{\Lambda_k} j$. We show this by induction on $q_k-p_k$. If it is one, then $\sigma_k(p_k) = i$ and $\sigma_k(p_k+1) = j$, so that by the very definition
$$ x(\Gamma_{D(k)}, \vect{\omega}^{D(k)})_{ \sigma_k(p_k+1)} = x(\Gamma_{D(k)}, \vect{\omega}^{D(k)})_{\sigma_k(p_k)} + \omega_{p_k}^{D(k)}e_{1+a_{p_k}^{D(k)}}  \ .$$
Regarding the inductive step, consider $j'= \sigma_k(q_k-1)$. Note that since $q_{k}-1 > p_k$ we have that $j' >_{\Lambda_k} i$. We can use the both for $j,j'$ and for $j',i$, respectively descending from the base case and from the inductive hypothesis. Putting all together we get
\begin{align*}
&x(\Gamma_{D(k)}, \vect{\omega}^{D(k)})_j -x(\Gamma_{D(k)}, \vect{\omega}^{D(k)})_i =  \\
= &\left (x(\Gamma_{D(k)}, \vect{\omega}^{D(k)})_j - x(\Gamma_{D(k)}, \vect{\omega}^{D(k)})_{j'} \right )+
\left ( x(\Gamma_{D(k)}, \vect{\omega}^{D(k)})_{j'} - x(\Gamma_{D(k)}, \vect{\omega}^{D(k)})_i \right ) = \\
 = &\vect{\omega}_{q_k-1}^{D(k)}e_{1+a_{q_k-1}^{D(k)}} + \sum_{h=p_k}^{q_k-2} \vect{\omega}_{h}^{D(k)} e_{1+a_h^{D(k)} } = \\
 = & \sum_{h=p_k}^{q_k-1} \vect{\omega}_{h}^{D(k)} e_{1+a_h^{D(k)} }\ .
\end{align*}
\end{proof}

\begin{figure}[H]
	\centering
	\includegraphics[scale=0.15]{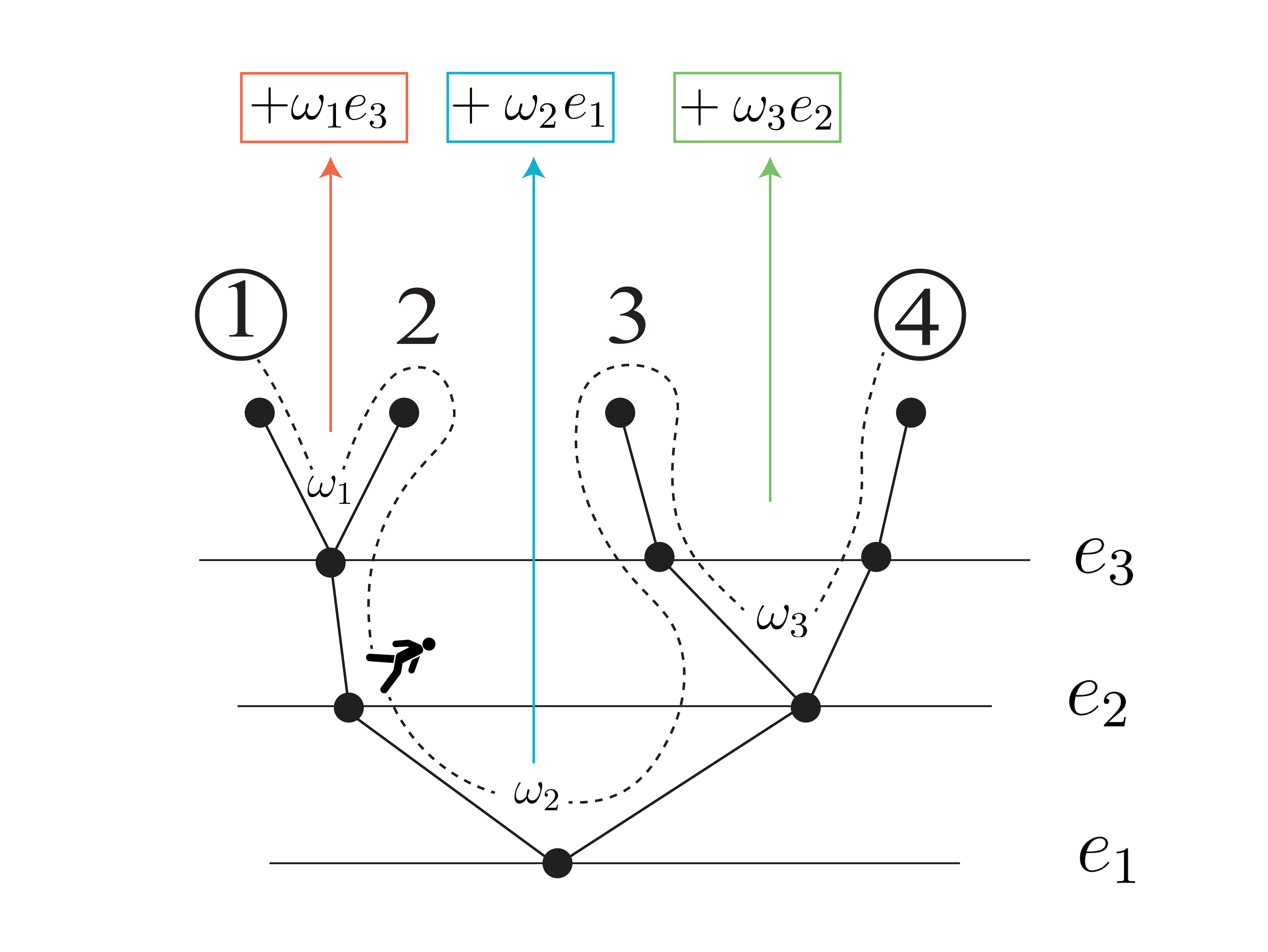}
	\caption{A tiny man walking from $1$ to $4$ and collecting the contributes on the forks}
	\label{walking-man}
\end{figure}

We have all the ingredients to promote $\check{\tau}^0$ to a map from $\check{\Omega}(\bt \Gamma)$ to the configuration space.
\begin{lemma} \label{stratum-tau} For all $\bt \Gamma \in \Nerve(\FNP_m(n))_d$ and $D: [r] \to [d]$, the map 
$$\check{\tau}^0 : \check{\Omega}(\bt \Gamma, D) \to (\mathbb{R}^m)^n$$
corestricts to $\Conf_n(\mathbb{R}^m)$. It is also compatible with the stratification, yielding a map
$$\check{\tau}^1 : \check{\Omega}(\bt \Gamma) \to \Conf_n(\mathbb{R}^m)  \ .$$
\end{lemma}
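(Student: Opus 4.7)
The plan splits into two stages: first, showing that $\check{\tau}^0$ lands in $\Conf_n(\R^m)$ on each stratum $\check{\Omega}(\bt\Gamma,D)$; then, gluing the stratum-wise formulas into a single continuous map $\check{\tau}^1$ defined on all of $\check{\Omega}(\bt\Gamma)$.

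For the first stage, fix $i \neq j$ in $\{1,\ldots,n\}$ and set $d^* := \min\{d_{\Gamma_k}(i,j) : k \in \im D\}$, the smallest depth between $i$ and $j$ realized on the chain. I will inspect the $(d^*+1)$-th Euclidean coordinate of $\check{\tau}^0(\bt\lambda,\bts\omega)_j - \check{\tau}^0(\bt\lambda,\bts\omega)_i$ via the walking-man formula (Lemma~\ref{difference}). Trees $\Gamma_k$ with $d_{\Gamma_k}(i,j) > d^*$ take every walk step at heights $a^k_h > d^*$ by the min-rule of depth orderings, so they contribute nothing to coordinate $d^*+1$; while each tree with depth exactly $d^*$ traverses at least one fork at height $d^*$ during the walk from $\min_{\Gamma_k}\{i,j\}$ to $\max_{\Gamma_k}\{i,j\}$. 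That coordinate therefore reduces to
\[
\sum_{\substack{k \in \im D\\ d_{\Gamma_k}(i,j) = d^*}} \lambda_k\, \sgn^{\Gamma_k}_{ij}\, \bigg( \sum_{h\,:\,a^k_h = d^*} \omega^k_h \bigg),
\]
in which every inner sum is strictly positive, and at least one $\lambda_k$ appearing in the outer sum is strictly positive by the very definition of $d^*$.

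The main obstacle, I expect, will be the sign-consistency step: showing that $\sgn^{\Gamma_k}_{ij}$ is the same for all $k \in \im D$ with $d_{\Gamma_k}(i,j) = d^*$. The idea is that for any two such indices $k < k'$, the chain condition in the reverse exit poset yields $\Conf(\Gamma_{k'}) \subset \overline{\Conf(\Gamma_k)}$, so the exit-poset rule recalled in Section~\ref{configurations} applies with $\alpha = \min_{\Gamma_k}\{i,j\}$, $\beta = \max_{\Gamma_k}\{i,j\}$. Starting from $\alpha <_{d^*} \beta$ in $\Gamma_k$, the only admissible outcome in $\Gamma_{k'}$ compatible with $d_{\Gamma_{k'}}(i,j) = d^*$ is $\alpha <_{d^*} \beta$ in $\Gamma_{k'}$ as well, because every other allowed option would strictly change the depth between $i$ and $j$. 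Hence $\sgn^{\Gamma_k}_{ij} = \sgn^{\Gamma_{k'}}_{ij}$, and the displayed coordinate is a same-signed combination of strictly positive terms, hence nonzero.

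For the second stage, I will simply set $\check{\tau}^1(\bt\lambda, \bts\omega) := \sum_{k=0}^d \lambda_k\, x(\Gamma_k, \omega^k)$ on all of $\check{\Omega}(\bt\Gamma)$. Terms with $\lambda_k = 0$ vanish, so the value depends only on those $\omega^k$ with $\lambda_k > 0$ and therefore descends to the quotient used to define $\check{\Omega}(\bt\Gamma)$; continuity is manifest from the explicit formula. The restriction of $\check{\tau}^1$ to any stratum $\check{\Omega}(\bt\Gamma, D)$ is exactly $\check{\tau}^0$, and the first stage guarantees that its image is contained in $\Conf_n(\R^m)$.
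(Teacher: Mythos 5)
Your proof is correct. For the first part---that the image is a configuration---you follow the paper's route: extract the $e_{1+d^*}$-coordinate via the walking-man formula (Lemma~\ref{difference}), note that trees whose fork between $i,j$ goes strictly above height $d^*$ contribute nothing to it, and observe that every minimal-depth tree contributes a same-signed positive amount. The paper arranges its version around the observation that the minimal-depth trees form a prefix $\Gamma_{D(0)},\dots,\Gamma_{D(q)}$ of the chain (by defining $q$ as the largest index with $i<_r j$), which makes the sign agreement automatic; you instead establish the sign agreement directly by applying the exit-poset rule to arbitrary pairs $k<k'$ of minimal-depth indices, which is equally valid. One minor caveat: the rule as quoted in Section~\ref{configurations} is stated with $s<r$, whereas the paper's own proof of this lemma uses the orientation $s>r$ (the one consistent with points merging in the closure); your deduction only needs $s\neq d^*$, so it is insensitive to the convention. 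For the second part you genuinely streamline the paper's argument: rather than verifying stratum compatibility by a sequential-limit check as the paper does, you exhibit the single global formula $\sum_{k=0}^d \lambda_k\,x(\Gamma_k,\vect{\omega}^k)$, note that it is continuous on $|\Delta^d|\times[(\mathbb{R}_{>0})^{n-1}]^{d+1}$, descends to the quotient because $\lambda_k=0$ kills the dependence on $\vect{\omega}^k$, and restricts to $\check{\tau}^0$ on each $\check{\Omega}(\bt\Gamma,D)$. This is a shorter and more transparent proof of that half, at no cost in rigor.
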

\begin{proof} In order to see that the result of $\check{\tau}^0$ is a configuration, we will show  that the difference of any two points has a non-zero component. Specifically, for any $i,j$, suppose that $i <_r j$ in $\Gamma_{D(0)}$. Also, take $q$ the maximum index such that $i<_r j$ in all $\Gamma_{D(0)}, \ldots, \Gamma_{D(q)}$. Recall that in $\Gamma_{D(q+1)}, \ldots, \Gamma_{D(r)}$ we will have $i<_s j$ or $j<_s i$ with $s > r$. Consider $(\bt \lambda, \bts \omega) \in \check{\Omega}(\bt \Gamma, D)$. Using the formula in Lemma \ref{difference} with the usual $p_k, q_k$ notation, we have
\begin{align*}
& \check{\tau}^0(\bt \lambda, \bts \omega)_j - \check{\tau}^0(\bt \lambda, \bts \omega)_i   = \sum_{r=0}^k \lambda_{D(k)} \sgn_{ij}^{\Lambda_{D(k)} } \sum_{h=p_k}^{q_k-1} \vect{\omega}^{D(k)}_h e_{1+a^{D(k)}_h} \\
= & \sum_{r=0}^q \lambda_{D(k)}\cdot (+1) \sum_{h=p_k}^{q_k-1} \vect{\omega}^{D(k)}_h e_{1+r} + \sum_{r=q+1}^k \lambda_{D(k)} \sgn_{ij}^{\Lambda_{D(k)} } \sum_{h=p_k}^{q_k-1} \vect{\omega}^{D(k)}_h e_{1+a^{D(k)}_h}\ .
\end{align*}
Note that $\lambda_p$ and $\omega^{p}_{\alpha}$ are positive for all $p, \alpha$, thus the first summation is a positive multiple of $e_{1+r}$. The other summation instead, because of the above observation, is in $\textrm{Span} \{e_{2+r}, \ldots, e_m \}$. We can conclude that 
$$ \langle \check{\tau}^0(\bt \lambda, \bts \omega)_j - \check{\tau}^0(\bt \lambda, \bts \omega)_i , e_{1+r} \rangle > 0 \ ,$$
which in particular implies that $\check{\tau}^0(\bt \lambda, \bts \omega)$ is a configuration. 

Let us examine the compatibility with the stratification. We have that
$$ \overline{\check{\Omega}(\bt \Gamma, D)}  = \bigsqcup_{T: [r'] \to [r] } \check{\Omega}(\bt \Gamma, DT)  \ .$$
Consider a point $(\bt \lambda, \bts \omega) \in \check{\Omega}(\bt \Gamma, DT)$ for some $T: [s] \to [r]$, and take $(\bt \lambda^{(n)}, (\bts \omega)^{(n)})$ converging to $(\bt \lambda, \bts \omega)$. We have to show that
$$ \lim_{n \to \infty} \tau^0(\bt \lambda^{(n)}, (\bts \omega)^{(n)}) = \tau^0(\bt \lambda, \bts \omega)  \ .$$
Note that 
\begin{align*}
 \lim_{n \to \infty} \check{\tau}^0(\bt \lambda^{(n)}, (\bts \omega)^{(n)} ) & =  \lim_{n \to \infty} \sum_{k=0}^r \lambda^{(n)}_{D(k)}	x(\Gamma_{D(k)}, (\vect{\omega}^{D(k)})^{(n)}) = \\
 & =  \lim_{n \to \infty}  \sum_{h \in \im D} \lambda^{(n)}_h x(\Gamma_h, (\vect{\omega}^h)^{(n)}) \\
 & = \sum_{h \in \im D} \lambda_h x(\Gamma_h, \vect{\omega}^h) \ .
\end{align*}
We now use the fact that $\lambda_h = 0$ for all $h \not \in \im DT$, giving
\begin{align*}
& = \sum_{h \in \im DT} \lambda_h x(\Gamma_h, \vect{\omega}^h) \\
& = \sum_{k=0}^s \lambda_{DT(k)} x(\Gamma_{DT(k)}, \vect{\omega}^{DT(k)} ) \\
& = \check{\tau}^0(\bt \lambda, \bts \omega) \ ,
\end{align*}
which is the thesis.
\end{proof}

\begin{remark} From a combinatorial perspective, the proof of $\check{\tau}^0$ being a configuration can be summarised in this way. Consider a chain of trees $\Gamma_0 < \ldots < \Gamma_d$ and a point $(\lambda_{\bullet}, \bts \omega)$. Suppose that without loss of generality $i <_p j$ in $\Gamma_0$ and that $\lambda_0 \neq 0$. Then the difference between the $i$-th and $j$-th point receives a positive contribution along direction $e_{1+p}$ from $x(\Gamma_0, \vect{\omega}^0)$, since the fork between $i$ and $j$ reaches depth $p$ at some point. At this point, all the bigger trees in the chain falls in two categories (see Section \ref{configurations}, explaining the combinatorial reformulation of the poset structure on Fox-Neuwirth trees):
\begin{itemize}
\item Whether $i<_p j$, so that an analogous contribution occurs;
\item Whether $i<_q j$ or $j <_q i$ with $q  > p$, contributing to a different direction.
\end{itemize}
All in all, the difference between the two points will be positive in the direction $e_{1+p}$.

\end{remark}
Finally, let us see that we can put this together into a globally defined map.
\begin{lemma} For $\bt \Gamma \in \Nerve(\FNP_m(n))^{nd}_d $, the map $\check{\tau}^1: \check{\Omega}(\Gamma_{\bullet}) \to \Conf_n(\mathbb{R}^m)$ satisfy the $\partial$-contravariance condition, so it assembles into a map
$$ \check{\tau} : \WT_m(n) \to \Conf_n(\mathbb{R}^m)  \ .$$
\end{lemma}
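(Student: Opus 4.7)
The plan is to verify the $\partial$-contravariance condition of the non-degenerate realization framework (Lemma \ref{simplicial-non-deg} applied with $Z=\Conf_n(\mathbb{R}^m)$): for every $\bt\Gamma \in \Nerve(\FNP_m(n))^{nd}_d$ and every $i \in [d+1]$, the square
\[\begin{tikzcd}
\check\Omega(\partial_i \bt\Gamma) \ar[r,"\check\tau^1_{\partial_i \bt\Gamma}"] \ar[d,"\check\Omega(\partial_i)"'] & \Conf_n(\mathbb{R}^m) \\
\check\Omega(\bt\Gamma) \ar[ur,"\check\tau^1_{\bt\Gamma}"']
\end{tikzcd}\]
commutes. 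Once this is established, the colimit property immediately assembles the local data into a global map $\check\tau : \WT_m(n) \to \Conf_n(\mathbb{R}^m)$.

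The check is a direct computation. First observe that, because $\check\tau^0$ only uses $\vect\omega^k$ when $\lambda_k \neq 0$, it descends to the equivalence relation defining $\check\Omega(\bt\Gamma)$, and can be rewritten on any representative simply as
\[
\check\tau^1_{\bt\Gamma}(\bt\lambda, \bts\omega) \;=\; \sum_{k=0}^{d} \lambda_k \cdot x(\Gamma_k, \vect\omega^k).
\]
Now take $(\bt\lambda, \bts\omega) \in \check\Omega(\partial_i \bt\Gamma)$ with $\bt\lambda \in |\Delta^{d-1}|$. By Definition \ref{twist-2}, the image $(\partial_i\bt\lambda, \partial_i\bts\omega)$ in $\check\Omega(\bt\Gamma)$ has $0$ in the $i$-th $\lambda$-slot, the unit vector $\underline u$ in the $i$-th $\omega$-slot, and the remaining slots obtained by reindexing via the increasing bijection $\sigma_i$.

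Applying the formula above to $(\partial_i\bt\lambda, \partial_i\bts\omega)$, the $k=i$ summand vanishes (its coefficient is $0$, so the placeholder weight $\underline u$ is harmless); the surviving sum becomes, after substituting $\ell = \sigma_i(k)$,
\[
\check\tau^1_{\bt\Gamma}\bigl(\check\Omega(\partial_i)(\bt\lambda, \bts\omega)\bigr) \;=\; \sum_{\ell=0}^{d-1} \lambda_\ell \cdot x(\Gamma_{d_i(\ell)}, \vect\omega^\ell).
\]
Since by definition of the face map on the nerve one has $(\partial_i\bt\Gamma)_\ell = \Gamma_{d_i(\ell)}$, the right-hand side is precisely $\check\tau^1_{\partial_i\bt\Gamma}(\bt\lambda, \bts\omega)$, which is what we needed.

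I do not expect any serious obstacle: the non-trivial content has already been absorbed into Lemma \ref{stratum-tau} (corestriction to $\Conf_n(\mathbb{R}^m)$ and compatibility with the stratification). What is left is essentially bookkeeping, the only subtle point being to observe that the unit vector $\underline u$ inserted in the $i$-th slot of $\partial_i\bts\omega$ is multiplied by the zero coefficient $(\partial_i\bt\lambda)_i = 0$, so it never contributes to the configuration. The cleanest presentation is the one above: reduce $\check\tau^0$ to a slice-free sum, apply $\check\Omega(\partial_i)$, observe the $i$-th term vanishes, and identify the reindexed sum with the expression obtained directly from the chain $\partial_i\bt\Gamma$.
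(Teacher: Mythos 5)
Your proof is correct. It takes a genuinely different route from the paper's, and the difference is worth noting.

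The paper's proof runs stratum by stratum: it restricts to $\check\Omega(\partial_i\bt\Gamma,D)$ for $D\colon[r]\to[d-1]$, invokes Lemma \ref{restriction} to know that $\check\Omega(\partial_i)$ lands in $\check\Omega(\bt\Gamma,\partial_iD)$, and then compares the two compositions by writing out the sums indexed over $\im D$. You instead observe that because weights in $\WT_m$ are strictly positive and finite, the configuration $x(\Gamma_k,\vect\omega^k)$ is always a genuine element of $(\mathbb{R}^m)^n$, so the stratum-wise definition of $\check\tau^1$ collapses to the single slice-free expression $\sum_{k=0}^{d}\lambda_k\, x(\Gamma_k,\vect\omega^k)$, well defined on the quotient because the $k$-th term vanishes exactly when $\lambda_k=0$. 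With this reformulation, $\partial$-contravariance is a one-line reindexing and Lemma \ref{restriction} never enters. This is cleaner for the $\WT_m$ case in isolation. What the paper's more laborious stratified approach buys is that its structure carries over verbatim to the $\WHT_m$ setting (cf.\ Lemmas \ref{stratified} and \ref{stratum-comp}), where weights may be $0$ or $\infty$ and the slice-free reformulation fails: there one cannot replace the sum over $\im D$ by a sum over all of $[d]$, since $x(\Gamma_k,\vect\omega^k)$ need not be a finite vector and the $\lambda_k=0$ slots cannot be discarded by a pointwise-multiplication argument. Your approach is thus a legitimate shortcut that exploits a feature special to positive weights, whereas the paper sacrifices elegance here to keep the two arguments parallel.
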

\begin{proof} The diagram for the $\partial$-contravariance reads
\[\begin{tikzcd}
	{\check{\Omega}(\partial_i \Gamma_{\bullet})} & {\textrm{Conf}_n(\mathbb{R}^m)} \\
	{\check{\Omega}( \Gamma_{\bullet})}
	\arrow[from=1-1, to=1-2]
	\arrow[from=1-1, to=2-1]
	\arrow[from=2-1, to=1-2]
\end{tikzcd}\]

Let us restrict to the stratum $\check{\Omega}(\partial_i \bt \Gamma, D) \subset \check{\Omega}(\partial_i \bt \Gamma)$ for some $D: [r] \to [d-1]$. Because of Lemma \ref{restriction}, we know that the left map lands in $\check{\Omega}(\bt \Gamma, \partial_i D)$. Thus we reduce to the commutativity of
\[\begin{tikzcd}
	{\check{\Omega}(\partial_i\Gamma_{\bullet}, D)} & {\textrm{Conf}_n(\mathbb{R}^m)} \\
	{\check{\Omega}( \Gamma_{\bullet}, \partial_i D )}
	\arrow[from=1-1, to=1-2]
	\arrow[from=1-1, to=2-1]
	\arrow[from=2-1, to=1-2]
\end{tikzcd}\]
Consider $(\bt \lambda, \bts \omega) \in \check{\Omega}(\partial_i \bt \Gamma, D)$. The upper arrow is
$$ (\bt \lambda, \bts \omega) \mapsto \sum_{k=0}^r \lambda_{D(k)} x( \partial_i \Gamma_{D(k)}, \vect{\omega}^{D(k)} ) = \sum_{k=0}^r \lambda_{D(k)} x(  \Gamma_{d_i D(k)}, \vect{\omega}^{D(k)} )  \ ,$$
while the composition of the other two arrows give
$$ (\bt \lambda, \bts \omega) \mapsto (\partial_i \bt \lambda, \partial_i \bts \omega) \mapsto \sum_{k=0}^r (\partial_i \lambda)_{d_i D(k)} x(\Gamma_{d_i D(k)}, (\partial_i \vect{\omega})^{d_i D(k)} ) =  \sum_{k=0}^r \lambda_{D(k)} x(\Gamma_{d_i D(k)}, \vect{\omega}^{D(k)} ) \ ,$$
which coincides with the previous sum. We used that $(\partial_i \lambda)_{d_i s} = \lambda_{s_i d_i s} = \lambda_s$ and similarly for $\bts \omega$. 
\end{proof}

\subsection{Weighted Hairy Trees} \label{wht-section}
We now construct a variant of the $\WT_m$ space, in which we allow zero or $\infty$ weights on "hairs": a group of consecutive labels with the highest possible height (e.g. $4 <_2 5 <_2 6$ in $\FNP_3$). This will serve as a bridge between $\BZ$ and $\Kons$. It is crafted in a four-step process:
\begin{enumerate}
\item Build $\tilde{\Omega}(\Lambda_{\bullet}, \phi, D)$ where $\bt \Lambda \in \Nerve(\FNP_m(n))^{nd}_r, \phi: [n] \to [\ell], D: [r] \to [d]$ in $\Delta_s$;
\item Put them together as strata of a space $\tilde{\Omega}(\Gamma_{\bullet}) \subset |\Delta^d | \times \left ( \mathbb{R}^{ \ell -1} \right )^{d+1}$ for $\bt \Gamma \in \Nerve(\FNP_{m}(\ell))^{nd}_{d}$, then quotient out by some equivalence relation $\mathcal{R}$ and obtain cells $\Omega(\bt \Gamma)$;
\item Turn $\Omega$ into a functor $\Nerve(\FNP_m(n))^{nd} \to \Ttop$, and use cells $\Omega(\Gamma_{\bullet})$ to build the twisted non-degenerate realization $\WHT_m(n) = | \Nerve(\FNP_m(n))^{nd} |_{\Omega}$;
\item Construct the semicosimplicial maps $d_j: \WHT_m(n) \to \WHT_m(n+1)$ as simplicial morphisms and verify the semicosimplicial identities.
\end{enumerate} 
A more intuitive explanation will be given in Remark \ref{wht-intuition}, but it is important to keep in mind the general roadmap from a formal point of view. A picture of $\WHT_2(2)$ can be found in \ref{wt-sd}.
\subsubsection{Construction of defining cells}
Since the construction involves convex combinations, let us give the following
\begin{definition} Given $D: [r] \to [d]$, let us define $ \mathring{D}|\Delta^d| \subset | \Delta^d | $ as the open face associated to $D$:
$$\mathring{D}|\Delta^d| := \{ (\lambda_0, \ldots, \lambda_d) \in |\Delta^d |: i \not \in \im(D) \Rightarrow \lambda_i = 0, i \in \im(D) \Rightarrow \lambda_i > 0 \} \ .$$
\end{definition}
We also have to distinguish a special kind of tree for the building blocks, for a reason that will be clear later:
\begin{definition} The tree $T_n = 1 <_{m-1} \ldots <_{m-1} n$ with trivial permutation and maximal depth indices is called the \textit{trivial tree} with $n$ leaves. If a chain of trees $\bt \Gamma \in \Nerve(\FNP_m(n))_d$ ends with a trivial tree, it is called trivializable.
\end{definition}
Note also that if a chain \textit{contains} a trivial tree it must be at the end since it is a maximal element of the poset. There is also a special map that will appear together with the special tree:
\begin{definition} Given $d \in \mathbb{N}$, denote by $u_d : [0] \to [d]$ the map that sends $0$ to $d$.
\end{definition}
To simplify the subsequent exposition, we also define:
\begin{definition} The triple $(\bt \Lambda, \phi, D)$ for $\Lambda_{\bullet} \in \Nerve(\FNP_m(n))^{nd}_r, \phi: [n] \to [\ell], D: [r] \to [d]$ is called \textit{trivial} if $r=0, \bt \Lambda = T_n, D = u_d$.
\end{definition}
We are ready to define the building blocks of $\WHT$.
\begin{definition} Given a non-trivial triple $\Lambda_{\bullet} \in \Nerve(\FNP_m(n))^{nd}_r, \phi: [n] \to [\ell], D: [r] \to [d]$, define
$$ \tilde{\Omega}(\bt \Lambda, \phi, D) := \{ ( \bt \lambda, \bts{\omega} ): \bt \lambda \in \mathring{D} | \Delta^d|, \bts{\omega} \in (\Rext^{\ell-1} )^{d+1} : \forall k \in [r] \ \ \forall \phi^{\Lambda_k}(0) \le \alpha \le \phi^{\Lambda_k}(n)  $$
$$
(\vect{\omega}^{D(k)})_{\alpha}  \left\{ \begin{array}{ll}
        = 0,  & \textrm{if }  \alpha \not \in \im \phi^{\Lambda_k } \\
        \in \mathbb{R}_{> 0},  &  \textrm{if }  \phi^{\Lambda_k }(0) < \alpha < \phi^{\Lambda_k}(n) \ \ \ \textrm{ and }  \alpha \in \im \phi^{\Lambda_k } \\
        = \infty, & \textrm{if }  \alpha = \phi^{\Lambda_k }(0) \textrm{ or } \alpha = \phi^{\Lambda_k}(n) \\
    \end{array} \right. $$
$$
\} $$
For a trivial triple $(T_n, \phi, u_d)$ the weights are not constrained, so the cell does not depend on $\phi$:
$$ \tilde{\Omega}(T_n, \phi, u_d) = \tilde{\Omega}( \phi T_n, u_d ) = \tilde{\Omega}( T_{\ell}, u_d) :=  \{ ( \bt \lambda, \bts{\omega} ): \bt \lambda \in \mathring{D} | \Delta^d|, \bts{\omega} \in (\Rext^{\ell-1} )^{d+1} \}  \ .$$
\end{definition}
\begin{remark} \label{wht-intuition}
Such block represents a convex combination of weighted trees, where:
\begin{itemize}
\item $\bt \lambda$ is the tuple of coefficients for the combination;
\item $D$ specifies which $\lambda_k$ are zero and which are positive;
\item $\Lambda_{\bullet}$ specify a tree for each non-zero coefficient of the combination;
\item $\vect{\omega}^{D(k)}$ is the vector of weights associated to the tree $\Lambda_k$; the weight $(\vect{\omega}^{D(k)})_{\alpha}$ sits in the branch between the $\alpha$-th and the $(\alpha+1)$-th position. We retain redundant weight vectors $\vect{\omega}^k$ for $k \not \in \im(D)$ to provide a common ambient space for the bricks when varying $D$. However, this degree of redundancy will be factored out in the process of construction.
\item $\phi= d_{i_1} \ldots d_{i_p}$ with $i_1 \le \ldots \le i_p $ describes an element of the cosimplicial action on Fox-Neuwirth trees and it prescribes where 'hairs' are meant to be (see corollary ). Note however that the doubling process must be homogeneous in $\Lambda_0, \ldots, \Lambda_r$, since $\phi$ is the same for all the trees.
\end{itemize}
\end{remark}

\begin{example} Consider $D, \phi$ defined as 
\hspace*{-1cm} \begin{figure}[H]
	\centering
	\includegraphics[scale=0.135]{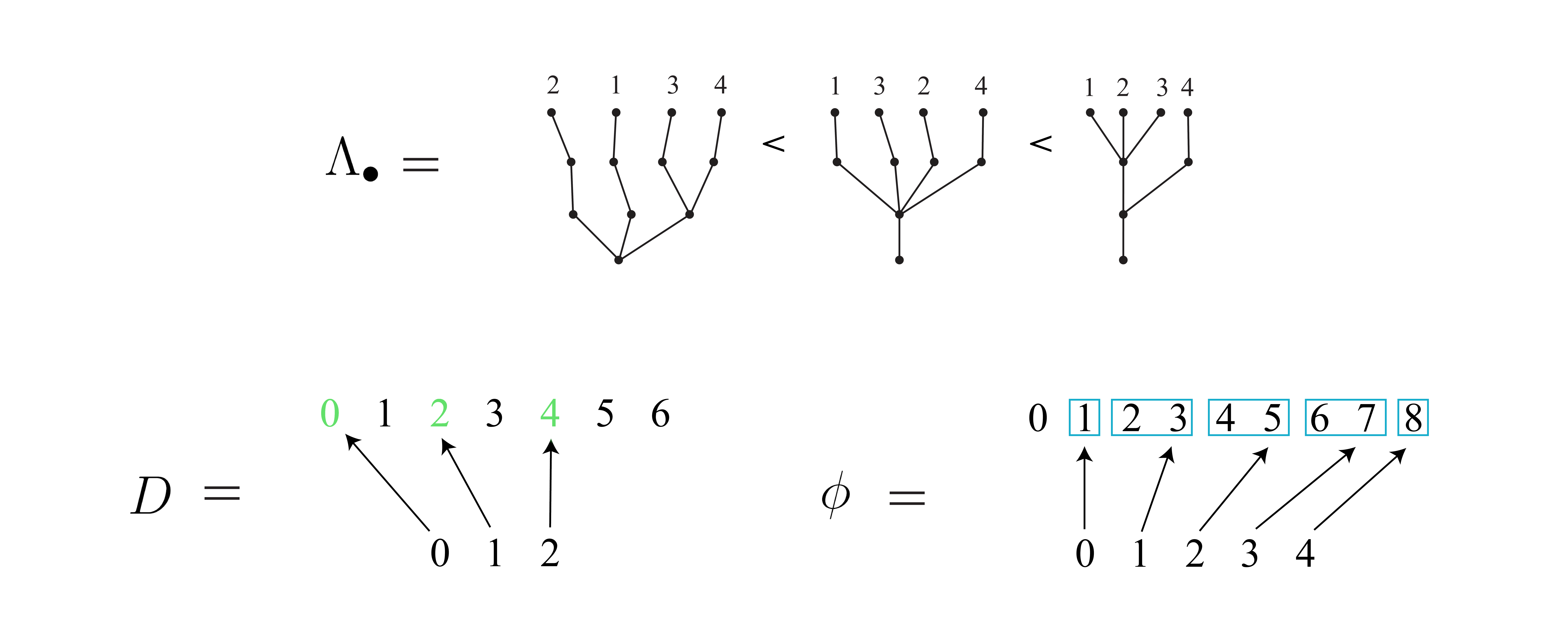}
\end{figure}
 An element $(\bt \lambda, \bt \omega) \in \check{\Omega}(\bt \Lambda, \phi, D)$ can be pictured analogously to figure \ref{weighted-combination}, but with two differences: we don't have to specify trees where coefficients of the convex combinations are zero, thus we only keep positive coefficients; weights are constrained to be $0,\infty$ on the hairs predicted by the Shape-Tree Lemma applied to $\phi$. All in all, we have 
\begin{figure}[H]
	\centering
	\includegraphics[scale=0.23]{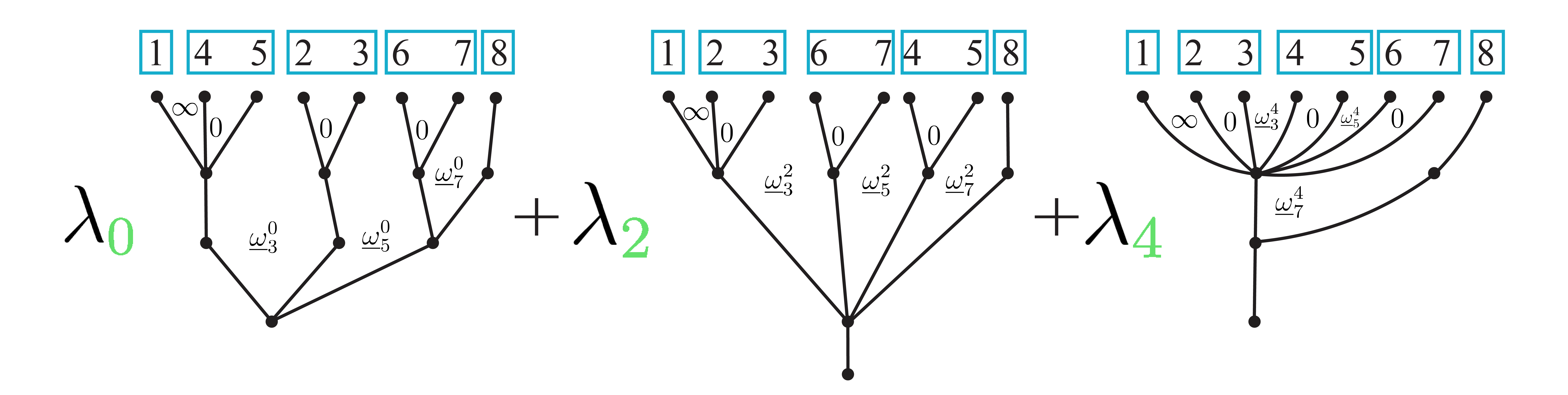}
	\caption{An element of the weighted hairy trees space}
	\label{weighted-hairy-element}
\end{figure}

\end{example}
These bricks constitute the strata of a bigger cell.
\begin{definition} Given $\bt \Gamma \in \Nerve(\FNP_m(\ell))^{nd}_d$, define
$$ \tilde{\Omega}(\bt \Gamma) = \bigcup_{\phi \Lambda_{\bullet} = D \Gamma_{\bullet} } \tilde{\Omega}(\bt \Lambda, \phi, D) \subset |\Delta^d| \times (\Rext^{\ell-1})^{d+1}  \ ,$$
with the subspace topology. Here $D$ acts on $\Gamma_{\bullet}$ via the simplicial structure on the nerve.
\end{definition}

We now quotient out by an equivalence relation that achieves three goals: 
\begin{enumerate}
\item It cancels the redundance of useless weight vectors \textit{after} the several strata have been put together in a common ambient space;
\item It contracts to a point the portion of space where different cosimplicial moves (still to be defined) of $\WHT_m$ would land, so that the space will result strictly cosimplicial.
\item In case of a trivializable $\bt \Gamma$, it identifies all the trivial strata and collapses them to a point.
\end{enumerate}

\begin{definition} Consider a triple $(\bt \Lambda, \phi, D)$ and two elements $(\bt \lambda, \bts \omega), (\bt \lambda, \bts \theta) \in \tilde{\Omega}(\bt \Lambda, \phi, D)$. If the triple is non-trivial, we say that $(\bt \lambda, \bts \omega) \sim (\bt \lambda, \bts \theta)$ if $(\vect{\omega}^k)_{\alpha} = (\vect{\theta}^k)_{\alpha}$ for all $k \in \im(D)$ and all $\alpha: \ \ \phi^{\Lambda_k}(0) \le \alpha \le \phi^{\Lambda_k}(n)$. If it is trivial, they are always equivalent. Now define
$$ \Omega(\bt \Gamma) = \tilde{\Omega}(\bt \Gamma) / \sim  \ ,$$
where the equivalence relation is generated by the union of the various equivalence relations on the $\tilde{\Omega}(\bt \Lambda, \phi, D)$.
\end{definition}

The cell $\Omega(\bt \Gamma)$ will be used later for a twisted non-degenerate realization.

\begin{remark} We want to explore the "polytopic geometry" of cells, in parallel to Remark \ref{polytope-exploration-wt} about positively weighetd trees. Since the goal of this investigation is to provide pictures and not to prove theorems, our arguments will be sloppy; details are left to the reader. 

The main difference stems from the new quotient relation outside the interval $[\phi(0), \phi(n)]$. Put it simply, if we have $k+1$ hairs with labels $1, \ldots, k+1$ at the beginning of a tree $\Gamma$, the weights $(\omega_1, \ldots, \omega_k)$ on such hairs will be subjected to a different topology than before. Let us call such a group of hairs "extremal", and denote by $C(k)$ the projection $\Omega(\Gamma)$ on these components. 

We want to sketch why $C(k) \simeq |\Delta^k|$, by induction on $k$. If $k=1$, the weight can freely vary from $0$ to $\infty$, so that $C(1) \simeq |\Delta^1|$. In the inductive step, consider $k+1$ extremal hairs. Note that we have a map 
$$C(k+1) \to C(k) \times [0, \infty] \ ,$$
$$(\omega_1, \ldots, \omega_{k+1}) \mapsto ( (\omega_1, \ldots, \omega_k), \omega_{k+1}) \ .$$
The first $k$ components land on $C(k)$ since the first $k$ hairs are extremal as well. This map is injective for $\omega_{k+1} \neq \infty$, but the equivalence relation collapse the vector to a point whenever $\omega_{k+1} = \infty$. This is exactly the definition of a cone, so that
$$ C(k+1) \simeq C(k) * |\Delta_0| \simeq |\Delta^k| * |\Delta^0| \simeq |\Delta^{k+1}| $$
by inductive hypothesis. In general, the polytope associated to $\Omega(\Gamma)$, where $\Gamma$ is a single tree, will be a product of two possibly trivial simplices (corresponding to "extremal" weights) and a cube (corresponding to "internal" weights). Let us define the following polytope:
$$ \mathcal{R}(u,v,p) \simeq  |\Delta^u| \times |\Delta^{p-v} | \times \Cube_{v-u-1} , \ \ \ \textrm{ for } 0\le u < v \le d$$ 
Referring to the definition of "extremal values" $a(\Gamma), b(\Gamma)$ given in \ref{extremal-values}, we will then have
$$ \mathcal{P}(\Omega(\Gamma)) \simeq \mathcal{R}(a(\Gamma), b(\Gamma), n) \ .$$
An example for $\Gamma = 123|4$ is depicted in figure \ref{polytope-2}. 

When examining a chain of trees $\bt \Gamma \in \Nerve(\FNP_m(n))^{nd}_d$, the situation is more intricated. We are only able to describe explicitly the geometry of a substratum $\Omega(\bt \Gamma, D)$, which is the union of all possible $\Omega(\bt \Lambda, \phi, D) \subset \Omega(\bt \Gamma)$ with a fixed $D$. The issue that emerges is the following. Since the map $\phi$ governing the stratum must be the same for all the involved trees, the extremal weights that contribute to simplices must sit on hairs in \textbf{every} tree. In other words, given $D:[r] \to [d]$, define
$$ a(\bt \Gamma, D) = \min_{k \in [r]} a(\Gamma_{D(k)}), \ \ \ b(\bt \Gamma) := \max_{k \in [r]} b(\Gamma_{D(k)})\ . $$
Then we have
$$ \mathcal{P}(\Omega(\bt \Gamma,D)) \simeq  \mathcal{R}(a(\bt \Gamma, D), b(\bt \Gamma, D), n)^{\vee(d+1)} \ .$$
The "free join" $\vee$ is defined, and its presence motivated, in  Remark \ref{polytope-exploration-wt}.
\begin{figure}
\centering
\includegraphics[width=6cm]{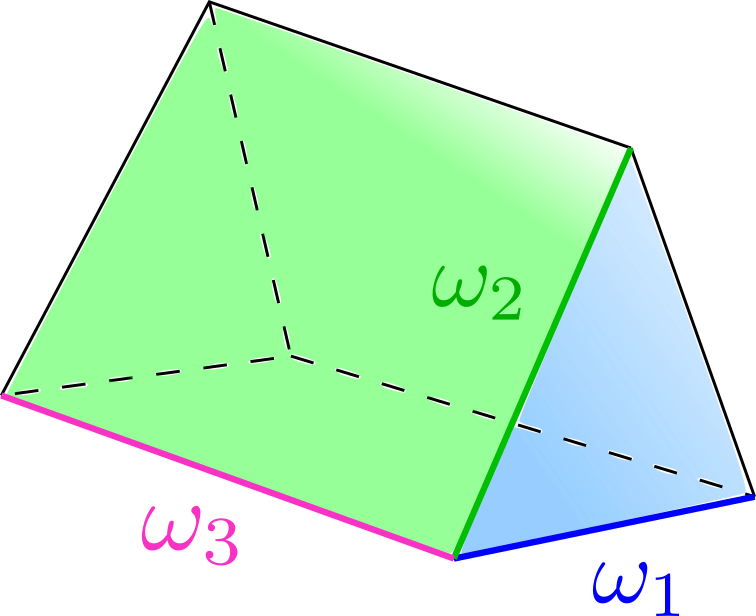}
\caption{The geometry behind $\Omega(123|4)$, a cell of $\WHT_2(4)$}
\label{polytope-2}
\end{figure}

\end{remark}
\subsubsection{Topology of cells}
The aim of this section is to better describe the topological structure of the union-and-quotient involved in the construction of $\Omega(\bt \Gamma)$. Let us give a preliminary definition
\begin{definition} Given $\bt \Gamma \in \Nerve(\FNP_m(\ell)^{nd}_d$ and $\bt \Lambda \in \Nerve(\FNP_m(n))_r^{nd}, \phi: [n] \to [\ell], D:[r] \to [d]$ such that $\phi \Lambda = D \Gamma$, define $\Omega(\bt \Lambda, \phi, D) = \tilde{\Omega}(\bt \Lambda, \phi, D)/ \sim$ as the image of $\tilde{\Omega}(\bt \Lambda, \phi, D)$ in $\Omega(\bt \Gamma)$.
\end{definition}
This is the central theorem:
\begin{theorem}[Stratification Theorem] Let $\bt \Gamma \in \Nerve(\FNP_m(\ell))^{nd}_d$.
\begin{enumerate}
\item If $\bt \Gamma$ is non-trivializable, then  $\Omega(\bt \Lambda, \phi, D) $ for $\bt \Lambda \in \Nerve(\FNP_m(n))_r^{nd}, \phi: [n] \to [\ell], D:[r] \to [d]$ such that $\phi \Lambda = D \Gamma$ are disjoint and stratify the space $\Omega(\bt \Gamma)$, with exit poset structure given by
$$ \overline{\Omega(\bt \Lambda, \phi, D) } = \coprod_{ \psi \bt \Pi = Q \bt \Lambda }  \Omega( \bt \Pi, \phi \psi, DQ)  \ .$$
\item If $\bt \Gamma$ is trivializable, all the trivial strata $\Omega(T_k, \phi, u_{\ell})$ for $k\le \ell$ and $\phi:[k] \to [\ell]$ are identified and collapsed to a point, denoted as $\Omega(T)$. Non trivial strata are pairwise disjoint and does not intersect $\Omega(T)$. The exit poset structure, for a triple $(\bt \Lambda, \phi, D)$ with $r \not \in \im D $ is equal to the non-trivializable case:
$$ \overline{\Omega(\bt \Lambda, \phi, D) } = \coprod_{ \psi \bt \Pi = Q \bt \Lambda }  \Omega( \bt \Pi, \phi \psi, DQ)  \ ,$$
while for $r \in \im D$ we have: 
$$ \overline{\Omega(\bt \Lambda, \phi, D) } = \left (\coprod_{ \substack{\psi \bt \Pi = Q \bt \Lambda \\ Q \neq u_r } }  \Omega( \bt \Pi, \phi \psi, DQ)  \right ) \sqcup \Omega(T)  \ .$$
\end{enumerate}
\end{theorem}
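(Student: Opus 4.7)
The plan is to treat each claim via a general recovery principle: the triple $(\bt\Lambda, \phi, D)$ can be reconstructed from any representative of a point in $\Omega(\bt\Gamma)$, except when a trivializable slot destroys the information. The argument naturally splits into disjointness, the closure formula, and the trivializable modification.

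\textit{Disjointness in the non-trivializable case.} I would recover $(\bt\Lambda, \phi, D)$ from a representative $(\bt\lambda, \bts\omega)$ in three stages. First, the support of $\bt\lambda$, which is an invariant under $\sim$, determines $\im D$ and hence the increasing map $D$. Second, because $\bt\Gamma$ avoids $T_\ell$, each tree $\Gamma_{D(k)}$ has at least one depth strictly less than $m-1$; this is the key rigidity, since the cosimplicial action only ever inserts depth-$(m-1)$ hairs. Corollary \ref{extrema} reads $\phi(0)$ and $\phi(n)$ off the extremal $\infty$-weights of any $\vect\omega^{D(k)}$, Corollary \ref{consecutive} recovers the full jump pattern of $\phi^{\Lambda_k}$, and combined with the known permutation $\sigma^{\Gamma_{D(k)}}$ this pins down $\phi$ and then $\Lambda_k$. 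Disjointness after quotient follows because $\sim$ only varies weights outside the constrained range, so it cannot identify representatives sitting over different triples.

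\textit{Exit poset structure.} I would analyze limits of sequences in $\tilde\Omega(\bt\Lambda, \phi, D)$ inside the ambient $|\Delta^d|\times(\Rext^{\ell-1})^{d+1}$. Two independent degenerations can occur: coordinates $\lambda_{D(k)}$ may drop to $0$, restricting $D$ to $DQ$ for some increasing $Q:[r']\to[r]$, and interior positive weights $\omega^{D(k)}_\alpha$ may escape to $0$ or $\infty$. A weight collapsing to $0$ installs a new interior hair at $\alpha$; a weight exploding to $\infty$ occurs precisely when $\alpha$ becomes extremal in the new range $[\phi^{\Pi_k}(0), \phi^{\Pi_k}(n)]$. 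Combinatorially, both phenomena are encoded by extending the coface decomposition of $\phi$ by a further factor $\psi$. Requiring the tree data to match simultaneously on every surviving slot gives the condition $\psi\bt\Pi = Q\bt\Lambda$, and conversely every lower triple is reached by an explicit approximating sequence, establishing the coproduct formula for the closure.

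\textit{Trivializable case and main obstacle.} When the degeneration $Q$ equals $u_r$, the sole surviving slot of $\bt\Pi$ is mapped under $\phi\psi$ to $T_\ell$, so the Shape-Tree Lemma forces that slot to be a trivial tree, placing us on a trivial stratum. The equivalence relation is set up precisely so that all such trivial strata collapse to the single point $\Omega(T)$: this yields both the disjointness of non-trivial strata from $\Omega(T)$ (the recovery argument still works as long as some $\Lambda_k$ is non-trivial, which holds for non-trivial triples) and the modified closure formula, in which the $Q = u_r$ term is removed from the coproduct and replaced by $\Omega(T)$. The principal obstacle throughout is the possibility that a weight at an unconstrained position happens to take the value $0$ or $\infty$, so that the same ambient point lies simultaneously in several $\tilde\Omega(\bt\Lambda, \phi, D)$; the heart of the proof is the case analysis, powered by the Shape-Tree Corollaries, showing that $\sim$ never propagates such accidental coincidences into identifications between distinct non-trivial triples.
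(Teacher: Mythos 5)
Your proposal is correct and follows essentially the same strategy as the paper's proof: recovering the triple $(\bt\Lambda,\phi,D)$ from the support of $\bt\lambda$ and the innermost $\infty$-weights (the paper packages this as the Cell Recognition Lemma~\ref{stratum-infinities}, which is itself powered by the Shape-Tree machinery you cite), then computing the closure by relaxing strict inequalities to weak ones and tracking which degenerations are compatible with the defining constraints, with the trivializable chain handled as a special case via the non-strictness $a(\Gamma)<b(\Gamma)$ that fails exactly for $T_\ell$. The only imprecision is that you attribute to Corollary~\ref{extrema} the reading-off of $\phi(0),\phi(n)$ from the weight vector, whereas that step is really Lemma~\ref{stratum-infinities}; this is a minor misattribution and does not affect the soundness of the argument.
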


 Let us see a definition that means to capture the "hairy part" of a tree at the extreme left and right.

\begin{definition} \label{extremal-values} Given $\Gamma \in \FNP_m(n)$, define $E(\Gamma) = \{1, \ldots, a(\Gamma)\} \cup \{b(\Gamma), \ldots, n-1\}$, where 
$$ a(\Gamma) = \max \{ a \in \{1, \ldots, n-1\}: \ \ \forall k \le a+1\ \ \sigma^{\Gamma}(k) = k, \ \ \forall k \le a \ \ k <_{m-1} (k+1) \}  \ ,$$
$$ b(\Gamma) = \min \{ b \in \{1, \ldots, n-1\}: \ \ \forall b \le k \le n \ \ \sigma^{\Gamma}(k) = k, \ \ \forall b \le k \le n-1 \ \ k <_{m-1} (k+1) \}  \ .$$
\end{definition}

\begin{figure}[H]
	\centering
	\includegraphics[scale=0.16]{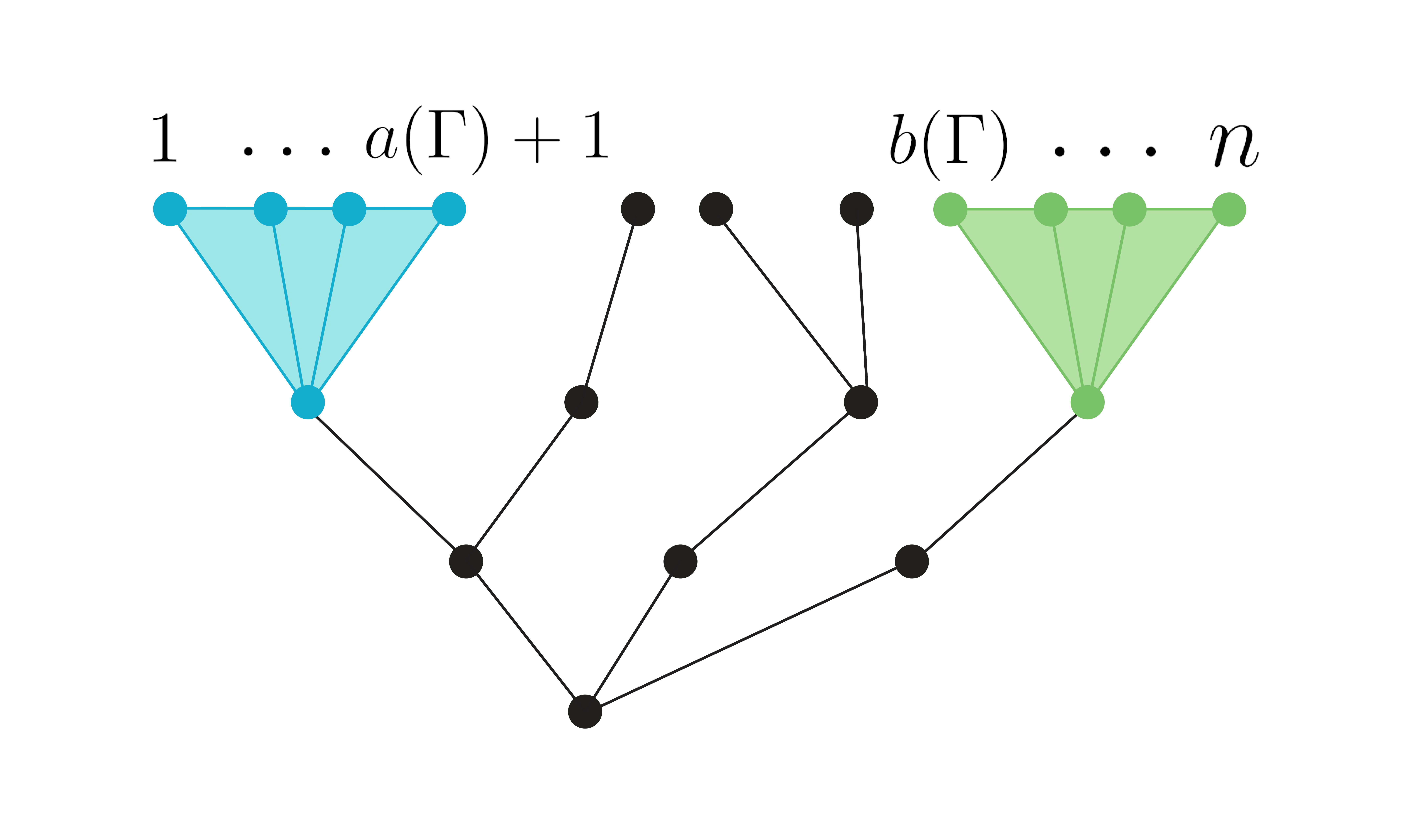}
	\caption{Extremal hair blocks}
	\label{Your label}
\end{figure}

Note that $E(\Gamma)$ is the set of indices where the weight $\infty$ can appear in $\Omega(\Gamma)$. Indeed, we can show the following
\begin{lemma} \label{infinities} Suppose $\bt \Gamma \in \Nerve(\FNP_m(n))_d, \bt \Lambda \in \Nerve(\FNP_m(p))_r, \phi:[p] \to [n], D:[r] \to [d]$ such that $\phi \Lambda = D\Gamma$. Then $\phi^{\Lambda_k}(0), \phi^{\Lambda_k}(p) \in E(\Gamma_{D(k)})$ for all $k \in [r]$. 

In particular, if $(\bt \lambda, \bt \omega) \in \Omega(\bt \Lambda, \phi, D)$ is such that $\omega^k_{\alpha}= \infty$ for some $k, \alpha$, then $\alpha \in E(\Gamma_{D(k)})$. 
\end{lemma}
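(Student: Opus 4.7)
The plan is to combine the Shape-Tree Lemma (Lemma~\ref{twist-initial}) with Corollary~\ref{extrema} to read off the hair structure of $\Gamma_{D(k)} = \phi\Lambda_k$ at its left and right extremes, where the $\infty$ weights are allowed to live. First, Corollary~\ref{extrema} gives $\phi^{\Lambda_k}(0) = \phi(0)$ and $\phi^{\Lambda_k}(p) = \phi(p)$ for every $k \in [r]$, so the task reduces to verifying $\phi(0),\phi(p) \in E(\Gamma_{D(k)})$.

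For the left extreme, apply Part~1 of the Shape-Tree Lemma with $\psi := \phi$ and $\Lambda := \Lambda_k$: since $\phi\bt\Lambda = D\bt\Gamma$, componentwise $\phi\Lambda_k = \Gamma_{D(k)}$, and the lemma provides, for every $1 \le t \le \phi(0)$, both $\sigma^{\Gamma_{D(k)}}(t) = t$ and $a^{\Gamma_{D(k)}}_t = m-1$. These are precisely the conditions entering the definition of $a(\Gamma_{D(k)})$, giving $\phi(0) \le a(\Gamma_{D(k)})$ and hence $\phi(0) \in \{1,\ldots,a(\Gamma_{D(k)})\} \subset E(\Gamma_{D(k)})$. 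The right extreme is handled by a symmetric application of Part~3, which furnishes identity labels and maximal depth indices on the range $\phi(p)+1 \le s \le n$; this unpacks to $\phi(p) \ge b(\Gamma_{D(k)})$, so $\phi(p) \in \{b(\Gamma_{D(k)}),\ldots,n-1\} \subset E(\Gamma_{D(k)})$.

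The ``in particular'' clause then follows immediately from the defining constraints of $\tilde\Omega(\bt\Lambda,\phi,D)$, which permit $\omega^k_\alpha = \infty$ only at $\alpha \in \{\phi^{\Lambda_k}(0),\phi^{\Lambda_k}(p)\}$: the first part of the lemma has just placed both of these indices inside $E(\Gamma_{D(k)})$, so the representative value $\infty$ can only appear at indices of $E(\Gamma_{D(k)})$.

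I do not anticipate any real obstacle, since the whole argument is essentially a packaging of results from Section~3. The only point requiring care is the bookkeeping to match the Shape-Tree index ranges ($1 \le t \le \phi(0)$ on the left and $\phi(p)+1 \le s \le n$ on the right) against the precise indexing conventions of $a(\Gamma)$ and $b(\Gamma)$ in the definition of $E(\Gamma)$.
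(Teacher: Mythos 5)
Your first-part argument tracks the paper's, which simply records the two inequalities $\phi^{\Lambda_k}(0) \le a(\Gamma_{D(k)})$ and $\phi^{\Lambda_k}(p) \ge b(\Gamma_{D(k)})$ as a direct consequence of the Shape-Tree Lemma. (There is a small indexing subtlety that both proofs brush past: Part~1 of the Shape-Tree Lemma only gives $\sigma^{\Gamma_{D(k)}}(t)=t$ for $t\le\phi(0)$, while Definition~\ref{extremal-values} asks for identity labels up to index $a+1$, so your phrase ``these are precisely the conditions entering the definition of $a(\Gamma_{D(k)})$'' is not literally accurate. This looks like a convention mismatch in the definition rather than something peculiar to your argument, so I will not dwell on it.)

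The genuine gap is in the ``in particular'' clause. You assert that the constraints defining $\tilde{\Omega}(\bt\Lambda,\phi,D)$ permit $\omega^k_\alpha=\infty$ only at $\alpha\in\{\phi^{\Lambda_k}(0),\phi^{\Lambda_k}(p)\}$. That is not what the definition says: the three-way constraint on $(\vect{\omega}^{D(k)})_\alpha$ is quantified only over the closed range $\phi^{\Lambda_k}(0)\le\alpha\le\phi^{\Lambda_k}(p)$. For $\alpha$ strictly below $\phi^{\Lambda_k}(0)$ or strictly above $\phi^{\Lambda_k}(p)$ the weight is completely unconstrained in $\Rext$, so a representative $(\bt\lambda,\bts\omega)$ may well have $\omega^k_\alpha=\infty$ at indices your argument never considers. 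The paper disposes of these with a one-line trichotomy: since $\infty$ cannot occur at $\alpha$ in the open interval $(\phi^{\Lambda_k}(0),\phi^{\Lambda_k}(p))$, any $\alpha$ with $\omega^k_\alpha=\infty$ satisfies either $\alpha\le\phi^{\Lambda_k}(0)\le a(\Gamma_{D(k)})$ or $\alpha\ge\phi^{\Lambda_k}(p)\ge b(\Gamma_{D(k)})$, and both alternatives already put $\alpha$ in $E(\Gamma_{D(k)})$. Adding this case analysis closes the gap.
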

\begin{proof} The first part follows from the very definition, since by the Shape-Tree Lemma \ref{twist-initial} we have 
$$\phi^{\Lambda_k}(0) \le a(\Gamma_{D(k)}) \ \ \ \ \ \phi^{\Lambda_k}(p) \ge b(\Gamma_{D(k)}) \ .$$
Regarding the second part, by definition, we can't have $\phi^{\Lambda_k}(0) < \alpha < \phi^{\Lambda_k}(p)$, where the weights are finite. Thus we have whether $\alpha \le \phi^{\Lambda_k}(0) \le a(\Gamma_{D(k)})$ or $\alpha \ge \phi^{\Lambda_k}(p) \ge b(\Gamma_{D(k)})$, showing $\alpha \in E(\Gamma_{D(k)})$.
\end{proof}
Let us also notice that weights equal to $0,\infty$ can give us hints about the stratum:
\begin{lemma}[Cell Recognition] \label{stratum-infinities} Consider $\bt \Lambda \in \Nerve(\FNP_m(p))_r, \bt \Gamma \in \Nerve( \FNP_m(n))_d, \phi: [p] \to [n], D:[r] \to [d]$ such that $\phi \bt \Lambda = D \bt \Gamma$ and an element $(\bt \lambda, \bts \omega) \in \Omega(\bt \Lambda, \phi, D)$. Suppose the stratum is non-trivial. If $a \le a(\Gamma_{D(k)})$ and $b \ge b(\Gamma_{D(k)})$ are such that
$$ \omega^{D(k)}_a = \infty, \ \ \omega^{D(k)}_b = \infty  \ ,$$
$$\omega^{D(k)}_s < \infty \ \ \forall a < s < b \ ,$$
then $\phi^{\Lambda_k}(0) = a, \phi^{\Lambda_k}(p) = b$. Furthermore:
$$ \im \phi^{\Lambda_k} = \{a,b\} \cup \{a < s < b: \omega_s^{D(k)} < \infty\} \ .$$
\end{lemma}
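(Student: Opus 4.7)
The plan is to exploit the precise three-case definition of $\tilde{\Omega}(\bt\Lambda,\phi,D)$, together with the location constraints for infinite weights coming from Lemma \ref{infinities}, to pin down the two endpoint values $\phi^{\Lambda_k}(0)$ and $\phi^{\Lambda_k}(p)$, and then read off the image directly from the pattern of vanishing versus nonvanishing weights.

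First I would determine $\phi^{\Lambda_k}(0)$ (the argument for $\phi^{\Lambda_k}(p)$ is symmetric). By Lemma \ref{infinities} we already know $\phi^{\Lambda_k}(0)\le a(\Gamma_{D(k)}) < b(\Gamma_{D(k)}) \le b$, hence in particular $\phi^{\Lambda_k}(0)<b$. By the definition of $\tilde{\Omega}(\bt\Lambda,\phi,D)$ the endpoint $\phi^{\Lambda_k}(0)$ carries weight $\infty$, while every index strictly inside $(\phi^{\Lambda_k}(0),\phi^{\Lambda_k}(p))$ carries a \emph{finite} weight (either $0$ or a positive real). Since the hypothesis forces $\omega^{D(k)}_s<\infty$ on the whole of $(a,b)$, the value $\phi^{\Lambda_k}(0)$ cannot sit in $(a,b)$; together with $\phi^{\Lambda_k}(0)<b$ this forces $\phi^{\Lambda_k}(0)\le a$. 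For the reverse inequality, note that $a\le a(\Gamma_{D(k)})<b(\Gamma_{D(k)})\le\phi^{\Lambda_k}(p)$, so if $\phi^{\Lambda_k}(0)<a$ we would have $\phi^{\Lambda_k}(0)<a<\phi^{\Lambda_k}(p)$ and the three-case definition would force $\omega^{D(k)}_a$ to be finite, contradicting $\omega^{D(k)}_a=\infty$. Hence $\phi^{\Lambda_k}(0)=a$; the mirror argument with $b$ and $\phi^{\Lambda_k}(p)$ yields the second equality.

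With the endpoints fixed to $a$ and $b$, the image description follows immediately from the three-case definition applied on the interior $(a,b)=(\phi^{\Lambda_k}(0),\phi^{\Lambda_k}(p))$: for $a<s<b$, the index $s$ lies in $\im\phi^{\Lambda_k}$ precisely when $\omega^{D(k)}_s\in\mathbb{R}_{>0}$, whereas $s\notin\im\phi^{\Lambda_k}$ forces $\omega^{D(k)}_s=0$. Adjoining $a,b$, which lie in $\im\phi^{\Lambda_k}$ tautologically as $\phi^{\Lambda_k}(0)$ and $\phi^{\Lambda_k}(p)$, one recovers the stated characterization of $\im\phi^{\Lambda_k}$ via the positions of nonvanishing weight inside the finite range.

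The whole argument is really pure bookkeeping, and the only thing I would expect to be careful about is the clean separation of the weight behaviour on the constrained range $[\phi^{\Lambda_k}(0),\phi^{\Lambda_k}(p)]$ from the unconstrained range outside, where infinite weights may appear freely and carry no information; Lemma \ref{infinities} is exactly what places $a$ and $b$ in the correct regime relative to those two ranges. The non-triviality hypothesis on the stratum is what guarantees the three-case prescription is in force at all — on the trivial cell $\Omega(T_\ell,u_d)$ weights are completely unrestricted and no such recognition is possible.
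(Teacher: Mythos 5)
Your proof is correct and follows essentially the same route as the paper's: first pin down $\phi^{\Lambda_k}(0)$ (and dually $\phi^{\Lambda_k}(p)$) by playing the $\infty$-weight constraint at the endpoints of $\im\phi^{\Lambda_k}$ against the finite-weight hypothesis on $(a,b)$ and the inequalities $\phi^{\Lambda_k}(0)\le a(\Gamma_{D(k)})<b(\Gamma_{D(k)})\le\phi^{\Lambda_k}(p)$ furnished by Lemma \ref{infinities} and non-triviality, then read off $\im\phi^{\Lambda_k}$ from the zero-versus-positive dichotomy in the interior. Where the paper invokes a role-exchange symmetry for the reverse inequality $\phi^{\Lambda_k}(0)\ge a$, you spell out the same contradiction directly; these are the same argument.

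One small remark, not a defect in your reasoning: the interior condition in the displayed image formula, as printed, reads $\omega^{D(k)}_s<\infty$, but under the lemma's hypothesis that condition is vacuously true for every $a<s<b$, which would make the right-hand side the full interval. The intended condition is $\omega^{D(k)}_s>0$ (equivalently $\omega^{D(k)}_s\in\mathbb{R}_{>0}$, since finiteness is already given), and this is exactly what you use and what the Stratification Theorem's proof later quotes. So you have in effect proved the corrected statement, which is the right thing to do.
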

\begin{proof} Since $\omega^{D(k)}_{\phi^{\Lambda_k}(0)} = \infty$, we must have
$$ \phi^{\Lambda_k}(0) \le a \textrm{ or } \phi^{\Lambda_k}(0) \ge b  \ .$$
The second inequality would imply $\phi^{\Lambda_k}(0) \ge b \ge b(\Gamma_{D(k)})$. Since the stratum is non-trivial, we have $b(\Gamma_{D(k)}) > a(\Gamma_{D(k)})$, which yields a contradiction with $\phi^{\Lambda_k}(0) \le a(\Gamma_{D(k)})$. Thus we have $\phi^{\Lambda_k}(0) \le a$. Note that $\phi^{\Lambda_k}(0), \phi^{\Lambda_k}(p)$ satisfy the same properties of $a,b$. Exchanging the roles in the above argument, we deduce $\phi^{\Lambda_k}(0) = a$. Regarding $b= \phi^{\Lambda_k}(p)$, the argument is the same.

To conclude, the statement about $\im \phi^{\Lambda_k}$ is trivially true if we substitute $a$ with $\phi^{\Lambda_k}(0)$ and $b$ with $ \phi^{\Lambda_k}(p)$. 
\end{proof} 
In the same spirit, weights also give some information about the depth of the branches:
\begin{lemma} \label{special-weights}  Let $\Lambda_{\bullet} \in \Nerve(\FNP_m(n))^{nd}_r, \phi: [n] \to [\ell], D: [r] \to [d]$ and $(\bt \lambda, \bts \omega) \in \tilde{\Omega}(\bt \Lambda, \phi, D)$. If $\omega^{D(k)}_{\alpha} \in \{0, \infty\}$, then $a^{D(k)}_{\alpha} = m-1$.
\end{lemma}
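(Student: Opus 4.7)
The plan is a case analysis on the position $\alpha$ relative to the two ``boundary'' values $\phi^{\Lambda_k}(0) = \phi(0)$ and $\phi^{\Lambda_k}(n) = \phi(n)$ (equalities provided by Corollary \ref{extrema}), paired in each case with the appropriate clause of the Shape-Tree Lemma \ref{twist-initial}. I would first dispose of the trivial triple $(\bt\Lambda, \phi, D) = (T_n, \phi, u_d)$: the generators in Definition \ref{fn-cosimplicial} all preserve the property ``every depth index equals $m-1$'', so $\Gamma_{D(0)} = \phi T_n$ satisfies $a^{D(0)}_\alpha = m-1$ for every $\alpha$ regardless of $\omega$.

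For a non-trivial triple, the hypothesis $\omega^{D(k)}_\alpha \in \{0, \infty\}$, read against the defining clauses of $\tilde{\Omega}(\bt\Lambda, \phi, D)$, leaves four possibilities. If $1 \le \alpha \le \phi(0)$, Shape-Tree Point 1 directly gives $a^{\phi\Lambda_k}_\alpha = m-1$. If $\phi(n)+1 \le \alpha \le \ell-1$, Shape-Tree Point 3 does the same. The boundary index $\alpha = \phi(n) < \ell$ is the depth between the last ``internal'' position and the first right-hair: unpacking the final coface $d_{\textrm{last}}$ in a standard decomposition $\phi = \phi' \, d_{\textrm{last}}$ shows via Definition \ref{fn-cosimplicial} that this new depth-index is $m-1$, which is exactly the base-case computation in the proof of Point 3.

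The only substantial case is $\phi(0) < \alpha < \phi(n)$: the clauses then force $\omega^{D(k)}_\alpha = 0$ and therefore $\alpha \notin \im \phi^{\Lambda_k}$. I would set $a = \alpha$ and let $b$ be the smallest element of $\im \phi^{\Lambda_k}$ strictly greater than $\alpha$, which exists and satisfies $b \le \phi(n)$ because $\phi(n) \in \im \phi^{\Lambda_k}$. Then $\phi^{\Lambda_k}(0) < a < b \le \phi^{\Lambda_k}(n)$ and $\{a, \ldots, b-1\} \subset (\im \phi^{\Lambda_k})^c$. The bottom-to-top direction of Corollary \ref{consecutive} converts this position-gap into a label-gap $\phi(0) < \alpha_L < \beta_L \le \phi(n)$ with $\{\alpha_L, \ldots, \beta_L - 1\} \subset (\im \phi)^c$, where $\alpha_L = \sigma^{\phi\Lambda_k}(a)$ and $\beta_L = \sigma^{\phi\Lambda_k}(b)$. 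Shape-Tree Point 2 applied to $(\alpha_L, \beta_L)$ then yields $a^{\phi\Lambda_k}_t = m-1$ for all positions $t$ with $a \le t \le b-1$, and specialising to $t = \alpha$ completes the argument.

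The main obstacle is notational rather than mathematical: the weights $\omega^{D(k)}_\alpha$ are indexed by positions in $\Gamma_{D(k)}$, whereas the hypotheses of Shape-Tree Point 2 are phrased in terms of label-gaps $\{\alpha_L, \ldots, \beta_L-1\} \subset (\im \phi)^c$. Corollary \ref{consecutive} is precisely the dictionary translating one kind of gap into the other, so once the four regions above are laid out the proof reduces to a single application of each clause of Lemma \ref{twist-initial}.
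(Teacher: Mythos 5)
Your proof is correct and follows the same high-level strategy as the paper's: a case split on $\omega^{D(k)}_\alpha \in \{0,\infty\}$, resolved by the three clauses of the Shape-Tree Lemma \ref{twist-initial}. You are, however, noticeably more careful than the paper's own two-sentence argument, which (i) does not mention the trivial triple, for which the weight constraints are vacuous and the conclusion comes instead from $\Gamma_{D(0)}=\phi T_n$ having all depth-indices $m-1$; (ii) cites Points 1 and 3 for the $\infty$ case, even though Point 3 as \emph{stated} only covers $\alpha \ge \phi(n)+1$, leaving the boundary $\alpha=\phi(n)$ to be extracted from the base-case computation inside the proof of Point 3 exactly as you do; and (iii) writes ``$\alpha\notin\im\phi$'' and invokes Point 2 with $\beta=\alpha+1$, silently identifying the position-gap in $\im\phi^{\Lambda_k}$ (which is what the defining constraint actually gives) with the label-gap in $\im\phi$ required by Point 2 — the translation you make explicit via Corollary \ref{consecutive}. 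One small simplification: in the middle case you can simply take $b=\alpha+1$ rather than the minimal element of $\im\phi^{\Lambda_k}$ above $\alpha$; the inclusion $\{\alpha\}\subset(\im\phi^{\Lambda_k})^c$ is already enough for Corollary \ref{consecutive}, and this recovers exactly the $\beta=\alpha+1$ that the paper uses.
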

\begin{proof} If $\omega_{\alpha}^{D(k)} = \infty$, then $\alpha \le \phi^{\Lambda_k}(0) = \phi(0)$ or $\alpha \ge \phi^{\Lambda_k}(n) = \phi(n)$, using Corollary \ref{extrema}. By the Shape-Tree Lemma \ref{twist-initial}, point 1 and 3, we have that in both cases $ a^{D(k)}_{\alpha} = m-1$.

If $\omega_{\alpha}^{D(k)} = 0$, then $\phi^{\Lambda_k}(0) < \alpha < \phi^{\Lambda_k}(n) $ and $\alpha \not \in \im \phi$. Because of Lemma \ref{twist-initial}, point 2, applied to $\beta = \alpha+1$, we get $a^{D(k)}_{\alpha} = m-1$.
\end{proof}
We are ready to give a proof of the Stratification Theorem.

\begin{proof} Consider $\bt \Gamma \in \Nerve(\FNP_m(\ell))_d^{nd}$. Firstly, let us assume $\bt \Gamma$ non trivializable. 

We want to show that for two triples $(\bt \Lambda, \phi, D)$ and $(\bt \Lambda', \phi, D')$ with $\bt \Lambda \in \Nerve( \FNP_m(n))^{nd}_r, \phi: [n] \to \ell, D:[r] \to [d]$ and $\bt \Lambda' \in \Nerve( \FNP_m(n'))^{nd}_{r'}, \phi': [n'] \to \ell, D':[r'] \to [d]$ ,such that $\phi \bt \Lambda = D \bt \Gamma$ and $\phi' \bt \Lambda' = D' \bt \Gamma$, we have
$$ \tilde{\Omega}(\bt \Lambda, \phi, D) \cap \tilde{\Omega}(\bt \Lambda, \phi, D) = \emptyset  \ .$$
Indeed, suppose that there exists $(\bt \lambda, \bt \omega) \in \tilde{\Omega}(\bt \Lambda, \phi, D) \cap \tilde{\Omega}(\bt \Lambda, \phi, D)$. We want to show that the two triples coincide. Since we have that
$$ \im D = \{ i: \lambda_i > 0 \} = \im D '  \ ,$$
and that $D,D'$ are strictly increasing maps, we must have $r=r'$ and $D=D'$. By the cell-recognition Lemma \ref{stratum-infinities} since the triples are not trivial, for all $k \in [r]$ we have $\phi^{\Lambda_k}(0) = (\phi')^{\Lambda_k'}(0) =: a_k$ and  $\phi^{\Lambda_k}(r) = (\phi')^{\Lambda_k'}(r') =: b_k$. Also,
$$ \im \phi^{\Lambda_k} = \{a_k, b_k \} \cup \{a_k < s < b_k : \omega^{D(k)}_s > 0 \} = \im (\phi')^{\Lambda'_k}  \ .$$
It is easy to see from the very definition that $\im \phi^{\Lambda_k } = (\sigma^{\Gamma_{D(k)} })^{-1} ( \im \phi )$, thus
$$ \im \phi = \sigma^{\Gamma_{D(k)} }  ( \im \phi^{\Lambda_k} ) =  \sigma^{\Gamma_{D(k)} }  ( \im (\phi')^{\Lambda'_k} ) = \im \phi'  \ .$$
Since they are strictly increasing maps, we must have $n=n'$ and $\phi= \phi'$. Finally, because of Lemma \ref{d-section}, it is easy to show that $d_i : \FNP_m(n) \to \FNP_m(n+1)$ is injective, because it has a retraction $s_i$. Since $\phi$ is a composition of $d_i$'s, it is injective too. Thus the equation $\phi \Lambda_k = \Gamma_{D(k)} = \phi \Lambda'_k$ implies $\Lambda_k = \Lambda'_k$ for all $k=0, \ldots,r$. This concludes the proof of disjointess.

This implies that the quotiented cells $\Omega(\bt \Lambda, \phi, D)$ are disjoint too, since the the equivalence relation is defined on the single cells. We are left with showing the exit poset structure. We start by computing the closure of the non-quotiented cell, which is easy. Substituting the strict inequalities of extended real numbers with weak ones we end up dropping the positive-and-finite constraint, getting:
$$ \textrm{cl}( \tilde{\Omega}(\bt \Lambda, \phi, D))  := \{ ( \bt \lambda, \bts{\omega} ): \bt \lambda \in D | \Delta^d|, \bts{\omega} \in (\Rext^{\ell-1} )^{d+1} : \forall k \in [r] \ \ \forall \phi^{\Lambda_k}(0) \le \alpha \le \phi^{\Lambda_k}(n)  \ ,$$
$$
(\vect{\omega}^{D(k)})_{\alpha}  \left\{ \begin{array}{ll}
        = 0,  & \textrm{if }  \alpha \not \in \im \phi^{\Lambda_k } \\
        = \infty, & \textrm{if }  \alpha = \phi^{\Lambda_k }(0) \textrm{ or } \alpha = \phi^{\Lambda_k}(n)  \\
    \end{array} \right. $$
$$
\} $$
It is a classical fact  that the closure of the quotiented cell can be computed as (temporarily denoting $f$ the quotient map):
$$ \textrm{cl}(\Omega(\bt \Lambda, \phi, D)) = f( \textrm{cl}(f^{-1}(\Omega(\bt \Lambda, \phi, D) ) ) ) = f( \textrm{cl}(\tilde{\Omega}(\bt \Lambda, \phi, D) ) ) )  \ ,$$
since $\tilde{\Omega}(\bt \Lambda, \phi, D)$ is stable for equivalence.  We then get $ \textrm{cl}(\Omega( \bt \Lambda, \phi,D)) = \textrm{cl}( \tilde{\Omega}(\bt \Lambda, \phi, D)) / \sim$. 
In order for some $\Omega(\Pi_{\bullet}, \phi', D')$ such that $\phi' \Pi_{\ell} = \Gamma_{D'(\ell)} $ to be contained in the above set, for a general $(\bt \lambda, \bts \omega) \in \Omega(\Pi_{\bullet}, \phi', D')$ the following condition must hold \footnote{They should hold for an element equivalent to $(\bt \lambda, \bts \omega)$, but since $\Omega(\Pi_{\bullet}, \phi', D')$ is stable for equivalence there is no need to make this intermediate passage in showing the containment.}
\begin{enumerate}
\item $\bt \lambda \in D| \Delta^d|$, that is some subface $\mathring{D}' |\Delta^d|$ of  $D| \Delta^d|$;
\item If $k \in [r]$ and $\alpha \in [d]$ satisfies $\phi^{\Lambda_k}(0) \le \alpha \le \phi^{\Lambda_k}(n)$ and $\alpha \not \in \im \phi^{\Lambda_k},$ then
$$ (\vect{\omega}^k)_{\alpha} = 0 \ \ \Rightarrow \ \ \alpha \not \in \im (\phi')^{\Pi_k} \textrm{ or } \alpha < (\phi')^{\Pi_k}(0) \textrm{ or } \alpha > (\phi')^{\Pi_k}(p)  \ ,$$ 
which reduces to just $\alpha \not \in \im (\phi')^{\Pi_k}$ since the other two conditions are redundant. 
\item Since the stratum is non-trivial, there are no weights equal to $\infty$ in the interval $\{s: (\phi')^{\Pi_k}(0) <s< (\phi')^{\Pi_k}(p)\} $. Thus  
$$\phi^{\Lambda_k}(0), \phi^{\Lambda_k}(n) \in \{s: s \le (\phi')^{\Pi_k}(0) \textrm{ or } s \ge (\phi')^{\Pi_k}(p) \}  \ .$$
\end{enumerate}
The first condition implies that $\im D' \subseteq \im D$; since they are both increasing maps, we must have $D' = DQ$ for some increasing $Q$. Let us see that condition $3$ implies $\phi^{\Lambda_k}(0) \le (\phi')^{\Pi_k}(0)$ and $\phi^{\Lambda_k}(p) \ge (\phi')^{\Pi_k}(p)$. Suppose by contradiction that $\phi^{\Lambda_k}(0) \ge (\phi')^{\Pi_k}(p)$. Then we would have 
$$ a(\Gamma_{D(k)} ) \ge \phi^{\Lambda_k}(0) \ge (\phi')^{\Pi_k}(p) \ge b(\Gamma_{D(k)} ) > a(\Gamma_{D(k)})  \ .$$
The last inequality comes from non-triviality of the stratum. By analogy, we also conclude $\phi^{\Lambda_k}(p) \ge (\phi')^{\Pi_k}(p)$.

Taking the complementary of condition $2$ we get (using the interval notation for natural numbers between two extrema):
$$ \im (\phi')^{\Pi_k} \subset \im \phi^{\Lambda_k} \cup [\phi^{\Lambda_k}(0), \phi^{\Lambda_k}(n)]^c  \ .$$
However, condition $3$ implies that 
$$ \im (\phi')^{\Pi_k} \subset [(\phi')^{\Pi_k}(0), (\phi')^{\Pi_k}(p)] \subset [\phi^{\Lambda_k}(0), \phi^{\Lambda_k}(n)]  \ .$$
Thus we get
$$ \im (\phi')^{\Pi_k} \subset \im \phi^{\Lambda_k}  \ .$$
Let $\Gamma_k = (\sigma_k, a^k)$. By the very definition of twisted morphism we have $\im \phi' = \sigma_k (\im (\phi')^{\Pi_k})$ and $\im \phi = \sigma_k( \im \phi^{\Lambda_k}) $. Applying the permutation we get $\im \phi' \subset \im \phi$. Since they are both strictly increasing maps, we deduce the existence of $\psi$ such that $\phi' = \phi \psi$, which concludes the proof.

We now pass to analyze the case in which $\bt \Gamma$ is trivializable. The proof of disjointness between non-trivial strata in the previous case applies also in this case. The disjointness between a non-trivial stratum and $\Omega(T_{\ell}, u_r)$ is easy, since an element $(\bt \lambda, \bts \omega) \in \Omega(\bt \Gamma)$ is in $ \Omega(T_{\ell}, u_r)$ if and only if $\bt \lambda = (0, \ldots, 0,1)$. We are left with showing the closure formula for a non-trivial stratum $\Omega(\bt \Lambda, \phi, D)$. The non-trivial strata $\Omega(\bt \Pi, \phi', D')$ contained in the closure are the same we found in the previous case, since we only used non-triviality of $\Omega(\bt \Pi, \phi', D')$. Regarding the trivial stratum, if $r \not \in \im D$, the closure does not intersect  $\Omega(T_{\ell}, u_r)$, since for all $(\bt \lambda, \bts \omega) \in \Omega(\bt \Lambda, \phi, D)$ we have $\lambda_r = 0$. If $r \in \im D$, we do have $\Omega(T_{\ell}, u_r) \subset \overline{\Omega(\bt \Lambda, \phi,D)}$, since any sequence $(\bt \lambda^{(n)}, \vect{\omega}^{\bullet, (n)} )$ with $\lambda^{(n)}_r \to 1$ will converge to the only point in $\Omega(T_{\ell}, u_r)$. We then obtain the formula stated in the theorem.
\end{proof}

\subsubsection{Realizing the space}
Finally, we have to assemble $\Omega(\bt \Gamma)$ when $\bt \Gamma$ ranges in $\Nerve(\FNP_m(n))^{nd}$. In order to do this, we turn $\Omega$ into a functor $\elt(\Nerve(\FNP_m(n))^{nd})^{op} \to \Ttop$ and use the formalism of twisted non-degenerate realization.
\begin{definition} \label{twist} For $\Gamma_{\bullet} \in \Nerve(\FNP_m(n))_d^{nd} $ and $i \in [d+1]$, define $\Omega(\partial_i) : \Omega( \partial_i \Gamma_{\bullet}) \to \Omega( \bt \Gamma) $ as $\Omega(\partial_i)(\bt \lambda, \bts \omega) = (\partial_i \bt \lambda, \partial_i \bts \omega)$, where
$$ \partial_i(\bt \lambda)_k = \left\{ \begin{array}{ll}
       0,  & \textrm{if }  k=i\\
      	\lambda_{\sigma_i(k)}, & \textrm{ otherwise} 
    \end{array} \right.  $$
$$ \partial_i(\bts \omega )^k = \left\{ \begin{array}{ll}
       \underline{u},  & \textrm{if }  k=i\\
      	\vect{\omega}^{\sigma_i(k)}, & \textrm{ otherwise} 
    \end{array} \right.  $$
Here $\underline{u}=(1, \ldots, 1)$.
\end{definition}

Note that since $\lambda_i = 0$, the value $\partial_i(\bts \omega )^i$ is not relevant, since it is quotiented out by $\sim$. 
Intuitively, the map is just the identification $|\Delta^d | \to \partial_i |\Delta^d|$, doing nothing more than adding a redundant weight vector in the other component.

\begin{lemma} \label{stratified} The maps $\Omega(\partial_i)$ are stratified, with stratum-map
$$ \Omega(\partial_i)( \Omega(\Lambda_{\bullet}, \phi, D) ) \subset \Omega( \Lambda_{\bullet}, \phi, \partial_i D)  \ .$$
They are well defined with respect to $\sim$ and turns $\Omega$ into a functor 
$$ \Omega: \elt (\Nerve(\FNP_m(n))^{nd})^{op} \to \Ttop \ .$$
\end{lemma}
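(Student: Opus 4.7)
The lemma packages three assertions: the map $\Omega(\partial_i)$ is stratified with image in $\Omega(\bt\Lambda,\phi,\partial_iD)$, it descends to the quotient by $\sim$, and the collection of such maps satisfies the semicosimplicial identities so as to define a functor on the category of elements of the non-degenerate nerve. The plan is to verify these three points directly, exploiting the fact that both coordinate components $\bt\lambda$ and $\bts\omega$ transform in a separately simplicial fashion, and leaning on the Stratification Theorem and the Shape-Tree Lemma to keep track of indexing.

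For the stratification statement, I would start from the identity $D(\partial_i\bt\Gamma)=(d_iD)\bt\Gamma=(\partial_iD)\bt\Gamma$, which shows that the indexing condition $\phi\bt\Lambda=D(\partial_i\bt\Gamma)$ for a stratum of $\Omega(\partial_i\bt\Gamma)$ coincides with $\phi\bt\Lambda=(\partial_iD)\bt\Gamma$, the indexing condition for $\Omega(\bt\Lambda,\phi,\partial_iD)\subset\Omega(\bt\Gamma)$. Given $(\bt\lambda,\bts\omega)\in\Omega(\bt\Lambda,\phi,D)$, the barycentric coordinate $\partial_i\bt\lambda$ lies in $\mathring{(\partial_iD)}|\Delta^d|$ by construction, since a zero is inserted at the index $i$, which is excluded from $\im(\partial_iD)=\im(d_iD)$. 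The reindexing on weights gives $(\partial_i\bts\omega)^{(\partial_iD)(k)}=\vect\omega^{D(k)}$ for each $k\in[r]$, and the positivity/infinity constraints from the definition of $\tilde\Omega(\bt\Lambda,\phi,\partial_iD)$ at index $(\partial_iD)(k)$ transfer directly from those satisfied by $\vect\omega^{D(k)}$ in the source.

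For well-definedness with respect to $\sim$, I would observe that the relation on a non-trivial stratum identifies weight tuples agreeing on the coordinates $(k,\alpha)$ with $k\in\im D$ and $\phi^{\Lambda_k}(0)\le\alpha\le\phi^{\Lambda_k}(n)$. The map $\Omega(\partial_i)$ alters the weight family only at the newly inserted index $i\notin\im(\partial_iD)$, so the coordinates controlling $\sim$ in the target are precisely those already controlled in the source; on a trivial stratum the image lies in the collapsed point and the condition is vacuous. For the semicosimplicial identities $\Omega(\partial_j)\Omega(\partial_i)=\Omega(\partial_i)\Omega(\partial_{j-1})$ for $i<j$, both coordinate components reduce to the standard coface identity $d_jd_i=d_id_{j-1}$: on $\bt\lambda$ this is the classical statement about inserting two zeros, and on $\bts\omega$ the same identity on the reindexing degeneracies $\sigma_\bullet$ ensures that the constant vector $\vect u$ is inserted at the same pair of positions by either composite. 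Continuity of $\Omega(\partial_i)$ is immediate from the componentwise definition.

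The main subtlety I expect lies in the bookkeeping for trivializable chains $\bt\Gamma$, where the trivial stratum of $\Omega(\bt\Gamma)$ is collapsed to a single point. One should verify that the target stratum $\Omega(\bt\Lambda,\phi,\partial_iD)$ is the correct one even in edge cases where the image accidentally lands in that collapsed point, distinguishing the case $i=d$ (in which $\partial_d\bt\Gamma$ is not trivializable, so only the target carries a trivial stratum) from $i<d$ (in which both source and target carry a collapsed point that must match). Since $\Omega(T)$ is a point, each such check reduces to a finite consistency verification, but this is the one place where a uniform statement hides a case distinction.
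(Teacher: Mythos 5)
Your proposal matches the paper's proof closely: the same three verifications (stratification, $\sim$-compatibility, cosimpliciality of the $\Omega(\partial_i)$), the same observation that the weight reindexing gives $(\partial_i \bts \omega)^{(\partial_i D)(k)} = \bts\omega^{D(k)}$ for all $k \in [r]$ — which is what transfers both the stratum constraints and the equivalence relation — and the same reduction of the functoriality check on both components to the classical coface identity $d_j d_i = d_i d_{j-1}$. The one point you correctly flag as subtle, the trivializable case, is where the paper is more explicit: it observes that if $\partial_i\bt\Gamma$ is trivializable then $i$ cannot be the last index, so that $\partial_i u_d = u_{d+1}$ and the collapsed barycentric coordinate $(0,\ldots,0,1)$ is preserved, and it is immediate (your ``finite consistency verification'') that a non-trivial source stratum can never map into the collapsed point; your roadmap for these cases is consistent with this, just left as a sketch.
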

\begin{proof} Let us show that $\Omega(\partial_i)$ is stratified. Consider $(\bt \lambda, \bts \omega) \in \Omega(\Lambda_{\bullet}, \phi, D) $ non trivial. We have ve to show that for all $k \in [r]$ and $\phi^{\Lambda_k}(0) \le \alpha \le \phi^{\Lambda_k}(n)$ some constraint depending on $\phi^{\Lambda_k}$ holds for $(\partial_i \bts \omega)^{\partial_iD(k)}$. Considering that $\partial_i D(k) \neq i$, we have
$$ (\partial_i \bts \omega)^{\partial_iD(k)} = (\bts \omega )^{ \sigma_i \partial_i D(k) } = (\bts \omega)^{D(k)} \ ,$$
thus the constraints are satisfied since $(\bt \lambda, \bts \omega) \in \Omega(\Lambda_{\bullet}, \phi, D)$. 

Now consider a trivial stratum $\Omega(T_n, u_d) \subset \Omega(\partial_i \bt \Gamma)$. We claim that since $\partial_i \bt \Gamma$ is trivializable, then $i \le d$. Let us explicit $\bt \Gamma = \Gamma_0 < \ldots < \Gamma_d < \Gamma_{d+1}$. If $i=d+1$, then $\partial_{d+1} \bt \Gamma = \Gamma_0 < \ldots < \Gamma_d$ would not be trivializable, because its element $\Gamma_d$ is not maximal. From this observation, we deduce $\partial_i u_d = u_{d+1}$, thus we have to show
$$ \partial_i( \Omega(T_{\ell}, u_d) ) \subset \Omega(T_{\ell}, u_{d+1}) \ .$$
Such strata are only characterized by having $\bt \lambda = (0,\ldots,0,1)$. The same inequality $i \le d$ implies the desired $\partial_i(0, \ldots, 0,1) = (0,\ldots,0,1)$.

Secondly, let us show they are well-defined with respect to the equivalence relation. For a trivial stratum, it is automatic since every two elements are equivalent. For a non-trivial stratum, if $(\bt \lambda, \bts \omega) \sim (\bt \lambda, \bts \theta)$, we have for $k \in [r]$ and $\phi^{\Lambda_k}(0) \le \alpha \le \phi^{\Lambda_k(n)} $:
$$ (\partial_i \omega^{\partial_i D(k)})_{\alpha} = (\omega^{D(k)})_{\alpha} = (\theta^{D(k)})_{\alpha} = (\partial_i \theta^{\partial_i D(k) } )_{\alpha} \ ,$$
showing $\Omega(\partial_i)(\bt \lambda, \bts \omega) \sim \Omega(\partial_i) (\bt \lambda, \bts \theta) $. 

We are left with verifying functoriality\footnote{Such verification is independent of the triviality of the stratum, since the formulas are defined uniformly.}, that is cosimpliciality of maps $\Omega(\partial_i)$. Consider $i <j$. We have to show that 
$$\Omega( \partial_j)\Omega(\partial_i)(\bt \lambda, \bts \omega)   = \Omega( \partial_i) \Omega(\partial_{j-1})  (\bt \lambda, \bts \omega)  \ .$$
On the left-hand side we have, in the first component:
$$ (\partial_j \partial_i  \bt \lambda )_k \left\{ \begin{array}{ll}
       0,  & \textrm{if }  k = j \textrm{ or } \sigma_j(k) = i \\
      	\lambda_{\sigma_i\sigma_j(k)}, & \textrm{ otherwise} 
    \end{array} \right.
      	$$
while in the second component:  
  $$ (\partial_j \partial_i \bts \omega )_k \left\{ \begin{array}{ll}
       \underline{u},  & \textrm{if }  k = j \textrm{ or } \sigma_j(k) = i \\
      	\omega^{\sigma_j\sigma_i(k)}, & \textrm{ otherwise} 
    \end{array} \right.
      	$$    	
On the right-hand-side we have, in the first component:
$$ (\partial_i  \partial_{j-1} \bt \lambda )_k \left\{ \begin{array}{ll}
       0,  & \textrm{if }  k = i \textrm{ or } \sigma_i(k) = j-1 \\
      	\lambda_{\sigma_{j-1}\sigma_i(k)}, & \textrm{ otherwise} 
    \end{array} \right.
      	$$
while in the second component:
$$ (\partial_i  \partial_{j-1}  \bts \omega )_k \left\{ \begin{array}{ll}
       \underline{u},  & \textrm{if }  k = i \textrm{ or } \sigma_i(k) = j-1 \\
      	\omega^{\sigma_{j-1}\sigma_i(k)}, & \textrm{ otherwise} 
    \end{array} \right.
      	$$

We conclude because:
\begin{itemize}
\item Degenerate cases on the left hand side happens when $k=i$ or $\sigma_j(k) = i$. Since $i<j$, the latter becomes $k=i$.
\item Degenerate cases on the right hand side happens when $k=i$ or $\sigma_i(k)= j-1$. Since $i <j $, the latter is possible if $k \ge i+1$ and $k-1 = j-1$ or $k \le i$ and $k=j-1$. But since $i< j = k+1 \le i+1$, in the last case $j=i+1$ so that $k=i$. In both cases, the only possibilities are $k=i,j$.
\item Non-degenerate cases have indices $\sigma_i \sigma_j(k) = \sigma_j \sigma_{i-1}(k)$ because of the cosimplicial identities. 
\end{itemize}
\end{proof}

We are ready to give the main definition of the section:
\begin{definition} The \textit{Weighted Hairy Trees} space is defined as the twisted non-degenerate realization
$$ \WHT_m(n) := | \Nerve(\FNP_m(n))^{nd}|_{\Omega}  \ .$$
\end{definition}

In order to provide some intuition for the difference between $\WHT_m(n)$ and the "underlying" space $\BZ_m(n) \simeq \Nerve(\FNP_m(n))$, we report here a graphical illustration of $\WHT_2(2)$. As we will see in the next chapter $\BZ_m(n)$ sits naturally inside $\WHT_m(n)$; in the picture, such subspace is highlighted. 

\begin{figure} \label{wt-sd}
\centering
\includegraphics[width=12cm]{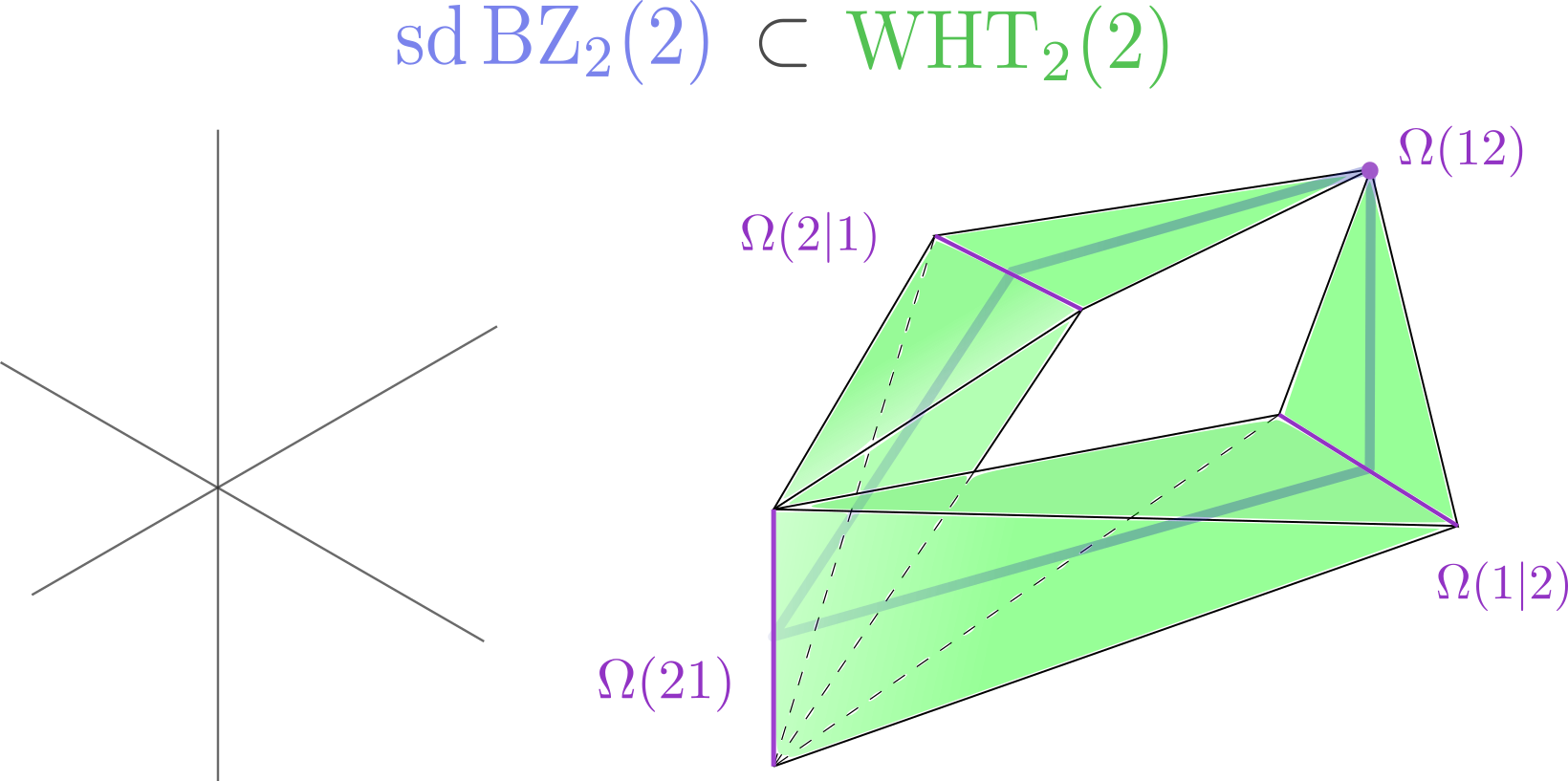}
\caption{A copy of $\BZ_2(2)$ inside $\WHT_2(2)$}
\end{figure}

\subsubsection{Semicosimpliciality}
We end this chapter by equipping $\WHT_m$ with a semicosimplicial structure.
\begin{definition} Given $\Gamma_{\bullet} \in \Nerve(\FNP_m(\ell))^{nd}$, we define $d_j: \Omega(\bt \Gamma) \to \Omega( d_j \bt \Gamma) $ as $d_j(\bt \lambda, \bts \omega) = (\bt \lambda, d_j^{\bt \Gamma}(\bts \omega) )$, with $ d_j^{\bt \Gamma}(\bts \omega)^k =  d_j^{\Gamma_k}(\vect{\omega}^k)$. The latter depends on $j$. If $0 < j < \ell+1$, then 
$$ d_j^{\Gamma_k}(\vect{\omega}^k)_{\alpha} = \left\{ \begin{array}{ll}
       0,  & \textrm{if }  \alpha = p_k\\
      	\vect{\omega}^k_{\sigma_{p_k}(\alpha)}  & \textrm{ otherwise} 
    \end{array} \right.  $$
Here $p_k = \sigma_k^{-1}(i)$. If $j=0$, then
      	$$ d_0^{\Gamma_k}(\vect{\omega}^k)_{\alpha}= d_0(\vect{\omega}^k)_{\alpha} = \left\{ \begin{array}{ll}
       \infty,  & \textrm{if }  \alpha = 1\\
      	\vect{\omega}^k_{\alpha -1 }  & \textrm{ otherwise} 
    \end{array} \right.  $$
If $j=\ell+1$, then
      	$$ d_{\ell+1}^{\Gamma_k}(\vect{\omega}^k)_{\alpha}= d_{\ell+1}(\vect{\omega}^k)_{\alpha} = \left\{ \begin{array}{ll}
       \infty,  & \textrm{if }  \alpha = \ell\\
      	\vect{\omega}^k_{\alpha}  & \textrm{ otherwise} 
    \end{array} \right.  $$
\end{definition}

We want to show that these maps are well-defined and prove that the following theorem holds:
\begin{theorem}The maps $d_k : \WHT_m(\ell) \to \WHT_m(\ell+1) $ turns $\WHT_m$ into a semicosimplicial object $\Delta_s \to \Ttop$.
\end{theorem}

The proof is not hard, despite it takes some effort to treat all the different cases. We will only report details about the main cases, leaving the others for the reader. Let us start with verifying that the maps $d_j : \WHT_m(\ell) \to \WHT_m(\ell+1)$ are well-defined.
\begin{lemma} The maps $d_j: \Omega(\bt \Gamma) \to \Omega( d_j \bt \Gamma) $ are stratified with stratum map
$$ d_j( \Omega(\bt \Lambda, \phi, D) ) \subset \Omega( \bt \Lambda, d_j \phi, D) $$
for $\phi : [n] \to [\ell], D: [r] \to [d], \bt \Lambda \in \Nerve(\FNP_m(n))^{nd}$. Furthermore, they are well-defined with respect to the equivalence relation and they satisfy $\partial$-contravariance, so that that they assemble into maps $d_j : \WHT_m(\ell) \to \WHT_m(\ell+1)$.
\end{lemma}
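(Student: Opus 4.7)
The plan is to verify three things in turn: (i) that $d_j$ is stratified with stratum map $d_j(\Omega(\bt\Lambda, \phi, D)) \subset \Omega(\bt\Lambda, d_j\phi, D)$; (ii) that $d_j$ descends to the quotient by $\sim$; and (iii) that $d_j$ satisfies $\partial$-contravariance in the sense of Lemma \ref{simplicial-non-deg}, so that the cellwise maps assemble into a single map $\WHT_m(\ell) \to \WHT_m(\ell+1)$. The central technical input is Lemma \ref{twisted-example}, giving the factorization $(d_j\phi)^{\Lambda_k} = d_j^{\phi\Lambda_k}\phi^{\Lambda_k}$, which for $0 < j < \ell+1$ specializes to $d_{p_k}\phi^{\Lambda_k}$ with $p_k = \sigma_k^{-1}(j)$, and for $j \in \{0, \ell+1\}$ reduces to $d_j\phi^{\Lambda_k}$.

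For (i), fix $(\bt\lambda, \bts\omega)$ in a non-trivial stratum $\Omega(\bt\Lambda, \phi, D)$ and check the constraints defining $\Omega(\bt\Lambda, d_j\phi, D)$ for each $k \in [r]$ and each position $\alpha$ in the range $[(d_j\phi)^{\Lambda_k}(0), (d_j\phi)^{\Lambda_k}(n)]$. In the internal case $0 < j < \ell+1$, the new weight vector $d_j^{\Gamma_{D(k)}}(\vect{\omega}^{D(k)})$ inserts a $0$ at position $p_k$ and re-indexes the rest via $\sigma_{p_k}$; using $\im(d_j\phi)^{\Lambda_k} = d_{p_k}(\im\phi^{\Lambda_k})$ together with the observation that the extrema of $(d_j\phi)^{\Lambda_k}$ arise from those of $\phi^{\Lambda_k}$ by applying $d_{p_k}$, a case-by-case verification shows that the inserted $0$ lands exactly at an index outside the new image, while the preserved and shifted infinities and positive-finite values match the required positions. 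In the extremal cases $j \in \{0, \ell+1\}$, Corollary \ref{extrema} yields $(d_j\phi)^{\Lambda_k}(0) = d_j(\phi^{\Lambda_k}(0))$ and likewise at $n$; the new $\infty$ inserted by $d_0^{\Gamma_{D(k)}}$ or $d_{\ell+1}^{\Gamma_{D(k)}}$ is then either at the shifted extremum (when $\phi^{\Lambda_k}$ already reached that end of the weight vector) or outside the constrained range (when it did not), and in either situation the constraint is satisfied. For trivial strata $(T_n, \phi, u_d)$, the first component $\bt\lambda = (0, \ldots, 0, 1)$ is unchanged by $d_j$, and since Definition \ref{fn-cosimplicial} yields $d_jT_\ell = T_{\ell+1}$ for every $j$, the target cell $\Omega(d_j\bt\Gamma)$ is still trivializable and the image falls into the collapsed point $\Omega(T)$, where no weight constraint is enforced.

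Claim (ii) is essentially immediate: $d_j^{\bt\Gamma}$ acts componentwise in $k$, and each component $d_j^{\Gamma_k}(\vect{\omega}^k)$ depends only on $\vect{\omega}^k$, so two inputs that agree on the coordinates required by $\sim$ produce outputs that do too. For (iii), expand both compositions of the square from Lemma \ref{simplicial-non-deg}: starting from $(\bt\lambda, \bts\omega) \in \Omega(\partial_i\bt\Gamma)$, both $\Omega(\partial_i) \circ d_j$ and $d_j \circ \Omega(\partial_i)$ produce an element with first coordinate $\partial_i\bt\lambda$ and, for every $k \neq i$, the same $k$-th weight vector $d_j^{\Gamma_k}(\vect{\omega}^{\sigma_i(k)})$, using the identity $(\partial_i\bt\Gamma)_{\sigma_i(k)} = \Gamma_k$ for $k \neq i$. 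At $k = i$, one composition yields $\underline{u}$ and the other $d_j^{\Gamma_i}(\underline{u})$; but $i \notin \im(\partial_i D)$, so this coordinate is unconstrained in the target stratum $\Omega(\bt\Lambda, d_j\phi, \partial_i D)$ and the two outputs are identified by $\sim$. The main obstacle is the case analysis in (i): reconciling the pointwise insertion of $0$ or $\infty$ performed by $d_j^{\bt\Gamma}$ with the shifts in image and extrema of $(d_j\phi)^{\Lambda_k}$, for which the Shape-Tree Lemma \ref{twist-initial} and its Corollaries \ref{increasing}, \ref{extrema}, \ref{consecutive} handle the combinatorial bookkeeping.
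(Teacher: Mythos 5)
Your proposal is correct and follows essentially the same route as the paper's own proof: the same three-part structure (stratification, compatibility with $\sim$, $\partial$-contravariance), the same key ingredient (the factorization $(d_j\phi)^{\Lambda_k} = d_j^{\phi\Lambda_k}\phi^{\Lambda_k}$ from Lemma \ref{twisted-example}), the same separate handling of trivial strata and of the internal versus extremal cases of $j$, and the same observation that the $k=i$ coordinate is killed by $\sim$ because $i \notin \im(\partial_i D)$. The only place you are terser than the paper is point (ii): ``depends only on $\vect{\omega}^k$'' is not by itself sufficient, since the relevant index range shifts from $[\phi^{\Lambda_k}(0), \phi^{\Lambda_k}(n)]$ to $[d_{p'_k}\phi^{\Lambda_k}(0), d_{p'_k}\phi^{\Lambda_k}(n)]$ and one must check that $s_{p'_k}$ carries the latter (minus $p'_k$) into the former — the identity $s_{p'_k}d_{p'_k} = \Id$ does this, and the paper makes it explicit where you leave it implicit.
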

\begin{proof} Let us start by proving the stratification. If $\Omega(\bt \Lambda, \phi,D)$ is a trivial stratum, we have $D=u_d$ and $\phi \Lambda = T_{\ell}$. We then have to prove
$$ d_j(\Omega(T_{\ell}, u_d) ) = d_j(\Omega(\bt  \Lambda, \phi, D))  \subset  \Omega( \bt \Lambda, d_j \phi, D) = \Omega(d_jT_{\ell}, u_d) = \Omega(T_{\ell+1}, u_d) $$
that is, trivial strata are preserved. This is true since the map does not modify the $\bt \lambda$ component.

Now consider $(\bt \lambda, \bts \omega) \in \Omega(\bt \Lambda, \phi, D)$ non trivial. Let us explain the general case, that is $0<j< \ell+1$; the other two cases $j =0, j= \ell+1$ are analogous.

Recall that $d_j(\bt \lambda, \bts \omega) = (\bt \lambda, d_j^{\bt \Gamma} \bts \omega)$, where
$$ (d_j^{\Gamma_k} \omega^k)_{\alpha} = \left \{  
\begin{array}{ll}
0, & \textrm{if } \alpha = p_k \\
\omega^k_{s_{p_k(\alpha)}}, & \textrm{otherwise}
\end{array} \right.
$$
Here $p_k = \sigma_k^{-1}(j)$. For compactness of notation, we denote by $p'_k = p_{D(k)}$. The constraint on the first component is automatically verified, since $\bt \lambda$ is unchanged. Regarding the constraint on the weights, let us begin by noticing that 
$$ (d_j \phi)^{\Lambda_k} = d_j^{\phi \Lambda_k} \phi^{\Lambda_k} = d_j^{\Gamma_{D(k)} } \phi^{\Lambda_k} = d_{p'_k} \phi^{\Lambda_k}$$
We now have to verify that the new weight vector is $0,\infty$ or positive-and-finite depending on the indices. For all $k=0, \ldots, r$ we have
$$ (d_j^{\Gamma_{D(k)} } \omega^{D(k)} )_{ d_{p'_k} \phi^{\Lambda_k}(0) } \stackrel{?}{=} \infty $$
Note that the subscript is different from $p_k'$ because $p_k' \not \in \im d_{p'_k} $, thus
$$(d_j^{\Gamma_{D(k)} } \omega^{D(k)} )_{ d_{p'_k} \phi^{\Lambda_k}(0) } = \omega^{D(k)}_{ s_{p'_k} d_{p'_k} \phi^{\Lambda_k}(0)} = \omega^{D(k)}_{ \phi^{\Lambda_k}(0) } = \infty$$
Analogously
$$ (d_j^{\Gamma_{D(k)} } \omega^{D(k)} )_{ d_{p'_k} \phi^{\Lambda_k}(n) } = \omega^{D(k)}_{ \phi^{\Lambda_k}(n)} = \infty$$
Let us check the positive-and-finite weights. Consider an $\alpha$ such that $d_{p'_k} \phi^{\Lambda_k}(0) < \alpha < d_{p'_k} \phi^{\Lambda_k}(n)$ and $ \alpha \in \im d_{p'_k} \phi^{\Lambda_k}$. Applying $s_{p'_k}$ to such constraints we get $\phi^{\Lambda_k}(0)< s_{p'_k}(\alpha) < \phi^{\Lambda_k}(n)$ and $s_{p'_k} (\alpha) \in \im \phi^{\Lambda_k} $. Using that $\alpha \neq p'_k$ we get 
$$ (d_j^{\Gamma_{D(k)}} \omega^{D(k)} )_{\alpha} = \omega^{D(k)}_{ s_{p'_k}(\alpha)} \in \mathbb{R}_{> 0} $$
because of the constraints on $\omega$. We conclude with the $0$ weights, which concerns $\alpha$ such that $d_{p'_k} \phi^{\Lambda_k}(0) < \alpha < d_{p'_k} \phi^{\Lambda_k}(n)$ and $ \alpha \not \in \im d_{p'_k} \phi^{\Lambda_k}$. Applying $s_{p'_k}$ we get $\phi^{\Lambda_k}(0) < s_{p'_k}\alpha < \phi^{\Lambda_k}(n)$ as above. We want to show that $ \alpha \not \in \im d_{p'_k} \phi^{\Lambda_k}$ implies $\alpha = p'_k$ or $s_{p'_k}(\alpha) \not \in \im \phi^{\Lambda_k}$. Indeed, if $\alpha \neq p'_k$ and $s_{p_k}(\alpha) \in \im \phi^{\Lambda_k}$ we have 
$$ \alpha = d_{p'_k}s_{p'_k}(\alpha) \in \im \phi^{\Lambda_k} $$
since
$$ d_{p'_k} s_{p_k'} (x) = \left \{ \begin{array}{ll} 
x, & \textrm{if } x < p'_k \\
x+1, & \textrm{if } x = p'_k \\
x, & \textrm{if } x > p'_k 
\end{array} \right.
$$
In case $\alpha = p_k'$ we have $(d_j^{\Gamma_{D(k)}} \omega^{D(k)} )_{p'_k} = 0$ by the very definition, and if $s_{p'_k}(\alpha) \not \in \im \phi^{\Lambda_k}$ we have
$$(d_j^{\Gamma_{D(k)}} \omega^{D(k)} )_{\alpha} = \omega^{D(k)}_{s_{p'_k}(\alpha)} = 0$$
and we are done!

Now that the we have shown the map is stratified, we can verify that it respects the equivalence relation. This is automatic for trivial strata because any two elements are equivalent. Now suppose that $(\bt \lambda, \bts \omega) \sim (\lambda, \bts \theta)$ in $\tilde{\Omega}(\bt \Lambda, \phi, D)$ non trivial. Then for all $k=0, \ldots,r$ and $(d_j\phi)^{\Lambda_k}(0) \le \alpha \le (d_j \phi)^{\Lambda_k}(n)$ we have to check that
$$ (d_j^{\Gamma_{D(k)}} \omega^{D(k)} )_{\alpha} =  (d_j^{\Gamma_{D(k)}} \theta^{D(k)})_{\alpha}  $$
Let us treat the main case $0<j<\ell+1$.

Recall that $(d_j \phi)^{\Lambda_k} = d_{p'_k} \phi^{\Lambda_k}$. If $\alpha = p'_k$ we have
$$ (d_j^{\Gamma_{D(k)}} \omega^{D(k)} )_{p'_k} = 0 = (d_j^{\Gamma_{D(k)}} \theta^{D(k)})_{p'_k} $$
If $\alpha \neq p'_k$ then
$$ (d_j^{\Gamma_{D(k)}} \omega^{D(k)} )_{\alpha} = \omega^{D(k)}_{s_{p'_k}(\alpha) } = \theta^{D(k)}_{s_{p'_k}(\alpha)} =  (d_j^{\Gamma_{D(k)}} \theta^{D(k)})_{\alpha} $$
since $\phi^{\Lambda_k}(0) \le s_{p'_k}(\alpha) \le \phi^{\Lambda_k}(n) $. 

We conclude the proof of the lemma by showing that $d_j$ satisfy $\partial$-contravariance. For every $i,j$ we have to show that
\[\begin{tikzcd}
	{\Omega(\Lambda_{\bullet}, \phi, D)} & {\Omega(\Lambda_{\bullet}, d_j\phi, D)} \\
	{\Omega(\Lambda_{\bullet}, \phi, \partial_i D)} & {\Omega(\Lambda_{\bullet}, d_j\phi, \partial_i D)}
	\arrow["{\Omega(\partial_i)}"', from=1-1, to=2-1]
	\arrow["{\Omega(\partial_i)}", from=1-2, to=2-2]
	\arrow["{d_j}", from=1-1, to=1-2]
	\arrow["{d_j}"', from=2-1, to=2-2]
\end{tikzcd}\]
As usual, we report the proof for the general case of $j$. The $\rightarrow, \downarrow$ composition is
$$(\bt \lambda, \bts \omega) \mapsto (\bt \lambda, d_j^{\partial_i \bt \Gamma} \bts \omega) \mapsto (\partial_i \bt \lambda, \partial_i   d_j^{\partial_i \bt \Gamma} \bts \omega) $$
The second component, on the indices\footnote{Note that here $p_k' = p_{\partial_i D(k)}$ since the codomain has $\partial_iD$ in place of $D$. } $\partial_i D(k)$ and $d_{p'_k} \phi^{\Lambda_k}(0) \le \alpha \le d_{p'_k} \phi^{\Lambda_k}(n)$ which have to be checked for the equivalence is 
$$ (\partial_i   d_j^{\partial_i \bt \Gamma} \omega) ^{\partial_i D(k)}_{\alpha} =  (d_j^{\partial_i \bt \Gamma} \omega) ^{ D(k)}_{\alpha} =    (d_j^{ \Gamma_{\partial_i D(k)} }\omega^{D(k)} )_{\alpha} = \left \{ \begin{array}{ll} 
0, & \textrm{if } \alpha = p'_k \\
\omega^{D(k)}_{s_{p'_k}(\alpha)}, & \textrm{otherwise} \end{array} \right. 
 $$
The $\downarrow, \rightarrow$ composition is
$$(\bt \lambda, \bts \omega) \mapsto (\partial_i \bt \lambda, \partial_i \bts \omega) \mapsto (\partial_i \bt \lambda, d_j^{\bt \Gamma} \partial_i \bts \omega) $$ 
The second component, on relevant indices, is 
$$ (d_j^{\bt \Gamma} \partial_i   \omega) ^{\partial_i D(k)}_{\alpha} = 0$$
if $\alpha = p'_k$, and
$$ (d_j^{\bt \Gamma} \partial_i   \omega) ^{\partial_i D(k)}_{\alpha} = (\partial_i \omega) ^{\partial_i D(k)}_{s_{p'_k}(\alpha)} = \omega^{ D(k)}_{ s_{p'_k}(\alpha)}$$
otherwise. Thus, they coincide. 
\end{proof}

Now that we have shown the existence of actual maps $d_k$ between the various weighted hairy trees spaces, we want to show that they satisfy the cosimplicial relationships. The cosimpliciality basically holds because the shifts in positions behind are the same of many others "double-the-point-like" cosimplicial objects. However, the fact that we put $\infty$ weights and not zero weights at the extremes requires special attention, and this is the main objective of the equivalence relation we used in the definition of $\WHT$. For the sake of conciseness, we will not delve into the details of all cases
\begin{theorem}The maps $d_k : \WHT_m(\ell) \to \WHT_m(\ell+1) $ turns $\WHT_m$ into a semicosimplicial object $\Delta_s \to \Ttop$.
\end{theorem}

\begin{proof}  We have to show that $d_j d_i = d_i d_{j-1}$ for $i< j$. We restrict to a cell $\Omega(\bt \Gamma) \subset \WHT_m(n)$, where $\bt \Gamma \in \Nerve(\FNP_m(\ell))_d$.  The main cases are
$$ \boxed{0 < i < j < \ell+2}, \boxed{i > 0, j= \ell+2}, \boxed{i=0,  j < \ell+2}, \boxed{i=0, j=\ell+2} $$
We will report details for the general case only ($0 < i < j < \ell+2$). 
Let us give the following definitions of $p_k, q_k, r_k, t_k$ for $k=0, \ldots, d$ and $\sigma_k := \sigma^{\Gamma_k}$:
$$ p_k = (\sigma_k)^{-1}(i), \ \ r_k = (d_{j-1} \sigma_k)^{-1}( i), \ \ q_k = (d_i \sigma_k)^{-1} (j), \ \ t_k = \sigma_k^{-1}(j-1) $$
In general, let us see how the two maps $d_j d_i, d_i d_{j-1}$ act on a $(\bt \lambda, \bts \omega) \in \Omega(\bt \Gamma)$. Since the maps act as the identity on the first component, we will only write the second one. For simplicity, let us call
$$ \eta^k_{\alpha} := d_j d_i( \bts \omega)^k_{\alpha}, \ \ \ \epsilon^k_{\alpha} = d_i d_{j-1} ( \bts \omega)^k_{\alpha} $$
Here the computations
$$ \eta^k_{\alpha} = ( d_j^{ d_i \Gamma_k}  d_i^{ \Gamma_k} \vect{\omega}^k )_{\alpha} = \left \{ \begin{array}{ll} 
\textrm{if } \alpha = q_k: & 0 \\
\textrm{otherwise}: &  (d_i^{\Gamma_k} \omega^k )_{s_{q_k}(\alpha)} = \left \{ \begin{array}{ll} 
\textrm{if } s_{q_k}(\alpha) = p_k: & 0 \\
\textrm{otherwise}: \omega^k_{ s_{p_k} s_{q_k} \alpha} 
\end{array} \right.
 \end{array} \right. 
$$
 $$ \epsilon^k_{\alpha} = ( d_i^{ d_{j-1} \Gamma_k}  d_{j-1}^{ \Gamma_k} \vect{\omega}^k )_{\alpha} = \left \{ \begin{array}{ll} 
\textrm{if } \alpha = r_k: & 0 \\
\textrm{otherwise}: &  (d_i^{\Gamma_k} \omega^k )_{s_{r_k}(\alpha)} = \left \{ \begin{array}{ll} 
\textrm{if } s_{r_k}(\alpha) = t_k: & 0 \\
\textrm{otherwise}: \omega^k_{ s_{t_k} s_{r_k} \alpha} 
\end{array} \right.
 \end{array} \right. 
$$
In order to compare the two terms, we need relationships between $p_k, q_k, r_k, t_k$. Recall from the proof of lemma \ref{twisted-example} the formula for inverses of transformed permutations. We have
$$ r_k = (d_{j-1} \sigma_k)^{-1}(i) = \left \{ \begin{array}{ll} 
\textrm{if } i=j-1 , & i\\
\textrm{otherwise}, & d_{t_k} \sigma_k^{-1} s_{j-1} (i) = d_{t_k} \sigma_k^{-1}(i) = d_{t_k}(p_k) 
\end{array} \right.$$
$$ q_k = (d_i \sigma_k)^{-1}(j) = d_{p_k} \sigma_k^{-1} s_i (j) = d_{p_k} \sigma_k^{-1}(j-1) = d_{p_k}(t_k) $$
In order to use these relations effectively, we divide into three subcases: 
$$\boxed{p_k < t_k}, \boxed{t_k < p_k}, \boxed{p_k=t_k} $$ 
Note that the last is equivalent to $i=j-1$. Let's (re)start!
\begin{description}
\item[$\boxed{p_k < t_k}$ ]. We obtain $ r_k = p_k, q_k = t_k+1$. This means
$$ \eta^k_{\alpha} = \left \{ \begin{array}{ll} 
0, & \textrm{if} \alpha = t_k +1 \textrm{ or } \alpha = p_k \\
\omega^k_{ s_{p_k} s_{t_k +1}(\alpha) }, & \textrm{otherwise} 
\end{array} \right. $$
$$ \epsilon^k_{\alpha} = \left \{ \begin{array}{ll} 
0, & \textrm{if} \alpha = p_k \textrm{ or } \alpha = t_k +1 \\
\omega^k_{ s_{t_k} s_{p_k}(\alpha) }, & \textrm{otherwise} 
\end{array} \right. $$
We conclude because $s_{\alpha} s_{\beta} = s_{\beta-1} s_{\alpha} $ for $\beta > \alpha$. 

\item[$\boxed{p_k > t_k}$ ]. We obtain $ r_k = p_k+1, q_k = t_k$. This means
$$ \eta^k_{\alpha} = \left \{ \begin{array}{ll} 
0, & \textrm{if} \alpha = t_k \textrm{ or } \alpha = p_k+1 \\
\omega^k_{ s_{p_k} s_{t_k}(\alpha) }, & \textrm{otherwise} 
\end{array} \right. $$
$$ \epsilon^k_{\alpha} = \left \{ \begin{array}{ll} 
0, & \textrm{if} \alpha = p_k+1  \textrm{ or } \alpha = t_k \\
\omega^k_{ s_{t_k} s_{p_k+1}(\alpha) }, & \textrm{otherwise} 
\end{array} \right. $$
The conclusion follows as in the previous subcase.

\item[$\boxed{p_k = t_k}$ ]. We obtain $ r_k = p_k, q_k = p_k+1$. This means
$$ \eta^k_{\alpha} = \left \{ \begin{array}{ll} 
0, & \textrm{if} \alpha = p_k+1 \textrm{ or } \alpha = p_k \\
\omega^k_{ s_{p_k} s_{p_k+1}(\alpha) }, & \textrm{otherwise} 
\end{array} \right. $$
$$ \epsilon^k_{\alpha} = \left \{ \begin{array}{ll} 
0, & \textrm{if} \alpha = p_k \textrm{ or } \alpha = p_k+1 \\
\omega^k_{ s_{p_k} s_{p_k}(\alpha) }, & \textrm{otherwise} 
\end{array} \right. $$
The conclusion follows as in the previous subcase.
\end{description}

\end{proof}

In the next chapter, we show how the $\WHT$ space provides a bridge between the two cosimplicial objects we have examined: $\Kons_m$ and $\BZ_m$.

\section{From Kontsevich to Fox-Neuwirth} \label{kons-to-fox-l}
We want to show that there is a zig-zag of semicosimplicial maps
$$ \BZ_m \xleftarrow{\sim} \WHT_m \xrightarrow{\sim} \Kons_m \ ,$$
which are pointwise homotopy equivalences. Let us start with the leftmost map.
\subsection{Equivalence of $\WHT$ and $\BZ$}
Recall that $\BZ_m(n) \simeq |\Nerve(\FNP_m(n))|_{\textrm{std}}$. Lemma \ref{non-deg-real} implies that we can realize such simplicial set by glueing only non-degenerate cells, that is:
$$ |\Nerve(\FNP_m(n))|_{\textrm{std}} \simeq  |\Nerve(\FNP_m(n))^{nd}|_{\textrm{std}} $$
Indeed, the poset $\FNP_m(n)$ is an acyclic category, so that $\Nerve(\FNP_m(n))$ is a non-singular simplicial set (see Example \ref{poset-sing}). This means that $\BZ_m(n)$ and $\WHT_m(n)$ are realizations of the same (semi)simplicial set with different twists. Using Lemma \ref{simplicial-non-deg}, we can construct a simplicial morphism $\WHT_m(n) \to \BZ_m(n)$ by specifying a map $f_{\bt \Gamma} : \Omega(\bt \Gamma) \to \textrm{std}(\Gamma_{\bullet}) \simeq |\Delta^d| $ for all $\bt \Gamma \in \Nerve(\FNP_m(n))^{nd}_d$ that satisfy the $\partial$-contravariance condition. Let us give a definition-lemma of such maps:
\begin{lemma} \label{BZ-semi} The maps $f_{\bt \Gamma} : \Omega(\bt \Gamma) \to \textrm{std}(\bt \Gamma) $ defined as 
$$ f_{\bt \Gamma}(\bt \lambda, \bts \omega) = \bt \lambda$$ 
are well defined with respect to the equivalence relation and respect $\partial$-contravariance. Furthermore, the assembled maps  $f_m(n) : \WHT_m(n) \to \BZ_m(n)$ are semicosimplicial. 
\end{lemma}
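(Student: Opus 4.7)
The strategy is to check the three assertions of the lemma independently: well-definedness of each cell map $f_{\bt \Gamma}$ with respect to $\sim$, $\partial$-contravariance so that Lemma \ref{simplicial-non-deg} assembles the family into a global map, and intertwining with the semicosimplicial structures on $\WHT_m$ and $\BZ_m$.

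For well-definedness, I would inspect the definition of $\sim$ on each building block $\tilde{\Omega}(\bt \Lambda, \phi, D)$: in the non-trivial case it only identifies pairs $(\bt \lambda, \bts \omega) \sim (\bt \lambda, \bts \theta)$ that already share the first factor, and in the trivial case the constraint $D = u_d$ forces every element to have $\bt \lambda = (0,\dots,0,1)$, so the collapse to $\Omega(T)$ is consistent with the $\bt \lambda$-projection. In every case, projection onto the first factor is constant on $\sim$-classes and descends unambiguously to $\Omega(\bt \Gamma)$.

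For $\partial$-contravariance, I would chase the defining square on a point $(\bt \lambda, \bts \omega) \in \Omega(\partial_i \bt \Gamma)$. One composition sends it to $\bt \lambda \mapsto \partial_i \bt \lambda$ via $\std(\partial_i^{op})$, and the other yields $\Omega(\partial_i)(\bt \lambda, \bts \omega) = (\partial_i \bt \lambda, \partial_i \bts \omega) \mapsto \partial_i \bt \lambda$. The two agree tautologically because $\Omega(\partial_i)$ is designed in Definition \ref{twist} to act on the first factor exactly by the standard coface. Invoking Lemma \ref{simplicial-non-deg} with $\varphi$ the identity $\Nerve(\FNP_m(n))^{nd} \to \Nerve(\FNP_m(n))$ (the target carrying the standard twist) assembles the cell maps into a continuous $f_m(n) : \WHT_m(n) \to \BZ_m(n)$.

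For semicosimpliciality, I would compare the two $d_j$'s on the $\bt \lambda$-factor. On $\WHT_m$ the map is $(\bt \lambda, \bts \omega) \mapsto (\bt \lambda, d_j^{\bt \Gamma} \bts \omega)$, fixing $\bt \lambda$ by construction. On $\BZ_m \simeq |\Nerve(\FNP_m(n))|_{\std}$, the piecewise linear coface recalled in Section \ref{kons-cosimp} sends the barycentric simplex with vertices $v(\Gamma_0), \dots, v(\Gamma_d)$ to the one with vertices $v(d_j \Gamma_0), \dots, v(d_j \Gamma_d)$ via the identity on barycentric coordinates; on the underlying simplicial realization this is the standard nerve coface, and so it acts as the identity on $\bt \lambda$. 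Hence both $f_m(n+1) \circ d_j^{\WHT}$ and $d_j^{\BZ} \circ f_m(n)$ send $(\bt \lambda, \bts \omega) \in \Omega(\bt \Gamma)$ to $\bt \lambda \in \std(d_j \bt \Gamma)$, and the identity holds globally.

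I expect no genuine obstacle here: $f_{\bt \Gamma}$ is literally the projection that forgets the weight data, and each step amounts to the observation that the three relevant operations ($\sim$, $\Omega(\partial_i)$, $d_j^{\WHT}$) preserve the first factor by design. The only small care needed is in matching Definition \ref{twist} and the construction of $d_j^{\WHT}$ to the corresponding standard cosimplicial operations on the nerve, which is immediate from the formulas.
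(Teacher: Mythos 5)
Your proof is correct and follows essentially the same route as the paper: all three checks reduce to the observation that $\sim$, $\Omega(\partial_i)$, and $d_j^{\WHT}$ each preserve the $\bt\lambda$-factor, while $d_j^{\BZ}$ is the realization of a simplicial map and so acts as the identity on barycentric coordinates. Your added remark about the trivial stratum forcing $\bt\lambda=(0,\dots,0,1)$ makes the well-definedness step slightly more explicit than the paper's terse "obvious," but the argument is the same.
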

\begin{proof} The well-definition with respect to the equivalence is obvious, since the result does not depend on the second component. Let us see the $\partial$-contravariance, that is the commutativity of the diagram
\[\begin{tikzcd}
	{\Omega(\partial_i \Gamma_{\bullet})} & {|\Delta^d|} \\
	{\Omega(\Gamma_{\bullet})} & {|\Delta^{d+1} |}
	\arrow["{f_{\partial_i \Gamma_{\bullet}}}", from=1-1, to=1-2]
	\arrow["{\Omega(\partial_i)}"', from=1-1, to=2-1]
	\arrow["{f_{\Gamma_{\bullet}}}"', from=2-1, to=2-2]
	\arrow["{\partial_i}", from=1-2, to=2-2]
\end{tikzcd}\]
It is straightforward to see that both compositions yield $\partial_i \bt \lambda$. We are left with verifying semicosimpliciality. Since $d_j \bt \Gamma$ has the same length of $\bt \Gamma$, we have $\textrm{std}(dj \bt \Gamma) \simeq | \Delta^d | \simeq \textrm{std}(\bt \Gamma)$. The map $d_j: \BZ_m(n) \to \BZ_m(n+1)$ when restricted to $\textrm{std}(\bt \Gamma) \to \textrm{std}(d_j \bt \Gamma) $ is the identity, being the realization of a simplicial map. The commutation of semicosimplicial constraint 
\[\begin{tikzcd}
	{\Omega(\Gamma_{\bullet})} & {|\Delta^d|} \\
	{\Omega(d_j\Gamma_{\bullet})} & {|\Delta^d|}
	\arrow["{f_{\Gamma_{\bullet}}}", from=1-1, to=1-2]
	\arrow["{d_j}"', from=1-1, to=2-1]
	\arrow["{f_{d_j \Gamma_{\bullet}}}"', from=2-1, to=2-2]
	\arrow["{\textrm{id}}", from=1-2, to=2-2]
\end{tikzcd}\]
is then immediate, because both the compositions are equal to $\bt \lambda$.

\end{proof}

We are now left with showing that the maps $f_m(n)$ are homotopy equivalences. We achieve this by providing explicit homotopy inverses $g_m(n)$ (which are not needed to be semicosimplicial) to $f_m(n)$. Recall the definition of extremal branches $E(\Gamma)$ of a tree $\Gamma \in \FNP_m(n)$ from \ref{extremal-values}. Then we have:
\begin{lemma} \label{BZ-he} For $\bt \Gamma \in \Nerve(\FNP_m(n))^{nd}_d$, define $g_{\bt \Gamma} :  \textrm{std}(\bt \Gamma) \to \Omega(\bt \Gamma) $ as 
$$ g_{\bt \Gamma}(\bt \lambda) = (\bt \lambda, \bts u (\bt \Gamma) ) \ ,$$
where
$$ \vect{u}(\bt \Gamma)^k_{\alpha} = \left \{ \begin{array}{ll} 
\infty, & \textrm{if } \alpha \in E(\Gamma_k) \\
1, & \textrm{otherwise}
\end{array} \right. $$
Then $g_{\bt \Gamma}$ respect $\partial$-contravariance, and the assembled maps $g_m(n) : \BZ_m(n) \to \WHT_m(n)$ are homotopy inverses to $f_m(n)$. 
\end{lemma}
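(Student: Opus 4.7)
The plan is to verify the three assertions of the lemma in turn: that $g_{\bt \Gamma}$ lands in $\Omega(\bt \Gamma)$, the $\partial$-contravariance square, and the homotopy inverse claim. For well-definedness of $g_{\bt \Gamma}(\bt \lambda) = (\bt \lambda, \bts u(\bt \Gamma))$, given $\bt \lambda$ with support $\im D$, I would exhibit a stratum $\Omega(\bt \Lambda, \phi, D)$ containing the image point. The natural choice is to take $\phi$ so that the twisted morphisms $\phi^{\Lambda_k}$ have endpoints $a(\Gamma_{D(k)})$ and $b(\Gamma_{D(k)})$, with $\bt \Lambda$ the corresponding ``core'' subchain obtained by stripping the extremal hairs from each $\Gamma_{D(k)}$. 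Along the poset chain the extremal hair regions grow monotonically ($a(\Gamma_k)$ weakly increasing, $b(\Gamma_k)$ weakly decreasing as $k$ rises in the chain), which allows the various $\phi^{\Lambda_k}$ to be realised uniformly by a single $\phi$. The Cell Recognition Lemma~\ref{stratum-infinities} then identifies $\phi^{\Lambda_k}(0), \phi^{\Lambda_k}(n)$ with the innermost $\infty$-positions of $\bts u(\bt \Gamma)^{D(k)}$; any remaining $\infty$-weights of $\bts u(\bt \Gamma)$ sit strictly outside $[\phi^{\Lambda_k}(0), \phi^{\Lambda_k}(n)]$ and carry no constraint thanks to $\sim$. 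This is the most delicate step of the proof.

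For $\partial$-contravariance I would compute both sides of the square explicitly. Both compositions produce a pair $(\partial_i \bt \lambda, \cdot)$, so the content lies in the weight component. For $k \neq i$, the vector $\bts u(\partial_i \bt \Gamma)^{\sigma_i(k)}$ agrees with $\bts u(\bt \Gamma)^k$ on the nose, because deletion of the $i$-th tree from the chain preserves $E(\Gamma_k)$ for the remaining trees. For $k = i$, the uniform vector $\underline{u} = (1, \ldots, 1)$ inserted by $\Omega(\partial_i)$ (Def.~\ref{twist}) need not equal $\bts u(\bt \Gamma)^i$ entrywise, but both are identified via $\sim$ since $\lambda_i = 0$ removes this index from the relevant range of every stratum containing the image.

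The composition $f_{\bt \Gamma} \circ g_{\bt \Gamma} = \mathrm{id}$ is immediate from the formulas. For $g_m(n) \circ f_m(n) \simeq \mathrm{id}$, I would construct the fibrewise straight-line homotopy $H_t(\bt \lambda, \bts \omega) = (\bt \lambda, (1-t) \bts \omega + t \bts u(\bt \Gamma))$ on each stratum, with the convention that $\infty$-weights at stratum endpoints and $0$-weights at non-image positions are preserved throughout (only the positive-finite weights in the interior of $[\phi^{\Lambda_k}(0), \phi^{\Lambda_k}(n)]$ are genuinely interpolated, and they remain positive-finite). This keeps $H_t$ inside a fixed stratum, and the global homotopy on $\WHT_m(n)$ assembles via the non-degenerate realisation machinery of Section~\ref{twisted-geo}, using the contravariance of both $g$ and the identity. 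The main obstacle beyond the construction of the stratum in step~(1) will be checking compatibility of $H_t$ with the equivalence $\sim$ and with the stratum exit-poset structure; alternatively, since both $\BZ_m(n)$ and $\WHT_m(n)$ deformation-retract onto $\Conf_n(\mathbb{R}^m)$ by the analogues of Lemma~\ref{restriction}, the homotopy inverse property can be obtained abstractly.
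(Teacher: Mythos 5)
Your argument has three genuine gaps. First, the monotonicity claim — that $a(\Gamma_k)$ is weakly increasing and $b(\Gamma_k)$ weakly decreasing along a chain — is false. For instance in $\FNP_2(3)$ one has $\Gamma_0 = 1 <_1 2 <_0 3 < \Gamma_1 = 3 <_1 1 <_1 2$ (the latter is obtained by letting $x_3$'s first coordinate collapse onto $x_1 = x_2$'s in $\mathbb{R}^2$), yet $a(\Gamma_0)=1$ while $a(\Gamma_1)=0$. And even if the monotonicity held, your proposed stratum with $\phi^{\Lambda_k}(0) = a(\Gamma_{D(k)})$, $\phi^{\Lambda_k}(p) = b(\Gamma_{D(k)})$ cannot exist in general: by Corollary~\ref{extrema} the quantities $\phi^{\Lambda_k}(0)$ and $\phi^{\Lambda_k}(p)$ do not depend on $k$ (they equal $\phi(0)$ and $\phi(p)$), so they can only match $a(\Gamma_{D(k)}),b(\Gamma_{D(k)})$ when the latter are \emph{constant} along the chain — which monotonicity would only help with if it were constancy. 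So the well-definedness step is not carried through.

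Second, the fibrewise straight-line homotopy $(1-t)\bts\omega + t\bts u(\bt\Gamma)$ is ill-posed on $\Rext$: as soon as one coordinate of $\bts u(\bt\Gamma)$ is $\infty$ but the corresponding coordinate of $\bts\omega$ is finite, the expression equals $\infty$ for every $t>0$ and $\omega^k_\alpha$ at $t=0$, so it is discontinuous in $t$. Your patch — freezing the $\infty$ and $0$ weights and only interpolating the interior finite ones — restores continuity but then $H_1 \neq g_{\bt\Gamma}\circ f_{\bt\Gamma}$ at exactly those positions $\alpha \in E(\Gamma_k)$ where $\bts u$ has $\infty$ but the starting $\bts\omega$ did not, and the equivalence relation $\sim$ does not identify the two points (it only quotients out weights \emph{outside} the interval $[\phi^{\Lambda_k}(0), \phi^{\Lambda_k}(p)]$, not inside). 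The paper instead conjugates the linear interpolation by $\arctan$, i.e.\ $H_t = \tan(t\arctan(\bts\omega)+(1-t)\arctan(\bts u(\bt\Gamma)))$, which is a genuine homotopy of $\Rext$ onto itself; verifying its compatibility with $\sim$ is then the technical core of the proof (the paper shows that for $t\neq 0$ the endpoints $\phi^{\Lambda_k}(0),\phi^{\Lambda_k}(p)$ of the containing stratum are preserved). Third, the ``abstract alternative'' at the end is circular: the fact that $\WHT_m(n)\simeq \Conf_n(\mathbb{R}^m)$ is established in Theorem~\ref{WHT-he}, whose proof runs through Corollary~\ref{gamma-he}, which in turn uses precisely this lemma to conclude that $f_m(n)$ is a homotopy equivalence.
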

An illustration of how $g_m(n)$ includes $\BZ_m(n)$ into $\WHT_m(n)$ can be found in figure \ref{wt-sd}.
\begin{proof} The diagram for $\partial$-contravariance reads 
\[\begin{tikzcd}
	{|\Delta^d|_{(\partial_i \Gamma_{\bullet})}} & {\Omega(\partial_i \Gamma_{\bullet})} \\
	{|\Delta^{d+1}|_{(\Gamma_{\bullet})}} & {\Omega(\Gamma_{\bullet})}
	\arrow[from=1-1, to=1-2]
	\arrow[from=2-1, to=2-2]
	\arrow[from=1-1, to=2-1]
	\arrow[from=1-2, to=2-2]
\end{tikzcd}\]
The $\rightarrow, \downarrow$ composition is
$$ \bt \lambda \mapsto (\bt \lambda, \bts u (\partial_i \bt \Gamma) ) \mapsto (\partial_i \bt \lambda, \partial_i \bts u (\partial_i \bt \Gamma)) \ .$$
The other composition is
$$ \bt \lambda \mapsto \partial_i \bt \lambda \mapsto (\partial_i \bt \lambda, \bts u (\bt \Gamma) )  \ .$$
But for $k \neq i$ (where $\lambda_k \neq 0$) we have
$$\partial_i \bts u ( \partial_i \bt \Gamma)^k = \bts u(\partial_i \bt \Gamma)^{s_i(k)} = \bts u( (\partial_i \bt \Gamma)_{s_i(k)}) = \bts u ( \Gamma_{d_i s_i (k)} ) )= \bts u ( \Gamma_k) \ .$$
Now let us verify that it is an homotopic inverse to $f$. It is straightforward to see that $fg = \textrm{Id}$. The other composition is only homotopic to the identity via the following homotopy:
$$ h: \Omega(\bt \Gamma) \times I \to \Omega(\bt \Gamma)  \ ,$$
$$ h_t(\bt \lambda, \bts \omega) = (\bt \lambda, H_t(\bts \omega) ) \ ,$$
where
$$ H_t(\bts \omega) = \tan( t \cdot \arctan( \bts \omega) + (1-t) \cdot \arctan( \bts u(\bt \Gamma)) )  \ .$$
Here $\tan, \arctan$ are extended continously to functions between $\Rext$ and $[0, \pi/2]$, and their action on a vector is componentwise (that is $\arctan(\bts \omega)^k_{\alpha} = \arctan(\omega^k_{\alpha})$ ). 

Let us verify that the map respects the equivalence relation. For a trivial stratum $\Omega(T_n, u_d)$ there is nothing to show, since any two elements are equivalent. Suppose $(\bt \lambda, \bts \omega) \sim (\bt \lambda, \bts \theta) \in \tilde{\Omega}(\bt \Lambda, \phi, D)$ non trivial, for some $\bt \Lambda \in \Nerve(\FNP_m(p))_r, \phi:[p] \to [n], D:[r] \to [d]$. This means $\omega^{D(k)}_{\alpha} = \theta^{D(k)}_{\alpha}$ for all $k \in [r]$ and $\phi^{\Lambda_k}(0) \le \alpha \le \phi^{\Lambda_k}(p)$. We want to show that $H_t(\bts \omega)$ is equivalent to $H_t(\bts \theta)$ for all $t$. 

Firstly, notice that for $t \neq 0$ we must have $H_t( \Omega( \bt \Lambda, \phi, D) ) \subset \Omega( \bt \Lambda', \phi', D)$ such that $(\phi')^{\Lambda'_k}(0) = \phi^{\Lambda_k}(0)$ and $(\phi')^{\Lambda'_k}(p') = \phi^{\Lambda_k}(p)$. 
Indeed, for $(\bt \mu, \bts \rho) \in \Omega( \bt \Lambda, \phi, D)$ we have for $\phi^{\Lambda_k}(0) < \alpha < \phi^{\Lambda_k}(p)$
$$ H_t(\bts \rho)^k_{\alpha} = \tan( t \cdot \arctan(\rho^k_{\alpha}) + (1-t) \cdot \arctan( \bts u(\bt \Gamma))^k_{\alpha} )) < \infty \textrm{ if and only if } $$
$$  t \cdot \arctan( \rho^k_{\alpha} ) + (1-t) \cdot \arctan( \bts u(\bt \Gamma)) < \pi/2  \ ,$$
which is true because $t \neq 0$ and $\rho^k_{\alpha} < \infty$. Also, for $\alpha = \phi^{\Lambda_k}(0), \phi^{\Lambda_k}(p)$
\begin{align*}
    H_t(\bts \rho)^k_{\alpha} & = \tan( t \cdot \arctan(\rho^k_{\alpha}) + (1-t) \cdot \arctan( \bts u(\bt \Gamma))^k_{\alpha} )) = \\
& = \tan( t \cdot \arctan(\infty) + (1-t) \cdot \arctan( \infty )) = \tan( t \pi/2 + (1-t) \pi/2) = \infty \ ,
\end{align*}
since $\alpha \in E(\Gamma_{D(k)})$ by Lemma \ref{infinities}, part (2). By Lemma \ref{stratum-infinities}, this implies $(\phi')^{\Lambda'_k}(0) = \phi^{\Lambda_k}(0)$ and $(\phi')^{\Lambda'_k}(p') = \phi^{\Lambda_k}(p)$. 

This makes the verification of the equivalence relation easy, since for all $(\phi')^{\Lambda'_k}(0) = \phi^{\Lambda_k}(0) \le \alpha \le \phi^{\Lambda_k}(p) = (\phi')^{\Lambda'_k}(p')$ we have
$$ H_t(\vect{\omega})^{D(k)}_{\alpha} = H_t(\vect{\omega}^{D(k)}_{\alpha}) =H_t(\vect{\theta}^{D(k)}_{\alpha}) = H_t(\vect{\theta})^{D(k)}_{\alpha}  \ .$$
For $t=0$, on the other hand, new weights equal $\infty$ are created, up to positions $a(\Gamma_{D(k)})$ and $b(\Gamma_{D(k)})$, so that the stratum is possibly different. However, the new stratum $\Omega(\bt \Pi, \psi, D)$ is such that 
$$\psi^{\Pi_k}(0) = a(\Gamma_{D(k)} ) \ge \phi^{\Lambda_k}(0), \psi^{\Pi_k}(q) = b(\Gamma_{D(k)}) \le \phi^{\Lambda_k}(p) \ .$$ 
This makes the verification above equally valid. Now note that the homotopy also satisfies
$$ h_0(\bt \lambda, \bts \omega) =(\bt \lambda, \bts u) = gf(\bt \lambda, \bts \omega)  \ ,$$
$$ h_1(\bt \lambda, \bts \omega) = (\bt \lambda, \bts \omega)  \ .$$
The pieces $\Omega^{(I)}(\bt \Gamma) := \Omega(\bt \Gamma) \times I$ still define a twist (trivial on the interval component) that assemble into
$$ | \Nerve(\FNP_m(n)|_{\Omega^{(I)}} = \colim_{\bt \Gamma \in \Nerve(\FNP_m(n)) } \Omega(\bt \Gamma) \times I \cong \left ( \colim_{\bt \Gamma \in \Nerve(\FNP_m(n)) } \Omega(\bt \Gamma) \right ) \times I \cong \WHT_m(n) \times I \ ,$$
since $I$ is locally compact. If we verify that $h_t$ is $\partial$-contravariant, it will then assemble into a function between the respective twisted realizations $\WHT_m(n) \times I \to \WHT_m(n)$. The above equalities $h_0 = gf, h_1 = \textrm{Id}$ will extend to the assembled maps, providing the needed homotopy. This is the diagram:
\[\begin{tikzcd}
	{\Omega(\partial_i \Gamma_{\bullet})\times I} & {\Omega(\partial_i \Gamma_{\bullet})} \\
	{\Omega(\Gamma_{\bullet}) \times I} & {\Omega(\Gamma_{\bullet})}
	\arrow[from=1-1, to=2-1]
	\arrow[from=1-1, to=1-2]
	\arrow[from=2-1, to=2-2]
	\arrow[from=1-2, to=2-2]
\end{tikzcd}\]
The $\rightarrow, \downarrow$ composition is
$$ (\bt \lambda, \bts \omega) \mapsto (\bt \lambda, H_t(\bts \omega) \mapsto (\partial_i \bt \lambda, \partial_i H_t(\bts \omega) ) \ ,$$
while
$$ (\bt \lambda, \bts \omega) \mapsto (\partial_i \bt \lambda, \partial_i \bts \omega) \mapsto (\partial_i \bt \lambda, H_t(\partial_i \bts \omega) ) \ .$$
We conclude because, for $k \neq i$:
$$ H_t (\partial_i \bts \omega)^k =  H_t( \omega^{s_i(k)}) = \partial_i H_t (\bts \omega)^k \ .$$
\end{proof}

The analogous proof for $\WT_m$ we described in Lemma \ref{BZ-he-2} has an interesting corollary, that we show for subsequent arguments:
\begin{corollary} \label{gamma-he} The map $\gamma: \WT_m(n) \to \WHT_m(n)$ is an homotopy equivalence.
\end{corollary}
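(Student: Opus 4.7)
The plan is to reduce the statement to a 2-out-of-3 argument for homotopy equivalences, using the projections onto $\BZ_m(n)$ constructed in Lemmas \ref{BZ-he} and \ref{BZ-he-2}.

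First I would pin down the map $\gamma$ itself. Both $\WT_m(n)$ and $\WHT_m(n)$ are twisted non-degenerate realizations of the same simplicial set $\Nerve(\FNP_m(n))$, so by Lemma \ref{simplicial-non-deg} it is enough to specify, for each $\bt \Gamma \in \Nerve(\FNP_m(n))^{nd}_d$, a cell-level map $\gamma_{\bt \Gamma} : \check{\Omega}(\bt \Gamma) \to \Omega(\bt \Gamma)$ that satisfies $\partial$-contravariance. The natural choice is the tautological inclusion $(\bt \lambda, \bts{\omega}) \mapsto (\bt \lambda, \bts{\omega})$: since the weights in $\check{\Omega}(\bt \Gamma)$ are strictly positive and finite, the image automatically lies in the union of strata $\Omega(\bt \Lambda, \Id, D)$ of $\Omega(\bt \Gamma)$, where $D$ records the support of $\bt \lambda$ and $\bt \Lambda$ the selected trees. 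Because the face operators in Definition \ref{twist-2} and Definition \ref{twist} are given by the same formula (inserting the unit vector $\underline{u}$ at position $i$), $\partial$-contravariance is immediate.

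The crucial observation is that $f_m(n) \circ \gamma = \check{f}_m(n)$: both $f$ and $\check{f}$ are defined cell-wise as projection onto the $\bt \lambda$ coordinate, so the triangle
\[
\begin{tikzcd}
{\WT_m(n)} & {\WHT_m(n)} \\
& {\BZ_m(n)}
\arrow["\gamma", from=1-1, to=1-2]
\arrow["{\check{f}_m(n)}"', from=1-1, to=2-2]
\arrow["{f_m(n)}", from=1-2, to=2-2]
\end{tikzcd}
\]
commutes on the nose. Lemma \ref{BZ-he-2} provides that $\check{f}_m(n)$ is a homotopy equivalence (with explicit inverse $\check{g}_m(n)$), and Lemma \ref{BZ-he} provides the same for $f_m(n)$ (with explicit inverse $g_m(n)$). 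Applying 2-out-of-3 to the identity $\check{f}_m(n) = f_m(n) \circ \gamma$ yields that $\gamma$ is itself a homotopy equivalence.

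The main—and quite mild—obstacle is the bookkeeping in checking that the cell-level inclusions $\gamma_{\bt \Gamma}$ assemble into a well-defined continuous map on the full twisted realizations, which reduces to the comparison of face formulas noted above. Once this is in place, the corollary follows immediately from the commutative triangle and the two previously established equivalences, without need of a direct homotopy inverse for $\gamma$.
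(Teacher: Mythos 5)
Your proof is correct and follows essentially the same route as the paper: the paper also establishes the on-the-nose commutativity $\check f = f\circ\gamma$ cell-by-cell (both compositions sending $(\bt\lambda,\bts\omega)\mapsto\bt\lambda$) and then invokes 2-out-of-3 with Lemmas \ref{BZ-he} and \ref{BZ-he-2}. The only slight difference is that you spell out the cell-level construction of $\gamma$ and its $\partial$-contravariance here, whereas the paper defers those verifications to Lemma \ref{closure}; this is a presentational choice, not a mathematical one.
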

\begin{proof} The following diagram is commutative: 
\[\begin{tikzcd}
	{\WT_m(n)} & {\WHT_m(n)} \\
	{\BZ_m(n)}
	\arrow["\gamma", from=1-1, to=1-2]
	\arrow["{\check{f}}"', from=1-1, to=2-1]
	\arrow["f", from=1-2, to=2-1]
\end{tikzcd}\]
Since for all $\bt \Gamma \in \Nerve(\FNP_m(n))^{nd}_d$ the diagram reduces to
\[\begin{tikzcd}
	{\check{\Omega}(\bt \Gamma)} & {\Omega(\bt \Gamma)} \\
	{|\Delta^d|}
	\arrow["\gamma", from=1-1, to=1-2]
	\arrow["{\check{f}}"', from=1-1, to=2-1]
	\arrow["f", from=1-2, to=2-1]
\end{tikzcd}\]
and both compositions are $(\bt \lambda, \bts \omega) \mapsto \bt \lambda$. By the 2-out-of-3 property of homotopy equivalences, since $f,\check{f}$ are homotopy equivalences, we deduce the thesis.
\end{proof}
\begin{remark} This is not straightforward with sections, since $\gamma \check{g} \neq g$.
\end{remark}

\subsection{The map from $\WHT$ to $\Kons$}
This is the harder part of the zig-zag, mainly because the map has to be defined on the single strata $\Omega(\Lambda_{\bullet}, \phi, D)$ and has many cases and subcases. However, almost all verifications are straightforward: we paid the price of a complicated definition for a tame behaviour. In practice, we will extend the map $\WT_m \to \BZ_m$ to zero and infinite weights. Since distances between points can be zero (or $\infty$) in this generalization, we have to move to the Kontsevich spaces. We start from a preliminary definition.

\begin{definition} Denote by $\hat{n} : \mathbb{R}^m \setminus \{0 \} \to S^{m-1} $ the normalization map
$$ \hat{n}(x) = \frac{x}{\|x \| }  \ .$$
\end{definition}
The next definition is meant to capture if some points belong to the same cloud, or if they have been put at infinity.
\begin{definition} Given $\Lambda \in \FNP_m(n)$ and $i < j$ in $ \{1, \ldots, n\}$, we say that $(i,j)$ is $\phi$-degenerate if one of three following cases occur:
\begin{itemize}
\item $i \le \phi(0)$ - a \textit{left-extreme} pair;
\item $j  > \phi(n)$ - a \textit{right-extreme} pair;
\item $\{i, \ldots, j-1\} \subset (\im \phi)^c$ - a \textit{collapsed} pair.
\end{itemize}
\end{definition}

Let us see the building blocks of the maps $\WHT_m \to \Kons_m$. As you can see, the formula for the difference of two points in the wt space will pop up. This witnesses the coherence of the two definitions, that is stated precisely in Lemma \ref{coherence-wht}.

\begin{definition} Consider $\bt \Lambda \in \Nerve(\FNP_m(n))^{nd}_r , \bt \Gamma \in \Nerve(\FNP_m(\ell))^{nd}_d$, $1 \le i < j \le \ell$ and $ \phi: [n] \to [\ell], D: [r] \to [d]$ in $\Delta_s$ such that $\phi \bt \Lambda = D \bt \Gamma$. Let $\alpha_k = \min\{i,j\}, \beta_k = \max\{i,j \}$ with respect to the order of $\Gamma_{D(k)}$, and $p_k = \sigma_{D(k)}^{-1}(\alpha_k ), q_k = \sigma_{D(k)}^{-1}(\beta_k)$. Let us explicit depths and permutations of $\Gamma_k = (\sigma_k, a^k)$. We define a map
$$ \tau^0_{ij} : \Omega(\bt \Lambda, \phi, D) \to S^{m-1} $$
to be $e_1$ if $(i,j)$ is $\phi$-degenerate or the stratum is trivial, and
$$ \tau^0_{ij}(\bt \lambda, \bts \omega) =       \hat{n} \left ( \sum_{k=0}^{r} \lambda_{D(k)} \sgn^{\Gamma_{D(k)}}_{ij} \sum_{h = p_k}^{q_k-1} \vect{\omega}^{D(k)}_{h} e_{1+a^{D(k)}_{h}} \right )  $$
otherwise.
\end{definition}

In the following lemma, we show this is a robust definition.

\begin{lemma} \label{stratum-comp} The map $\tau^0_{ij}: \Omega(\Lambda_{\bullet}, \phi, D) \to S^{m-1} $ is well defined,  respects the equivalence relation and assemble into a map $\tau^1_{ij} : \Omega(\Gamma_{\bullet}) \to S^{m-1} $.
\end{lemma}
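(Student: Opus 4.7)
The lemma reduces to three checks on a non-trivial stratum $\tilde{\Omega}(\bt\Lambda,\phi,D)$ with $(i,j)$ non-$\phi$-degenerate; trivial strata and $\phi$-degenerate pairs yield the constant value $e_1$ and are automatic. I will (a) verify that the argument of $\hat{n}$ is finite and non-zero, (b) check invariance under $\sim$, and (c) glue the pieces across strata of $\Omega(\bt\Gamma)$.

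For finiteness of the inner sum, I combine the hypotheses $i>\phi(0)$ and $j\le\phi(n)$ with points (1), (3) of the Shape-Tree Lemma (labels in $[1,\phi(0)]$ and $[\phi(n)+1,\ell]$ stay at fixed positions) and Corollary \ref{extrema} ($\phi^{\Lambda_k}(0)=\phi(0)$, $\phi^{\Lambda_k}(n)=\phi(n)$) to obtain $p_k>\phi(0)$ and $q_k\le\phi(n)$ for every $k$. Hence every summation index $h\in[p_k,q_k-1]$ sits strictly inside $(\phi^{\Lambda_k}(0),\phi^{\Lambda_k}(n))$, where the defining constraints confine $\omega^{D(k)}_h$ to $\{0\}\cup\mathbb{R}_{>0}$ (the $\infty$ values live only at the two boundary positions); the inner sum is thus a finite real vector.

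For non-vanishing I adapt Lemma \ref{stratum-tau}. Let $r$ be the minimal depth at which $i,j$ are separated in $\Gamma_{D(0)}$. Every $k$ contributes non-negatively to the $e_{1+r}$-component, and the $k=0$ contribution must be strictly positive: if every depth-$r$ fork in $[p_0,q_0-1]$ carried weight $0$, Lemma \ref{special-weights} would force $r=m-1$ and then all weights in $[p_0,q_0-1]$ to vanish, placing $[p_0,q_0-1]\subset(\im\phi^{\Lambda_0})^c$; the reverse direction of Corollary \ref{consecutive} (with a separate, purely combinatorial argument in case $i,j$ appear in reversed order in $\Gamma_{D(0)}$, which via the core statement of Shape-Tree already forces the absurd $i=j+(q_0-p_0)>j$) then contradicts non-$\phi$-degeneracy. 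Since the equivalence on $\tilde{\Omega}(\bt\Lambda,\phi,D)$ only allows reassignment of $\omega^{D(k)}_\alpha$ outside $[\phi^{\Lambda_k}(0),\phi^{\Lambda_k}(n)]$, and the formula uses only weights strictly inside this range, $\sim$-invariance follows from the finiteness step automatically.

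For the assembly on $\Omega(\bt\Gamma)$, the Stratification Theorem gives pairwise disjoint non-trivial strata (with trivial strata identified to a single point, consistent with $\tau^0_{ij}=e_1$), so set-theoretic pasting is forced. Continuity is argued along the lines of the final paragraph of Lemma \ref{stratum-tau}: along a sequence approaching a sub-stratum $\Omega(\bt\Pi,\phi\psi,DQ)\subset\overline{\Omega(\bt\Lambda,\phi,D)}$, terms with $k\notin\im Q$ are killed by $\lambda_{D(k)}\to 0$ (with the quotient absorbing any simultaneous $\omega$-divergence at those indices), while the remaining terms converge by continuity of the finite sum and of $\hat{n}$ on non-zero inputs. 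The crux is the non-vanishing step, which tightly weaves together the Shape-Tree Lemma, Lemma \ref{special-weights}, and Corollary \ref{consecutive} to convert the combinatorial non-$\phi$-degeneracy into the required analytic statement; the continuity check in the assembly is delicate because of potential $0\cdot\infty$ indeterminacies but is routine once the stratification machinery from the preceding section is in place.
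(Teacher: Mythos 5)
Your arguments for finiteness, non-vanishing, and invariance under $\sim$ are correct, and your non-vanishing step is actually tighter than the paper's: the paper extracts an $h_0^* \in \im\phi^{\Lambda_0}$ from Corollary \ref{consecutive} to conclude $\omega_{h_0^*}>0$, but this does not by itself bound the $e_{1+r}$-component (which is $\sum_{h\,:\,a^{D(0)}_h=r}\omega^{D(0)}_h$) unless $a_{h_0^*}=r$. Your split via Lemma \ref{special-weights} — for $r<m-1$ every depth-$r$ weight is automatically positive, and only for $r=m-1$ does one genuinely need Corollary \ref{consecutive} — resolves this cleanly.

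However, the part you wave off as ``routine once the stratification machinery is in place'' is precisely where the bulk of the paper's proof lives, and your sketch does not cover it. The hard case in the stratum-compatibility check is when $(i,j)$ is not $\phi$-degenerate on $\Omega(\bt\Lambda,\phi,D)$ but becomes $\phi\psi$-degenerate on a sub-stratum $\Omega(\bt\Pi,\phi\psi,DQ)$ of its closure: there $\tau^0_{ij}$ is the $\hat{n}$-formula on the open stratum but the constant $e_1$ on the sub-stratum, and one must prove the formula tends to $e_1$. Your mechanism — killing terms $k\notin\im Q$ via $\lambda_{D(k)}\to 0$ and then invoking continuity of $\hat{n}$ on non-zero inputs — does not suffice, for two reasons. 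First, the transition to a degenerate sub-stratum can occur with $Q=\mathrm{id}$ and no coefficient vanishing at all: a finite internal weight at a position that becomes the new extreme $(\phi\psi)^{\Pi_k}(0)$ or $(\phi\psi)^{\Pi_k}(p)$ simply diverges to $\infty$. Second, in these extreme-pair sub-cases the vector inside $\hat{n}$ itself becomes unbounded, so ``continuity of $\hat{n}$ on non-zero inputs'' does not apply; one must decompose the sum as $L^{(n)}e_1+U^{(n)}$ with $L^{(n)}\to\infty$ and $U^{(n)}$ bounded (boundedness again coming from Lemma \ref{special-weights}: positions of depth $<m-1$ carry finite weights), divide by $L^{(n)}$ using homogeneity of $\hat{n}$, and conclude the limit is $e_1$. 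Together with the Shape-Tree argument that, in the collapsed sub-case, pins every depth on the walk to $m-1$ so the vector is already a positive multiple of $e_1$, this case analysis is the crux of the lemma, not an afterthought.
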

\begin{proof}  Firstly, we have to show that the argument of $\hat{n}$ is non-zero. This mimicks the strategy of Lemma \ref{stratum-tau}. Suppose that $i <_r j$ in $\Gamma_{D(0)}$ and take $q$ to be the maximum index such that $i<_r j$ in all $\Gamma_{D(0)}, \ldots, \Gamma_{D(q)}$. We saw in Lemma \ref{stratum-tau} that 
$$\langle \sum_{k=0}^{r} \lambda_{D(k)} \sgn^{\Gamma_{D(k)}}_{ij} \sum_{h = p_k}^{q_k-1} \vect{\omega}^{D(k)}_{h} e_{1+a^{D(k)}_{h}}, e_{1+r} \rangle =   \sum_{k=0}^q \lambda_{D(k)} \sum_{h = p_k}^{q_k-1} \vect{\omega}^{D(k)}_{h}  \ .$$
Since $i,j$ is not a $\phi$-degenerate pair, for all $k$ there exists by Corollary \ref{consecutive} an $h_k^*$ with $p_k \le h_k^* \le q_k-1$ such that $h^*_k \in \im \phi^{\Lambda_k}$, which implies in particular $\vect{\omega}^{D(0)}_{h^*_0} > 0$. Together with $\lambda^{D(0)} > 0$ and non-negativity of all $\lambda$'s and $\vect{\omega}$'s, we conclude the $(1+r)$-th component is $\ge \lambda_{D(0)} \omega_{h^*_0} > 0$. 

The equivalence relation is easily satisfied. If $(i,j)$ are $\phi$-degenerate or the stratum is trivial there is nothing to show, since the function is constant. If $(i,j)$ is non-degenerate and the stratum is non trivial, we only use weights $\vect{\omega}^{\ell}_{\alpha}$ with $\ell \in \im D$ and $\phi(0) < p_k \le \alpha < q_k \le \phi(n) $, which do not depend on the equivalence representative. 

The stratum compatibility requires a detailed inspection by cases. Consider a substratum $\Omega(\bt \Pi, \phi\psi, DT) \subset \overline{\Omega( \bt \Lambda, \phi, D)} $ and take $(\bt \lambda, \bts \omega) \in \Omega( \bt \Lambda, \phi\psi, DT), \ \ (\bt \lambda^{(n)}, (\bts \omega)^{(n)}) \in \Omega(\bt \Lambda, \phi, D)$ such that
$$ \lim_{n \to \infty} (\bt \lambda^{(n)}, (\bts \omega)^{(n)}) = ( \bt \lambda, \bts \omega)  \ .$$
We have to show that
$$ \lim_{n \to \infty} \tau^0_{ij}(\bt \lambda^{(n)}, (\bts \omega)^{(n)})  = \tau^0_{ij}(\bt \lambda, \bts \omega)  \ .$$
If $\Omega(\bt Pi, \phi \psi, DT) = \Omega(T_n, u_d)$ is trivial, we have that $\bt \lambda^{(n)} \to (0,\ldots, 0,1)$, and by definition of the trivial cell $a^d_h = m-1$ for all $h$ and the permutation is the identity. Thus
$$\lim_{n \to \infty} \tau^0_{ij}(\bt \lambda^{(n)}, (\bts \omega)^{(n)}) = \hat{n} \left (  \sum_{h = i}^{j-1} \vect{\omega}^{D(r)}_{h} e_1 \right ) = e_1 \ .$$
We suppose from now on that $\Omega( \bt \Lambda, \phi\psi, DT)$ is non trivial and we start distinguishing cases depending on $(i,j)$.

If $(i,j)$ are $\phi$-degenerate, they also are $\phi\psi$-degenerate, thus both sides are equal to $e_1$. In case $(i,j)$ are not $\phi$-degenerate nor $\phi \psi$-degenerate, the proof is identical to Lemma \ref{stratum-tau}. 

If $(i,j)$ are not $\phi$-degenerate, but $\phi\psi$ degenerate, we have to show that
$$ \lim_{n \to \infty} \tau^0_{ij}(\bt \lambda^{(n)}, (\bts \omega)^{(n)}) = e_1  \ .$$
There are three sub-cases:
\begin{enumerate}
\item $(i,j)$ is a $\phi \psi$- collapsed pair;
\item $(i,j)$ is a $\phi \psi$-left-extreme pair but not collapsed;
\item $(i,j)$, is a $\phi \psi$-right-extreme pair but not collapsed.
\end{enumerate}

In the first case, by Lemma \ref{twist-initial}, we have that  $a^{D(k)}_h = m-1$ for all $p_k \le h \le q_k -1$. Note also that the same lemma ensures  
$$\sigma_k^{-1}(j) = \sigma_k(i) + (j-i) > \sigma_k^{-1}(i)  \ ,$$
that is $i$ appears before $j$ in the permutation. This implies $\sgn_{ij}^{\Gamma_{D(k)}} = 1$. Let us use this information to simplify $\tau^0_{ij}$:
\begin{align*}
\tau^0_{ij}(\bt \lambda^{(n)}, (\bts \omega)^{(n)}) &  =  \hat{n} \left ( \sum_{k=0}^{r} \lambda^{(n)}_{D(k)} \sgn^{\Gamma_{D(k)}}_{ij} \sum_{h = p_k}^{q_k-1} (\vect{\omega}^{(n)} )^{D(k)}_{h} e_{1+a^{D(k)}_{h}} \right ) \\
& = \hat{n} \left ( \sum_{k=0}^{r} \lambda^{(n)}_{D(k)} (+1) \sum_{h = p_k}^{q_k-1} (\vect{\omega}^{(n)} )^{D(k)}_{h} e_1 \right ) \ .
\end{align*}
We conclude because $\hat{n}(v) = e_1$ for all positive multiples of $e_1$.

In the second case we have to split in two subcases: $j \le \phi\psi(0)$ and $j > \phi\psi(0)$. If $j\le \phi\psi(0)$, we have that $\sigma_k^{-1}(j) = j$ because of Lemma \ref{twist-initial}; in particular $\sgn_{ij}^{\Gamma_{D(k)} } =1$. By the same lemma, $a^{D(k)}_h = m-1$ for all $i \le h \le j-1$. As in the first case, the expression simplify to a multiple of $e_1$. 

If $j > \phi\psi(0)$, we must have $q_k > \phi(0)$ and $\sgn^{\Gamma_{D(k)}}_{ij} =1$. Let us remark that in particular $p_k \le \phi\psi(0) \le q_k -1$. Define $A^k_{ij}$ as the set of indices $r: p_k \le r \le q_k-1$ for which $a^{D(k)}_r = m-1$, and $B^k_{ij} = \{p_k, \ldots, q_k-1\} \setminus A^k_{ij}$. Let us write
\begin{align*} \tau^0_{ij}(\bt \lambda^{(n)}, (\bts \omega)^{(n)}) & =   \hat{n} \left ( \sum_{k=0}^{r} \lambda_{D(k)}^{(n)} \sum_{h = p_k}^{q_k-1} (\vect{\omega}^{D(k)}_{h})^{(n)} e_{1+a^{D(k)}_{h}} \right ) \\
& =  \hat{n} \left ( \sum_{k=0}^{r} \lambda_{D(k)}^{(n)} \sum_{h \in A^k_{ij} } (\vect{\omega}^{D(k)}_{h})^{(n)} e_1 + \sum_{k=0}^{r} \lambda_{D(k)} \sum_{h \in B^k_{ij} } (\vect{\omega}^{D(k)}_{h})^{(n)} e_{1+a^{D(k)}_{h}} \right ) \ .
\end{align*}

Note that $\phi(0) \in A^k_{ij}$ because of the usual Lemma \ref{twist-initial}. Let us call 
$$ L^{(n)} = \sum_{k=0}^{r} \lambda_{D(k)}^{(n)} \sum_{h \in A^k_{ij} } (\vect{\omega}^{D(k)}_{h})^{(n)} \in \Rext, \ \ \ U^{(n)} = \sum_{k=0}^{r} \lambda_{D(k)} \sum_{h \in B^k_{ij} } (\vect{\omega}^{D(k)}_{h})^{(n)} e_{1+a^{D(k)}_{h}} \in \Rext^m \ ,$$
so that the expression rewrites  as
$$ \hat{n} \left ( L^{(n)} e_1 + U^{(n)} \right )   \ .$$
Observe that $L^{(n)} \to \infty$, since 
$$L^{(n)} \ge \left ( \sum_{k=0}^r \lambda^{(n)}_{D(k) } \right ) (\omega^{D(k)}_{\phi(0)} )^{(n)} = (\omega^{D(k)}_{\phi(0)} )^{(n)}  \ ,$$
which tends to $\omega^{D(k)}_{\phi\psi(0)} = \infty $. Thus eventually in $n$ we have $L^{(n)} > 0$. Dividing by $L^{(n)}$, since $\hat{n}$ is homogenous, we reduce to
 $$ \hat{n} \left ( e_1 + \frac{U^{(n)}}{L^{(n)}} \right )   \ .$$
Now notice that the norm of $U^{(n)}$ is bounded, since
$$ \norm{ \sum_{k=0}^{r} \lambda_{D(k)} \sum_{h \in B^k_{ij} } (\vect{\omega}^{D(k)}_{h})^{(n)} e_{1+a^{D(k)}_{h}} } \le \sum_{k=0}^{r} \lambda_{D(k)} \sum_{h \in B^k_{ij} } (\vect{\omega}^{D(k)}_{h})^{(n)} \le \sum_{h \in B^k_{ij} } (\vect{\omega}^{D(k)}_{h})^{(n)}  \ ,$$
 which converges to $\sum_{h \in B^k_{ij} } \vect{\omega}^{D(k)}_{h}$. By Lemma \ref{special-weights}, since $a^{D(k)}_h < m-1$ for $h \in B^k_{ij}$, we have that $ \vect{\omega}^{D(k)}_{h} < \infty$. This shows the boundedness of $U^{(n)}$ in $n$. Together with $L^{(n)} \to \infty$, it implies that $U^{(n)}/L^{(n)} \to 0$. Since $\hat{n}$ is continous, we get that 
 $$ \tau^0_{ij}(\bt \lambda^{(n)}, (\bts \omega)^{(n)}) =  \hat{n} \left ( e_1 + \frac{U^{(n)}}{L^{(n)}} \right ) \to \hat{n}(e_1) = e_1  \ ,$$
as desired. The third case is completely analogous to the second one.
\end{proof}

\begin{lemma} Given $1 \le i < j \le \ell$, the maps defined in Lemma \ref{stratum-comp}
$$\tau^1_{ij} : \Omega(\Gamma_{\bullet}) \to S^{m-1} $$
for $\bt \Gamma \in \Nerve(\FNP_m(\ell))_d$ satisfy the $\partial$-contravariance condition, so that they assemble into maps
$$\tau_{ij}: \WHT_m(\ell) \to S^{m-1}  \ .$$
\end{lemma}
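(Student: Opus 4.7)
The plan is to invoke the contravariance criterion for non-degenerate realizations (Lemma \ref{simplicial-non-deg}): since $\Nerve(\FNP_m(\ell))$ is non-singular by Example \ref{poset-sing}, it suffices to check the $\partial$-contravariance square only for non-degenerate chains. So fix $\bt \Gamma \in \Nerve(\FNP_m(\ell))^{nd}_{d+1}$ and a face index $c \in \{0,\ldots,d+1\}$; by Lemma \ref{semisimplicial}, $\partial_c \bt \Gamma$ is also non-degenerate, so both $\tau^1_{ij,\bt \Gamma}$ and $\tau^1_{ij,\partial_c \bt \Gamma}$ are well defined by Lemma \ref{stratum-comp}, and we must verify $\tau^1_{ij,\bt \Gamma} \circ \Omega(\partial_c) = \tau^1_{ij,\partial_c \bt \Gamma}$.

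First I would reduce to a stratum-wise verification. By Lemma \ref{stratified}, the map $\Omega(\partial_c)$ sends each stratum $\Omega(\bt \Lambda, \phi, D) \subset \Omega(\partial_c \bt \Gamma)$ (with $\phi \bt \Lambda = D(\partial_c \bt \Gamma)$) into the stratum $\Omega(\bt \Lambda, \phi, \partial_c D) \subset \Omega(\bt \Gamma)$. The trivial-stratum case is handled first: if the source is the trivial stratum $\Omega(T_\ell, u_d)$, the inequality $c \le d$ noted in the proof of Lemma \ref{stratified} forces the target to be the trivial stratum $\Omega(T_\ell, u_{d+1})$, and on both $\tau^0_{ij}$ is constantly $e_1$.

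For non-trivial strata, the crucial point is that the source and target triples share the same $\bt \Lambda$ and the same $\phi$. Hence $\phi$-degeneracy of the pair $(i,j)$ is identical on both sides, and both evaluations collapse to $e_1$ in that case. When $(i,j)$ is not $\phi$-degenerate, both sides apply the explicit formula defining $\tau^0_{ij}$: on $\Omega(\partial_c \bt \Gamma)$ the $k$-th summand uses coefficient $\lambda_{D(k)}$, weight vector $\bts \omega^{D(k)}$, and tree $(\partial_c \bt \Gamma)_{D(k)} = \Gamma_{\partial_c D(k)}$, while on $\Omega(\bt \Gamma)$ the corresponding summand uses $(\partial_c \bt \lambda)_{\partial_c D(k)} = \lambda_{D(k)}$, $(\partial_c \bts \omega)^{\partial_c D(k)} = \bts \omega^{D(k)}$, and tree $\Gamma_{\partial_c D(k)}$, the coefficient and weight identifications following from $\sigma_c(\partial_c D(k)) = D(k)$ together with $\partial_c D(k) \neq c$. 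Consequently the two summations agree term by term, and applying $\hat{n}$ yields the desired equality.

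The main obstacle I anticipate is purely bookkeeping: aligning the indexing conventions $D$ versus $\partial_c D$, separating the trivial-stratum case cleanly, and being careful that the non-singularity of $\Nerve(\FNP_m(\ell))$ is what lets us restrict attention to non-degenerate chains in the first place. Once the identification $(\partial_c \bt \Gamma)_{D(k)} = \Gamma_{\partial_c D(k)}$ and the invariance of $\bt \Lambda$ and $\phi$ under $\Omega(\partial_c)$ are recorded, the two summations become manifestly identical and no new combinatorial ingredient beyond what is already developed in the Stratification Theorem and Lemma \ref{stratum-comp} is needed.
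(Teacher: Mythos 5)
Your proposal is correct and follows essentially the same route as the paper's proof: both reduce to a stratum-wise check via Lemma \ref{stratified}, handle the trivial-stratum and $\phi$-degenerate cases by noting both sides are constantly $e_1$, and in the remaining case verify term-by-term agreement using the identities $(\partial_c \bt\lambda)_{\partial_c D(k)} = \lambda_{D(k)}$, $(\partial_c \bts\omega)^{\partial_c D(k)} = \bts\omega^{D(k)}$, and $(\partial_c \bt\Gamma)_{D(k)} = \Gamma_{\partial_c D(k)}$. The only cosmetic difference is that you make the appeal to non-singularity and Lemma \ref{simplicial-non-deg} explicit and use a separate letter $c$ for the face index, which is cleaner than the paper's overloaded $i$.
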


\begin{proof}  The diagram which has to be checked is the following:
\[\begin{tikzcd}
	{\Omega(\partial_i\Gamma_{\bullet})} & {S^{m-1}} \\
	{\Omega(\Gamma_{\bullet})}
	\arrow[from=1-1, to=1-2]
	\arrow[from=1-1, to=2-1]
	\arrow[from=2-1, to=1-2]
\end{tikzcd}\]
We restrict our attention to a stratum  $\Omega(\Lambda_{\bullet}, \phi, D) \subset \Omega(\partial_i\Gamma_{\bullet})$. Because of Lemma \ref{stratified}, the left map lands in $\Omega(\Lambda_{\bullet}, \phi, \partial_iD)$. We thus reduce to the commutativity of
\[\begin{tikzcd}
	{\Omega(\Lambda_{\bullet}, \phi, D)} & {S^{m-1} } \\
	{\Omega(\Lambda_{\bullet}, \phi, \partial_iD)}
	\arrow[from=1-1, to=1-2]
	\arrow[from=1-1, to=2-1]
	\arrow[from=2-1, to=1-2]
\end{tikzcd}\]
If the strata are trivial of $(i,j)$ is $\phi$-degenerate, the result is always $e_1$. Otherwise, consider $(\bt \lambda, \bts \omega) \in \Omega(\Lambda_{\bullet}, \phi, D)$. The upper arrow is simply
$$ \sum_{k=0}^r \lambda_{D(k)} \sgn_{ij}^{(\partial_i \Gamma)_{ D(k)} }  \sum_{h=p_k}^{q_k-1} \vect{\omega}^{D(k)}_h e_{1+(\partial_i a)^{D(k)}_h} = \sum_{k=0}^r \lambda_{D(k)} \sgn_{ij}^{\Gamma_{ \partial_iD(k)} }  \sum_{h=p_k}^{q_k-1} \vect{\omega}^{D(k)}_h e_{1+a^{\partial_i D(k)}_h}   \ .$$
The other composition is
\begin{align*}
 \tau^0_{ij} \partial_i (\bt \lambda, \bts \omega) = \tau^0_{ij} \partial_i (\bt \lambda, \bts \omega) & = \sum_{k=0}^r (\partial_i \lambda)_{\partial_i D(k)} \sgn_{ij}^{ \Gamma_{ \partial_i D(k)} }  \sum_{h=p_k}^{q_k-1} (\partial_i \vect{\omega})^{\partial_i D(k)}_h e_{1+a^{\partial_i D(k)}_h} \\
& = \sum_{k=0}^r \lambda_{\sigma_i \partial_i D(k)} \sgn_{ij}^{ \Gamma_{ \partial_i D(k)} }  \sum_{h=p_k}^{q_k-1}  \vect{\omega}^{\sigma_i \partial_i D(k)}_h e_{1+a^{\partial_i D(k)}_h} \\
 & = \sum_{k=0}^r \lambda_{D(k)} \sgn_{ij}^{ \Gamma_{ \partial_i D(k)} }  \sum_{h=p_k}^{q_k-1}  \vect{\omega}^{ D(k)}_h e_{1+a^{\partial_i D(k)}_h} \ .
\end{align*}
The two results coincide and the lemma is proved.
\end{proof}

We denote by 
$$\tau=(\tau_{ij})_{i \neq j} : \WHT_m(\ell) \to (S^{m-1})^{\binom{\ell}{2}} $$
the map which is $\tau_{ij}$ for $i<j$, and $-\tau_{ji}$ for $i > j$. In order to show this actually lands in the Kontsevich space, we need to see it is an extension of the map on $\WT_m(n)$. Firsty, let us define the inclusion of $\WT_m(n)$ in $\WHT_m(n)$.
\begin{lemma} \label{closure} For $\bt \Gamma \in \Nerve(\FNP_m(\ell))^{nd}_d$, the natural map $\gamma: \check{\Omega}(\bt \Gamma) \to \Omega(\bt \Gamma) $ is well-defined and satisfy the $\partial$-contravariance conditions, so it extends to a map
$$ \gamma: \WT_m(\ell) \to \WHT_m(\ell)  \ .$$
Furthermore, the image of $\gamma$ is dense. 
\end{lemma}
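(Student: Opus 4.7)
The plan is to define $\gamma$ cellwise as the tautological inclusion of positive finite weights into $\Rext$, observe that both the equivalence relation and the face operators are governed by identical formulas on the two sides, globalize via Lemma \ref{simplicial-non-deg}, and prove density by perturbing every $0$ weight to $1/n$ and every $\infty$ weight to $n$.

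On a cell $\check{\Omega}(\bt\Gamma)$ for $\bt\Gamma \in \Nerve(\FNP_m(\ell))^{nd}_d$, define $\gamma_{\bt\Gamma}(\bt\lambda, \bts\omega)$ to be the class of the same element in $\Omega(\bt\Gamma)$, viewed inside $|\Delta^d| \times (\Rext^{\ell-1})^{d+1}$ through $\mathbb{R}_{>0} \hookrightarrow \Rext$. If $D:[r]\to[d]$ is the support of $\bt\lambda$ and $\bt\Lambda := (\Gamma_{D(0)}, \ldots, \Gamma_{D(r)})$, the image lies in the stratum $\tilde{\Omega}(\bt\Lambda, \Id_\ell, D)$: with $\phi = \Id_\ell$ the twisted map $\Id_\ell^{\Lambda_k}$ is again the identity, so $\phi^{\Lambda_k}(0)=0$ and $\phi^{\Lambda_k}(\ell)=\ell$, and the defining constraints of $\tilde{\Omega}$ reduce to $\omega^{D(k)}_{\alpha} \in \mathbb{R}_{>0}$ for $1 \le \alpha \le \ell-1$. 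Well-definedness with respect to $\sim$ is automatic, since the $\check{\Omega}$-equivalence (equality of the entire $\vect{\omega}^k$ for $k \in \im D$) is strictly stronger than the $\Omega$-equivalence on the target stratum, whose range of indices $0 \le \alpha \le \ell$ exhausts all weight positions. Continuity of $\gamma_{\bt\Gamma}$ is then clear from its factorization as an inclusion into the ambient product composed with the quotient map defining $\Omega(\bt\Gamma)$.

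The $\partial$-contravariance diagram commutes on the nose, because Definitions \ref{twist-2} and \ref{twist} specify $\check{\Omega}(\partial_i)$ and $\Omega(\partial_i)$ by the literally identical formula $(\bt\lambda, \bts\omega) \mapsto (\partial_i \bt\lambda, \partial_i \bts\omega)$ (zero inserted in slot $i$ of $\bt\lambda$ and $\underline{u}$ in slot $i$ of $\bts\omega$). Applying Lemma \ref{simplicial-non-deg} with the identity simplicial map on $\Nerve(\FNP_m(\ell))^{nd}$ assembles the $\gamma_{\bt\Gamma}$ into a global $\gamma : \WT_m(\ell) \to \WHT_m(\ell)$.

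For density, fix $(\bt\lambda, \bts\omega) \in \Omega(\bt\Gamma)$. If this point is the collapsed $\Omega(T)$ (possible only when $\bt\Gamma$ is trivializable), take $\bt\lambda^{(n)} = (0,\ldots,0,1)$ and $\bts\omega^{(n)} = \underline{u}$: each such term lies in the trivial stratum $\tilde{\Omega}(T_\ell, \Id_\ell, u_d)$ and therefore already equals $\Omega(T)$ under the quotient, and it is clearly in $\im \gamma$. Otherwise pick a representative in a non-trivial $\tilde{\Omega}(\bt\Lambda, \phi, D)$ and define $\bts\omega^{(n)}$ by preserving positive finite entries while replacing each zero by $1/n$ and each infinity by $n$. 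Then $(\bt\lambda, \bts\omega^{(n)})$ has support $D$ and all weights in $\mathbb{R}_{>0}$, so it sits in $\tilde{\Omega}((\Gamma_{D(0)}, \ldots, \Gamma_{D(r)}), \Id_\ell, D) \subseteq \im \gamma$. Componentwise convergence $\bts\omega^{(n)} \to \bts\omega$ in $\Rext$ gives convergence in the ambient product, and hence in $\Omega(\bt\Gamma)$ by continuity of the quotient; taking the union over cells yields density in $\WHT_m(\ell)$.

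The only mildly delicate point is the separate treatment of the trivializable case in the density argument, where the naive perturbation recipe would formally land in a trivial triple whose stratum has been collapsed; this is harmless because $\Omega(T)$ is a single point and is already hit on the nose by the constant sequence above. Every remaining verification is forced by the tautological nature of $\gamma$ on the weight coordinates and by the exact coincidence of the formulas for $\gamma$, $\check{\Omega}(\partial_i)$, and $\Omega(\partial_i)$.
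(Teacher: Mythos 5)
Your proof is correct, and the first two thirds of it (the cellwise definition of $\gamma$, the identification $\gamma\bigl(\check{\Omega}(\bt\Gamma,D)\bigr) = \Omega(D\bt\Gamma,\Id,D)$ together with the trivializable edge case, compatibility with $\sim$, and the observation that $\check{\Omega}(\partial_i)$ and $\Omega(\partial_i)$ share the same formula so $\partial$-contravariance is automatic) track the paper's argument very closely; the paper simply cites Lemma~\ref{restriction} instead of re-deriving the $\phi=\Id$ constraints, but the content is the same.

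Where you diverge is the density step. The paper argues formally: it applies the Stratification Theorem's closure formula to each $\Omega(D\bt\Gamma,\Id,D)$, observing that $\cl\,\Omega(D\bt\Gamma,\Id,D)$ already absorbs every stratum $\Omega(\bt\Lambda,\phi,D)$ with $\phi\bt\Lambda=D\bt\Gamma$, and then unions over the finitely many $D$ to cover $\Omega(\bt\Gamma)$. You instead build an explicit approximating sequence: fix a representative of the target class, send $0\mapsto 1/n$ and $\infty\mapsto n$ in the weights while keeping $\bt\lambda$ fixed, land in $\tilde{\Omega}(D\bt\Gamma,\Id,D)\subset\im\gamma$, and conclude by continuity of the quotient, with $\Omega(T)$ handled separately since it is a single point that $\gamma$ hits directly. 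Both are sound. The paper's argument is shorter and leans on the machinery it has already built (the closure formula), while yours is more self-contained and gives a concrete picture of \emph{which} positively weighted trees approximate a given hairy tree --- which is arguably the more informative statement, and is the kind of thing that is good to have written out when one later wants to understand the topology of $\WHT_m$ near the degenerate strata. Both arguments share the same final hand-wave (passing from density in each $\Omega(\bt\Gamma)$ to density in the colimit $\WHT_m(\ell)$), which is fine since the colimit carries the final topology.
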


\begin{proof} The map that sends $(\bt \lambda, \bts \omega)$ in $(\bt \lambda, \bts \omega)$ is stratified. Indeed, if $\bt \Gamma$ is trivializable and $D=u_d$, then $\gamma(\check{\Omega}(\bt \Gamma, u_d)) \subset \Omega(T_{\ell}, u_d)$ since $	\bt \lambda = (0, \ldots, 0,1)$ is preserved. Otherwise, we have $\gamma( \check{\Omega}(\bt \Gamma, D) ) = \Omega( D \bt \Gamma, \Id, D)$, since they have the same positive-and-finite-weights definition. It also respects the equivalence relation, since in the trivial case any two elements in the codomain are equivalent, while in the non-trivial case the equivalence relation is the same.

The contravariance conditions are automatic, since $\check{\Omega}(\partial_i)$ has the same formula of $\Omega(\partial_i)$. (see Lemma \ref{restriction}). Regarding density, notice that for all $\bt \Gamma$:
\begin{align*}
\cl \left ( \gamma \left (  \check{\Omega}(\bt \Gamma) \right ) \right ) & = \cl \left ( \gamma \left ( \bigsqcup_{D: [r] \to [d] } \check{\Omega}(\bt \Gamma, D) \right ) \right ) \\
 & = \bigcup_{D : [r] \to [d]}  \cl \left ( \gamma \left ( \check{\Omega}(\bt \Gamma, D) \right ) \right ) \\
& = \bigcup_{D : [r] \to [d]}  \cl \Omega(D \bt \Gamma, \Id, D) \\
&  \supset \bigcup_{D : [r] \to [d]}  \bigsqcup_{ \phi \bt \Lambda = D \bt \Gamma } \Omega( \bt \Lambda, \phi, D)  \\
& = \Omega(\bt \Gamma)  \ .
\end{align*}
Since $\WHT_m(\ell)$ is obtained as a colimit (aka union and quotients) of pieces $\Omega(\bt \Gamma)$ when varying $\bt \Gamma$, the thesis is proved.
\end{proof}

We are ready to show the coherence of the maps $\WHT_m(n) \to \Kons_m(n)$ and $\WT_m(n) \to \Conf_n(\mathbb{R}^m)$.
\begin{lemma} \label{coherence-wht} The following diagram
$$\begin{tikzcd}
	{\WT_m(n)} & {} & {\WHT_m(n)} \\
	{\textrm{Conf}_n(\mathbb{R}^m)} & {\textrm{Kons}_m(n)} & {(S^{m-1} )^{\binom{n}{2}}}
	\arrow["{\check{\tau}}"', from=1-1, to=2-1]
	\arrow["\gamma", from=1-1, to=1-3]
	\arrow["\phi"', from=2-1, to=2-2]
	\arrow["\subseteq"{description}, draw=none, from=2-2, to=2-3]
	\arrow["\tau", from=1-3, to=2-3]
\end{tikzcd}$$
is commutative.
\end{lemma}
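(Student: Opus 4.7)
The plan is to verify the commutativity by a stratum-by-stratum check, using the fact that both compositions $\phi \circ \check{\tau}$ and $\tau \circ \gamma$ are continuous maps $\WT_m(n) \to (S^{m-1})^{\binom{n}{2}}$ assembled from the data of a twisted (non-degenerate) realization. Since $\WT_m(n)$ is built by colimit from the cells $\check{\Omega}(\bt \Gamma)$ (for $\bt \Gamma \in \Nerve(\FNP_m(n))^{nd}_d$), each of which is further stratified by the $\check{\Omega}(\bt \Gamma, D)$ (Lemma \ref{restriction}), it suffices to check equality on each such substratum. Moreover, $\tau$ is assembled componentwise from the $\tau_{ij}$, so I would reduce further to verifying $\phi_{ij} \circ \check{\tau} = \tau_{ij} \circ \gamma$ for each ordered pair $i \neq j$.

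For the target stratum: by Lemma \ref{closure}, $\gamma$ sends $\check{\Omega}(\bt\Gamma, D)$ into $\Omega(D\bt\Gamma, \Id, D) \subset \Omega(\bt\Gamma)$, so the morphism appearing in the stratum label is the identity $\Id : [n] \to [n]$. Because $\Id$ is surjective with $\Id(0)=0$ and $\Id(n)=n$, no pair $1 \le i < j \le n$ is $\Id$-degenerate (neither left-extreme, right-extreme, nor collapsed). This rules out all the ``degenerate-case overrides'' in the definition of $\tau_{ij}$ and leaves only the main formula.

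On such a non-trivial target stratum, the defining formula for $\tau_{ij}^0$ on $\Omega(D\bt\Gamma, \Id, D)$ gives
\[
\tau_{ij}\bigl(\gamma(\bt\lambda, \bts\omega)\bigr) = \hat{n}\Bigl(\sum_{k=0}^{r} \lambda_{D(k)} \, \sgn^{\Gamma_{D(k)}}_{ij} \sum_{h=p_k}^{q_k-1} \vect{\omega}^{D(k)}_h \, e_{1+a^{D(k)}_h}\Bigr),
\]
and the walking-man formula (Lemma \ref{difference}) identifies the argument of $\hat{n}$ with $\check{\tau}^0(\bt\lambda, \bts\omega)_j - \check{\tau}^0(\bt\lambda, \bts\omega)_i$. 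Up to the paper's sign convention for $\phi_{ij}$, this is exactly $\phi_{ij}$ applied to $\check{\tau}(\bt\lambda, \bts\omega)$. Repeating this for every admissible pair yields commutativity on every non-trivial substratum of every cell.

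The remaining case is when $\bt\Gamma$ is trivializable with $\Gamma_d = T_n$ and $D = u_d$, so that $\bt\lambda = (0,\ldots,0,1)$ and $\gamma$ sends the substratum into the single collapsed point $\Omega(T)$. Here $\check{\tau}^0(\bt\lambda, \bts\omega) = x(T_n, \vect{\omega}^d)$ is an explicit vertical configuration, and the equality follows by continuity: the non-trivial substrata of the same cell accumulate onto $\check{\Omega}(\bt\Gamma, u_d)$, and the stratum-compatibility of $\tau_{ij}$ established in Lemma \ref{stratum-comp} forces its value on $\Omega(T)$ to coincide with the common limit of $\phi_{ij} \circ \check{\tau}$ already verified above. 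I expect this last step---reconciling the ``by fiat'' value of $\tau_{ij}$ on the collapsed trivial stratum with the computation via the walking-man formula and the convention $\mu = e_m$---to be the main technical obstacle; everything else is a direct unwinding of definitions.
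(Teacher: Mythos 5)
Your treatment of the non-trivial strata is exactly the paper's: $\gamma$ lands in $\Omega(D\bt\Gamma, \Id, D)$, no pair $(i,j)$ is $\Id$-degenerate (since $\Id$ is surjective with $\Id(0)=0$, $\Id(n)=n$), so $\tau_{ij}$ is given by the main formula, and the walking-man formula (Lemma~\ref{difference}) identifies it with $\hat{n}(\check{\tau}(\bt\lambda,\bts\omega)_j - \check{\tau}(\bt\lambda,\bts\omega)_i) = \phi_{ij}\check{\tau}(\bt\lambda,\bts\omega)$.

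For the trivial strata you diverge from the paper, and this is where your proposal has a real gap. The paper does a direct computation: when $\bt\Gamma$ is trivializable with $D=u_d$, it writes $\check{\tau}(\bt\lambda, \bts\omega) = x(T_n, \vect{\omega}^d)$, observes that $T_n$ has trivial permutation and all depth indices $m-1$, so $\sgn^{T_n}_{ij}=1$ and all the $e_{1+a^d_h}$ coincide, and concludes $\phi_{ij}\check{\tau}$ is the normalization of a positive multiple of that single basis vector, which matches the declared default value of $\tau^0_{ij}$ on trivial strata. No continuity is invoked. Your continuity argument would also work, but the phrase ``the non-trivial substrata of the same cell accumulate onto $\check{\Omega}(\bt\Gamma, u_d)$'' fails for the zero-simplex cell $\bt\Gamma = T_n$: here $\check{\Omega}(T_n) = (\mathbb{R}_{>0})^{n-1}$ \emph{is} the trivial substratum and has no non-trivial substratum within the same cell to accumulate from. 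To repair your argument you would have to work globally in $\WT_m(n)$, accumulating from adjacent cells $\check{\Omega}(\Gamma_0 < T_n)$ via the $\partial$-contravariance, which is an additional step the direct computation sidesteps entirely.

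Your flagged worry about reconciling the ``by fiat'' value with $\mu = e_m$ is astute: the paper writes the default value of $\tau^0_{ij}$ as $e_1$, yet the walking-man calculation with $a^d_h = m-1$ produces $e_{1+a^d_h} = e_m$, so the line ``$e_{1+a^d_h} = e_1$ for all $h$'' only holds when $m=1$. This appears to be a uniform $e_1$/$e_m$ slip in the paper's conventions (the same slip appears in the recalled Kontsevich coface formula) rather than a genuine obstruction, since both sides of the equality are subject to the same convention; once one reads both occurrences consistently, the direct computation closes the case without any of the fragility you anticipated.
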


\begin{proof} Firstly, recall that where $\phi = (\phi_{ij})_{i \neq j}: \textrm{Conf}_n(\mathbb{R}^m) \to \textrm{Kons}_m(n)$ is given by $\vect{x} \mapsto \hat{n} ( x_j - x_i ) $. Note also that we can reduce to $i< j$, since both $\phi_{ij}$ and $\tau_{ij}$ maps change sign when $i,j$ are swapped. Let us now restrict our attention to a stratum $\check{\Omega}(\bt \Gamma, D)$. By the previous lemma, $\gamma$ is stratified and we can reconduce to two cases. If $\bt \Gamma$ is trivializable and $D=u_d$, then the diagram becomes
$$\begin{tikzcd}
	{\check{\Omega}(\bt \Gamma, u_d)} & {} & {\Omega(T_n, u_d)} \\
	{\textrm{Conf}_n(\mathbb{R}^m)} & {\textrm{Kons}_m(n)} & {(S^{m-1} )^{\binom{n}{2}}}
	\arrow["{\check{\tau}}"', from=1-1, to=2-1]
	\arrow["\gamma", from=1-1, to=1-3]
	\arrow["\phi"', from=2-1, to=2-2]
	\arrow["\subseteq"{description}, draw=none, from=2-2, to=2-3]
	\arrow["\tau", from=1-3, to=2-3]
\end{tikzcd}$$
	The $\rightarrow, \downarrow$ composition is $e_1$ by definition. Let us compute the other way around. Consider $(\bt \lambda, \bts \omega) \in \check{\Omega}(\bt \Gamma, u_d)$. Since $\bt \lambda = (0,\ldots, 0,1)$ and $\Gamma_d = T_n$, we have
	$$\check{\tau}(\bt \lambda, \bts \omega) = x( T_n, \vect{\omega}^d)  \ .$$
The tree $T_n$ is characterized by having trivial permutation and $a_h = m-1$ for all $h=1,\ldots, n-1$. This implies in particular $\sgn^{\Gamma_d}_{ij}=1$ for $i<j$ and $e_{1+a^d_h} = e_1$ for all $h$. Using the difference formula from Lemma \ref{difference}, we have
	$$ \phi_{ij}\check{\tau}(\bt \lambda, \bts \omega) = \hat{n} \left ( \sum_{h=i}^{j-1} \vect{\omega}^d_h e_1 \right ) = e_1 \ ,$$
as desired. 

In the non-trivial case, we reduce to the following commutative diagram:
$$\begin{tikzcd}
	{\check{\Omega}(\bt \Gamma, D)} & {} & {\Omega(D \bt \Gamma, \Id, D)} \\
	{\textrm{Conf}_n(\mathbb{R}^m)} & {\textrm{Kons}_m(n)} & {(S^{m-1} )^{\binom{n}{2}}}
	\arrow["{\check{\tau}}"', from=1-1, to=2-1]
	\arrow["\gamma", from=1-1, to=1-3]
	\arrow["\phi"', from=2-1, to=2-2]
	\arrow["\subseteq"{description}, draw=none, from=2-2, to=2-3]
	\arrow["\tau", from=1-3, to=2-3]
\end{tikzcd}$$
In other words, we have to check that for points in a stratum with $\phi=\Id$ the formula for $\tau_{ij}$  coincides with $\phi_{ij} \circ \check{\tau}$. Observe that if $\phi=\Id$, no pair $(i,j)$ is degenerate, since $i > \phi(0)=0$, $j\le \phi(n) = n$ and $\phi$ is surjective. Thus for all $(\bt \lambda, \bts \omega) \in \check{\Omega}(\bt \Gamma, D)$ and $i<j$ we have that
$$ \tau_{ij}\gamma(\bt \lambda, \bts \omega) = \hat{n} \left ( \sum_{k=0}^{r} \lambda_{D(k)} \sgn^{\Gamma_{D(k)}}_{ij} \sum_{h = p_k}^{q_k-1} \vect{\omega}^{D(k)}_{h} e_{1+a^{D(k)}_{h}} \right ) \ .$$
On the other hand, by Lemma \ref{difference}, we have that
$$ \phi_{ij} \check{\tau}( \bt \lambda, \bts \omega) = \hat{n} \left ( \check{\tau}( \bt \lambda, \bts \omega)_j - \check{\tau}( \bt \lambda, \bts \omega)_ i \right ) = \hat{n} \left ( \sum_{k=0}^r \lambda_{D(k)} \sgn^{\Gamma_{D(k)}}_{ij} \sum_{h = p_k}^{q_k-1} \vect{\omega}^{D(k)}_{h} e_{1+a^{D(k)}_{h}} \right )  \ .$$
Voilà!
\end{proof}

This allows us to define the map $\tau : \WHT_m(n) \to \Kons_m(n)$.
\begin{corollary} The map $\tau : \WHT_m(n) \to ( S^{m-1} )^{\binom{n}{2} }$ corestricts to $\Kons_m(n)$.
\end{corollary}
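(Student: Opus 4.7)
The plan is to combine the density statement from Lemma \ref{closure} with the coherence diagram in Lemma \ref{coherence-wht}, exploiting that $\Kons_m(n)$ is by definition closed in $(S^{m-1})^{\binom{n}{2}}$.

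First, I would observe that $\Kons_m(n)$ is closed in $(S^{m-1})^{\binom{n}{2}}$ by its very construction as the closure of $\phi(\Conf_n(\mathbb{R}^m))$. Next, I would restrict $\tau$ along the inclusion $\gamma: \WT_m(n) \to \WHT_m(n)$. The commutative diagram of Lemma \ref{coherence-wht} gives $\tau \circ \gamma = \phi \circ \check{\tau}$, and since $\check{\tau}$ lands in $\Conf_n(\mathbb{R}^m)$ (Lemma \ref{stratum-tau}), we deduce that
\[
\tau(\gamma(\WT_m(n))) \;\subseteq\; \phi(\Conf_n(\mathbb{R}^m)) \;\subseteq\; \Kons_m(n).
\]

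Now I would use Lemma \ref{closure}, which asserts that $\gamma(\WT_m(n))$ is dense in $\WHT_m(n)$. Since $\tau$ is continuous (being assembled from continuous maps $\tau_{ij}$ on each cell of a colimit), its image satisfies
\[
\tau(\WHT_m(n)) \;\subseteq\; \overline{\tau(\gamma(\WT_m(n)))} \;\subseteq\; \overline{\Kons_m(n)} \;=\; \Kons_m(n),
\]
which is exactly the corestriction statement.

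The main subtlety (not really an obstacle, but worth stating explicitly) is that density together with continuity only controls the image of $\tau$ up to taking closure; this is why it is essential that $\Kons_m(n)$ was defined as a closed subspace rather than just as the image of $\phi$. No calculation with the stratified formulas for $\tau_{ij}$ is required at this stage, since all the combinatorial work has already been done in Lemmas \ref{stratum-comp}, \ref{closure}, and \ref{coherence-wht}.
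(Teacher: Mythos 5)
Your proof is correct and follows essentially the same route as the paper: both use the density of $\gamma(\WT_m(n))$ in $\WHT_m(n)$ (Lemma \ref{closure}), the coherence diagram $\tau \circ \gamma = \phi \circ \check{\tau}$ (Lemma \ref{coherence-wht}), continuity of $\tau$, and the fact that $\Kons_m(n)$ is closed by construction. Your explicit remark that density plus continuity only controls the image up to closure, and that this is why closedness of $\Kons_m(n)$ matters, is exactly the hinge of the argument.
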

\begin{proof} This is an easy consequence of Lemmas \ref{coherence-wht} and \ref{closure}, since 
\begin{align*}
\tau(\WHT_m(n)) & = \tau \left ( \textrm{cl}( \gamma(\WT_m(n))) \right ) \subset \textrm{cl}(\tau(\gamma(\WT_m(n)))) \\
& = \textrm{cl}(\phi(\check{\tau}(\WT_m(n)) )) \subset  \textrm{cl}(\phi(\Conf_n(\mathbb{R}^m))) \\
& = \Kons_m(n) \ .
\end{align*}
\end{proof}

Before diving into the core of the section, let us prove some preliminary lemmas. 
\begin{definition} Given $1 \le i < j \le \ell+1$ and $u \in [\ell+1]$, we say that $(i,j)$ is an $u$-exceptional pair in the following cases:
\begin{description}
\item[$\boxed{0 < u < \ell}$] $(i,j)= (u,u+1)$;
\item[$\boxed{u=0}$] $i=1$;
\item[$\boxed{u=\ell+1}$] $j=\ell+1$.
\end{description} 
\end{definition}
\begin{remark} \label{exceptional} Let us notice that the exceptions raise because the couple $(s_u(i), s_u(j))$ would not respect\footnote{With the convention that $s_{\ell+1} = \Id$. } $1 \le s_u(i) < s_u(j) \le \ell$: in the first case because it would have equal components, while in the others because one of the two components would be too big or too small. 

Also, note that if $1 \le i < j \le \ell+1 $ is $u$-exceptional we can never have $(i,j) = (u, u+1)$: in the first case is explicitly forbidden; in the other two we would have $i=0$ or $j= \ell+2$.
\end{remark}

The following is a self-contained calculation that is useful in proofs. It can be considered as a proof of $s_u$ being left adjoint of $d_u$, seen as functors between the categories $[n], [n+1]$.
\begin{lemma} \label{adjointness} For all $p \in [n], x \in [n+1]$ and $u \in [n+1]$, we have
$$ (1) \ \ \ d_u(p) \ge x \ \ \Leftrightarrow \ \ p \ge s_u(x)  \ ,$$
$$ (2) \ \ \ d_u(p) < x \ \ \Leftrightarrow \ \ p < s_u(x)  \ .$$
\end{lemma}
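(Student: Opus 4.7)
The plan is to establish (1) by a straightforward case analysis on the two conditions that control the values of $d_u$ and $s_u$, and then derive (2) as the logical contrapositive. Recall the standard formulas: $d_u(p)=p$ if $p<u$ and $d_u(p)=p+1$ if $p\ge u$, while $s_u(x)=x$ if $x\le u$ and $s_u(x)=x-1$ if $x>u$ (the convention $s_{\ell+1}=\Id$ mentioned just before the lemma fits this formula, since every $x\in[\ell+1]$ satisfies $x\le\ell+1$).

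For (1), I would split according to the value of $s_u(x)$. If $x\le u$, then $s_u(x)=x$, and one needs $d_u(p)\ge x\iff p\ge x$. When $p<u$ this is immediate because $d_u(p)=p$; when $p\ge u\ge x$ both sides are automatically true since $d_u(p)=p+1\ge p\ge x$. If instead $x>u$, then $s_u(x)=x-1$, and the required equivalence is $d_u(p)\ge x\iff p\ge x-1$. When $p\ge u$, $d_u(p)=p+1$, so $d_u(p)\ge x\iff p\ge x-1$ directly. When $p<u$, one checks that $p\le u-1\le x-2$, so $p<x-1$; simultaneously $d_u(p)=p<u<x$, so both sides are false. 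This exhausts the cases.

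Then (2) follows at once: since $d_u(p)$ and $x$ are integers, $d_u(p)<x$ is the negation of $d_u(p)\ge x$, and similarly $p<s_u(x)$ is the negation of $p\ge s_u(x)$, so (2) is equivalent to (1).

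There is no real obstacle here; the only thing to be careful about is making the edge cases ($u=0$ and $u=\ell+1$) fit uniformly into the formulas. For $u=\ell+1$, every $p\in[n]$ satisfies $p<u$ so $d_u=\mathrm{id}$, and every $x\in[\ell+1]$ satisfies $x\le u$ so $s_u=\mathrm{id}$, and the statement degenerates to $p\ge x\iff p\ge x$; for $u=0$ the symmetric degeneration occurs. Thus the unified case analysis above handles everything at once.
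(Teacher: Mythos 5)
Your proof is correct and matches the paper's argument essentially verbatim: both split into the same four cases according to whether $p<u$ or $p\ge u$ and whether $x\le u$ or $x>u$, then obtain (2) by negating (1). The only tiny inaccuracy is in your closing remark about $u=0$, where $d_0$ and $s_0$ do not both become the identity (rather $d_0(p)=p+1$ for all $p$), but this is a side comment that your main case analysis already handles correctly.
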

\begin{proof} We start by proving $(1)$. We divide in four cases to explicit the piecewise definitions of $d_u, s_u$. The two statements become
\begin{description}
\item[$\boxed{p\le u-1, \ \ \  x \ge u+1}$] $p \ge x \Leftrightarrow p \ge x-1$. Both are false since $p \le u-1 < u \le x-1$.
\item[$\boxed{p \le u-1, \ \ \  x \le u}$]$p \ge x \Leftrightarrow p \ge x$.
\item[$\boxed{p \ge u, \ \ \  x \ge u+1}$] $p+1 \ge x \Leftrightarrow p \ge x-1$
\item[$\boxed{p \ge u, \ \ \  x \le u}$] $p+1 \ge x \Leftrightarrow p \ge x$. Both are true since $p \ge u \ge x$.
\end{description} 
Now $(2)$ follows by negating both statements of the equivalence, since in a linear order $\neg (a \le b) = a > b$.
\end{proof}
We now pass to prove something that is more specific to our context, which is a direct corollary of the above simple lemma.
\begin{lemma} \label{degeneracy} Consider $1 \le i< j \le \ell+1$, an increasing $\phi: [n] \to [\ell]$ and a face map $d_u : [\ell] \to [\ell+1]$. Then $(i,j)$ is $d_u \phi$-degenerate if and only if is $u$-exceptional or $(s_u(i), s_u(j) )$ is $\phi$-degenerate.
\end{lemma}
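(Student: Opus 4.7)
The plan is to split the biconditional along the dichotomy offered by Remark~\ref{exceptional}: either $(i,j)$ is $u$-exceptional, or $(s_u(i),s_u(j))$ is a legitimate pair with $1\le s_u(i)<s_u(j)\le\ell$ and we can ask whether it is $\phi$-degenerate. The goal is to show both implications collapse to essentially bookkeeping once Lemma~\ref{adjointness} is in hand.

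First I would dispose of the $u$-exceptional case by a direct check in each of its three subcases. If $0<u<\ell+1$ and $(i,j)=(u,u+1)$, then $\{i,\dots,j-1\}=\{u\}$, and since $u\notin\im d_u$ we get $\{u\}\subset(\im d_u\phi)^c$, so $(i,j)$ is $d_u\phi$-collapsed. If $u=0$ and $i=1$, then $d_0\phi(0)=\phi(0)+1\ge 1=i$, so $(i,j)$ is $d_u\phi$-left-extreme. If $u=\ell+1$ and $j=\ell+1$, then $d_{\ell+1}\phi(n)=\phi(n)\le\ell<j$, so $(i,j)$ is $d_u\phi$-right-extreme. This settles the ``exceptional $\Rightarrow$ $d_u\phi$-degenerate'' direction.

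Next, assuming $(i,j)$ is not $u$-exceptional, I would verify the three degeneracy types translate one-to-one across $s_u$. For the left-extreme condition, $i\le d_u\phi(0)=d_u(\phi(0))$ is, by Lemma~\ref{adjointness}(1) with $p=\phi(0)$, $x=i$, equivalent to $\phi(0)\ge s_u(i)$, i.e.\ left-extreme of $(s_u(i),s_u(j))$ for $\phi$. Symmetrically, $j>d_u(\phi(n))$ is, by Lemma~\ref{adjointness}(2) with $p=\phi(n)$, $x=j$, equivalent to $s_u(j)>\phi(n)$, giving the right-extreme case. For the collapsed condition I would observe that since $d_u$ is a bijection $[\ell]\to[\ell+1]\setminus\{u\}$ with retraction $s_u$, a $k\ne u$ lies in $\im d_u\phi$ iff $s_u(k)\in\im\phi$; so $\{i,\dots,j-1\}\subset(\im d_u\phi)^c$ is equivalent to $s_u\bigl(\{i,\dots,j-1\}\setminus\{u\}\bigr)\subset(\im\phi)^c$.

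Then the one genuine computation left is to check, by breaking into the subcases $u<i$, $u\ge j$, and $i\le u\le j-1$ (the last further split by whether $u=i$ or $u=j-1$, always keeping the excluded equality $(i,j)=(u,u+1)$ in mind), that
\[
s_u\bigl(\{i,\dots,j-1\}\setminus\{u\}\bigr)=\{s_u(i),\dots,s_u(j)-1\}.
\]
Each case is a two-line calculation from the piecewise formula for $s_u$. This identifies the $d_u\phi$-collapsed condition on $(i,j)$ with the $\phi$-collapsed condition on $(s_u(i),s_u(j))$, completing the equivalence. The only real obstacle is precisely this bookkeeping of $s_u$ on $\{i,\dots,j-1\}\setminus\{u\}$: it is straightforward but the case splitting must be exhaustive and it is easy to silently invoke the excluded coincidence $(i,j)=(u,u+1)$ without flagging it.
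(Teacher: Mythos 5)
Your proof is correct, and its overall architecture matches the paper's: first dispose of the $u$-exceptional cases by exhibiting the relevant degeneracy for $d_u\phi$ directly, then, for non-exceptional $(i,j)$, match each of the three degeneracy types across $s_u$. For the left-extreme and right-extreme types you and the paper both invoke Lemma~\ref{adjointness} in the same way (and for what it's worth, your formulation of the right-extreme equivalence as $s_u(j)>\phi(n)$ is what is actually needed; the paper's printed $s_u(j)<\phi(n)$ is a typo). The one place you genuinely diverge is the collapsed case. The paper stays inside the adjointness lemma: it phrases ``collapsed'' contrapositively as ``there exists $y$ with $i\le d_u\phi(y)<j$'' and translates both inequalities at once via $d_u(p)\ge x\iff p\ge s_u(x)$ and $d_u(p)<x\iff p<s_u(x)$, landing directly on ``there exists $y$ with $s_u(i)\le\phi(y)<s_u(j)$.'' You instead use that $d_u$ is a bijection onto $[\ell+1]\setminus\{u\}$ with section $s_u$ to transport the membership condition, and then prove the set identity $s_u\bigl(\{i,\dots,j-1\}\setminus\{u\}\bigr)=\{s_u(i),\dots,s_u(j)-1\}$ by hand. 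I checked the identity: it does hold in every subcase once $(i,j)=(u,u+1)$ is excluded (which is exactly the relevant exceptional case), so your route closes. The trade-off is that the paper's contrapositive-plus-adjointness argument is a one-liner with no explicit case split, whereas your set-theoretic reformulation requires the interval-image computation you flag at the end — more bookkeeping to carry, but it makes the role of $u\notin\im d_u$ and the $(i,j)=(u,u+1)$ exclusion more transparent.
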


\begin{proof}  
We firstly assume $(i,j)$ is not $u$-exceptional. Let us distinguish three cases, depeding on the type of degeneracy. We repeatedly use Lemma \ref{adjointness}.
\begin{description}
\item[$(i,j)$ collapsed:] we have to show that $\{i, \ldots, j-1\} \subset (\im d_u \phi)^c$ if and only if $\{s_u(i), \ldots, s_u(j)-1\} \subset (\im \phi)^c$. We prove the contrapositive statement. There exists $y$ such that $i \le d_u \phi(y) < j$ witnessing $\{i, \ldots, j-1\} \cap \im d_u \phi \neq \emptyset$ if and only if $s_u(i) \le \phi(y) < s_u(j)$, witnessing $\{s_u(i), \ldots, s_u(j)-1\} \cap \im \phi \neq \emptyset$.
\item[$(i,j)$ left-extreme:] we have $i \le d_u\phi(0)$ if and only if $s_u(i) \le \phi(0)$.
\item[$(i,j)$ right-extreme:] we have $j > d_u\phi(n)$ if and only if $s_u(j) < \phi(n)$.
\end{description}
Secondly, let us examine the three $u$-exceptional cases and prove they yield $d_u\phi$-degenerate pairs.
\begin{description}
\item{$\boxed{0 < u < \ell+1, \ \ \ (i,j) = (u,u+1)}$} A collapsed pair, since $\{u\} \subset (\im d_u \phi)^c$
\item{$\boxed{u=0, \ \ \ i=1}$} A left-extreme pair, since $i=1 \le d_0\phi(0)$
\item{$\boxed{u=\ell+1, \ \ \ j=\ell+1}$} A right extreme pair, since $j = \ell+1 > \phi(n) = d_{\ell+1}\phi(n)$.
\end{description}
\end{proof}

The second lemma concerns the sign tensor.
\begin{lemma} \label{sign} Consider $\Gamma \in \FNP_m(\ell)$. For all $1 \le i < j \le \ell+1$ and $d_u:[\ell]\to \ell+1$, we have 
$$ \sgn_{ij}^{d_u \Gamma} =  \left \{ \begin{array}{ll} 
1, & \textrm{if } (i,j) \textrm{ is } u\textrm{-exceptional}\\
\sgn_{s_u(i)s_u(j)}^{\Gamma}, & \textrm{otherwise}
\end{array} \right. $$
\end{lemma}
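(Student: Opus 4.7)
Since $\sgn^{\Gamma'}_{ij} = +1$ iff $(\sigma^{\Gamma'})^{-1}(i) < (\sigma^{\Gamma'})^{-1}(j)$, the statement will reduce to comparing positions of $i, j$ in $d_u\Gamma$ with positions of $s_u(i), s_u(j)$ in $\Gamma$. My main tool will be the inverse-permutation lemma preceding Lemma \ref{twisted-example}, which yields an explicit formula for $\pi(t) := (d_u\sigma^\Gamma)^{-1}(t)$. For $0 < u < \ell+1$, with $\alpha := (\sigma^\Gamma)^{-1}(u)$, this gives $\pi(u) = \alpha$ and $\pi(t) = d_\alpha((\sigma^\Gamma)^{-1}(s_u(t)))$ for $t \ne u$; for $u \in \{0, \ell+1\}$ the formula is an affine shift of $(\sigma^\Gamma)^{-1}$.

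\textbf{Exceptional cases.} I would dispose of them by direct computation. For $(u,i) = (0,1)$: $\pi(1) = 1 < \pi(j) = (\sigma^\Gamma)^{-1}(j-1) + 1$ for any $j > 1$. For $(u,j) = (\ell+1,\ell+1)$: $\pi(\ell+1) = \ell+1 > \pi(i)$ for any $i \le \ell$. For $0<u<\ell+1$ and $(i,j) = (u,u+1)$: $\pi(u) = \alpha$ and $\pi(u+1) = d_\alpha((\sigma^\Gamma)^{-1}(s_u(u+1))) = d_\alpha(\alpha) = \alpha+1$. Each yields sign $+1$.

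\textbf{Non-exceptional cases.} For $u \in \{0, \ell+1\}$ the affine-shift position formula preserves order, so $\pi(i) < \pi(j)$ is equivalent to $(\sigma^\Gamma)^{-1}(s_u(i)) < (\sigma^\Gamma)^{-1}(s_u(j))$, which is precisely $\sgn^\Gamma_{s_u(i), s_u(j)} = +1$. For $0<u<\ell+1$, setting $p := (\sigma^\Gamma)^{-1}(s_u(i))$ and $q := (\sigma^\Gamma)^{-1}(s_u(j))$, I would split further: when neither $i$ nor $j$ equals $u$, both $\pi(i) = d_\alpha(p)$ and $\pi(j) = d_\alpha(q)$ come from applying the same strictly increasing $d_\alpha$, so the comparison is immediate; when $i = u$ (whence $j \notin \{u, u+1\}$ by non-exceptionality), we have $\pi(i) = \alpha$ and $\pi(j) = d_\alpha(q)$ with $q \ne \alpha$, and Lemma \ref{adjointness} gives $\alpha < d_\alpha(q) \iff \alpha \le q$, which collapses to $\alpha < q$ since $q \ne \alpha$; the symmetric subcase $j = u$ with $i < u$ is handled in the same way.

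\textbf{Anticipated obstacle.} The only delicate subcase is the one where exactly one of $i, j$ equals $u$ under $0 < u < \ell+1$: here the position map is $\alpha$ on one side but $d_\alpha(\cdot)$ on the other, blocking a direct appeal to monotonicity of $d_\alpha$. Lemma \ref{adjointness} — essentially the adjunction $d_\alpha \dashv s_\alpha$ on the posets $[n]$ — is exactly what bridges the gap, and the non-exceptionality hypothesis $(i,j) \ne (u, u+1)$ will be used to rule out the degenerate configuration $p = q = \alpha$ that would otherwise derail the equivalence.
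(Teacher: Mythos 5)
Your proposal is correct and follows essentially the same route as the paper: read off positions from the inverse-permutation formula preceding Lemma \ref{twisted-example}, dispose of the $u$-exceptional cases directly, and split the non-exceptional internal case according to whether $u \in \{i,j\}$, using $q \neq \alpha$ (a consequence of $(i,j) \neq (u,u+1)$) to close the delicate subcase. Your write-up is in fact slightly tighter than the paper's — you invoke Lemma \ref{adjointness} where the paper reproves the same $d_\alpha$/$s_\alpha$ comparison inline, and you handle $u \in \{0,\ell+1\}$ explicitly via the affine-shift formula, whereas the paper folds these into the general case even though $\alpha = \sigma^{-1}(u)$ is only defined for $1 \le u \le \ell$.
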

\begin{proof} In the $u$-exceptional cases, this follows from the very definition. Otherwise, let us suppose that $\sgn_{ij}^{d_u \Gamma} = 1$. Denote by $\sigma$ the permutation associated to $\Gamma$. We divide in three cases:
\begin{description}
\item[$\boxed{i,j \neq u}$] Denote by $\alpha:= \sigma^{-1}(u)$. The sign assumption means
$$ (d_u \sigma)^{-1}(i) < (d_u \sigma)^{-1}(j) \ \ \Rightarrow \ \ d_{\alpha} \sigma^{-1} s_u(i) < d_{\alpha} \sigma^{-1} s_u(j)  \ .$$
Applying $s_{\alpha}$ we get $\sigma^{-1} s_u(i) \le \sigma^{-1} s_u(j) $, but they can't be equal, otherwise the above inequality would be an equality. We deduce that $\sgn_{s_u(i) s_u(j)}^{\Gamma} = 1$.
\item[$\boxed{i=u}$] In this case
$$  (d_u \sigma)^{-1}(u) = \alpha < (d_u \sigma)^{-1}(j) = d_{\alpha} \sigma^{-1} s_u (j) \ \ \Rightarrow \ \ \alpha \le \sigma^{-1}s_u(j)  \ .$$
Since $(i,j)$ is not $u$-exceptional, we have $j \neq u+1$ (see Remark \ref{exceptional}). If the inequality was not strict, we would have $s_u(j) = u$, which is impossible since $j \neq u+1, u$. We conclude that
$$ \sigma^{-1} s_u(i) = \sigma^{-1}(u) = \alpha < \sigma^{-1} s_u(j)  \ ,$$
giving $\sgn^{\Gamma}_{s_u(i) s_u(j) } = 1$.
\item[$\boxed{j=u}$] In this case
$$  (d_u \sigma)^{-1}(i) = d_{\alpha} \sigma^{-1} s_u (i)   < \alpha = (d_u \sigma)^{-1}(u) \ \ \Rightarrow \ \  \sigma^{-1}s_u(i) \le  \alpha  \ .$$
Since $i < j = u$, we can't have an equality, otherwise $s_u(i) = \alpha \Rightarrow i \in \{u, u+1\}$.  We conclude that
$$ \sigma^{-1} s_u(i) < \alpha = \sigma^{-1} s_u(j)  \ ,$$
giving $\sgn^{\Gamma}_{s_u(i) s_u(j) } = 1$.
\end{description}

The above argument shows $\sgn^{d_u \Gamma}_{ij}= \sgn^{\Gamma}_{s_u(i) s_u(j)}$ for non exceptional pairs in case the LHS is $1$. If it is $-1$, we use antisymmetricity to deduce
$$\sgn^{d_u \Gamma}_{ij}= -\sgn^{d_u \Gamma}_{ji}= -\sgn^{\Gamma}_{s_u(j) s_u(i)} = \sgn^{\Gamma}_{s_u(i) s_u(j)} \ ,$$
and the lemma is proved.
\end{proof}
Finally, let us recall the semicosimplicial structure on $\Kons_m$ in terms of our definitions.
\begin{recall*} Given $u \in [n+1]$, define the maps $d_u : \Kons_m(n) \to \Kons_m(n+1)$ as
$$k < \ell: \ \ \ d_u(x_{ij})_{k \ell} =  \left \{ \begin{array}{ll} 
e_1, & \textrm{if } (k, \ell) \textrm{ is } u\textrm{-exceptional} \\
x_{s_u(i)s_u(j)}, & \textrm{otherwise}
\end{array} \right. $$
$$ \ell < k: \ \ \ d_u(x_{ij})_{k \ell} = - d_u(x_{ij})_{\ell k} \hspace{4.5cm}$$
\end{recall*}
In the article \cite{sinha2004operads}, the cosimplicial structure is described as the Mc-Clure and Smith construction applied to $\Kons_m$. The operadic structure on the latter and the associated multiplication are described in Theorem 4.5 and Proposition 4.7; the streamlined version of such construction we provided in the background section (\ref{kons-cosimp}) can be used to check that our definition agrees with the classical one.

We are ready to prove the two main theorems.
\begin{theorem} \label{WHT-semi}
The map $ \tau : \WHT_m(n) \to \Kons_m(n) $ is semicosimplicial with respect to the semicosimplicial structures on $\WHT_m$ and $ \Kons_m$.
\end{theorem}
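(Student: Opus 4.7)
The claim is that, for every $u \in [n+1]$ and every $i \neq j$ in $[n+1]$, the $(i,j)$-coordinate of $\tau \circ d_u$ agrees with that of $d_u \circ \tau$. Both $\tau$ and the $d_u$ action on $\Kons_m$-coordinates being antisymmetric in the index pair, we restrict to $i<j$. We work one stratum at a time: on $\Omega(\bt{\Lambda},\phi,D) \subset \Omega(\bt{\Gamma})$, the map $d_u$ lands in $\Omega(\bt{\Lambda},d_u\phi,D)$, so the LHS is computed from the degeneracy datum $d_u\phi$, while the RHS is
\[
\bigl(d_u\tau(\bt{\lambda},\bts{\omega})\bigr)_{ij} \;=\;
\begin{cases} e_1, & (i,j)\text{ is }u\text{-exceptional}, \\
\tau_{s_u(i)s_u(j)}(\bt{\lambda},\bts{\omega}), & \text{otherwise}.
\end{cases}
\]

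The argument splits along $u$-exceptionality. If $(i,j)$ is $u$-exceptional, Lemma \ref{degeneracy} certifies that it is also $d_u\phi$-degenerate, so $\tau_{ij}$ of the image equals $e_1$ by construction, matching the RHS; the trivial-stratum case is analogous. If $(i,j)$ is non-exceptional, Lemma \ref{degeneracy} gives the equivalence that $(i,j)$ is $d_u\phi$-degenerate iff $(s_u(i),s_u(j))$ is $\phi$-degenerate, so the degenerate sub-case again gives $e_1=e_1$. The substantive case is the remaining one, where both sides are given by the explicit sum-of-weights normalization.

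In that case we must verify, stratum by stratum and for each $k$,
\[
\sgn^{d_u\Gamma_{D(k)}}_{ij}\sum_{h=\tilde p_k}^{\tilde q_k-1}(d_u\bts{\omega})^{D(k)}_h\,e_{1+(d_ua^{D(k)})_h} \;=\; \sgn^{\Gamma_{D(k)}}_{s_u(i)s_u(j)}\sum_{h=p_k}^{q_k-1}\bts{\omega}^{D(k)}_h\,e_{1+a^{D(k)}_h},
\]
with $\tilde p_k<\tilde q_k$ the sorted positions of $\{i,j\}$ in $d_u\Gamma_{D(k)}$ and $p_k<q_k$ those of $\{s_u(i),s_u(j)\}$ in $\Gamma_{D(k)}$. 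The sign equality is Lemma \ref{sign}. The plan for the sums is to substitute $h\mapsto s_{\beta_k}(h)$ with $\beta_k = \sigma_{D(k)}^{-1}(u)$ for $0<u<\ell+1$, and the analogous shift by $\pm 1$ when $u=0$ or $u=\ell+1$. The two core inputs are the order-intertwining $s_u(\min_{d_u\Gamma_{D(k)}}\{i,j\}) = \min_{\Gamma_{D(k)}}\{s_u(i),s_u(j)\}$ (and likewise for $\max$), and the endpoint identity $s_{\beta_k}(\tilde p_k) = p_k$, $s_{\beta_k}(\tilde q_k) = q_k$; both follow from the inverse formula for $(d_u\sigma)^{-1}$ combined with Lemma \ref{adjointness}. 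When $\beta_k \in [\tilde p_k,\tilde q_k-1]$, the summand at $h=\beta_k$ has weight $0$ by the definition of the $\WHT$-cosimplicial action and drops out; the surviving indices reindex bijectively onto $[p_k,q_k-1]$, and $e_{1+(d_ua^{D(k)})_h}$ becomes $e_{1+a^{D(k)}_{s_{\beta_k}(h)}}$ by the definition of $d_u$ on depth indices, making the two sums equal term by term.

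The main obstacle is the bookkeeping in this last step: one must distribute across the three regimes $u=0$, $0<u<\ell+1$, $u=\ell+1$, and, within the internal case, across the sub-cases $\beta_k<\tilde p_k$, $\tilde q_k\le\beta_k$, and $\tilde p_k\le\beta_k<\tilde q_k$. In each extremal regime one uses that the newly inserted $\infty$-weight sits at position $1$ (for $u=0$) or $\ell$ (for $u=\ell+1$), which lies strictly outside the sum range $[\tilde p_k,\tilde q_k-1]$ precisely because $(i,j)$ is non-exceptional (i.e.\ $i\ge 2$ for $u=0$, $j\le\ell$ for $u=\ell+1$); no $\infty$ contribution then arises and the substitution reduces to a clean shift. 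All sub-cases collapse to the identity above, completing the verification on every stratum and hence globally on $\WHT_m(n)$.
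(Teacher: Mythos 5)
Your proof plan matches the paper's argument essentially step for step: both restrict to a single stratum $\Omega(\bt{\Lambda},\phi,D)$, invoke Lemma~\ref{degeneracy} to split on $u$-exceptionality and $d_u\phi$-degeneracy, dispose of the degenerate and trivial-stratum cases with $e_1 = e_1$, use Lemma~\ref{sign} for the sign tensor, and reduce the substantive case to a reindexing of the sum by $s_{r_k}$ (your $\beta_k$) with the $h=r_k$ term dropping out because the inserted weight is $0$ there. The only cosmetic difference is that you explicitly track the $u=0$ and $u=\ell+1$ cases (where the inserted $\infty$-weight lies at position $1$ or $\ell$, outside the summation range precisely because $(i,j)$ is non-exceptional), whereas the paper only details the interior case $0<u<\ell+1$ with $p_k<r_k<q_k$ and leaves the rest to the reader; your remark there fills in a point the paper glosses over, but it is not a different route.
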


\begin{proof} For all $u \in [\ell+1]$, we must show that the following diagram is commutative:
\[\begin{tikzcd}
	{\WHT_m(\ell)} & {\Kons_m(\ell)} \\
	{\WHT_m(\ell+1)} & {\Kons_m(\ell+1)}
	\arrow["\tau", from=1-1, to=1-2]
	\arrow["{d_u}"', from=1-1, to=2-1]
	\arrow["\tau"', from=2-1, to=2-2]
	\arrow["{d_u}", from=1-2, to=2-2]
\end{tikzcd}\]
We restrict to a single stratum $\Omega(\bt \Lambda, \phi, D) \subset \WHT_m(n)$ for some $D: [r] \to [d], \phi: [n] \to [\ell], \bt \Lambda \in \Nerve(\FNP_m(n))_r$ and $\bt \Gamma \in \Nerve(\FNP_m(\ell))_d$ such that $\phi* \bt \Lambda = D \bt \Gamma$. Also, we examine the $(i,j)$-th component of the object where maps land, for $i< j$ (since in the Kontsevich space tensors are antisymmetric). If the stratum is trivial, we must show commutativity of the following diagram:
\[\begin{tikzcd}
	{\Omega(T_{\ell}, u_d) } & {\Kons_m(\ell)} \\
	{\Omega(T_{\ell+1}, u_d) } & {S^{m-1} }
	\arrow["\tau", from=1-1, to=1-2]
	\arrow["{d_u}"', from=1-1, to=2-1]
	\arrow["\tau_{ij}"', from=2-1, to=2-2]
	\arrow["({d_u})_{ij}", from=1-2, to=2-2]
\end{tikzcd}\]
The $\downarrow, \rightarrow$ composition yields $e_1$. In the other way around, we must compute $d_u( \vect{v})_{ij}$ where $\vect{v}_{\alpha \beta} = e_1$ for all $\alpha < \beta$; it is straightforward to see the latter is $e_1$ too.

In the non-trivial case, we reduce to the following diagram:
\[\begin{tikzcd}
	{\Omega(\bt \Lambda, \phi, D) } & {\Kons_m(\ell)} \\
	{\Omega(\bt \Lambda, d_u \phi, D) } & {S^{m-1} }
	\arrow["\tau", from=1-1, to=1-2]
	\arrow["{d_u}"', from=1-1, to=2-1]
	\arrow["\tau_{ij}"', from=2-1, to=2-2]
	\arrow["({d_u})_{ij}", from=1-2, to=2-2]
\end{tikzcd}\]
Consider $(\bt \lambda, \bts \omega) \in \Omega(\bt \Lambda, \phi, D)$. The equation we have to check is
$$ \tau_{ij}d_u( \bt \lambda, \bts \omega) = (d_u \tau(\bt \lambda, \bts \omega))_{ij}  \ .$$
Firstly, let us examine the case in which $(i,j)$ is $d_u\phi$-degenerate. The LHS is $e_1$ by the very definition of $\tau$. Regarding the RHS, Lemma \ref{degeneracy} shows that there are two subcases: $(i,j)$ is $u$-exceptional; $(s_u(i), s_u(j)$ is $\phi$-degenerate. In the first subcase, the RHS is $e_1$ because of the definition of $d_u$; in the second subcase, we conclude because
$$ (d_u \tau(\bt \lambda, \bts \omega))_{ij} = (\tau(\bt \lambda, \bts \omega))_{s_u(i) s_u(j)} = e_1  \ .$$
We suppose from now on that $(i,j)$ is not $d_u \phi$-degenerate, which implies by the same lemma that $(i,j)$ is not $u$-exceptional and $(s_u(i), s_u(j))$ is not $\phi$-degenerate. 

Let 
\begin{align*}
\alpha_k & = \sigma_k^{-1}(s_u(i)) \\
\beta_k & = \sigma_k^{-1}(s_u(j)) \\
r_k & = \sigma_k^{-1}(u) \\
p_k & = (d_u \sigma_k)^{-1}(i) = d_{r_k} \sigma_k^{-1}(s_u(i)) \\
q_k & = (d_u \sigma_k)^{-1}(j) = d_{r_k} \sigma_k^{-1}(s_u(j)) . \ 
\end{align*}
The right-hand side becomes
$$\tau(\bt \lambda, \bts \omega)_{s_u(i) s_u(j)} =  \hat{n} \left ( \sum_{k=0}^r \lambda_{D(k)} \sgn_{s_u(i) s_u(j)}^{\Gamma_k} \sum_{h=\alpha_k}^{\beta_k-1} \vect{\omega}^{D(k)}_h e_{1+a^{D(k)}_h} \right ) \ ,$$
so there is no need to distinguish cases to compute it. The left-hand side, on the other hand, requires a more detailed inspection. Since $d_u$ appears, we have to examine separately the cases $u=0, u=\ell+1$ and $0 < u < \ell+1$. The latter must be further divided into five subcases. The beginning of the computation is common, using Lemma \ref{sign} about the sign:
\begin{align*}
& \hat{n} \left ( \sum_{k=0}^r \lambda_{D(k)} \sgn_{ij}^{d_u \Gamma_k }\sum_{h = p_k}^{q_k-1} (d_u\vect{\omega} )^{D(k)}_h e_{1+(d_ua)^{D(k)}_h } \right )  \\
= & \hat{n} \left ( \sum_{k=0}^r \lambda_{D(k)} \sgn_{s_u(i)s_u(j)}^{\Gamma_k }\sum_{h = p_k}^{q_k-1} (d_u\vect{\omega} )^{D(k)}_h e_{1+(d_ua)^{D(k)}_h } \right ) \ .
\end{align*}
We only report the details for the general case, that is $0<u<\ell+1, \ \ p_k < r_k < q_k$ ; the others are completely analogous and left to the reader. In this case, the permutation indices become
\begin{align*}
p_k = d_{r_k} \sigma_k^{-1}(s_u(i)) \ \ \Rightarrow  \ \ p_k & = s_{r_k}(p_k) = \sigma_k^{-1}(s_u(i)) = \alpha_k \\
q_k = d_{r_k} \sigma_k^{-1}(s_u(j)) \ \ \Rightarrow  \ \ q_k -1 & = s_{r_k}(q_k) = \sigma_k^{-1}(s_u(j)) = \beta_k \ .
\end{align*}
Regarding weights and depths, this depends on where $h$ falls. For $p_k \le h < r_k$ we have 
\begin{align*}
(d_u\vect{\omega})^{D(k)}_h & = \vect{\omega}^{D(k)}_{s_{r_k}(h)} = \vect{\omega}^{D(k)}_h \\
(d_ua)^{D(k)}_h & = a^{D(k)}_{s_{r_k}(h)} = a^{D(k)}_h \ .
\end{align*}
For $h=r_k$ we have
\begin{align*}
(d_u\vect{\omega})^{D(k)}_{r_k} & = 0 \\
(d_ua)^{D(k)}_{r_k} & = m-1 \ .
\end{align*}
Finally, for $r_k < h < q_k$ we have
\begin{align*}
(d_u\vect{\omega})^{D(k)}_h & = \vect{\omega}^{D(k)}_{s_{r_k}(h)} = \vect{\omega}^{D(k)}_{h-1} \\
(d_ua)^{D(k)}_h & = a^{D(k)}_{s_{r_k}(h)} = a^{D(k)}_{h-1} \ .
\end{align*}
Plugging this into the left-hand side:
\begin{align*}
& \hat{n} \left ( \sum_{k=0}^r \lambda_{D(k)} \sgn_{s_u(i)s_u(j)}^{\Gamma_k }\sum_{h = p_k}^{q_k-1} (d_u\vect{\omega} )^{D(k)}_h e_{1+(d_ua)^{D(k)}_h }  \right )  \\
= &  \hat{n} \left ( \sum_{k=0}^r \lambda_{D(k)} \sgn_{s_u(i)s_u(j)}^{\Gamma_k } \left ( \sum_{h = \alpha_k}^{r_k-1} (\vect{\omega} )^{D(k)}_h e_{1+a^{D(k)}_h} + (\vect{\omega} )^{D(k)}_{r_k} e_{1+a^{D(k)}_{r_k}} + \sum_{h = r_k+1}^{q_k-1} (\vect{\omega} )^{D(k)}_{h-1} e_{1+a^{D(k)}_{h-1}} \right )  \right )   \\
= &  \hat{n} \left ( \sum_{k=0}^r \lambda_{D(k)} \sgn_{s_u(i)s_u(j)}^{\Gamma_k }\sum_{h = \alpha_k}^{\beta_k-1} (\vect{\omega} )^{D(k)}_h e_{1+a^{D(k)}_h} \right )   \ ,
\end{align*}
since the $r_k$-th term is zero. In the second summation, we substituted $h=h-1$. This concludes the proof.
\end{proof}
\begin{theorem} \label{WHT-he}The map $ \tau : \WHT_m(n) \to \Kons_m(n) $ is an homotopy equivalence.
\end{theorem}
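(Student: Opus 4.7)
The plan is to exhibit $\tau$ as one edge of a commutative square of homotopy equivalences and conclude by the two-out-of-three property. By Lemma \ref{coherence-wht} we have a commutative diagram
\[
\begin{tikzcd}
\WT_m(n) & \WHT_m(n) \\
\Conf_n(\mathbb{R}^m) & \Kons_m(n)
\arrow["\gamma", from=1-1, to=1-2]
\arrow["\check{\tau}"', from=1-1, to=2-1]
\arrow["\tau", from=1-2, to=2-2]
\arrow["\phi"', from=2-1, to=2-2]
\end{tikzcd}
\]
in which $\gamma$ is a homotopy equivalence by Corollary \ref{gamma-he} and $\phi$ is a homotopy equivalence by Sinha's theorem \cite{sinha2004manifold}. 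If I can show that the left vertical map $\check{\tau}$ is also a homotopy equivalence, then $\tau \circ \gamma = \phi \circ \check{\tau}$ will be a homotopy equivalence as a composition of such, and the two-out-of-three property applied to the triangle $\tau, \gamma, \tau \circ \gamma$ will force $\tau$ to be one as well.

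So the heart of the argument is the claim that $\check{\tau} : \WT_m(n) \to \Conf_n(\mathbb{R}^m)$ is a homotopy equivalence. I would prove this by precomposing with the section $\check{g} : \BZ_m(n) \to \WT_m(n)$ of Lemma \ref{BZ-he-2}, which sends $\bt \lambda$ to $(\bt \lambda, \bts u)$ with all weights equal to $1$ and is already known to be a homotopy equivalence. Unfolding the definitions, $\check{\tau} \circ \check{g}$ sends $\bt \lambda \in |\Delta^d|$ supported on a non-degenerate chain $\Gamma_0 < \dots < \Gamma_d$ to the convex combination $\sum_k \lambda_k\, x(\Gamma_k, \vect{u})$. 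By the explicit inductive recipe for the Blagojević--Ziegler vertices $v(\Gamma)$ recalled in Section \ref{configurations}, one has $x(\Gamma, \vect{u}) = v(\Gamma)$, so $\check{\tau} \circ \check{g}$ is precisely the tautological inclusion $\BZ_m(n) \hookrightarrow \Conf_n(\mathbb{R}^m)$. That inclusion is a homotopy equivalence by construction of $\BZ_m(n)$ \cite{blagojevic2013convex}, so two-out-of-three applied to the triangle $\check{g}, \check{\tau}, \check{\tau} \circ \check{g}$ forces $\check{\tau}$ to be a homotopy equivalence as well.

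I do not foresee any genuine obstacle here, since the whole argument is a formal consequence of (i) the commutative square of Lemma \ref{coherence-wht}, (ii) the homotopy equivalences $\gamma$ and $\check{g}$ already produced between $\WT_m$, $\WHT_m$ and $\BZ_m$, and (iii) Sinha's theorem identifying $\Conf_n(\mathbb{R}^m)$ with $\Kons_m(n)$. The only concrete verification required is the identification $x(\Gamma, \vect{u}) = v(\Gamma)$, which is immediate from comparing the two parallel inductive definitions. The conceptual payoff is precisely the one advertised at the start of Section \ref{kons-to-fox-l}: all four spaces in the square are homotopical variants of the configuration space, glued together by one square of compatible maps, so the homotopy equivalence of $\tau$ is essentially forced.
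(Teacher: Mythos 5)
Your proposal is correct and takes essentially the same route as the paper: both show $\check{\tau}$ is a homotopy equivalence by identifying $\check{\tau}\circ\check{g}$ with the Blagojević--Ziegler inclusion $v:\BZ_m(n)\hookrightarrow\Conf_n(\mathbb{R}^m)$ via the computation $x(\Gamma,\vect{u})=v(\Gamma)$, and then conclude with two applications of two-out-of-three, using the commutative square of Lemma \ref{coherence-wht} together with the known equivalences $\gamma$ (Corollary \ref{gamma-he}), $\check{g}$ (Lemma \ref{BZ-he-2}), $v$ (Blagojević--Ziegler) and $\phi$ (Sinha).
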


\begin{proof} This is a matter of diagram-chasing, using the $2/3$ property of homotopy equivalences. Consider the following diagram:
\[\begin{tikzcd}
	& {\textrm{WT}_m(n)} & {\textrm{WHT}_m(n)} \\
	{\textrm{BZ}_m(n) } & {\textrm{Conf}_n(\mathbb{R}^m)} & {\textrm{Kons}_m(n)}
	\arrow["{\check{g}}", from=2-1, to=1-2]
	\arrow["v"', from=2-1, to=2-2]
	\arrow["{\check{\tau}}", from=1-2, to=2-2]
	\arrow["\tau", from=1-3, to=2-3]
	\arrow["\gamma", from=1-2, to=1-3]
	\arrow["\phi", from=2-2, to=2-3]
\end{tikzcd}\]
The desired result follows from this chain of observations:
\begin{itemize}
\item The map $\check{g}$ is an homotopy equivalence, as proved in Lemma \ref{BZ-he-2};
\item The map $v$ is an homotopy equivalence, as proved in \cite{blagojevic2013convex};
\item The map $\gamma$ is an homotopy equivalence, as proved in Corollary \ref{gamma-he};
\item The lefthand diagram commutes. Indeed, consider a chain $\bt \Gamma \in \Nerve(\FNP_m(n))_d$ and $\bt \lambda \in \textrm{std}(\bt \Gamma)$. We have that $\check{g}(\bt \lambda) = \bts u $ (all components equal to $1$), thus $\check{\tau}\check{g}(\bt \lambda)$ is defined as 
$$ \sum_{k=0}^d \lambda_k x \left ( \Gamma_k, \vect{u} \right )  \ ,$$
with the inductive definition
$$ x \left ( \Gamma_k, \vect{u} \right )_{\sigma_k(1) } = 0, \ \ \ x \left (\Gamma_k, \vect{u} \right )_{\sigma_k(p+1) } = x \left (\Gamma_k, \vect{u} \right )_{\sigma_k(p) } + u_p e_{1+a_p^k} = x \left (\Gamma_k, \vect{u} \right )_{\sigma_k(p) } + e_{1+a_p^k} \ .$$
This is exactly the definition of $v(\bt \Gamma)$.
\item By $2/3$, we get that $\check{\tau}$ is an homotopy equivalence.
\item Since $\phi$ is an homotopy equivalence, by $2/3$ applied to $\phi \check{\tau}$ and $\gamma$, we get that $\tau$ is an homotopy equivalence.
\end{itemize}
\end{proof}

\subsection{Relating spectral sequences}
Let us draw the conclusions of the article. Theorems \ref{WHT-semi}, \ref{WHT-he}, \ref{BZ-semi}, \ref{BZ-he} combine into the following
\begin{theorem} \label{zzthm} For all $m\ge 2$, there is a zig-zag of semicosimplicial homotopy equivalences:
\[\begin{tikzcd}
	& {\textrm{WHT}_m} \\
	{\BZ_m} && {\textrm{Kons}_m}
	\arrow["\simeq"', from=1-2, to=2-1]
	\arrow["\simeq", from=1-2, to=2-3]
\end{tikzcd}\]
\end{theorem}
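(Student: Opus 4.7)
The plan is to assemble the theorem from the component results already developed in Sections 4 and 5: the projection $f_m \colon \WHT_m \to \BZ_m$ sending $(\bt\lambda, \bts\omega) \mapsto \bt\lambda$ and the infinitesimal difference map $\tau \colon \WHT_m \to \Kons_m$ built out of the $\hat n$-normalized walking-man formula. First I would invoke Lemma \ref{BZ-semi} to get that $f_m$ is semicosimplicial, noting that this is almost a tautology: the cosimplicial operators of $\BZ_m$ act as the identity on the $\bt\lambda$-coordinate $\textrm{std}(\bt\Gamma) \simeq |\Delta^d|$, while the $d_j$ of $\WHT_m$ touch only the weight vectors. For $\tau$, semicosimpliciality is Theorem \ref{WHT-semi}, whose content reduces—via Lemma \ref{degeneracy} and the sign identity of Lemma \ref{sign}—to matching the $u$-exceptional cases of $d_u$ on $\Kons_m$ with the $d_u\phi$-degenerate pairs on $\WHT_m$, and then to a case-by-case reindexing of weights under $s_{r_k}, d_{r_k}$.

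Next I would handle the two homotopy-equivalence statements. For $f_m$, Lemma \ref{BZ-he} constructs the explicit section $g_m$ assigning to $\bt\lambda$ the pair $(\bt\lambda, \bts u(\bt\Gamma))$ with extremal weights set to $\infty$; the verification that $g_m f_m \simeq \Id$ uses a straight-line homotopy in $\arctan$-coordinates, which must be shown to respect the defining quotient of $\Omega(\bt\Gamma)$ and to stay inside strata whose extremal $\infty$-weights match those predicted by Lemma \ref{infinities}. For $\tau$, Theorem \ref{WHT-he} chains together the known equivalences $\check g \colon \BZ_m \to \WT_m$, $v \colon \BZ_m \to \Conf_n(\R^m)$, and $\gamma \colon \WT_m \to \WHT_m$, and combines them with the coherence identity $\phi \circ \check\tau = \tau \circ \gamma$ from Lemma \ref{coherence-wht}; two applications of the two-out-of-three property for homotopy equivalences then deliver the claim.

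With both legs of the zig-zag established as semicosimplicial maps that are pointwise homotopy equivalences, the stated diagram assembles on the nose and the theorem follows. The conceptual obstacle is not localized in this final synthesis but in the preceding construction of $\tau$ on the extreme strata: the raw difference formula $\sum_k \lambda_{D(k)} \sgn_{ij}^{\Gamma_{D(k)}} \sum_h \omega^{D(k)}_h e_{1+a^{D(k)}_h}$ must be extended continuously across the loci where some weights are $0$ or $\infty$, and the compactifying normalization $\hat n$ together with the dominance of extremal $\infty$-weights (the $L^{(n)} \to \infty$ analysis in Lemma \ref{stratum-comp}) is what makes this extension well-defined and land in $\Kons_m$. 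Once that delicate boundary analysis is absorbed, the present theorem is purely a matter of pasting together the semicosimplicial and homotopical conclusions already in hand.
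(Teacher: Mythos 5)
Your proposal is correct and follows the paper's own proof precisely: Theorem~\ref{zzthm} is assembled directly from Lemma~\ref{BZ-semi}, Lemma~\ref{BZ-he}, Theorem~\ref{WHT-semi}, and Theorem~\ref{WHT-he}, which is exactly the chain of component results you invoke. The additional commentary you give on where the real technical work lies (the boundary analysis in Lemma~\ref{stratum-comp} and the $\arctan$-homotopy in Lemma~\ref{BZ-he}) is accurate but belongs to the proofs of those prerequisite results rather than to the proof of the theorem itself.
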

We are ready to prove the main result of the article:
\begin{theorem} \label{ssthm} For all $m \ge 2$ and abelian groups $A$, the Barycentric Fox-Neuwirth and Sinha Spectral Sequence in (co)homology are isomorphic from the first page on.
In particular, $E_{pq}^r(\FNC_m,A)$ (resp. $E^{pq}_r(\FNC_m,A)$) converges to the homology (resp. cohomology) of $E_m$ with coefficients in $A$ for $m \ge 4$. 
\end{theorem}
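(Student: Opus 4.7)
The plan is to apply the singular chains (resp.\ cochains) functor with coefficients in $A$ to the zig-zag of semicosimplicial homotopy equivalences
\[
  \BZ_m \xleftarrow{\sim} \WHT_m \xrightarrow{\sim} \Kons_m
\]
supplied by Theorem \ref{zzthm}, and then to transport the resulting level-wise quasi-isomorphisms through the associated spectral sequences. Since each map in the zig-zag is semicosimplicial and is a homotopy equivalence in every cosimplicial degree, $C_*(-;A)$ produces a zig-zag of semicosimplicial chain complexes in which every level map is a quasi-isomorphism. The associated bicomplexes (horizontal differential = alternating sum of cofaces), filtered by vertical homology, then have $E^1_{pq} = H_q(X(p); A)$ for $X \in \{\BZ_m, \WHT_m, \Kons_m\}$, and the zig-zag gives isomorphisms of $E^1$-pages commuting with the horizontal differentials. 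A standard comparison argument will then propagate the isomorphism to every subsequent page.

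The second step is to identify the $E^1$-page of the spectral sequence associated to $C_*(\BZ_m(\bullet); A)$ with $E^1(\FNC_m, A)$. Taking vertical homology first in the bicomplex $\BFNA_{dn} = \FNC_m(n)_d \otimes A$ yields $H_q(\Nerve(\FNP_m(n)); A)$ at column $n$; since $\BZ_m(n)$ is, by construction (Section \ref{configurations}), a realization of $\Nerve(\FNP_m(n))$ inside the configuration space, these groups agree canonically. I would then verify that the horizontal differential on this $E^1$-page, induced by the combinatorial cosimplicial structure on $\FNP_m(\bullet)$ of Definition \ref{fn-cosimplicial}, matches the horizontal differential on $H_*(\BZ_m(\bullet); A)$ induced by the topological cosimplicial structure---essentially a tautology, since the latter is by construction the geometric realization (on the nerve) of the former.

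The cohomology statement would follow by running the same argument with $C^*(-;A)$ in place of $C_*(-;A)$: level-wise homotopy equivalences remain quasi-isomorphisms, the dual bicomplex $\BFNA^\vee$ computes the same $E_1$-page as $C^*(\BZ_m(\bullet); A)$, and the comparison with the Sinha cohomology spectral sequence proceeds identically. Convergence for $m \ge 4$ is then inherited directly from the known convergence of the Sinha spectral sequence recalled in the Introduction.

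The main subtle point I expect to have to check carefully is not the main isomorphism---which follows formally from the spectral-sequence comparison theorem once the zig-zag is in place---but the compatibility of conventions at the $E^1$-page identification: normalized simplicial chains of $\Nerve(\FNP_m(n))$ versus singular chains of $\BZ_m(n)$, together with the sign conventions in passing from cofaces to the alternating-sum horizontal differential. These are routine verifications, but worth spelling out so that the two spectral sequences really match on the nose rather than merely up to an implicit regrading.
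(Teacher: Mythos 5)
Your proposal is correct and follows essentially the same route as the paper: apply chains to the semicosimplicial zig-zag of Theorem \ref{zzthm}, pass to the associated bicomplexes, observe that the level-wise homotopy equivalences become column-wise quasi-isomorphisms and hence isomorphisms from the $E^1$-page on, and quote \cite{sinha2004operads} for convergence when $m \ge 4$. The only minor difference is organizational: the paper incorporates the comparison between $\FNC_m(n)_* \otimes A$ and $C_*(\BZ_m(n); A)$ as an extra arm of the chain-level zig-zag (using directly that $\BZ_m(n)$ realizes $\Nerve(\FNP_m(n))$ and that the combinatorial cofaces on $\FNP_m(\bullet)$ were built to induce those on $\BZ_m(\bullet)$), whereas you carry out this identification at the $E^1$-page; both are valid, and your explicit flagging of the sign/convention matching is a reasonable caution that the paper leaves implicit.
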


\begin{proof} The first part of the theorem is a consequence of the semicosimplicial zig-zag above. Indeed, passing to (co)chains with coefficients in an abelian group $A$, we have a semicosimplicial zig zag that is degreewise a quasi-iso:
\[\begin{tikzcd}
	{\FNC_m(n)_* \otimes A } && {C_*(\WHT_m(n), A)} \\
	& {C_*(\BZ_m(n), A)} && {C_*(\Kons_m(n), A)}
	\arrow["\simeq"', from=1-3, to=2-2]
	\arrow["\simeq", from=1-3, to=2-4]
	\arrow["\simeq", from=1-1, to=2-2]
\end{tikzcd}\]
Taking the associated bicomplexes, we have a zig-zag that is a quasi-iso on columns:
\[\begin{tikzcd}
	{\BFNA_{\bullet *}} && {\mathcal{B}(C_*(\WHT_m(\bullet),A))} \\
	& {\mathcal{B}(C_*(\BZ_m(\bullet),A))} && {\mathcal{B}(C_*(\Kons_m(\bullet),A))}
	\arrow["\simeq"', from=1-3, to=2-2]
	\arrow["\simeq", from=1-3, to=2-4]
	\arrow["\simeq", from=1-1, to=2-2]
\end{tikzcd}\]
We are interested in the spectral sequences built from these bicomplexes. Since in the first step of the spectral sequence we take the homology of columns, such quasi-isomorphisms become isomorphisms from the first page on.
\
Regarding the second part, the convergence to the (co)homology of $T_{\infty} E_m \simeq E_m$ in the case $m \ge 4$ is proved in \cite{sinha2004operads}, Theorem 7.2.
\end{proof}

\printbibliography

\end{document}